\numberwithin{equation}{section}
\let\oldtocsection=\tocsection
\let\oldtocsubsection=\tocsubsection
\let\oldtocsubsubsection=\tocsubsubsection
\renewcommand{\tocsection}[2]{\hspace{0em}\oldtocsection{#1}{#2}\textbf}
\renewcommand{\tocsubsection}[2]{\hspace{1em}\oldtocsubsection{#1}{#2}}
\renewcommand{\tocsubsubsection}[2]{\hspace{2em}\oldtocsubsubsection{#1}{#2}}
\newtheorem{theorem}{Theorem}[section]
\newtheorem{lemma}[theorem]{Lemma}
\newtheorem{proposition}[theorem]{Proposition}
\newtheorem{corollary}[theorem]{Corollary}
\newtheorem{definition}{Definition}[section]
\theoremstyle{definition}
\newtheorem{remark}{Remark}[section]
\theoremstyle{definition}
\begin{document}

\newenvironment{pr oof}[1][D\'emonstration]{\begin{trivlist}
\item[\hskip \labelsep {\bfseries #1}]}{\end{trivlist}}
\newenvironment{example}[1][Example]{\begin{trivlist}
\item[\hskip \labelsep {\bfseries #1}]}{\end{trivlist}}

\title{Homogenization of the eigenvalues of the Neumann-Poincar\'e operator}
\author{
\'E. Bonnetier\textsuperscript{1}, C. Dapogny\textsuperscript{1}, and F. Triki\textsuperscript{1}
}

\maketitle

\begin{center}
\emph{\textsuperscript{1} Laboratoire Jean Kuntzmann, CNRS, Universit\'e Grenoble-Alpes, BP 53, 38041 Grenoble Cedex 9, France}.
\end{center}

\begin{abstract}
In this article, we investigate the spectrum of the Neumann-Poincar\'e operator associated 
to a periodic distribution of small inclusions with size $\varepsilon$, and its asymptotic behavior as the parameter $\varepsilon$ vanishes. 
Combining techniques pertaining to the fields of homogenization and potential theory, we prove that the limit spectrum 
is composed of the `trivial' eigenvalues $0$ and $1$, and of a subset which stays bounded away from $0$ and $1$ uniformly with respect to $\varepsilon$. 
This non trivial part is the reunion of the \textit{Bloch spectrum}, accounting for the collective resonances between collections of inclusions, 
and of the \textit{boundary layer spectrum}, associated to eigenfunctions which spend a not too small part of their energies near the boundary of the macroscopic device.
These results shed new light about the homogenization of the voltage potential $u_\varepsilon$ caused by a given source in a medium composed of a periodic distribution 
of small inclusions with an arbitrary (possible negative) conductivity $a$, surrounded by a dielectric medium, with unit conductivity. In particular, we prove that the limit behavior of $u_\varepsilon$ is strongly related to the (possibly ill-defined) homogenized diffusion matrix predicted by the homogenization theory in the standard elliptic case. 
Additionally, we prove that the homogenization of $u_\varepsilon$ is always possible when $a$ is either positive, or negative with a `small' or `large' modulus.
\end{abstract}
\bigskip
\bigskip

\hrule
\tableofcontents
\hrule

\bigskip 
\bigskip 

\def\a{{\alpha}}
\def\b{{\beta}}
\def\d{{\delta}}
\def\e{{\varepsilon}}
\def\R{{I \kern -.30em R}}
\def\ds{{\displaystyle}}

\section{Introduction}\label{secintro}

Perhaps the first historical evidence of plasmonic resonances 
is the Lygurcus cup, a piece of antique art modeled around the fourth century A.D.: 
when illuminated from the outside (i.e. the reflected rays are observed), the cup appears green; 
however, when illuminated from the inside (i.e. the transmitted rays are observed), it looks red.
The reason for this stupendous
behavior is that the cup is encrusted with a colloid of silver and gold nanoparticles, 
that cause a very strong scattering enhancement at wavelengths
close to $350$ nm; see for instance \cite{manley}.
An analogous (heuristic) consideration guided the fabrication
of the stained glass adorning medieval cathedrals. 

Over the last decades, plasmonic resonances have made possible 
several breakthroughs to face the
ever increasing need for imaging small features, and more generally for concentrating electromagnetic energy in very small regions. 
A surface plasmon is an electromagnetic wave which is confined to the interface between 
two media whose permittivities have opposite signs. 
This rather unusual situation arises for instance at the interface between a dielectric medium (such as void, air), and a metal at infrared or optical frequency.  
Surface plasmons have the unique ability to confine electromagnetic energy in
areas much smaller than the wavelength of the incident wave 
(and thus smaller that the diffraction limit which constrains conventional
dielectric media); 
this opens the door to many interesting applications, ranging from the design of efficient photovoltaic cells (whose light absorption has been enhanced thanks to the strong absorption bands of metal nanoparticles), to cancer therapy (gold nanoparticles being compatible with in vivo detection since they present no toxicity \cite{elsayed}).   
In biochemistry, the unique sensitivity of surface plasmons to the local shape of the dielectric-metal interface has inspired valuable enhancements of spectroscopic methods, 
allowing for instance very accurate observations of molecular adsorption phenomena in the context of DNA, polymers, or proteins \cite{patching,trivau}.
See for instance the monograph \cite{maier} for further details about plasmonic resonances and possible applications.\\

From the mathematical point of view, plasmonic resonances have attracted a great deal of attention - see \cite{grieser} for a review of some results.
To set ideas, consider a domain $D$ (representing a metallic particle or film, or a collection of such particles) 
with electric permittivity $\varepsilon_c := \varepsilon_c^\prime + i \varepsilon_c^{\prime\prime}$,
embedded in a matrix with permittivity $\varepsilon_m >0$ (both media have identical magnetic permeability $\mu >0$). 
In the considered applications, the real part $\varepsilon_c^\prime$ is negative (as is the case of metals in the infrared-visible 
light regime);
the imaginary part $\varepsilon_c^{\prime\prime} >0$ accounts for
the absorption of electromagnetic waves by the inclusions
and the dissipation of the corresponding energy as heat. 
In the setting of the time-harmonic Helmholtz equation for the transverse magnetic (TM) mode,
a surface plasmon is defined as a non trivial solution $u$ to the system governing the intensity of the magnetic field in the direction transverse to the propagation of the wave: 
$$ \left\{Ê
\begin{array}{cl}
-\text{\rm div}(\frac{1}{\varepsilon_m} \nabla u) + \mu \omega^2 u = 0 & \text{outside } D,\\
-\text{\rm div}(\frac{1}{\varepsilon_c} \nabla u) + \mu \omega^2 u = 0 & \text{in } D,\\
\frac{1}{\varepsilon_c} \frac{\partial u^-}{\partial n} = \frac{1}{\varepsilon_m} \frac{\partial u^+}{\partial n} & \text{on } \partial D, \\
+ \text{ other homogeneous B.C.}
\end{array}
\right. $$
Such a solution may only exist provided that $\varepsilon_c^\prime < 0$, 
and that the absorption coefficient vanishes ($\varepsilon_c^{\prime\prime} = 0$), in the limit where the size of the particles goes to $0$. 
In our dimensionless setting where this size is fixed, this corresponds to the limit $\omega \to 0$. 
Hence, studying the asymptotic regime when both $\varepsilon^{\prime\prime}_c$ and $\omega$
vanish provides valuable information in terms of designing nearly-resonant
structures.

That this quasi-static approximation is indeed an accurate description
of the physical phenomenon of plasmonic resonances
has been mathematically justified in the work of H. Ammari and his collaborators for the Maxwell system; see \cite{AmmariDengMillien,AmmariRuizYuZhang}. 
The scalar case of the Helmholtz equation 
has been investigated further in \cite{AmmariMillienRuizZhang}.

In this spirit the present contribution to the study of plasmonic resonances focuses on the quasistatic scalar equation for the voltage potential $u$:
\begin{equation} \label{eq.AD}
-\textrm{div}(A_D(x) \nabla u) = f, \text{ where } A_D (x) := \left\{Ê
\begin{array}{cl}
1 & \text{if }Êx \notin D,\\
a & \text{if } x \in D.
\end{array}
\right.   
\end{equation}
Here, $f$ stands for a source acting in the medium, and the conductivity $a$ inside the inclusions has a complex value; 
notice that adequate boundary conditions have to be added to (\ref{eq.AD}) in order to describe the behavior ot the field $u$ at the
boundary of the external device. 
When the imaginary part of the conductivity in the inclusions is small, but positive (or negative),
the equation (\ref{eq.AD}) is elliptic and 
the associated partial differential operator has a bounded inverse. 
However, as this imaginary part vanishes, 
the inverse operator may blow up for particular (negative) values of the real part of $a$ - our \textit{plasmonic eigenvalues}.
In such a situation, the voltage potential $u$ concentrates around
the boundaries of the inclusions and presents large gradients in
these regions.\\

The invertibility of the partial differential operator
associated to (\ref{eq.AD}) in the case where $a$ assumes a real, negative value, has been investigated from 
different viewpoints.
M. Costabel and E. Stephan~\cite{CostabelStephan} 
used the framework of integral equations
(and of the Neumann-Poincar\'e operator) in the case where $D$ is a piecewise smooth
inclusion with corners. 
Potential theory has also been used by H. Ammari and his collaborators in \cite{AmmariDengMillien, AmmariMillienRuizZhang,AmmariRuizYuZhang}.
A.S. Bonnet-Ben Dhia, P. Ciarlet and their collaborators \cite{Tcoerc} 
introduced the notion of T-coercivity as a generalization of the
inf-sup theory of Ladyzenskaja-Babuska-Brezzi.
H.-M. Nguyen formalized the notion of complementary media in \cite{nguyen} (see also the ideas exposed in \cite{MiltonNicorovici})
to recast (\ref{eq.AD}) as a Cauchy
problem posed on the interfaces between the inclusions and
the background medium. In a subsequent series of articles (see the overview \cite{hmrev}), 
he used this construction to analyze the cloaking and superlensing properties of devices described by a system of the form (\ref{eq.AD}).

Notice that the work on plasmonic resonances mentionned above is closely related
(via the Neumann-Poincar\'e operator) to work on cloaking by anomalous
localized resonance~\cite{AmmariCiraolo,AmmariCiraoloII,bouchitteSchweizer} and to the problem of estimating the strength
of the gradient of the voltage potential between two close-to-touching smooth
inclusions embedded in a dielectric medium \cite{AmmariKangLim,bontrikigrad}.
In this spirit, in \cite{bontriki,bontrikitsou}, the first and third authors have derived the asymptotic expansion of the 
plasmonic resonances associated to two close to touching inclusions of size one.  \\

In this article, we aim to study how resonant metallic particles may interact
and act collectively: for a given (small) size $\varepsilon >0$, the set $D$ of inclusions is an $\varepsilon$-periodic
collection $\omega_\varepsilon$ 
of smooth metallic inclusions distributed in a bounded domain 
$\Omega \subset \mathbb{R}^d$ (see Section \ref{sec.setting} for precise definitions). 
The voltage potential $u_\varepsilon$ is the solution to:
\begin{equation}\label{eq.Aepsintro}
\left\{ \begin{array}{cl}
-\textrm{div}(A_\varepsilon \nabla u_\varepsilon) = f &\text{in } \Omega,\\
u_\varepsilon = 0 & \text{on } \partial \Omega
\end{array} \right. , \text{ where } A_\varepsilon (x) := \left\{Ê
\begin{array}{cl}
1 & \text{if }Êx \notin \omega_\varepsilon,\\
a & \text{if } x \in \omega_\varepsilon.
\end{array}
\right.  
\end{equation}

Of particular interest is the limiting behavior of this system and of the corresponding plasmonic eigenvalues as the size $\varepsilon$ 
of the particles goes to $0$ (and their number tends to infinity). 
As we shall see, this question is closely connected to the construction of so-called \textit{hyperbolic metamaterials}, 
that is, anisotropic materials whose permittivity tensor has sign-changing eigenvalues. Such materials have raised a great
enthousiasm among the physics community, since they potentially allow e.g. for subwavelength
imaging (since the 
wavelength of the propagating waves allowed by their dispersion relation may be arbitrarily short), as was theoretically investigated in \cite{bonhm}, or for calculating density of states in solid-state and condensed matter physics; see \cite{poddubny,shekhar} for 
an overview and references. Interestingly, some authors have started to address the design
of hyperbolic metamaterials by topology optimization techniques \cite{otomori}. \\

Let us now outline how we intend to study the well-posedness of (\ref{eq.Aepsintro}).  
As we have mentioned, one way relies on potential theory; see for instance \cite{AmmariKang,ando,kangln}. 
Roughly speaking, the values of $a$ for which (\ref{eq.potgen}) is ill-posed are related to the eigenvalues of the Neumann-Poincar\'e operator 
${\mathcal K}^*_{\omega_\varepsilon} : L^2(\partial \omega_\varepsilon) \rightarrow L^2(\partial \omega_\varepsilon)$ (see (\ref{eq.defK}) hereafter). 
The nice feature of this point of view 
is that it decouples the geometry of the set $\omega_\varepsilon$ of inclusions from
the value of the conductivity inside. 
The drawback is that it makes it difficult to account for complex variations of the geometry of the set of inclusions, 
which is typically what we are interested in. 

To circumvent this difficulty, we take advantage of the enlighting work of D. Khavinson, M. Putinar and H.S. Shapiro~\cite{khavinson},
who explained H.~Poincar\'e's and M.~Krein's results about the Neumann-Poincar\'e operator in the context of modern functional analysis.
As in \cite{bontriki}, we tackle the system (\ref{eq.Aepsintro}) at the level of the so-called Poincar\'e variational problem, 
which brings into play the operator $T_\varepsilon: H^1_0(\Omega) \rightarrow H^1_0(\Omega)$ defined as follows: 
for $u \in H^1_0(\Omega)$, $T_\varepsilon u$ is the unique element in $H^1_0(\Omega)$ such that:
\begin{equation}\label{eq.Tepsintro}
\forall\; v \in H^1_0(\Omega), \quad
\int_\Omega{ \nabla (T_\varepsilon u) \cdot \nabla v \:dx} =  \int_{\omega_\varepsilon} {\nabla u \cdot \nabla v \:dx}.
\end{equation}
As we recall in Section \ref{sec.TD},
it turns out that the spectrum of the Neumann-Poincar\'e operator and 
the spectrum $\sigma(T_\varepsilon)$ of the Poincar\'e variational operator are 
explicitely related to one another.

Our aim is to study the behavior of the sets $\sigma(T_\varepsilon)$ 
as the period $\varepsilon$ of the distribution of inclusions tends to $0$, 
and more precisely the structure of the \textit{limit spectrum} $\lim_{\varepsilon \rightarrow 0}{\sigma(T_\varepsilon)}$, 
defined as the set of accumulation points of the $\sigma(T_\varepsilon)$: 
\begin{equation}\label{def.limspec}
\lim\limits_{\varepsilon \rightarrow 0}{\sigma(T_\varepsilon)} := \left\{ \lambda \in [0,1], \text{ s.t. } \exists \: \varepsilon_j \downarrow 0,\:\: \lambda_{\varepsilon_j} \in \sigma(T_{\varepsilon_j}), \:\: \lambda_{\varepsilon_j}\xrightarrow{ j\rightarrow \infty} \lambda  \right\}. 
\end{equation}
Regarding plasmonic resonances, 
this question is related to understanding whether collective effects of 
resonant particles may enhance or annihilate the concentration of large 
gradients in $\Omega$.

Another question we address in this article concerns the relation between 
the asymptotic behavior of the spectrum $\sigma(T_\varepsilon)$ and the well-posedness 
of the formally homogenized limit system associated to (\ref{eq.Aepsintro}).\\

Our work is organized as follows: In Section \ref{sec.setting},
we set notations and describe the geometry of the periodic distribution of inclusions. 

Section \ref{sec.TD} is concerned with a discussion about the main features of the Poincar\'e variational problem, 
in the case of a general inclusion (or collection
of inclusions) $D$ embedded in a macroscopic domain $\Omega$. 
We recall some known facts about the Poincar\'e variational operator $T_D$, in particular 
its connection with the Poincar\'e-Neumann operator of the inclusion $D$; 
we also present a construction which we shall use repeatedly in the sequel, 
connecting $T_D$ with an operator acting on functions defined only inside the inclusion $D$. 

Starting from Section \ref{sec.limspec}, we assume that $D = \omega_\varepsilon$ is a periodic collection
of inclusions, and in addition, that the considered inhomogeneities are smooth and
strictly included in the periodicity cells.
This section is devoted to the study of the spectral properties of the Poincar\'e variational problem for $\omega_\varepsilon$, 
and of how they behave as $\varepsilon$ tends to $0$. 
A summary of the main results obtained in this direction is presented in Section \ref{sec.mainresspec}; 
in a nutshell, we first prove in Section \ref{sec.boundev}, by 
using the min-max principle associated to the
Poincar\'e variational problem, that the non-degenerate spectra of the 
Poincar\'e variational operators $T_\varepsilon$ are uniformly contained in 
an interval $[m,M] \subset (0,1)$.
Then, in order to cope with the `bad' convergence properties of $T_\varepsilon$, 
we carry out a two-scale reformulation of the problem, and construct a two-scale limit operator
(in the sense of pointwise convergence) the spectrum of which
contains that of $T_0$, the Poincar\'e operator defined
on the periodicity cell.
Carrying out this rescaling over larger and larger blocks
of inhomogeneities, in the spirit of 
 the work of G. Allaire and C. Conca~\cite{allaireconcafs,allaireconca}, 
we then show that the limit spectrum $\lim_{\varepsilon \to 0} \sigma(T_\varepsilon)$ can be
decomposed as $\sigma_{\partial \Omega} \cup \sigma_{\textrm{Bloch}}$,
where the `Bloch spectrum' $\sigma_{\textrm{Bloch}}$ accounts for the collective resonances of 
groups of inclusions as $\varepsilon \to 0$, and the `boundary layer spectrum'
$\sigma_{\partial \Omega}$,
consists of limits of sequences of eigenvalues associated to eigenfunctions
that spend a not too small part of their energy near the boundary.

In Section \ref{secdirect}, we investigate the properties of the
boundary value problem (\ref{eq.Aepsintro}) associated to a conductivity $a \in \mathbb{C}$ filling the periodic distribution of inclusions. 
We first study in Section \ref{sec.cellpb} the properties (well-posedness, positive definiteness,...) of the tensor $A^*$, 
formally obtained by applying the usual periodic homogenization formulae.
In Section \ref{sec.gendirect}, we generalize F. Murat and L. Tartar's 
compactness theorem for homogenization to the case of elliptic equations with possibly non positive coefficients 
inside the inhomogeneities. In particular, we explain that the homogenized tensor $A^*$ partially encodes the behavior of the voltage potential $u_\varepsilon$ as $\varepsilon \to 0$. 
In Section \ref{sec.hommatlimspec}, we use these results to partially characterize the limit spectrum $\lim_{\varepsilon \to 0} \sigma(T_\varepsilon)$
in terms of $A^*$. Sections \ref{sec.hc} and \ref{sec.unifhomog} focus on the `high-contrast' source problem, 
i.e. when $a \to \pm \infty$.
Using results from Section \ref{sec.limspec}, we show that this problem is always well-posed, and 
converges to its homogenized limit as $\varepsilon \to 0$ uniformly with respect to the value of $a$. 
Finally, in the Appendix, we perform the explicit calculation of the limit spectrum in the particular setting
of rank $1$ laminates.
This situation is an interesting source of examples and counter examples 
to the general features exposed in the main parts of this work. 

\section{Description of the setting and notations}\label{sec.setting}

Let $\Omega \subset \mathbb{R}^d$ and $D \Subset \Omega$ be bounded, Lipschitz regular domains.
We denote by $u^+$ (resp. $u^-$) the restriction to $\Omega \setminus \overline{D}$ (resp. $D$) of a function $u:\Omega \to \mathbb{R}$. 
If $u^+$ and $u^-$ have traces $u^+\lvert_{\partial D}$ and $u^-\lvert_{\partial D}$ on $\partial D$, we denote by $[u] := u^+\lvert_{\partial D} - u^-\lvert_{\partial D}$ the \textit{jump} of $u$ across $\partial D$.
Introducing the unit normal vector $n$ to $\partial D$ pointing outward $D$, we denote by
$$ \frac{\partial u^\pm}{\partial n}(x) = \lim\limits_{t \rightarrow 0^+}{\nabla u (x \pm tn(x)) \cdot n(x)}$$
the \textit{exterior} and \textit{interior normal derivatives} of $u$ at $x \in \partial D$, when $u$ is regular enough. 
The associated \textit{normal jump} across $\partial D$ is denoted by $\left[\frac{\partial u}{\partial n}\right] = \frac{\partial u^+}{\partial n} - \frac{\partial u^-}{\partial n}$. 

As far as the geometry of the set $D$ is concerned, the main part of this article is devoted to the particular case
where it is a periodic distribution of many `small' identical inclusions of size $\varepsilon >0$. 

To be more precise, let us denote by 
$Y = (0,1)^d$ the unit periodicity cell of $\mathbb{R}^d$, and by $\omega \Subset Y$ the 
region occupied by the rescaled inclusion. 
For the sake of simplicity, we assume that both $\omega$ and $Y \setminus \overline{\omega}$ are connected. 
In due time, we shall put additional assumptions as far as the regularity of $\omega$ is concerned, 
but for the moment, we only require that it is Lipschitz regular. 
For $\varepsilon >0$, for any point $x \in \mathbb{R}^d$, 
there exists a unique pair $(\xi,y) \in \mathbb{Z}^d \times [0,1)^d$ such that $x= \varepsilon \xi + \varepsilon y$, which we denote
$$ \xi = \left[ \frac{x}{\varepsilon}\right]_Y , \text{ and } y = \left\{Ê\frac{x}{\varepsilon}\right\} _Y.$$ 
For given $\xi \in \mathbb{Z}^d$, we denote by $Y_\varepsilon^\xi := \varepsilon(\xi + Y)$ (resp $\omega_\varepsilon^\xi := \varepsilon(\xi + \omega)$) 
the copy of $Y$ (resp. $\omega$) with actual size $\varepsilon$ and position $\xi$. 
Let ${\mathcal O}_\varepsilon$ be the open reunion of all $\varepsilon$-cells which are completely included in $\Omega$, that is, ${\mathcal O}_\varepsilon$ is the interior of
\begin{equation}\label{eq.defOeps}
\overline{{\mathcal O}_\varepsilon}= \bigcup_{\xi \in \Xi_\varepsilon}{\overline{\varepsilon(\xi + Y)}}, \text{ where } \Xi_\varepsilon = \left\{ \xi \in \mathbb{Z}^d, \: Y_\varepsilon^\xi \Subset \Omega \right\},
\end{equation}
and let ${\mathcal B}_\varepsilon = \Omega \setminus \overline{{\mathcal O}_\varepsilon}$ be the complementary region in $\Omega$.
The subset $\omega_\varepsilon \Subset \Omega$ occupied by the inclusions is therefore defined by: 
\begin{equation}\label{eq.omeps}
 \omega_\varepsilon = \bigcup_{ \xi \in \Xi_\varepsilon}{\varepsilon(\xi + \omega)};
 \end{equation}
see Figure \ref{fighomog} for an illustration. 

\begin{figure}[!ht]
\centering
\includegraphics[width=0.75 \textwidth]{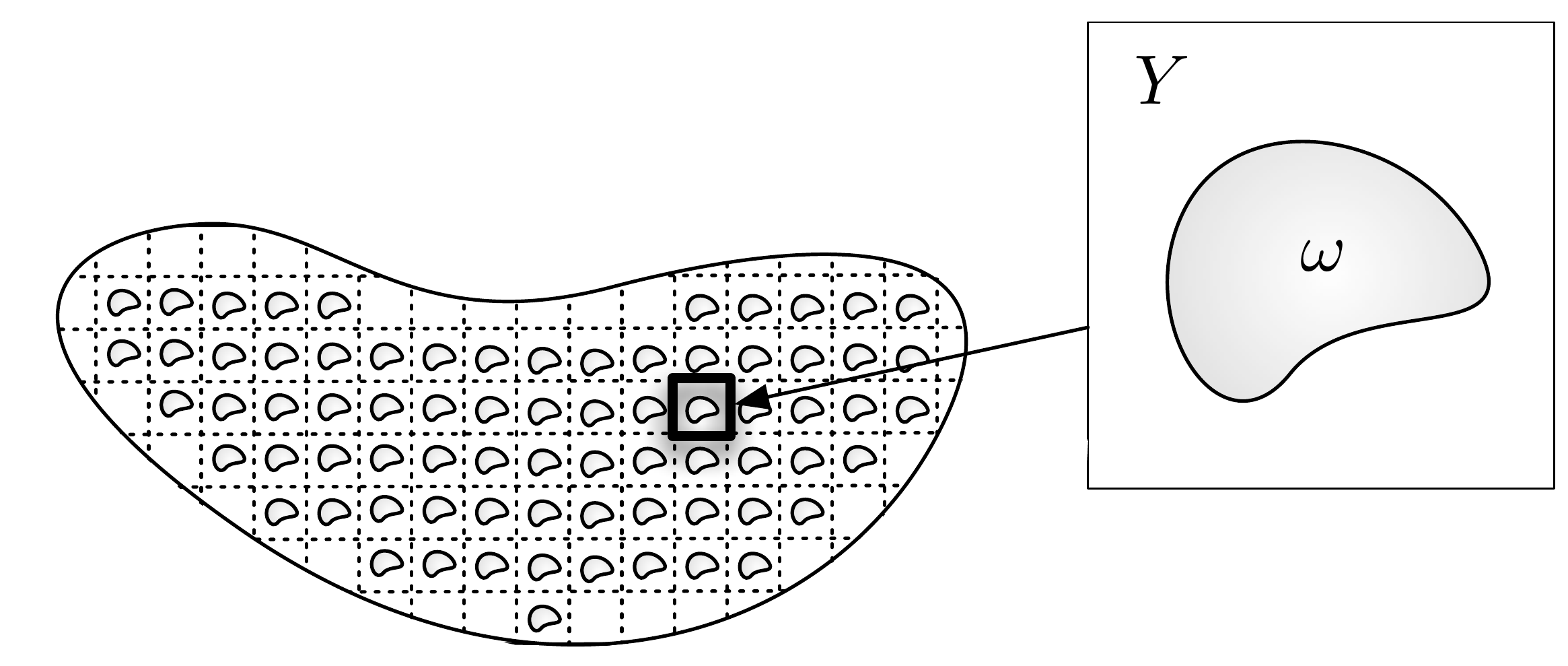}
\caption{\textit{Setting of the homogenization problem.}}
\label{fighomog}
\end{figure}

We consider a situation where the external medium to $\Omega$ is kept at constant, null potential; 
the matrix $\Omega \setminus \overline{\omega_\varepsilon}$ and the set of inclusions $\omega_\varepsilon$ 
are respectively filled with materials with constant conductivities $1$ and $a\in \mathbb{C}\setminus \left\{ 0 \right\}$. 
The main purpose of this article is to investigate the properties of the voltage potential $u \in H^1_0(\Omega)$ generated by a source $f \in H^{-1}(\Omega)$ 
acting in the medium, which is solution to the system: 
\begin{equation}\label{eq.potgen}
\left\{ 
\begin{array}{cl}
-\text{\rm div}(A_\varepsilon(x) \nabla u) = f &\text{in } \Omega, \\ 
u = 0 & \text{on } \partial \Omega
\end{array}
\right., \text{ where } A_\varepsilon(x) = \left\{
\begin{array}{cl}
1 & \text{if }Êx \in \Omega \setminus \overline{\omega_\varepsilon}, \\
a & \text{if }Êx \in \omega_\varepsilon.
\end{array}
\right. 
\end{equation}

As we recall in Section \ref{sec.defTD}, the system (\ref{eq.potgen}) does not fall into the classical setting of the Lax-Milgram theory 
when the conductivity $a$ inside the inclusions is negative, and there actually 
exists 
such values of $a$, called \textit{plasmonic eigenvalues} -
depending on the geometries of $\Omega$ and of $\omega_\varepsilon$
(thus on $\varepsilon$) -, 
for which (\ref{eq.potgen}) has non trivial solutions when $f \equiv 0$. 

Our goal is to understand the behavior of these plasmonic eigenvalues in the limit $\varepsilon \rightarrow 0$, 
when the set of inclusions is `homogenized' inside $\Omega$. 

\section{Generalities about the Poincar\'e variational problem}\label{sec.TD}

In this section, we discuss some general properties of the \textit{Poincar\'e variational problem}, in connection 
with the Neumann-Poincar\'e operator of a general Lipschitz regular subdomain $D \Subset \Omega$.
Since we chiefly intend to use these concepts in the setting of Section \ref{sec.setting}, we may think of $D$ as consisting of 
several connected components, 
that we denote $D_1,...,D_N$. We also assume throughout this section that $\Omega \setminus \overline{D}$ is connected.

\subsection{Definition of the operator $T_D$, and connection with the conductivity equation}\label{sec.defTD}~\\

Following \cite{khavinson}, we first define the operator $T_D : H^1_0(\Omega) \rightarrow H^1_0(\Omega)$ featured in the Poincar\'e variational problem. 
Throughout this article, the space $H^1_0(\Omega)$ is equipped with the inner product and associated norm: 
\begin{equation}\label{eq.H10prod}
 \langle u , v \rangle_{H^1_0(\Omega)} = \int_{\Omega}{\nabla u \cdot \nabla v \:dx},\text{ and } \lvert \lvert u \lvert\lvert_{H^1_0(\Omega)}^2 = \langle u, u \rangle_{H^1_0(\Omega)}.
 \end{equation}
For an arbitrary function $u \in H^1_0(\Omega)$, $T_D u$ is the solution in $H^1_0(\Omega)$ to the following variational problem: 
\begin{equation}\label{eq.defTD}
 \forall v \in H^1_0(\Omega), \:\: \int_\Omega{\nabla (T_D u ) \cdot \nabla v \:dx} = \int_{D}{\nabla u \cdot \nabla v \:dx}.
 \end{equation}
The existence and unicity of $T_D u$ are direct consequences of the Riesz representation theorem. 

Roughly speaking, $T_D u $ accounts for the fraction of the energy of $u$ which is contained 
in $D$. The following properties of $T_D$ immediately stem from the definition (\ref{eq.defTD}): 

\begin{proposition}
$T_D$ is self-adjoint and bounded, with operator norm $\lvert\lvert T_D \lvert\lvert = 1$.
 It is also non negative: 
 $$ \forall u \in H^1_0(\Omega), \:\: \langle T_D u , u \rangle_{H^1_0(\Omega)} = \int_{\Omega}{\nabla (T_D u ) \cdot \nabla u \:dx} = \int_D{\lvert \nabla u \lvert^2\:dx}.$$
\end{proposition}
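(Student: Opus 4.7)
The plan is to read off all three properties directly from the defining variational identity
\[
\forall v \in H^1_0(\Omega), \quad \langle T_D u, v\rangle_{H^1_0(\Omega)} = \int_D \nabla u \cdot \nabla v \, dx,
\]
by choosing appropriate test functions $v$.

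First, I would establish self-adjointness. The bilinear form $a(u,v) := \int_D \nabla u \cdot \nabla v \, dx$ is manifestly symmetric in $u$ and $v$. Writing the defining identity once as $\langle T_D u, v\rangle = a(u,v)$ and once with the roles of $u$ and $v$ swapped yields $\langle T_D v, u\rangle = a(v,u) = a(u,v)$, so $\langle T_D u, v\rangle = \langle u, T_D v\rangle$ for all $u,v \in H^1_0(\Omega)$. Non-negativity follows just as quickly: substituting $v = u$ in the definition gives
\[
\langle T_D u, u\rangle_{H^1_0(\Omega)} = \int_D \lvert \nabla u \rvert^2 \, dx \ge 0,
\]
which is exactly the displayed formula in the statement.

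For boundedness with $\lVert T_D\rVert \le 1$, I would test against $v = T_D u$ itself, obtaining $\lVert T_D u\rVert_{H^1_0(\Omega)}^2 = \int_D \nabla u \cdot \nabla (T_D u) \, dx$, and then apply the Cauchy--Schwarz inequality on $D$ followed by the trivial bound $\int_D \lvert \nabla u\rvert^2 \le \lVert u\rVert_{H^1_0(\Omega)}^2$. This yields $\lVert T_D u\rVert_{H^1_0(\Omega)} \le \lVert u\rVert_{H^1_0(\Omega)}$.

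The slightly subtler point is that the norm equals $1$, not merely $\le 1$. Here I would exhibit an explicit eigenfunction with eigenvalue $1$: since $D$ is a nonempty open subset of $\Omega$, one may pick any nonzero $u \in C^\infty_c(D) \subset H^1_0(\Omega)$. Because $\nabla u$ vanishes identically outside $D$, one has $\int_D \nabla u \cdot \nabla v \, dx = \int_\Omega \nabla u \cdot \nabla v \, dx$ for every $v \in H^1_0(\Omega)$, so by uniqueness in the Riesz representation $T_D u = u$. Consequently $\lVert T_D u\rVert = \lVert u\rVert > 0$, which forces $\lVert T_D\rVert = 1$. No step presents a real obstacle; the only thing to be mindful of is that the full operator norm is attained rather than merely approached, and this is what the explicit fixed point in $C^\infty_c(D)$ provides.
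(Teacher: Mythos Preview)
Your proof is correct and matches the paper's approach: the paper does not spell out a proof at all, merely stating that these properties ``immediately stem from the definition,'' and your argument is precisely the natural unpacking of that remark. Your observation that any nonzero $u\in C^\infty_c(D)$ is a fixed point of $T_D$ is exactly the content of the paper's later Proposition~\ref{prop.evTD}(ii), which identifies $\mathrm{Ker}(I-T_D)$ with $H^1_0(D)$.
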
\par
\smallskip

This operator naturally arises in the study of the conductivity equation: 
assume that the inclusion $D$ is filled with a material with conductivity $a$, 
and the complementary part $\Omega \setminus \overline{D}$ is occupied by a material with unit conductivity, so that
the potential $u \in H^1_0(\Omega)$ generated by a source $f \in H^{-1}(\Omega)$ is solution to the equation: 
\begin{equation}\label{eq.conduc}
\left\{ 
\begin{array}{cl}
-\text{\rm div}(A(x)\nabla u) = f & \text{in } \Omega, \\
u = 0 & \text{\rm on } \partial \Omega
\end{array}
\right., \text{ where } A(x) = \left\{ \begin{array}{cl}
1 & \text{if } x \in \Omega \setminus \overline{D},\\
a & \text{if } x \in D
\end{array}
\right. ,
\end{equation}
or under variational form: 
\begin{equation}\label{eq.varpot}
 \forall v \in H^1_0(\Omega), \:\: \int_\Omega{A(x)\nabla u \cdot \nabla v \:dx} = \langle f , v \rangle_{H^{-1}(\Omega), H^1_0(\Omega)},
 \end{equation}
 where $\langle \cdot, \cdot \rangle_{H^{-1}(\Omega),H^1_0(\Omega)}$ stands for the usual duality pairing between $H^{-1}(\Omega)$ and $H^1_0(\Omega)$. 
 
Obviously, when either $a\in \mathbb{R}$, $a>0$, or $a \in \mathbb{C}$ with nonzero imaginary part, 
the well-posedness of (\ref{eq.conduc}) is a straightforward consequence of the Lax-Milgram lemma. 

In the general case, let $g \in H^1_0(\Omega)$ be the representative of $f$ supplied by the Riesz representation theorem: 
$$ \forall v \in H^1_0(\Omega), \:\: \int_{\Omega}{\nabla g \cdot \nabla v \:dx} =  \langle f , v \rangle_{H^{-1}(\Omega), H^1_0(\Omega)}.$$
Simple manipulations show that (\ref{eq.varpot}) is equivalent to: 
$$\left( \lambda I - T_D \right) u = \lambda g, \text{ with } \lambda = \frac{1}{1-a}.$$
Hence, much information on the solvability of (\ref{eq.conduc}) can be gleaned from the study of the eigenvalues of $T_D$.

\subsection{Basic spectral properties of $T_D$}\label{sec.evTD}~\\

One of the main stakes of this article is the study of the spectrum $\sigma(T_D)$; 
as we shall see soon, this problem essentially amounts to considering the eigenvalue problem for $T_D$: 
seek $\lambda \in \mathbb{R}$ and $u \in H^1_0(\Omega)$, $u \neq 0$, 
such that $T_D u = \lambda u$, i.e. in variational form: 
\begin{equation}\label{eq.vpnp}
 \forall v \in H^1_0(\Omega), \:\: \lambda \int_\Omega{\nabla u \cdot \nabla v \:dx} = \int_D{\nabla u \cdot \nabla v\:dx}. 
 \end{equation}
A first property of interest is the following:
\begin{proposition}\label{prop.evTD} \noindent
The spectrum $\sigma(T_D)$ of $T_D$ is included in the interval $\left[Ê0, 1 \right]$. Moreover,
\begin{enumerate}[(i)]
\item The eigenspace associated to the eigenvalue $\lambda = 0$ is 
$$ \text{\rm Ker}(T_D) = \left\{ u \in H^1_0(\Omega), \:  u  = \text{a constant } c_j \text{ in each connected component } D_j \text{ of } D, \: j=1,...,n \right\}. $$
\item The eigenspace associated to $\lambda = 1$ is 
$$ \text{\rm Ker}(I-T_D) = \left\{ u \in H^1_0(\Omega), \:  u  = 0 \text{ in } \Omega \setminus \overline{D} \right\},$$
which can naturally be
identified with $H^1_0(D)$.
\item The following orthogonal decomposition of $H^1_0(\Omega)$ holds: 
\begin{equation}\label{TDortho}
H^1_0(\Omega) = \text{\rm Ker}(T_D) \oplus \text{\rm Ker}(I-T_D) \oplus \mathfrak{h},
\end{equation}
where the closed subspace $\mathfrak{h}$ is defined as:
\begin{equation}\label{eq.mathfrakh}
 \mathfrak{h} = \left\{ u \in H^1_0(\Omega), \:\: \Delta u = 0 \text{ in } D \cup (\Omega \setminus \overline{D}), \:\: \int_{\partial D_j}{\frac{\partial u^+}{\partial n} \:ds} = 0 , \: j =1,...,N \right\}.
 \end{equation}
\end{enumerate}
\end{proposition}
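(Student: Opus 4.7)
The plan is to derive all three statements from the variational definition (\ref{eq.defTD}), supplemented by a single integration-by-parts argument for (iii). The spectral inclusion $\sigma(T_D) \subset [0,1]$ is immediate from the preceding proposition, which gives $T_D$ self-adjoint, non-negative, and of operator norm $1$. For (i), inserting $v = u$ into (\ref{eq.defTD}) under the hypothesis $T_D u = 0$ yields $\int_D |\nabla u|^2\:dx = 0$; hence $\nabla u = 0$ a.e.\ in $D$, which is equivalent to $u$ being constant on each connected component $D_j$, and the converse is immediate. For (ii), rewriting $T_D u = u$ as $\int_{\Omega \setminus \overline D} \nabla u \cdot \nabla v\:dx = 0$ for all $v \in H^1_0(\Omega)$, then taking $v = u$, forces $\nabla u \equiv 0$ on the connected open set $\Omega \setminus \overline D$; combined with $u\lvert_{\partial \Omega} = 0$, this gives $u \equiv 0$ there, and the identification with $H^1_0(D)$ is via extension by zero.

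For (iii), orthogonality of $\text{\rm Ker}(T_D)$ and $\text{\rm Ker}(I-T_D)$ is automatic from self-adjointness and from $0 \neq 1$. It then suffices to show that the orthogonal complement $\mathfrak{g} := (\text{\rm Ker}(T_D) \oplus \text{\rm Ker}(I-T_D))^\perp$ coincides with $\mathfrak{h}$. Given $u \in \mathfrak{g}$, testing against functions in $H^1_0(D) \subset \text{\rm Ker}(I-T_D)$ yields $\Delta u = 0$ weakly in $D$; testing against those elements of $\text{\rm Ker}(T_D)$ that lie in $H^1_0(\Omega \setminus \overline D)$ (extended by $0$ on $D$) yields $\Delta u = 0$ in $\Omega \setminus \overline D$. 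For the flux conditions, one constructs, for each $j$, a function $\phi_j \in \text{\rm Ker}(T_D)$ equal to $1$ on $\overline{D_j}$, to $0$ on $\overline{D_k}$ for $k \neq j$, and harmonic in $\Omega \setminus \overline D$ with vanishing trace on $\partial \Omega$; Green's formula applied to $\int_{\Omega \setminus \overline D} \nabla u \cdot \nabla \phi_j\:dx = 0$, using $\Delta u = 0$ in $\Omega \setminus \overline D$ and the boundary values of $\phi_j$, reduces the identity to $-\int_{\partial D_j} \frac{\partial u^+}{\partial n}\:ds = 0$. The reverse inclusion $\mathfrak{h} \subset \mathfrak{g}$ follows by running the same integrations by parts backwards: for $v \in \text{\rm Ker}(I-T_D)$, harmonicity of $u$ on $D$ and the vanishing of $v$ on $\partial D$ kill $\int_\Omega \nabla u \cdot \nabla v\:dx$; for $v \in \text{\rm Ker}(T_D)$ with $v\lvert_{D_j} = c_j$, only $\int_{\Omega \setminus \overline D} \nabla u \cdot \nabla v\:dx$ survives, and Green's formula rewrites it as $-\sum_j c_j \int_{\partial D_j} \frac{\partial u^+}{\partial n}\:ds$, which vanishes by the flux condition.

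The main technical obstacle is the construction of the $\phi_j$ as genuine elements of $H^1_0(\Omega) \cap \text{\rm Ker}(T_D)$: the prescribed Dirichlet data ($1$ on $\partial D_j$, $0$ on the other components of $\partial(\Omega \setminus \overline D)$) must be in $H^{1/2}$ so that the harmonic extension lies in $H^1(\Omega \setminus \overline D)$, and the piecewise-constant assignment on $D$ must glue across $\partial D$ to yield a function in $H^1_0(\Omega)$ --- both guaranteed by the Lipschitz regularity of $\Omega$ and $D$ and the trace theorem. The boundary integrals $\int_{\partial D_j} \frac{\partial u^+}{\partial n}\:ds$ should throughout be interpreted as the $H^{-1/2}(\partial D_j)$--$H^{1/2}(\partial D_j)$ duality pairing $\langle \frac{\partial u^+}{\partial n}, 1\rangle$, which is legitimate since $u$ is harmonic in $\Omega \setminus \overline D$.
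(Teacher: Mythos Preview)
Your proof is correct and follows essentially the same route as the paper's: (i) and (ii) are dispatched directly from the variational identity, and (iii) is obtained by characterizing the orthogonal complements of $\text{\rm Ker}(T_D)$ and $\text{\rm Ker}(I-T_D)$ via Green's formula, first using compactly supported test functions to get harmonicity, then using elements of $\text{\rm Ker}(T_D)$ with prescribed constants on the $D_j$ to extract the flux conditions. Your version is simply more explicit about the construction of the test functions $\phi_j$ and the duality interpretation of the boundary integrals; note, incidentally, that the harmonicity you impose on $\phi_j$ in $\Omega\setminus\overline{D}$ is not actually used anywhere (only the boundary values of $\phi_j$ enter after Green's formula), so any $H^1_0(\Omega)$ extension with the right constants on the $D_k$ would do.
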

\begin{proof}
The points $(i)$ and $(ii)$ are pretty straightforward from (\ref{eq.vpnp}), and we focus on proving $(iii)$. 
First, $\text{\rm Ker}(T_D)$ and $\text{\rm Ker}(I-T_D)$ are clearly closed, orthogonal subspaces of $H^1_0(\Omega)$. 
Then, a function $u \in H^1_0(\Omega)$ is orthogonal to $\text{\rm Ker}(T_D)$ if and only if
$$ \forall v \in \text{\rm Ker}(T_D), \:\: \int_{\Omega \setminus \overline{D}}{\nabla u \cdot \nabla v \:dx} = 0; $$
using Green's formula, taking at first test functions $v \in {\mathcal C}^\infty_c(\Omega \setminus \overline{D})$, then 
arbitrary $v \in \text{\rm Ker}(T_D)$, this condition turns out to be equivalent to: 
$$ \Delta u = 0 \text{ on } \Omega \setminus \overline{D}, \text{ and } \int_{\partial D_j}{\frac{\partial u^+}{\partial n} \:ds } = 0 \text{ on each connected component } \partial D_j \text{ of } \partial D.$$
Likewise, one proves that a function $u \in H^1_0(\Omega)$ is orthogonal to $\text{\rm Ker}(I-T_D)$ if and only if $\Delta u =0$ on $D$. 
The desired result follows. 
\end{proof} \par
\smallskip 

\subsection{Connection between $T_D$ and the Neumann-Poincar\'e operator}\label{sec.NPTD}~\\

The material collected in this section is not new in the literature; it appears e.g. in \cite{bontriki,khavinson} (see also the references quoted throughout this section) 
in the context where the inclusion $D$ is embedded in the whole space $\mathbb{R}^d$, and not in a bounded domain $\Omega$. 
Since we intend to use several slightly different versions of this framework, 
we sketch the main points here in the context of Section \ref{sec.defTD}; these may be easily adapted to the other situations we shall consider. 

In all this subsection, we assume $D$ to be of class ${\mathcal C}^2$ (but $\Omega$ may still only assumed to be Lipschitz regular). 

Let $P(x,y)$ be the Poisson Kernel associated to the domain $\Omega$, i.e. for any $x \in \Omega$, $P(x,\cdot)$ is the unique solution of: 
\begin{equation}\label{eq.poisk}
 \left\{ 
\begin{array}{cl}
\Delta_y P(x,y) = \delta_x & \text{for } y \in \Omega, \\
P(x,y) = 0 & \text{for } y \in \partial \Omega,
\end{array}
\right.
\end{equation}
where $\delta_x$ stands for the Dirac distribution centered at $x$. 
Recall that $P(x,y)$ is symmetric, that is $P(x,y) = P(y,x)$ for $x\neq y \in \Omega$, and has the structure: 
$$ P(x,y) = G(x,y) + R_x(y),$$
where $G(x,y)$ is the free space Green function: 
$$ G(x,y) = \left\{ 
\begin{array}{cl}
\frac{1}{2\pi}\text{\rm log} \lvert x - y \lvert  & \text{if } d = 2, \\
\frac{\lvert x -y \lvert^{d-2} }{(2-d)\omega_d}& \text{if }Êd \geq 3. 
\end{array}
\right., \:\: \omega_d = \text{ area of the unit sphere in } \mathbb{R}^d, $$
and where, for given $x\notin \partial \Omega$, $R_x(y)$ is the (smooth) solution of: 
$$ \left\{ 
\begin{array}{cl}
\Delta_yR_x(y) = 0 & \text{for } y \in \Omega, \\
R_x(y) = -G(x,y) & \text{for } y \in \partial \Omega;
\end{array} 
\right. $$
 see for instance \cite{AmmariKang,folland}. 

In this context, the single and double layer potentials ${\mathcal S}_D \phi, \:{\mathcal D}_D \phi \in L^2(\partial D)$ 
associated to a density $\phi \in L^2(\partial D)$ are respectively defined by: 
$$\forall x \in D \cup (\Omega \setminus \overline{D}), \:\:  {\mathcal S}_D \phi(x) = \int_{\partial D}{P(x,y)\phi(y) \:ds(y)}, \text{ and } {\mathcal D}_D \phi(x) = \int_{\partial D}{\frac{\partial P}{\partial n_y}(x,y)\phi(y) \:ds(y)}.$$
Both ${\mathcal S}_D\phi$ and ${\mathcal D}_D \phi$ are harmonic functions in $D$ and in $\Omega \setminus \overline{D}$,
and vanish on $\partial \Omega$. Moreover, their behavior in the neighborhood of the boundary $\partial D$ is described by the Plemelj jump relations: 
\begin{equation}\label{eq.jumpSD}
 {\mathcal S}_D \phi ^+(x) = {\mathcal S}_D \phi^-(x), \text{ and } \frac{\partial {\mathcal S}_D\phi ^\pm}{\partial n}(x)= \pm \frac{1}{2}\phi(x) + {\mathcal K}_D^* \phi(x), 
 \end{equation}
\begin{equation}\label{eq.jumpDD}
\frac{\partial {\mathcal D}_D \phi ^+}{\partial n}(x) = \frac{\partial {\mathcal D}_D \phi^-}{\partial n}(x), \text{ and } {\mathcal D}_D\phi ^\pm (x)= \mp \frac{1}{2}\phi(x) + {\mathcal K}_D \phi(x), 
\end{equation}
where the Neumann-Poincar\'e operators ${\mathcal K}_D, \: {\mathcal K}_D^*: L^2(\partial D) \rightarrow L^2(\partial D)$ 
are the weakly singular integral operators, adjoint from one another, defined by: 
\begin{equation}\label{eq.defK}
 {\mathcal K}_D \phi (x) = \int_{\partial D}{\frac{\partial P}{\partial n_y}(x,y)\phi(y)\:ds(y)},  \text{ and } {\mathcal K}_D^* \phi (x) = \int_{\partial D}{\frac{\partial P}{\partial n_x}(x,y)\phi(y)\:ds(y)}.
 \end{equation}
Let us recall facts from
potential theory (see e.g. \cite{AmmariKang,mclean,sauterschwab}): 
\begin{itemize}
\item The single layer potential extends into an operator $H^{-1/2}(\partial D) \to H^1_0(\Omega)$, which we still denote by ${\mathcal S}_D$. 
For any potential $\phi \in H^{-1/2}(\partial D)$, ${\mathcal S}_D\phi$ is the unique $u \in H^1_0(\Omega)$ such that:
\begin{equation}\label{eq.varfSD}
 \forall v \in H^1_0(\Omega), \:\: \int_{\Omega}{\nabla u \cdot \nabla v \:dx}  = \langle \phi , v \rangle_{H^{-1/2}(\partial D), H^{1/2}(\partial D)}.
 \end{equation}
\item Owing to (\ref{eq.jumpSD}), the single layer potential induces a bounded operator $S_D: L^2(\partial D) \rightarrow L^2(\partial D)$, 
defined by $S_D \phi = {\mathcal S}_D \phi \lvert_{\partial D}$. Moreover, this operator naturally extends as an operator $H^{-1/2}(\partial D) \rightarrow H^{1/2}(\partial D)$.
\item The operator ${\mathcal K}_D$ extends as an operator $H^{1/2}(\partial D) \to H^{1/2}(\partial D)$, 
and ${\mathcal K}_D^*$ extends as an operator $H^{-1/2}(\partial D) \to H^{-1/2}(\partial D)$.
\item Since $D$ is of class ${\mathcal C}^2$, both operators ${\mathcal K}_D$ and ${\mathcal K}_D^*$ are compact. 
\end{itemize}

Of particular interest for our purpose is the space $\mathfrak{h}_{\mathcal S}$ of \textit{single layer potentials}: 
$$ \mathfrak{h}_{\mathcal S} = \left\{{\mathcal S}_D \phi , \:\: \phi \in H^{-1/2}(\partial D)\right\} = \left\{ u \in H^1_0(\Omega), \:\: \Delta u = 0 \text{ in } D \text{ and } \Omega \setminus \overline{D} \right\}. $$
Note that $\mathfrak{h}_{\mathcal S}$ is a 
closed
subspace of $H^1_0(\Omega)$ and in
light of Proposition \ref{prop.evTD}, the following facts hold: 
\begin{itemize}
\item $T_D$ maps ${\mathfrak h}_S$ into itself. 
\item ${\mathfrak h}_S$ contains ${\mathfrak h}$ and is orthogonal to $\text{\rm Ker}(I-T_D)$; hence: 
$$ {\mathfrak h} \subset {\mathfrak h}_S \subset {\mathfrak h} \oplus \text{\rm Ker}(T_D).$$
More precisely, it holds: 
$$ {\mathfrak h}_S = {\mathfrak h}Ê\oplus \text{\rm span}\left\{ h_j \right\}_{j=1,...,N},$$
where, for $j=1,...,N$, $h_j \in H^1_0(\Omega)$ is the unique function such that: 
$$ h_j = \left\{ \begin{array}{cl} 
1 & \text{ on } D_j, \\
0 &\text{ on } D_k,\: k \neq j 
\end{array} \right. , \text{ and }\Delta h_j = 0 
\text{ in } 
\Omega \setminus \overline{D}.$$
\item The spectrum of $T_D: H^1_0(\Omega) \rightarrow H^1_0(\Omega)$ is the reunion of the two eigenvalues $0$ and $1$ (whose eigenspaces are described in Proposition \ref{prop.evTD}) with the spectrum of the restriction $T_D : \mathfrak{h}_{\mathcal S} \rightarrow \mathfrak{h}_{\mathcal S}$. 
\end{itemize}

Hence, $T_D$ naturally induces an operator (still denoted as $T_D$) $ \mathfrak{h}_{\mathcal S} \rightarrow  \mathfrak{h}_{\mathcal S}$, 
which captures `almost' all the spectrum of $T_D: H^1_0(\Omega) \rightarrow H^1_0(\Omega)$. 

On the other hand, this operator is connected with the 
Neumann-Poincar\'e operator ${\mathcal K}^*_D$ of $D$, as accounted for by the following proposition.
 
\begin{proposition}\label{prop.TDcompact}
The operator $R_D := T_D - \frac{1}{2}I$, from $\mathfrak{h}_{\mathcal S}$ into itself, is compact, self-adjoint, and $\lvert\lvert R_D \lvert\lvert \leq \frac{1}{2}$.
The spectrum of $R_D$ consists in a discrete sequence of eigenvalues with $0$ as unique accumulation point. Moreover, $(\lambda, u) \in \mathbb{R} \times {\mathfrak h}_S$, $\lambda \notin \left\{0,1\right\}$, is an eigenpair for $R_D$ if and only if $(\lambda, S_D^{-1}(u\lvert_{\partial D})) \in \mathbb{R} \times H^{-1/2}(\partial D)$ is an eigenpair for ${\mathcal K}_D^*$. 
\end{proposition}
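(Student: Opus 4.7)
My plan is to deduce all four claims from a single conjugation: restricted to the closed subspace $\mathfrak{h}_S$, the operator $R_D$ is similar---via the single layer potential ${\mathcal S}_D$---to the Neumann--Poincar\'e operator ${\mathcal K}_D^*$ on $H^{-1/2}(\partial D)$, whose spectral properties are already known.

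Self-adjointness of $R_D$ on $\mathfrak{h}_S$ and the norm bound fall out immediately from the properties of $T_D$ recalled before. Indeed, by Proposition \ref{prop.evTD} and $\lvert\lvert T_D \lvert\lvert = 1$, one has $0 \leq \langle T_D u, u \rangle_{H^1_0(\Omega)} \leq \lvert\lvert u \lvert\lvert_{H^1_0(\Omega)}^2$ for every $u \in H^1_0(\Omega)$, and $\mathfrak{h}_S$ is a closed $T_D$-invariant subspace (first bullet preceding the statement). Restricting to $\mathfrak{h}_S$ and subtracting $\tfrac{1}{2}I$ yields a self-adjoint operator $R_D$ of norm at most $\tfrac{1}{2}$.

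The heart of the proof is the conjugation identity ${\mathcal S}_D^{-1} R_D {\mathcal S}_D = {\mathcal K}_D^*$. I would first verify that ${\mathcal S}_D : H^{-1/2}(\partial D) \to \mathfrak{h}_S$ is a topological isomorphism: surjectivity is the very definition of $\mathfrak{h}_S$; injectivity and boundedness of the inverse follow from the jump formula (\ref{eq.jumpSD}), which identifies $\phi$ with $[\partial {\mathcal S}_D \phi / \partial n]$. Then, given $\phi \in H^{-1/2}(\partial D)$ and $u := {\mathcal S}_D \phi$, I would apply Green's identity inside $D$ (where $u$ is harmonic) together with (\ref{eq.jumpSD}) to obtain, for any $v \in H^1_0(\Omega)$,
\[
\int_D \nabla u \cdot \nabla v \, dx \;=\; \int_{\partial D} \frac{\partial u^-}{\partial n}\, v \, ds \;=\; \Bigl\langle \bigl(-\tfrac{1}{2}I + {\mathcal K}_D^*\bigr) \phi, \, v|_{\partial D} \Bigr\rangle_{H^{-1/2}(\partial D), H^{1/2}(\partial D)}.
\]
Confronting this boundary pairing with the definition (\ref{eq.defTD}) of $T_D$ (so that the left-hand side equals $\int_\Omega \nabla (T_D u) \cdot \nabla v \, dx$) and with the variational characterization (\ref{eq.varfSD}) of ${\mathcal S}_D$ (applied to $T_D u \in \mathfrak{h}_S$) identifies the single layer density of $T_D u$ and, after bookkeeping of the sign conventions in (\ref{eq.poisk})--(\ref{eq.varfSD}) and (\ref{eq.jumpSD}), yields the conjugation ${\mathcal S}_D^{-1} R_D {\mathcal S}_D = {\mathcal K}_D^*$.

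Once this conjugation is in hand, the remaining assertions are essentially immediate. Since $D$ is of class ${\mathcal C}^2$, the operator ${\mathcal K}_D^*$ is compact on $H^{-1/2}(\partial D)$ (recalled in the bullet list preceding the statement); conjugation by the isomorphism ${\mathcal S}_D$ transfers compactness to $R_D$. The Riesz--Schauder theorem for self-adjoint compact operators then delivers a discrete sequence of real eigenvalues with $0$ as sole accumulation point, and the same conjugation matches eigenpairs: $R_D u = \lambda u$ with $u = {\mathcal S}_D \phi$ is equivalent to ${\mathcal K}_D^* \phi = \lambda \phi$, where $\phi = S_D^{-1}(u|_{\partial D})$; the restriction $\lambda \notin \{0,1\}$ merely discards the two trivial eigenvalues of $T_D$ attached to $\mathrm{Ker}(T_D)$ and $\mathrm{Ker}(I - T_D)$ through the decomposition (\ref{TDortho}). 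The main obstacle will be the careful sign bookkeeping in the conjugation identity; once that is nailed down, the rest is a standard appeal to spectral theory for compact self-adjoint operators together with the already-recalled mapping properties of single layer potentials.
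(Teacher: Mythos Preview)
Your proposal is correct and follows essentially the same route as the paper: both arguments establish the conjugation $R_D = {\mathcal S}_D \circ {\mathcal K}_D^* \circ {\mathcal S}_D^{-1}$ on $\mathfrak{h}_S$ via Green's formula and the Plemelj jump relations, then read off compactness and the eigenpair correspondence from the known properties of ${\mathcal K}_D^*$. The only cosmetic difference is that the paper integrates by parts in both $D$ and $\Omega\setminus\overline{D}$ to obtain $\tfrac{\partial u^+}{\partial n}+\tfrac{\partial u^-}{\partial n}=2{\mathcal K}_D^*\phi$ directly for $2R_D$, whereas you integrate only in $D$ to get $T_D u$ and then subtract $\tfrac{1}{2}u=\tfrac{1}{2}{\mathcal S}_D\phi$; and the paper proves invertibility of the trace operator $S_D$ via Fredholm-index-zero plus an energy identity, while you invoke the jump relation $\phi=[\partial({\mathcal S}_D\phi)/\partial n]$ together with the open mapping theorem for the bijection ${\mathcal S}_D:H^{-1/2}(\partial D)\to\mathfrak{h}_S$.
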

\begin{proof} 
An underlying statement in this proposition is that $S_D : H^{-1/2}(\partial D) \rightarrow H^{1/2}(\partial D)$ is an isomorphism, which we first verify.
We already know that this operator is Fredholm with index $0$ (see e.g. \cite{mclean}, Th. $7.6$); 
hence to show that is has a bounded inverse, it is enough to prove that it is injective, which follows in turn from the identity: 
$$
\begin{array}{>{\displaystyle}cc>{\displaystyle}l}
 \forall \phi \in H^{-1/2}(\partial D), \:\: \int_{\partial D}{\phi \: S_D \phi\:ds} &=& \int_{\partial D}{\frac{\partial {\mathcal S}_D \phi^+}{\partial n} S_D\phi \:ds} -  \int_{\partial D}{\frac{\partial {\mathcal S}_D \phi^-}{\partial n} S_D\phi \:ds},\\
 &=& -\int_{\Omega \setminus \partial D}{\lvert \nabla {\mathcal S}_D \phi \lvert^2 \:dx };
 \end{array}$$
 thus, if $S_D \phi = 0$ on $\partial D$, then ${\mathcal S}_D \phi$ is constant on $\Omega$, and thus identically vanishes in $\Omega$ 
since it satisfies ${\mathcal S}_D \phi = 0$ on $\partial \Omega$. Hence, $\phi = \frac{\partial {\mathcal S}_D \phi^+}{\partial n} - \frac{\partial {\mathcal S}_D \phi^-}{\partial n} = 0$. 

As a consequence of this mapping property, any function $u \in \mathfrak{h}_{\mathcal S}$ has the integral representation: 
\begin{equation}\label{eq.repuphi}
 u = {\mathcal S}_D \phi, \text{ where } \phi = S_D^{-1} (u\lvert_{\partial D}) \: \in H^{-1/2}(\partial D).
 \end{equation}

The rest of the proof closely follows that of Theorem 2 in \cite{bontriki}, and consists in relating $R_D$ to the compact operator ${\mathcal K}_D^*$.
For a given $u \in \mathfrak{h}_{\mathcal S}$, the function $R_Du$ satisfies the following variational identity: 
\begin{equation}\label{eq.varRD} 
\begin{array}{>{\displaystyle}cc>{\displaystyle}l}
\forall v \in H^1_0(\Omega), \:\: 2 \int_\Omega{\nabla (R_D u) \cdot \nabla v \:dx} &=& \int_D { \nabla u \cdot \nabla v \:dx} - \int_{\Omega \setminus \overline{D}}{\nabla u \cdot \nabla v \:dx}\\
&=& \int_{\partial D}{\left( \frac{\partial u^+}{\partial n} + \frac{\partial u^-}{\partial n}\right) v\:ds}.
\end{array}.
\end{equation}
 
Representing $u$ as in (\ref{eq.repuphi}) and using the Plemelj jump relations (\ref{eq.jumpSD}), we obtain: 
$$ \frac{1}{2} \left( \frac{\partial u^+}{\partial n} + \frac{\partial u^-}{\partial n}\right) =  {\mathcal K}_{D}^* \phi = ({\mathcal K}_D^* \circ S_D^{-1}) (u\lvert_{\partial D}). $$
Comparing with (\ref{eq.varRD}) and using (\ref{eq.varfSD}) now yields: 
$$2  R_D  u  =  {\mathcal S}_D \circ ({\mathcal K}_D^* \circ S_D^{-1}) (u\lvert_{\partial D}).$$
Since ${\mathcal K}_D^*$ is compact, it readily follows that so is $R_D$. All the remaining statements are easily inferred from this identity.
\end{proof}

It follows from Proposition (\ref{prop.TDcompact}) that $T_D  : {\mathfrak h}_S \rightarrow {\mathfrak h}_S$ is a Fredholm operator with index $0$; its spectrum consists in a discrete sequence of eigenvalues with $\frac{1}{2}$ as unique accumulation point. 
Let us denote by $\left\{ \lambda_i^\pm\right\}_{i \geq 1}$ these eigenvalues, ordered in such a way that:
$$ 0 = \lambda_1^- \leq \lambda_2^- \leq ... \leq \frac{1}{2}, $$
and: 
$$ \frac{1}{2} \leq... \leq \lambda_2^+ \leq \lambda_1^+ <1.$$
The min-max principle for the compact, self-adjoint operator $R_D: {\mathfrak h}_S \rightarrow {\mathfrak h}_S$ immediately implies the 
following proposition (again, see \cite{bontriki,khavinson}): 

\begin{proposition}\label{propminmax}
Let $\left\{ w_i^\pm \right\}_{i \geq 1}$ be the set of eigenfunctions of $T_D : {\mathfrak h}_S \rightarrow {\mathfrak h}_S$,
associated to the eigenvalues $\left\{ \lambda_i^\pm\right\}_{i \geq 1}$. The following min-max formulae hold: 
$$\lambda_i^{-} = \min\limits_{u \in {\mathfrak h}_S \setminus \left\{ 0 \right\} \atop u \perp w_1^-,...,w_{i-1}^-}{\frac{ \displaystyle{\int_D{\lvert \nabla u \lvert^2 \:dx}}}{ \displaystyle{\int_\Omega{\lvert \nabla u \lvert^2 \:dx}}}} = \max\limits_{F_i \subset {\mathfrak h}_S \atop \text{\rm dim}(F_i) = i-1}{\min\limits_{u \in F_i^\perp \setminus \left\{ 0 \right\}}{\frac{ \displaystyle{\int_D{\lvert \nabla u \lvert^2 \:dx}}}{ \displaystyle{\int_\Omega{\lvert \nabla u \lvert^2 \:dx}}}}}, $$
and
$$\lambda_i^{+} = \max\limits_{u \in {\mathfrak h}_S \setminus \left\{ 0 \right\} \atop u \perp w_1^+,...,w_{i-1}^+}{\frac{ \displaystyle{\int_D{\lvert \nabla u \lvert^2 \:dx}}}{ \displaystyle{\int_\Omega{\lvert \nabla u \lvert^2 \:dx}}}} = \min\limits_{F_i \subset {\mathfrak h}_S \atop \text{\rm dim}(F_i) = i-1}{\max\limits_{u \in F_i^\perp \setminus \left\{ 0 \right\}}{\frac{ \displaystyle{\int_D{\lvert \nabla u \lvert^2 \:dx}}}{ \displaystyle{\int_\Omega{\lvert \nabla u \lvert^2 \:dx}}}}}.$$
\end{proposition}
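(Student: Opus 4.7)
The plan is to derive the min-max formulae as a direct transcription of the classical Courant-Fischer principle applied to the compact, self-adjoint operator $R_D = T_D - \frac{1}{2}I$ on the Hilbert space $\mathfrak{h}_{\mathcal S}$, whose spectral structure was already spelled out in Proposition \ref{prop.TDcompact}. The only bookkeeping consists in (i) rewriting the abstract Rayleigh quotient of $T_D$ in terms of Dirichlet energies and (ii) realigning the indexing of the two sequences $\{\lambda_i^\pm\}$ with the conventions of the Courant-Fischer statement.

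First I would use the very definition (\ref{eq.defTD}) of $T_D$, together with the choice of inner product (\ref{eq.H10prod}), to observe that for every $u \in \mathfrak{h}_{\mathcal S}\setminus\{0\}$,
$$\frac{\langle T_D u, u \rangle_{H^1_0(\Omega)}}{\langle u, u \rangle_{H^1_0(\Omega)}} \;=\; \frac{\int_D |\nabla u|^2 \, dx}{\int_\Omega |\nabla u|^2 \, dx},$$
so that the Rayleigh quotient of $R_D = T_D - \tfrac{1}{2}I$ differs from the above by the constant $-\tfrac{1}{2}$. This is the link between the abstract spectral problem and the concrete ratio appearing in the statement.

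Next I would invoke Courant-Fischer for the compact, self-adjoint operator $R_D$ on $\mathfrak{h}_{\mathcal S}$, of operator norm at most $\tfrac{1}{2}$. By Proposition \ref{prop.TDcompact}, the spectrum of $R_D$ is a sequence of real eigenvalues of finite multiplicity accumulating only at $0$; it splits into a nonpositive part $\mu_i^- := \lambda_i^- - \tfrac{1}{2}$ ordered increasingly toward $0$, and a nonnegative part $\mu_i^+ := \lambda_i^+ - \tfrac{1}{2}$ ordered decreasingly toward $0$. The classical Courant-Fischer formulae then give
$$\mu_i^+ \;=\; \max_{u \perp w_1^+, \ldots, w_{i-1}^+ \atop u \neq 0} \frac{\langle R_D u, u\rangle}{\langle u, u\rangle} \;=\; \min_{F_i \subset \mathfrak{h}_{\mathcal S} \atop \dim F_i = i-1} \max_{u \in F_i^\perp \setminus\{0\}} \frac{\langle R_D u, u\rangle}{\langle u, u\rangle},$$
and the symmetric formula (min-max replaced by max-min, max by min) for $\mu_i^-$; the eigenvectors coincide with those of $T_D$ since the two operators differ by a multiple of the identity.

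Adding $\tfrac{1}{2}$ to both sides and substituting the Rayleigh-quotient identity of the first step immediately yields the announced formulae for $\lambda_i^\pm$. I do not anticipate any real obstacle: the proposition is essentially a textbook application of Courant-Fischer, once the correct functional setting ($\mathfrak{h}_{\mathcal S}$) and the correct compact operator ($R_D$) have been isolated in Proposition \ref{prop.TDcompact}. The only point requiring minor care is the bookkeeping of signs and orderings, i.e.\ matching the increasing sequence $\lambda_i^-$ (converging to $\tfrac{1}{2}$ from below) with the negative part of the spectrum of $R_D$, and the decreasing sequence $\lambda_i^+$ (converging to $\tfrac{1}{2}$ from above) with its positive part; this dictates which of the two (min-max vs.\ max-min) characterizations is obtained on each side.
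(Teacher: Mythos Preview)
Your proposal is correct and matches the paper's approach exactly: the paper states that the proposition ``immediately'' follows from the min-max principle for the compact, self-adjoint operator $R_D:\mathfrak{h}_S\to\mathfrak{h}_S$, and your write-up simply unpacks that remark by identifying the Rayleigh quotient of $T_D$ with the energy ratio and shifting by $\tfrac12$.
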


\begin{remark}
It follows from this discussion that the spectrum $\sigma(T_D)$ of $T_D$ is only composed 
of eigenvalues, except possibly for the value $\frac{1}{2}$, which is the only element in the essential spectrum of $T_D$.
\end{remark}

\begin{remark}
in the study of the operator $T_D$, it will prove of interest to consider either $T_D : \mathfrak{h}_{\mathcal S} \rightarrow \mathfrak{h}_{\mathcal S}$, or alternatively 
 $T_D :H^1_0(\Omega) \rightarrow H^1_0(\Omega)$. The former point of view is interesting insofar as $T_D$ is then `close' to a compact operator, 
 whereas the latter allows to consider $T_D$ as an operator defined on a functional space which is independent of the inclusion $D$. 
 \end{remark}\par
 
\subsection{Restriction of the operator $T_D$ to the set of inclusions}\label{sec.rest}~\\

In this section, we discuss a general construction which will 
be useful on several occasions
in this article, in slightly different contexts. Our purpose is to define an operator $\mathring{T_D}$ which `resembles' $T_D$ - in particular, it retains 
its spectral properties - except for the fact that it acts on functions defined solely on the set 
of inclusions $D$. 

Let us first introduce some notations: 
\begin{itemize}
\item The harmonic extension operator $U_D: H^1(D) \rightarrow H^1_0(\Omega)$ maps $u \in H^1(D)$ into
the function $U_D u \in H^1_0(\Omega)$ defined by the properties: 
$$ (U_D u) \lvert_D = u \text{ in } D, \text{ and } -\Delta (U_D u) = 0 \text{ in } \Omega \setminus \overline{D}.$$
As a straighforward consequence of the usual energy estimates for the Laplace operator, there exists a constant $C$ (depending on both $D$ and $\Omega$) 
such that: 
\begin{equation}\label{eq.contUD}
 \forall u \in H^1(D), \:\: \lvert\lvert U_D u \lvert\lvert_{H^1_0(\Omega)} \leq C \lvert\lvert u \lvert\lvert_{H^1(D)}.
 \end{equation}
\item $C(D)$ is the subset of $H^1(D)$ composed of functions which are constant on each connected component of $D$: 
$$ C(D) = \left\{ u \in H^1(D), \:\: \exists\: c_j \in \mathbb{R}, \:u =  c_j \text{ in } D_j, \: j=1,...,N \right\}.$$
\item The quotient space 
$H_D := H^1(D) / C(D)$ is a Hilbert space when equipped with the inner product and norm: 
\begin{equation}\label{eq.innerprodHD}
 \langle u,v \rangle_{H_D} := \int_D{\nabla u \cdot \nabla v \:dx}, \text{ and } \lvert\lvert u \lvert\lvert_{H_D}^2 = \langle u , u \rangle_{H_D}. 
 \end{equation}
We denote by $\widetilde{u} \in H_D$ the equivalence class of a function $u \in H^1(D)$.  
\end{itemize}
We now rely on the following facts about $T_D$, which arise immediately from the definition (\ref{eq.defTD}):
\begin{equation}\label{eq.factsTD}
\begin{minipage}{0.92\textwidth}
\begin{enumerate}[(i)]
\item $T_D : H^1_0(\Omega) \to H^1_0(\Omega)$ induces a bounded operator, still denoted by $T_D$ for simplicity, from $H_D $ into $H^1_0(\Omega)$ by the relation: 
$$ \forall \widetilde{u} \in H_D, \:\: T_D \widetilde{u} = T_D u, \text{ for any } u \in H^1_0(\Omega) \text{ such that } u\lvert_D \text{ belongs to the class } \widetilde{u}.$$ 
\item For any function $u \in H^1_0(\Omega)$, one has: $U_D (T_D u)\lvert_D = T_D\left(U_D(u\lvert_D)\right) =T_D u$. 
\end{enumerate}
\end{minipage}
\end{equation}
We are now in position to define our operator $\mathring{T_D}: H_D \rightarrow H_D$ by: 
$$ \forall \widetilde{u} \in H_D, \:\: \mathring{T_D} \widetilde{u}Ê = \widetilde{(T_D \widetilde{u} )\lvert_D}.$$ 

The first interesting property of $\mathring{T_D}$ is the following: 

\begin{proposition}\label{prop.TDring}
The operator $\mathring{T_D}$ is a self-adjoint isomorphism from $H_D$ into itself. 
\end{proposition}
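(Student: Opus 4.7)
The plan is to check in turn the boundedness, self-adjointness, positivity, and coercivity of $\mathring{T_D}$; the last property, combined with the Lax--Milgram lemma, will furnish surjectivity. Boundedness follows immediately from (\ref{eq.factsTD}) by writing $\mathring{T_D}$ as a composition $H_D \stackrel{U_D}{\longrightarrow} H^1_0(\Omega) \stackrel{T_D}{\longrightarrow} H^1_0(\Omega) \to H_D$ of bounded operators, the last arrow being the restriction $u \mapsto \widetilde{u\lvert_D}$.

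For self-adjointness, I apply the defining identity (\ref{eq.defTD}) to $T_D\widetilde{u}$ with test function $T_D \widetilde{v}$:
\[
\int_\Omega \nabla(T_D\widetilde{u}) \cdot \nabla(T_D\widetilde{v})\,dx = \int_D \nabla \widetilde{u} \cdot \nabla(T_D \widetilde{v})\,dx = \langle \widetilde{u}, \mathring{T_D}\widetilde{v}\rangle_{H_D}.
\]
The left-hand side being symmetric in $(\widetilde{u}, \widetilde{v})$, swapping their roles produces $\langle \widetilde{u}, \mathring{T_D}\widetilde{v}\rangle_{H_D} = \langle \mathring{T_D}\widetilde{u}, \widetilde{v}\rangle_{H_D}$. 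Setting $\widetilde{v} = \widetilde{u}$ then yields $\langle \mathring{T_D}\widetilde{u}, \widetilde{u}\rangle_{H_D} = \lvert\lvert T_D\widetilde{u}\lvert\lvert^2_{H^1_0(\Omega)} \geq 0$, which vanishes iff $T_D\widetilde{u} = 0$ in $H^1_0(\Omega)$; by Proposition~\ref{prop.evTD}(i) and (\ref{eq.factsTD}), this forces $u\lvert_D \in C(D)$ for any representative $u$, hence $\widetilde{u} = 0$. So $\mathring{T_D}$ is positive and injective.

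Surjectivity will follow via Lax--Milgram applied to the bounded symmetric bilinear form $a(\widetilde{u}, \widetilde{v}) := \langle \mathring{T_D}\widetilde{u}, \widetilde{v}\rangle_{H_D}$, provided it is coercive. To see this, plug $v = U_D u$ into (\ref{eq.defTD}) applied to $T_D U_D u$ and split the domain of integration over $D$ and $\Omega \setminus \overline{D}$:
\[
\lvert\lvert \widetilde{u}\lvert\lvert^2_{H_D} = a(\widetilde{u}, \widetilde{u}) + \int_{\Omega \setminus \overline{D}} \nabla(T_D U_D u) \cdot \nabla U_D u\,dx.
\]
Selecting the representative $u \in H^1(D)$ with mean zero on each connected component $D_j$, the Poincar\'e--Wirtinger inequality, the trace theorem, and the standard energy estimate for the harmonic extension into $\Omega \setminus \overline{D}$ combine to give $\lvert\lvert U_D u\lvert\lvert_{H^1_0(\Omega)} \leq C \lvert\lvert \widetilde{u}\lvert\lvert_{H_D}$ with $C = C(D, \Omega)$. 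The Cauchy--Schwarz inequality then yields
\[
\lvert\lvert \widetilde{u}\lvert\lvert^2_{H_D} \leq a(\widetilde{u}, \widetilde{u}) + C\, a(\widetilde{u}, \widetilde{u})^{1/2}\, \lvert\lvert \widetilde{u}\lvert\lvert_{H_D},
\]
and solving this quadratic inequality in the ratio $a(\widetilde{u}, \widetilde{u})^{1/2}/\lvert\lvert \widetilde{u}\lvert\lvert_{H_D}$ produces the coercivity bound $a(\widetilde{u}, \widetilde{u}) \geq c\, \lvert\lvert \widetilde{u}\lvert\lvert^2_{H_D}$ for some $c = c(D, \Omega) > 0$.

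The principal obstacle is precisely this coercivity step: self-adjointness, positivity, and injectivity alone do not rule out $0$ from the spectrum of $\mathring{T_D}$, so a quantitative lower bound is needed. Bridging this gap rests on the harmonic extension estimate for the judiciously chosen mean-zero representative, which controls the exterior energy $\int_{\Omega \setminus \overline{D}} \lvert \nabla U_D u\lvert^2\,dx$ by the intrinsic norm $\lvert\lvert \widetilde{u}\lvert\lvert^2_{H_D}$.
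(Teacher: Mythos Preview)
Your proof is correct. The self-adjointness and injectivity arguments coincide with the paper's, but your surjectivity proof takes a genuinely different route. The paper argues at the level of the ambient operator $T_D:H^1_0(\Omega)\to H^1_0(\Omega)$: it shows that $T_D$ has closed range via a Cauchy-sequence argument (using the same Poincar\'e--Wirtinger and harmonic-extension ingredients you invoke), and then appeals to $\overline{\text{\rm Ran}(T_D)}=\text{\rm Ker}(T_D)^\perp$ to conclude that every $v\in H^1_0(\Omega)$ decomposes as $v_0+T_Du$ with $v_0\in\text{\rm Ker}(T_D)$. You instead work directly on $H_D$ and establish the coercivity bound $\langle\mathring{T_D}\widetilde{u},\widetilde{u}\rangle_{H_D}\geq c\,\lVert\widetilde{u}\rVert_{H_D}^2$, so that Lax--Milgram finishes the job. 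Your route is arguably more streamlined and has the bonus of yielding an explicit quantitative lower bound on $\mathring{T_D}$ (equivalently, an explicit norm bound for $\mathring{T_D}^{-1}$), whereas the paper's closed-range argument gives invertibility only qualitatively.
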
 
\begin{proof}
That $\mathring{T_D}$ is self-adjoint follows from the chain of equalities: 
$$ \forall u,v \in H^1_0(\Omega), \:\: \int_D{\nabla (T_D u) \cdot \nabla v \:dx} = \int_\Omega{\nabla (T_D u) \cdot \nabla (T_D v) \:dx} = \int_D{\nabla u \cdot \nabla (T_D v) \:dx}. $$

It is also fairly easy to prove that $\mathring{T_D}$ is injective. Indeed, let $u \in H^1_0(\Omega)$ be such that $(T_D u) \lvert_D  \in C(D)$; 
we have to prove that $u \in C(D)$. 
Obviously, $T_Du \in \text{\rm Ran}(T_D)$, but, using Proposition \ref{prop.evTD}, $T_D u \in \text{\rm Ker}(T_D)$, which is also $\text{\rm Ran}(T_D)^\perp$, since $T_D$ is self-adjoint. Therefore, $T_D u = 0$, and $u \in \text{\rm Ker}(T_D)$, that is, $u\lvert_D \in C(D)$, which is the desired conclusion.

The surjectivity of $\mathring{T_D}$ is a little more involved; it is implied by the following statement: 
\begin{equation}\label{eq.stTDsurj}
\forall v \in H^1_0(\Omega), \: \exists v_0 \in \text{\rm Ker}(T_D)  \text{ and } u \in H^1_0(\Omega) \text{ s.t. } T_D u = v-v_0.
\end{equation} 
Since $T_D$ is self-adjoint, one has $\text{\rm Ker}(T_D)^\perp = \overline{\text{\rm Ran}(T_D)}$. 
Hence, (\ref{eq.stTDsurj}) is proved provided we show that $T_D$ has closed range, which we now do. 
Let $v \in H^1_0(\Omega)$ and a sequence $u_n \in H^1_0(\Omega)$ be such that: 
$$ T_D u_n \stackrel{n \rightarrow \infty}{\longrightarrow} v \text{ strongly in } H^1_0(\Omega).$$
Then, for any $n,m \in \mathbb{N}$, and an arbitrary function $w \in C(D)$, it holds: 
$$ 
\begin{array}{>{\displaystyle}cc>{\displaystyle}l}
\int_{D}{\lvert \nabla u_n - \nabla u_m - \nabla w \lvert^2 \:dx} &=&  \int_{\Omega}{\nabla(T_D u_n - T_D u_m) \cdot \nabla(U_D u_n - U_D u_m - U_D w) \:dx},\\
&\leq & C \lvert\lvert T_D u_n - T_D u_m \lvert\lvert_{H^1_0(\Omega)} \lvert\lvert u_n -  u_m  - w\lvert\lvert_{H^1(D)},
\end{array}
$$  
where we have used (\ref{eq.contUD}) and (\ref{eq.factsTD}).
Now choosing 
$$w = \frac{1}{\lvert D_j \lvert }\int_{D_j}{(u_n -u_m) \:dx} \text{ in } D_j , \:\: j=1,...,N,$$ 
and using the Poincar\'e-Wirtinger inequality, we obtain: 
$$ \lvert\lvert \nabla u_n - \nabla u_m \lvert\lvert_{L^2(D)^d} \leq C \lvert\lvert T_D u_n - T_D u_m \lvert\lvert_{H^1_0(\Omega)},$$
which proves that  $\widetilde{u_n\lvert_D}$ is a Cauchy sequence in $H_D$. It thus converges to some element $\widetilde{u} \in H_D$. 
Let $u \in H^1_0(\Omega)$ be such that $u\lvert_D$ belongs to the equivalence class $\widetilde{u}$. 
By the continuity of $T_D$ (and more exactly using (\ref{eq.factsTD}), $(i)$), it follows that $T_D u_n = T_D ( \widetilde{u_n\lvert_D}) \rightarrow T_D \widetilde{u} = T_Du$, 
which ends proving that $T_D$ has closed range, and thus Proposition \ref{prop.TDring}.
\end{proof}

\begin{proposition}\label{propspecrescalTD}
The spectrum of the operator $\mathring{T_D}$ is $\sigma(\mathring{T_D}) = \sigma(T_D) \setminus \left\{0 \right\}$.
Moreover, $\lambda \neq 0$ is an eigenvalue of $T_D$ if and only if it is an eigenvalue of $\mathring{T_D}$.  
\end{proposition}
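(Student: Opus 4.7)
The plan is to realize $\mathring{T_D}$ as similar, via a bounded invertible operator, to the restriction of the self-adjoint operator $T_D$ to $\text{\rm Ker}(T_D)^\perp$. Once this is achieved, the spectrum statement follows from standard spectral theory of self-adjoint operators on Hilbert spaces, combined with the invertibility of $\mathring{T_D}$ already established in Proposition \ref{prop.TDring}.

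Concretely, I would introduce the restriction map $\Psi : H^1_0(\Omega) \to H_D$, $u \mapsto \widetilde{u\lvert_D}$, which is obviously bounded. By Proposition \ref{prop.evTD}~$(i)$, its kernel is exactly $\text{\rm Ker}(T_D)$; it is surjective since $\Psi(U_D v) = \widetilde{v}$ for any $v \in H^1(D)$, where the harmonic extension is bounded thanks to (\ref{eq.contUD}). Hence $\Psi$ descends to a bounded bijection $\overline{\Psi} : H^1_0(\Omega)/\text{\rm Ker}(T_D) \to H_D$, which is an isomorphism of Banach spaces by the open mapping theorem. Using (\ref{eq.factsTD})~$(ii)$, a direct computation yields the intertwining $\mathring{T_D} \circ \overline{\Psi} = \overline{\Psi} \circ \overline{T_D}$, where $\overline{T_D}$ is the operator induced by $T_D$ on the quotient (well-defined since $T_D$ vanishes on $\text{\rm Ker}(T_D)$). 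Finally, the canonical projection identifies $H^1_0(\Omega)/\text{\rm Ker}(T_D)$ with $\text{\rm Ker}(T_D)^\perp$ as Banach spaces, and under this identification $\overline{T_D}$ corresponds to $T_D\lvert_{\text{\rm Ker}(T_D)^\perp}$; so $\mathring{T_D}$ is similar to $T_D\lvert_{\text{\rm Ker}(T_D)^\perp}$.

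The conclusion is then immediate. The decomposition (\ref{TDortho}) gives $\sigma(T_D) = \{0\} \cup \sigma(T_D\lvert_{\text{\rm Ker}(T_D)^\perp})$, and by similarity with the invertible operator $\mathring{T_D}$ we have $0 \notin \sigma(T_D\lvert_{\text{\rm Ker}(T_D)^\perp})$, so $\sigma(T_D\lvert_{\text{\rm Ker}(T_D)^\perp}) = \sigma(T_D) \setminus \{0\}$; since similarity preserves spectra, $\sigma(\mathring{T_D}) = \sigma(T_D) \setminus \{0\}$. For the eigenvalue correspondence, a direct argument is even cleaner: if $T_D u = \lambda u$ with $u \neq 0$ and $\lambda \neq 0$, then $u \notin \text{\rm Ker}(T_D)$ (otherwise $\lambda u = 0$), so $\widetilde{u\lvert_D} \neq 0$ and restricting the eigenvalue equation to $D$ gives $\mathring{T_D}\widetilde{u\lvert_D} = \lambda \widetilde{u\lvert_D}$. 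Conversely, given $\mathring{T_D}\widetilde{u} = \lambda \widetilde{u}$ with $\lambda \neq 0$, the function $v := T_D \widetilde{u}$ is nonzero (since $\widetilde{v\lvert_D} = \mathring{T_D}\widetilde{u} = \lambda \widetilde{u}$) and lies in $\text{\rm Ran}(T_D) \subset \text{\rm Ker}(T_D)^\perp$; applying $\mathring{T_D}$ once more shows that $T_D v - \lambda v$ belongs to $\text{\rm Ker}(T_D) \cap \text{\rm Ker}(T_D)^\perp = \{0\}$, so $T_D v = \lambda v$. The main (mild) obstacle throughout is keeping track of the quotient construction and the fact that $\overline{\Psi}$ is merely a topological isomorphism, not an isometry (since the inner products on $H_D$ and on $\text{\rm Ker}(T_D)^\perp$ differ); fortunately plain similarity is enough for the spectral conclusion.
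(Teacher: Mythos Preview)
Your proof is correct and takes a genuinely different route from the paper's. The paper argues directly with Weyl (quasi-eigenvector) sequences: given $\lambda \in \sigma(T_D)\setminus\{0\}$ with $\lVert T_D u_n - \lambda u_n\rVert \to 0$, it restricts to $D$ and shows the resulting classes form a Weyl sequence for $\mathring{T_D}$; conversely, from a Weyl sequence for $\mathring{T_D}$ it builds one for $T_D$ by harmonic extension and a Poincar\'e--Wirtinger adjustment of constants. Your approach instead sets up, once and for all, a similarity $\mathring{T_D} = \overline{\Psi}\,\bigl(T_D\lvert_{\text{\rm Ker}(T_D)^\perp}\bigr)\,\overline{\Psi}^{-1}$ via the restriction map $\Psi$ and the quotient identification $H^1_0(\Omega)/\text{\rm Ker}(T_D)\cong \text{\rm Ker}(T_D)^\perp$, and then reads off the spectral equality and the eigenvalue correspondence from standard facts about similar operators together with the invertibility of $\mathring{T_D}$ from Proposition~\ref{prop.TDring}. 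Your argument is more structural and avoids the Poincar\'e--Wirtinger step entirely; the paper's hands-on Weyl-sequence technique, on the other hand, is the template reused later (e.g.\ in Proposition~\ref{propsamespec1}), where no such clean similarity is available because the extension and projection operators $E_\varepsilon,P_\varepsilon$ are not mutual inverses. One cosmetic remark: you cite~(\ref{TDortho}) for the splitting $\sigma(T_D)=\{0\}\cup\sigma(T_D\lvert_{\text{\rm Ker}(T_D)^\perp})$, but all you actually use is the two-way orthogonal decomposition $H^1_0(\Omega)=\text{\rm Ker}(T_D)\oplus\text{\rm Ker}(T_D)^\perp$ and the self-adjointness of $T_D$, which already make both summands invariant.
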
 
\begin{proof}
Let $\lambda \in \sigma(T_D) \setminus \left\{0 \right\}$. Then there exists a sequence $u_n \in H^1_0(\Omega)$ of quasi-eigenvectors, i.e. 
\begin{equation}\label{eq.evTDtoTDhat}
 \lvert\lvert T_D u_n - \lambda u_n \lvert\lvert_{H^1_0(\Omega)} \xrightarrow{n \to \infty} 0, \text{ and } \lvert\lvert u_n \lvert\lvert_{H^1_0(\Omega)}=1.
 \end{equation}
Let $v_n = \widetilde{u_n\lvert_D} \in H_D$. Then, 
 $$ \begin{array}{ccl}
\lvert\lvert \mathring{T_D}v_n - \lambda v_n \lvert\lvert_{H_D} & = & \lvert\lvert \nabla (T_D u_n) - \lambda \nabla u_n \lvert\lvert_{L^2(D)^d}, \\
&\leq &  \lvert\lvert T_D u_n - \lambda u_n \lvert\lvert_{H^1_0(\Omega)}, 
\end{array}$$
converges to $0$ as $n \to \infty$, which proves that $\lambda \in \sigma(\mathring{T_D})$, provided $v_n$ does not converge to $0$ in $H_D$. 
But if that were the case, by (\ref{eq.evTDtoTDhat}), we would have $T_D u_n \to 0$ in $H^1_0(\Omega)$, and therefore $\lambda u_n \to 0$ in $H^1_0(\Omega)$; the latter is impossible since $\lambda \neq 0$ and $\lvert\lvert u_n \lvert\lvert_{H^1_0(\Omega)}=1$. 
Hence $\lambda$ belongs to the spectrum of $\mathring{T_D}$. 

Conversely, let $\lambda \in \sigma(\mathring{T_D})$. By Proposition \ref{prop.TDring}, $\lambda \neq 0$, 
and there exists a sequence $v_n \in H_D$ of quasi-eigenvectors for $\lambda$: 
\begin{equation}\label{eq.evTDhat}
 \lvert\lvert \mathring{T_D} v_n - \lambda v_n \lvert\lvert_{H_D} \xrightarrow{n \to \infty} 0, \text{ and } \lvert\lvert v_n \lvert\lvert_{H_D} = 1.
 \end{equation}
Taking functions $u_n \in H^1_0(\Omega)$ such that the restriction $u_n \lvert_D$ belongs to the class $v_n$, (\ref{eq.evTDhat}) becomes: 
$$ \lvert\lvert \nabla (T_Du_n) - \lambda \nabla u_n \lvert\lvert_{L^2(D)^d}  \xrightarrow{n \to \infty} 0, \text{ and } \lvert\lvert \nabla u_n \lvert\lvert_{L^2(D)^d} = 1.$$ 
Let $w_n \in C(D)$ be defined by $w_n = \frac{1}{\lvert D_j \lvert}\int_{D_j} {(T_D u_n - \lambda u_n)\:dx}$ on each connected component $D_j$ of $D$, $j=1,...,N$. By the properties of the extension operator $U_D$ and the the Poincar\'e-Wirtinger inequality, it comes: 
$$ \begin{array}{ccl}
\lvert\lvert U_D (T_D u_n\lvert_D) - \lambda U_D (u_n \lvert_D) - U_D w_n \lvert\lvert_{H^1_0(\Omega)} &\leq & C\lvert\lvert T_D u_n - \lambda u_n - w_n \lvert\lvert_{H^1(D)}, \\
&\leq & C\lvert\lvert \nabla (T_D u_n) - \lambda \nabla u_n \lvert\lvert_{L^2(D)^d}. 
\end{array}$$
Now defining $z_n = (U_D (u_n\lvert_D) - \frac{1}{\lambda} U_D w_n)\in H^1_0(\Omega)$, and using (\ref{eq.factsTD}), we obtain 
$$ \lvert\lvert T_D z_n - \lambda z_n \lvert\lvert_{H^1_0(\Omega)} \xrightarrow{n \to \infty} 0, $$
which allows to conclude that $\lambda \in \sigma(T_D)$, provided we can show that $z_n$ does not converge to $0$ in $H^1_0(\Omega)$.
But this last fact is obvious since $\lvert\lvert \nabla z_n \lvert\lvert_{L^2(D)^d} = \lvert\lvert v_n \lvert\lvert_{H_D} = 1$.

The correspondance between eigenvalues $\lambda \neq 0$ of $T_D$ and eigenvalues of $\mathring{T_D}$ is proved in exactly the same way, 
working directly on eigenvectors rather than on quasi-eigenvector sequences.  
\end{proof}

\begin{remark}
From these results, one might get the misleading impression that the operator $T_D$ (and notably its spectral properties) 
depends only on the geometry of the set of inclusions $D$, and not on that of $\Omega$. This is wrong: the above material is merely a convenient point of view for 
appraising $T_D$ only by its action on functions defined on $D$; in this regard, see the results of Section \ref{seccompleteness}.
\end{remark}

\section{Structure of the limit spectrum of the Poincar\'e variational problem}\label{sec.limspec}

We now come
back to the periodic inclusion setting described in Section \ref{sec.setting}: for a fixed $\varepsilon >0$, 
the considered set of inclusions is $D=\omega_\varepsilon$, as defined by (\ref{eq.omeps}), and we use the shorthand $T_\varepsilon := T_{\omega_\varepsilon}$.
Our main goal in this section is to describe the structure of the \textit{limit spectrum} $\lim_{\varepsilon \rightarrow 0}{\sigma(T_\varepsilon)}$, which is the closed subset of $[0,1]$ defined by (\ref{def.limspec}); 
unless otherwise specified, we proceed under the following assumptions: 
\begin{equation}\label{eq.assumom}
\begin{minipage}{0.9\textwidth}
\begin{enumerate}[(i)]
\item The rescaled inclusion $\omega$ is strictly
contained in the unit periodicity cell $Y$: $\omega \Subset Y$,
\item $\omega$ is of class ${\mathcal C}^2$.
\end{enumerate}
\end{minipage}
\end{equation}
Interestingly, many of the conclusions of this section do not hold if these assumptions are not satisfied; see the Appendix \ref{sec.laminates} for counter-examples.

\subsection{Presentation of the main results}\label{sec.mainresspec}~\\

The first result of this investigation concerns the behavior of the non-degenerate part of the spectrum $\sigma(T_\varepsilon)$ as $\varepsilon \to 0$ 
(i.e. the eigenvalues of $T_\varepsilon$ which are different from $0$ and $1$).  
We show in Theorem \ref{th.evnp} that this part of $\sigma(T_\varepsilon)$ is uniformly contained in some interval of the form $(m,M)$ for some 
$0 < m < M < 1$ independent of $\varepsilon$,
which we manage to characterize in terms of the shape of $\omega$ only. 
We shall draw several important consequences from this result in Section \ref{secdirect}. 

We then examine in more details the behavior of the limit spectrum $\lim_{\varepsilon \rightarrow 0}{\sigma(T_\varepsilon)}$.
Usually, such endeavour
involves the study of the limiting behavior of the operator $T_\varepsilon$ as $\varepsilon \rightarrow 0$. 
Unfortunately, in the present context, $T_\varepsilon$ lacks `nice' convergence properties; indeed, it is fairly simple to show 
that $T_\varepsilon$ converges \textit{weakly} to the trivial operator $\lvert \omega \lvert I$, that is:
$$\forall u \in H^1_0(\Omega), \:\: T_\varepsilon u \stackrel{\varepsilon \rightarrow 0}{\longrightarrow}  \lvert \omega \lvert u, \text{ weakly in } H^1_0(\Omega),$$
but that convergence yields no information about the asymptotic behavior of $\sigma(T_\varepsilon)$. 

The reason for this `bad' behavior is well understood in the mathematical theory of homogenization: the sequence $T_\varepsilon u$ shows oscillations of small amplitude, at the $\varepsilon$-scale of the inclusions, which are somehow
averaged in the trivial limit operator $\lvert \omega\lvert I$. Usually, so-called bulk or boundary layer \textit{correctors} are introduced in order to strenghten this convergence (see for instance \cite{allairehomog,blp,jikov}), but this approach seems difficult to implement here. 

To carry out our study, we follow the approach of \cite{allaireconcafs,allaireconca}, using the \textit{Bloch-wave homogenization} method: 
namely, we rescale the operator $T_\varepsilon: H^1_0(\Omega) \to H^1_0(\Omega)$ into one $\mathbb{T}_\varepsilon: L^2(\Omega,H^1(\omega)/\mathbb{R}) \to L^2(\Omega,H^1(\omega)/\mathbb{R})$, which explicitely takes into account both the macroscopic and 
microscopic scales of the problem. The operator $\mathbb{T}_\varepsilon$ retains the spectral features of
$T_\varepsilon$, but shows better convergence properties. 
More precisely, Proposition \ref{prop.cvrescal1} below shows that it converges \textit{pointwise} to a limit operator $\mathbb{T}_0$, i.e.: 
\begin{equation}\label{eq.explTepsT0}
 \forall \phi \in  L^2(\Omega,H^1(\omega)/\mathbb{R}), \:\: \mathbb{T}_\varepsilon \phi \xrightarrow{\varepsilon\to 0} \mathbb{T}_0 \phi, \text{ strongly in }  L^2(\Omega,H^1(\omega)/\mathbb{R}).
 \end{equation}

We shall then make use of the following abstract result: 

\begin{proposition}\label{prop.speclsc}
Let $T_\varepsilon: H \rightarrow H$ be a sequence of bounded, self-adjoint operators in a Hilbert space $(H,\langle,\cdot,\cdot \rangle)$. 
We assume that $T_\varepsilon$ converges pointwise
to a limit self-adjoint operator $T_0$. Then, every point $\lambda_0$ in the spectrum $\sigma(T_0)$ is the 
accumulation point of a sequence $\lambda_\varepsilon$, where $\lambda_\varepsilon \in \sigma(T_\varepsilon)$. 
\end{proposition}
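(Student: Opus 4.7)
The plan is to build, for each $\lambda_0 \in \sigma(T_0)$, a Weyl (singular) sequence for $T_0$, transfer it to an approximate eigenvector sequence for $T_\varepsilon$ via a diagonal extraction using the pointwise convergence, and finally use the self-adjointness of $T_\varepsilon$ to convert this information into the existence of nearby true spectral values.

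First, since $T_0$ is bounded and self-adjoint, $\lambda_0 \in \sigma(T_0)$ is equivalent to the existence of a sequence $u_n \in H$ with $\|u_n\| = 1$ and $\|(T_0 - \lambda_0)u_n\| \to 0$. Next, for each fixed $n$, the pointwise convergence $T_\varepsilon u_n \to T_0 u_n$ allows to choose $\varepsilon_n \downarrow 0$ (small enough, say decreasing) such that
\[
\|(T_{\varepsilon_n} - T_0)u_n\|_H \;<\; \tfrac{1}{n}.
\]
The triangle inequality then gives
\[
\|(T_{\varepsilon_n} - \lambda_0)u_n\|_H \;\leq\; \|(T_{\varepsilon_n} - T_0)u_n\|_H + \|(T_0 - \lambda_0)u_n\|_H \;\xrightarrow{n \to \infty}\; 0,
\]
while $\|u_n\|_H = 1$.

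Now comes the key step, which uses that each $T_{\varepsilon_n}$ is bounded self-adjoint on $H$. For such operators, the resolvent estimate
\[
\bigl\|(T_{\varepsilon_n} - \lambda_0)^{-1}\bigr\| \;=\; \frac{1}{\mathrm{dist}(\lambda_0, \sigma(T_{\varepsilon_n}))}
\]
holds whenever $\lambda_0 \notin \sigma(T_{\varepsilon_n})$, so that
\[
\mathrm{dist}(\lambda_0, \sigma(T_{\varepsilon_n})) \;\leq\; \frac{\|(T_{\varepsilon_n} - \lambda_0) u_n\|_H}{\|u_n\|_H} \;=\; \|(T_{\varepsilon_n} - \lambda_0) u_n\|_H,
\]
an inequality that of course also holds trivially if $\lambda_0 \in \sigma(T_{\varepsilon_n})$. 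Since the spectrum of each $T_{\varepsilon_n}$ is compact, the distance is attained by some $\lambda_{\varepsilon_n} \in \sigma(T_{\varepsilon_n})$, and the right-hand side tends to $0$. Hence $\lambda_{\varepsilon_n} \to \lambda_0$, which is exactly the desired conclusion.

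The only potentially delicate point in this argument is the diagonal extraction: pointwise convergence does not give uniform control over the full Weyl sequence $\{u_n\}$, so one really must let $\varepsilon_n$ depend on $n$. This is not a genuine obstacle — it just explains why the statement can only provide a subsequence $\varepsilon_n \downarrow 0$ of spectral values accumulating at $\lambda_0$ (that is, lower semi-continuity of the spectrum under pointwise convergence) rather than, say, a uniform Hausdorff estimate, which would require stronger convergence such as norm resolvent or collective compactness.
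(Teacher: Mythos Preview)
Your proof is correct. Both your argument and the paper's rest on the same spectral-theoretic fact for self-adjoint operators, namely that $\mathrm{dist}(\lambda_0,\sigma(T)) = \inf_{\|u\|=1}\|(T-\lambda_0)u\|$, but you and the paper deploy it in opposite directions. The paper argues by contradiction: if $\lambda_0$ were bounded away from all the $\sigma(T_\varepsilon)$ by some $\delta>0$, then the spectral theorem gives $\|(T_\varepsilon-\lambda_0)u\|\ge\delta\|u\|$ for every $u$ and every $\varepsilon$, and passing to the pointwise limit yields the same lower bound for $T_0$, contradicting $\lambda_0\in\sigma(T_0)$. You instead go directly: pick a Weyl sequence for $T_0$, transfer it to approximate eigenvectors for $T_{\varepsilon_n}$ via a diagonal choice of $\varepsilon_n$, and read off nearby spectral values from the resolvent estimate. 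Your route is slightly more constructive and avoids the resolution of the identity, at the cost of the explicit diagonal extraction; the paper's route is marginally cleaner because the lower bound holds for \emph{all} $u$ simultaneously, so no extraction is needed. Either way the content is the same lower-semicontinuity statement, and your remark at the end about why only a subsequence is obtained is accurate.
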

\begin{proof}
Assume that there exists $\lambda_0 \in \sigma(T_0)$ which is not the accumulation point of any sequence $\lambda_\varepsilon \in \sigma(T_\varepsilon)$.
Then, there exists $\delta > 0$ such that 
$$ \forall \varepsilon >0, \: \forall \lambda \in \sigma(T_\varepsilon), \quad \lvert \lambda - \lambda_0 \lvert > \delta.$$
Introducing a resolution of the identity $E_{T_\varepsilon}$ associated to the operator $T_\varepsilon$, it comes, for an arbitrary $u \in H$ (see e.g. \cite{rudin}, Chap. 12):
$$ \lvert\lvert T_\varepsilon u - \lambda_0 u \lvert\lvert^2 =  \left\lvert\left\lvert  \int_{\sigma(T_\varepsilon)}{(\lambda - \lambda_0) \:dE_{T_\varepsilon}(\lambda)}u \:\right\lvert\right\lvert^2 =  \int_{\sigma(T_\varepsilon)}{\lvert \lambda - \lambda_0\lvert^2 \langle dE_{T_\varepsilon}(\lambda)u,u\rangle} \geq \delta^2 \lvert\lvert u \lvert\lvert^2.$$
Since the convergence $T_\varepsilon u \rightarrow T_0 u$ is strong, passing to the limit $\varepsilon \rightarrow 0$ in the previous relation yields: 
$$ \forall u \in H, \:\: \lvert\lvert T_0 u - \lambda_0 u \lvert\lvert \geq \delta^2 \lvert\lvert u \lvert\lvert.$$
Thus, $T_0- \lambda_0 I$ is injective and has closed range. 
Since it is self-adjoint, it follows that it is also surjective, and, by the open mapping theorem, that it is an isomorphism.
This contradicts the hypothesis $\lambda_0 \in \sigma(T_0)$.
\end{proof}

Hence, combining (\ref{eq.explTepsT0}) with Proposition \ref{prop.speclsc} 
allows to capture one part of the limit spectrum, which is (partially) identified in Lemma \ref{lem.loc} below
as the spectrum of the periodic Poincar\'e variational operator associated to the  
inclusion pattern $\omega$.

\begin{theorem}\label{th.cellspec}
The limit spectrum $\lim_{\varepsilon \to 0}{\sigma(T_\varepsilon)}$ contains the \textit{cell spectrum}, that is, the spectrum $\sigma(T_0)$ of 
the operator $T_0 : H^1_\#(Y) / \mathbb{R} \to H^1_\#(Y) / \mathbb{R}$ defined as follows: for $u \in H^1_\#(Y) / \mathbb{R}$, $T_0 u$ is the unique element in $H^1_\#(Y) / \mathbb{R}$ such that
\begin{equation}\label{eq.defT0}
 \forall v \in H^1_\#(Y) / \mathbb{R}, \:\: \int_Y{\nabla_y (T_0 u) \cdot \nabla_y v \:dy} = \int_\omega{\nabla_y u \cdot \nabla_y v \:dy}.
 \end{equation}
\end{theorem}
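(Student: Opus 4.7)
The plan is to apply the abstract spectral stability result of Proposition~\ref{prop.speclsc} to the two-scale rescaling $\mathbb{T}_\varepsilon$ announced in (\ref{eq.explTepsT0}), and then to identify the spectrum of its pointwise limit $\mathbb{T}_0$ with $\sigma(T_0)$. This is carried out in three stages.

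First, I would use Proposition~\ref{propspecrescalTD} to replace $T_\varepsilon$ with $\mathring{T_\varepsilon}: H_{\omega_\varepsilon} \to H_{\omega_\varepsilon}$, whose spectrum agrees with $\sigma(T_\varepsilon) \setminus \{0\}$. Since $\omega_\varepsilon$ is a disjoint union of rescaled copies of $\omega$, the map $u \mapsto (u(\varepsilon \xi + \varepsilon \cdot))_{\xi \in \Xi_\varepsilon}$ yields a natural isometric isomorphism $H_{\omega_\varepsilon} \simeq \ell^2(\Xi_\varepsilon) \otimes (H^1(\omega)/\mathbb{R})$ (up to explicit $\varepsilon$-powers in the inner product). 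Attaching each fiber to the corresponding cell $Y_\varepsilon^\xi$ and extending by zero over $\mathcal{B}_\varepsilon$ embeds this space into $L^2(\Omega, H^1(\omega)/\mathbb{R})$; let $\mathbb{T}_\varepsilon$ denote the resulting lift of $\mathring{T_\varepsilon}$, completed by the zero operator on the orthogonal complement. By construction $\sigma(\mathbb{T}_\varepsilon) \subset \sigma(T_\varepsilon) \cup \{0\} = \sigma(T_\varepsilon)$.

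Second, I would prove the pointwise convergence $\mathbb{T}_\varepsilon \phi \to \mathbb{T}_0 \phi$ in $L^2(\Omega, H^1(\omega)/\mathbb{R})$, where the fiberwise limit operator is given by $(\mathbb{T}_0 \phi)(x, \cdot) = \mathring{T_0}(\phi(x, \cdot))$ and $\mathring{T_0}: H^1(\omega)/\mathbb{R} \to H^1(\omega)/\mathbb{R}$ is the periodic analog of the construction of Section~\ref{sec.rest} applied to the operator $T_0$ of the statement. By density, it suffices to consider smooth test functions $\phi \in \mathcal{C}^\infty_c(\Omega; \mathcal{C}^\infty(\overline{\omega}))$. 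The idea is to compare, on each cell $\omega_\varepsilon^\xi$ with $\xi \in \Xi_\varepsilon$, the true global harmonic extension realized by $\mathring{T_\varepsilon}\phi$ outside $\omega_\varepsilon$ with the cell-decoupled extension provided by $\mathring{T_0}\phi(\varepsilon\xi,\cdot)$, rescaled and placed in $Y_\varepsilon^\xi$. The difference solves a Laplace problem on $\Omega \setminus \omega_\varepsilon$ whose inhomogeneity is controlled by the variation of $\phi$ across consecutive cells, plus a boundary correction near $\partial \mathcal{O}_\varepsilon$; a two-scale energy estimate then yields the desired $o(1)$ bound. This is the main obstacle, since the harmonic extension involved in $T_\varepsilon$ is a genuinely global, non-local object and one must show that inter-cell couplings asymptotically decouple — this is where the hypothesis $\omega \Subset Y$ in (\ref{eq.assumom})(i) is essential, as it prevents inclusions from touching and forces exponential decay of the relevant cell Green function between separated cells.

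Finally, the identification $L^2(\Omega, H^1(\omega)/\mathbb{R}) \simeq L^2(\Omega) \otimes (H^1(\omega)/\mathbb{R})$ together with the tensor structure $\mathbb{T}_0 = I \otimes \mathring{T_0}$ yields $\sigma(\mathbb{T}_0) = \sigma(\mathring{T_0}) = \sigma(T_0) \setminus \{0\}$, the last equality being the periodic analog of Proposition~\ref{propspecrescalTD}. Proposition~\ref{prop.speclsc} then delivers $\sigma(T_0) \setminus \{0\} \subset \lim_{\varepsilon \to 0} \sigma(\mathbb{T}_\varepsilon) \subset \lim_{\varepsilon \to 0} \sigma(T_\varepsilon)$; since $0 \in \sigma(T_\varepsilon)$ for every $\varepsilon > 0$, it also lies trivially in the limit spectrum, completing the inclusion $\sigma(T_0) \subset \lim_{\varepsilon \to 0} \sigma(T_\varepsilon)$.
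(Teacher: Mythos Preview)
Your overall strategy --- lift $\mathring{T}_\varepsilon$ to $L^2(\Omega,H^1(\omega)/\mathbb{R})$, prove pointwise convergence, invoke Proposition~\ref{prop.speclsc} --- is exactly the paper's, and your construction of $\mathbb{T}_\varepsilon$ coincides with the paper's $E_\varepsilon \mathring{T}_\varepsilon P_\varepsilon$ (your cell-by-cell rescaling is the unfolding operator, and extending by zero on the complement is what $P_\varepsilon$ does since it is the adjoint of an isometry).

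The gap is in Stage~2: your claimed limit $\mathbb{T}_0 = I \otimes \mathring{T}_0$ is \emph{wrong}. The paper computes the actual pointwise limit (Proposition~\ref{prop.cvrescal1}) via two-scale convergence and finds
\[
\mathbb{T}_0\phi(x,y) = Q\bigl(\nabla v_0(x)\cdot y + \widehat{v}(x,y)\bigr),
\]
where $\widehat{v}(x,\cdot)$ is indeed the fiberwise $T_0\phi(x,\cdot)$ you expect, but there is an additional macroscopic term: $v_0\in H^1_0(\Omega)$ solves $-\Delta v_0 = -\mathrm{div}\bigl(\int_\omega \nabla_y\phi(\cdot,y)\,dy\bigr)$. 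This term is nonzero whenever $\phi$ varies in $x$, so the cells do \emph{not} asymptotically decouple. Your heuristic about exponential decay of the cell Green function misreads the situation: the harmonic extension underlying $T_\varepsilon$ is governed by the Dirichlet Green function of $\Omega$, which has only polynomial decay, and the inter-cell coupling survives in the limit precisely as this $v_0$. Concretely, your proposed comparison between the global $T_\varepsilon u_\varepsilon$ and the cell-decoupled patchwork of $T_0$-solutions would produce an $O(1)$ discrepancy in the $H^1(\omega)/\mathbb{R}$-norm, not $o(1)$.

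The paper's fix is to abandon the equality $\sigma(\mathbb{T}_0)=\sigma(T_0)\setminus\{0\}$ and prove only the inclusion $\sigma(T_0)\subset\sigma(\mathbb{T}_0)$ (Lemma~\ref{lem.loc}): for an eigenpair $(\lambda,u)$ of $T_0$, take $\phi(x,y)=u(y)$ constant in $x$; then $\int_\omega\nabla_y\phi\,dy$ is constant, so $v_0=0$, the macroscopic term disappears, and $\mathbb{T}_0\phi=\lambda\phi$. This weaker inclusion is all that is needed.
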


In the above statement, 
$H^1_\#(Y)$ stands for the completion of the set of smooth $Y$-periodic functions for the usual $H^1(Y)$-norm (this definition readily extends to define the set $W^{1,p}_\#(Y))$.
The Hilbert quotient space $H^1_\#(Y) / \mathbb{R}$ is equipped with the inner product: 
$$ \langle u , v\rangle_{H^1_\#(Y) / \mathbb{R}} = \int_Y{\nabla_y u \cdot \nabla_y v \:dy},$$
and its associated norm. 

By changing the topology measuring the convergence of $T_\varepsilon$, we are able to capture \textit{one part} of the limit spectrum $\lim_{\varepsilon \to 0}{\sigma(T_\varepsilon)}$, but not all of it. There are actually several ways to consider $T_\varepsilon$ on both macroscopic and microscopic cells. 
Indeed, following an idea of J. Planchard \cite{planchard}, then carried on by G. Allaire and C. Conca \cite{allaireconcafs, allaireconca}, 
we could perform the very same procedure with a pack of $K^d$ cells as a basis for the microscopic scale, 
which are naturally associated to a discrete Bloch decomposition. We then obtain: 

\begin{theorem}\label{th.blochspec}
The limit spectrum $\lim_{\varepsilon\to 0}{\sigma(T_\varepsilon)}$ contains the \textit{Bloch spectrum} 
$\sigma_{\text{\rm Bloch}}$ defined by 
\begin{equation}\label{eq.defBlochspec}
\sigma_{\text{\rm Bloch}} = \bigcup_{j =0}^\infty{\left[ \min_{\eta \in [0,1]^d}{\lambda_j(\eta)}, \max_{\eta \in [0,1]^d}{\lambda_j(\eta)}  \right]},
\end{equation}
where, for $j=0,...$ and $\eta \in [0,1]^d$, $\lambda_j(\eta)$ is the $j^{\text{th}}$ eigenvalue of the Bloch operator $T_\eta$ defined in (\ref{eq.defT0Bloch}-\ref{eq.defTetaBloch}) below. 
\end{theorem}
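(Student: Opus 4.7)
The plan is to extend the rescaling argument underlying Theorem \ref{th.cellspec} from a single periodicity cell to packs of $K^d$ cells, and then exploit the discrete Bloch decomposition of such packs. Concretely, for each integer $K \geq 1$, I would rescale $T_\varepsilon : H^1_0(\Omega) \to H^1_0(\Omega)$ into an operator $\mathbb{T}_\varepsilon^K$ acting on $L^2(\Omega, H^1_\#(KY)/\mathbb{R})$ in essentially the same way that $\mathbb{T}_\varepsilon$ was built on a single cell: a function $\phi(x,y)$ depending on a macroscopic variable $x \in \Omega$ and a microscopic variable $y \in KY$ (with $KY$-periodicity in $y$) encodes an approximate eigenfunction of $T_\varepsilon$ via the ansatz $u_\varepsilon(x) \approx \phi(x, x/\varepsilon)$. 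As in Proposition \ref{prop.cvrescal1}, one checks that $\mathbb{T}_\varepsilon^K$ is self-adjoint on the appropriate Hilbert space, that (modulo the trivial kernel related to macroscopic constants) it retains the spectral content of $T_\varepsilon$, and that $\mathbb{T}_\varepsilon^K$ converges pointwise as $\varepsilon \to 0$ to a limit operator $\mathbb{T}_0^K$ whose spectrum therefore satisfies $\sigma(\mathbb{T}_0^K) \subset \lim_{\varepsilon \to 0} \sigma(T_\varepsilon)$ by Proposition \ref{prop.speclsc}.

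The next step is to identify $\sigma(\mathbb{T}_0^K)$. Since $\mathbb{T}_0^K$ acts by freezing the macroscopic variable, its spectrum is the closure of the union over $x \in \Omega$ of the spectra of the $y$-fiber operators $T_0^K$ defined on $H^1_\#(KY)/\mathbb{R}$ by the cell problem
\begin{equation*}
\int_{KY} \nabla_y(T_0^K u) \cdot \nabla_y v\, dy = \int_{K\omega} \nabla_y u \cdot \nabla_y v\, dy, \quad v \in H^1_\#(KY)/\mathbb{R},
\end{equation*}
where $K\omega$ is the reunion of the $K^d$ translates of $\omega$ inside $KY$. Here one uses the discrete Bloch decomposition: $H^1_\#(KY)$ decomposes orthogonally (via the finite Fourier transform in the lattice $\frac{1}{K}\mathbb{Z}^d / \mathbb{Z}^d$) as a direct sum of eigenspaces of the group of unit translations, indexed by the quasi-momenta $\eta \in \Lambda_K := \frac{1}{K}\mathbb{Z}^d \cap [0,1)^d$. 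Each such fiber is unitarily equivalent to the space on which the Bloch operator $T_\eta$ of (\ref{eq.defT0Bloch}-\ref{eq.defTetaBloch}) is defined, and the restriction of $T_0^K$ to that fiber is exactly $T_\eta$. Consequently,
\begin{equation*}
\sigma(\mathbb{T}_0^K) \supset \sigma(T_0^K) = \bigcup_{\eta \in \Lambda_K} \sigma(T_\eta) = \bigcup_{\eta \in \Lambda_K} \bigcup_{j \geq 0} \{\lambda_j(\eta)\}.
\end{equation*}

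Taking the union over all $K \geq 1$ produces the set $\bigcup_j \{\lambda_j(\eta) : \eta \in \mathbb{Q}^d \cap [0,1)^d\}$ inside $\lim_{\varepsilon \to 0} \sigma(T_\varepsilon)$. To obtain the full bands $[\min_\eta \lambda_j(\eta), \max_\eta \lambda_j(\eta)]$, I would invoke the continuity of the Bloch eigenvalues $\eta \mapsto \lambda_j(\eta)$ on the compact cube $[0,1]^d$, which follows from the analytic/continuous dependence of the self-adjoint family $T_\eta$ on $\eta$ together with the min-max principle applied in the fixed underlying Hilbert space for $T_\eta$. Density of $\bigcup_K \Lambda_K$ in $[0,1]^d$ combined with connectedness of $[0,1]^d$ and the intermediate value theorem then shows that each band is contained in the closure of the accumulated eigenvalues, and since $\lim_{\varepsilon \to 0}\sigma(T_\varepsilon)$ is closed by definition, this yields $\sigma_{\text{Bloch}} \subset \lim_{\varepsilon \to 0}\sigma(T_\varepsilon)$.

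The main obstacle is the pointwise convergence $\mathbb{T}_\varepsilon^K \to \mathbb{T}_0^K$ in the $K$-cell setting: one must handle boundary layer effects near $\partial \Omega$ (the cells of $\mathcal{B}_\varepsilon$ that do not fit inside $\Omega$, and the cells straddling $\partial \mathcal{O}_\varepsilon$) and check that these contribute negligibly to the operator norm, uniformly in $K$. The second delicate point is the continuity statement for $\eta \mapsto \lambda_j(\eta)$: while conceptually standard, it requires writing the Bloch operator in a fixed reference space (typically by conjugating by $e^{-2i\pi \eta \cdot y}$) so that perturbation arguments apply, and care must be taken because the $j$-th eigenvalue may be degenerate at isolated values of $\eta$.
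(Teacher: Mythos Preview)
Your proposal is correct and follows essentially the same route as the paper: rescale $T_\varepsilon$ over packs of $K^d$ cells to an operator $\mathbb{T}_\varepsilon^K$, establish pointwise convergence to a limit $\mathbb{T}_0^K$ whose spectrum contains $\sigma(T_0^K) = \bigcup_{\eta \in \Lambda_K} \sigma(T_\eta)$ via the discrete Bloch decomposition, and conclude using continuity of $\eta \mapsto \lambda_j(\eta)$ together with density of $\bigcup_K \Lambda_K$ in $[0,1]^d$. Two minor points: the paper takes $\mathbb{T}_\varepsilon^K$ to act on $L^2(\Omega, H^1(\omega^K)/C(\omega^K))$ rather than $L^2(\Omega, H^1_\#(KY)/\mathbb{R})$ precisely because the unfolding/averaging operators $E_\varepsilon^K, P_\varepsilon^K$ do not respect $H^1$-regularity across cell boundaries (so your choice of space would need adjustment), and you do not need the boundary-layer estimates to be uniform in $K$, since the inclusion $\sigma(T_0^K) \subset \lim_{\varepsilon\to 0}\sigma(T_\varepsilon)$ is obtained for each fixed $K$ separately before taking the union.
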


Obviously, the Bloch spectrum contains the cell spectrum; it really captures `bulk' effects of the collective resonances of particles.

%

The former study makes it possible to capture the so-called Bloch part of the limit spectrum, 
and one may wonder whether all the possible limits are captured. Such is not the case because of the possible
concentration of energy on the boundary. In Section \ref{seccompleteness}, Theorem \ref{th.blochbl}, we show that 
$\lim_{\varepsilon \to 0}{\sigma(T_\varepsilon)}= \sigma_{\text{\rm Bloch}} \cup \sigma_{\partial \Omega}$, 
where the so-called \textit{Boundary layer spectrum} $\sigma_{\partial \Omega}$ is composed of the accumulation points $\lambda$ of the sequences of 
eigenvalues $\lambda_\varepsilon \in \sigma(T_\varepsilon)$ whose associated eigenfunctions concentrate a large enough proportion of their energy near the boundary $\partial \Omega$ of the macroscopic domain.

\subsection{Uniform bounds on the eigenvalues of the Poincar\'e variational problem}\label{sec.boundev}~\\

From Proposition \ref{prop.evTD}, we already now that the spectrum $\sigma(T_\varepsilon)$ is included in the interval $\left[0,1\right]$. 
Moreover, this proposition states that $0$ and $1$ are both eigenvalues of $T_\varepsilon$, and gives a complete characterization of the associated eigenspaces. 

The following theorem actually reveals that there is a gap
between the eigenvalues $0$ and $1$ and the rest of the spectrum $\sigma(T_\varepsilon)$, 
which is uniform with respect to $\varepsilon$. 

\begin{theorem}\label{th.evnp}
Under the assumptions (\ref{eq.assumom}), there exists $\varepsilon_0$ such that, for $0< \varepsilon <\varepsilon_0$,
$$ 
\left( \lambda \in \sigma(T_\varepsilon),  \:\: 
\lambda \notin \left\{Ê0, 1 \right\} \right)
\quad \Rightarrow \quad m \leq \lambda \leq M,
$$
where $0<m<M<1$ are the constants, independent of $\varepsilon$,
defined by:
\begin{equation}\label{eq.defmM}
 m = \min \limits_{u \in \widehat{{\mathfrak h}_0} \atop u \neq 0}{\frac{\displaystyle{\int_\omega{\lvert \nabla_y u \lvert^2\:dy}}}{\displaystyle{\int_Y{\lvert \nabla_y u \lvert^2\:dy}}}}, \quad M = \max \limits_{u \in \widehat{{\mathfrak h}_0} \atop u \neq 0}{\frac{\displaystyle{\int_\omega{\lvert \nabla_y u \lvert^2\:dy}}}{\displaystyle{\int_Y{\lvert \nabla_y u \lvert^2\:dy}}}},
 \end{equation}
and $\widehat{{\mathfrak h}_0} \subset H^1(Y) / \mathbb{R}$ is the Hilbert space defined by: 
\begin{equation}\label{eq.defhhat0}
\widehat{{\mathfrak h}_0} = \left\{Êu  \in H^1(Y) / \mathbb{R}, \:\: \Delta_y u = 0 \text{ in } \omega \cup (Y \setminus \overline{\omega}), \text{ and } \int_{\partial \omega}{\frac{\partial u^+}{\partial n_y}\:ds} = 0\right\} .
\end{equation}
\end{theorem}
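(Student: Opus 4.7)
The plan is to turn any eigenvalue $\lambda \in \sigma(T_\varepsilon) \setminus \{0,1\}$ into a Rayleigh quotient and then exploit a cell-wise rescaling that reduces everything to the variational characterization of $m$ and $M$ on the unit cell. Indeed, by Proposition \ref{prop.evTD}, every such eigenvalue corresponds to an eigenfunction $u \in \mathfrak{h}$, so that $u$ is harmonic on $\omega_\varepsilon \cup (\Omega \setminus \overline{\omega_\varepsilon})$, satisfies the zero-flux conditions across each $\partial \omega_\varepsilon^\xi$, and obeys the identity
$$ \lambda \;=\; \frac{\int_{\omega_\varepsilon} |\nabla u|^2 \, dx}{\int_\Omega |\nabla u|^2 \, dx}. $$

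The key step is a cell-wise rescaling: for each $\xi \in \Xi_\varepsilon$, set $\widehat{u}_\xi(y) := u(\varepsilon(\xi + y))$ for $y \in Y$. The piecewise harmonicity and the vanishing-flux condition inherited from $u \in \mathfrak{h}$ are invariant under this affine change of variables, so $\widehat{u}_\xi$, seen modulo constants in $H^1(Y)/\mathbb{R}$, belongs to $\widehat{\mathfrak{h}_0}$. Plugging it into (\ref{eq.defmM}) yields
$$ m \int_Y |\nabla_y \widehat{u}_\xi|^2 \, dy \;\leq\; \int_\omega |\nabla_y \widehat{u}_\xi|^2 \, dy \;\leq\; M \int_Y |\nabla_y \widehat{u}_\xi|^2 \, dy, $$
and undoing the change of variables, then summing over $\xi \in \Xi_\varepsilon$, I would obtain
\begin{equation}\label{eq.plan.cell}
 m \int_{\mathcal{O}_\varepsilon} |\nabla u|^2 \, dx \;\leq\; \int_{\omega_\varepsilon} |\nabla u|^2 \, dx \;\leq\; M \int_{\mathcal{O}_\varepsilon} |\nabla u|^2 \, dx.
\end{equation}
The upper bound $\lambda \leq M$ is then immediate, since $\mathcal{O}_\varepsilon \subset \Omega$ implies that the right inequality in (\ref{eq.plan.cell}) gives $\int_{\omega_\varepsilon} |\nabla u|^2 \leq M \int_\Omega |\nabla u|^2$.

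The lower bound is more delicate: splitting $\int_\Omega |\nabla u|^2 = \int_{\mathcal{O}_\varepsilon} |\nabla u|^2 + \int_{\mathcal{B}_\varepsilon} |\nabla u|^2$, the left inequality in (\ref{eq.plan.cell}) only gives $\lambda \geq m/(1 + r_\varepsilon)$, with $r_\varepsilon := \int_{\mathcal{B}_\varepsilon}|\nabla u|^2/\int_{\mathcal{O}_\varepsilon}|\nabla u|^2$. The main obstacle is therefore to control the energy in the boundary layer $\mathcal{B}_\varepsilon$, which is an $O(\varepsilon)$-tubular neighborhood of $\partial \Omega$ and, crucially, contains no inclusion (as $\omega_\varepsilon \subset \mathcal{O}_\varepsilon$), so that $u$ is harmonic throughout $\mathcal{B}_\varepsilon$ with $u = 0$ on $\partial \Omega$. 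The strategy I would adopt is to cover $\mathcal{B}_\varepsilon$ by $O(\varepsilon)$-sized boxes $B$, each paired with an adjacent interior cell $Y_\varepsilon^\xi \subset \mathcal{O}_\varepsilon$, and to use a Caccioppoli-type inequality (or a reflection/harmonic-extension comparison) for the harmonic function $u$ in $B$ with zero trace on $B \cap \partial \Omega$, so as to estimate $\int_B |\nabla u|^2 \leq C \int_{Y_\varepsilon^\xi} |\nabla u|^2$ for a constant $C$ depending only on $\omega$ and the local geometry of $\partial \Omega$; the assumed smoothness of $\partial \Omega$ (implicit after a local flattening) is what makes such a comparison uniform in $\varepsilon$. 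Summing yields $\int_{\mathcal{B}_\varepsilon} |\nabla u|^2 \leq C \int_{\mathcal{L}_\varepsilon} |\nabla u|^2$ where $\mathcal{L}_\varepsilon$ is a single layer of interior cells adjacent to $\partial \mathcal{O}_\varepsilon$.

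To close the argument and obtain the lower bound $\lambda \geq m$ in the sharp form stated, I would combine this boundary-layer estimate with a pigeonhole/averaging argument over concentric layers of cells of $\mathcal{O}_\varepsilon$ at increasing depth: since there are $\sim \varepsilon^{-1}$ such layers whose total energy is bounded by $\int_{\mathcal{O}_\varepsilon} |\nabla u|^2$, at least one layer $\mathcal{L}_\varepsilon$ must carry a $O(\varepsilon)$-fraction of that energy, forcing $r_\varepsilon \to 0$ as $\varepsilon \to 0$. Choosing $\varepsilon_0$ small enough then yields the claim. The whole technical core of the proof is thus concentrated in step (3)--(4) above: producing a uniform Caccioppoli-type comparison between boundary boxes in $\mathcal{B}_\varepsilon$ and adjacent interior cells, together with the depth-averaging that converts this into a vanishing relative contribution of $\mathcal{B}_\varepsilon$.
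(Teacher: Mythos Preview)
Your argument for the upper bound $\lambda \leq M$ is correct and coincides with the paper's.

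The lower bound, however, has a genuine gap. Your route via the boundary-layer ratio $r_\varepsilon = \int_{\mathcal{B}_\varepsilon}|\nabla u|^2 / \int_{\mathcal{O}_\varepsilon}|\nabla u|^2$ yields at best $\lambda \geq m/(1+r_\varepsilon)$, which is strictly less than $m$ whenever $r_\varepsilon > 0$; so even granting $r_\varepsilon \to 0$, you would not obtain the sharp bound $\lambda \geq m$ for each fixed $\varepsilon < \varepsilon_0$, only a limiting statement. More seriously, the mechanism you propose for $r_\varepsilon \to 0$ does not work as written: the Caccioppoli-type comparison (if it can be made rigorous at all---note that $\Omega$ is only assumed Lipschitz) ties $\int_{\mathcal{B}_\varepsilon}|\nabla u|^2$ to the \emph{first} interior layer of cells, not to an arbitrary layer, while the pigeonhole only guarantees that \emph{some} layer deep inside carries an $O(\varepsilon)$ fraction of the energy; you have no way to transport the boundary estimate to that layer. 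In fact there is no reason to expect $r_\varepsilon \to 0$ uniformly over eigenfunctions: the later discussion of the boundary-layer spectrum $\sigma_{\partial\Omega}$ (Theorem~\ref{th.blochbl}) shows precisely that eigenfunctions of $T_\varepsilon$ may retain a nonvanishing fraction of their energy near $\partial\Omega$.

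The paper bypasses the boundary layer entirely by a variational comparison. Given $u \in \mathfrak{h}_\varepsilon$, one constructs $v \in H^1_0(\mathcal{O}_\varepsilon)$, harmonic in $\mathcal{O}_\varepsilon \setminus \overline{\omega_\varepsilon}$, with $v = u + c_\xi$ on each inclusion $\omega_\varepsilon^\xi$ and with $\int_{\partial\omega_\varepsilon^\xi} \frac{\partial v^+}{\partial n}\,ds = 0$; the constants $c_\xi$ are obtained by inverting a capacity-type matrix. Since $\nabla v = \nabla u$ on $\omega_\varepsilon$, and since $u$ (adjusted by an element of $\text{Ker}(T_\varepsilon)$) minimizes $\int_\Omega|\nabla\,\cdot\,|^2$ among all functions in $H^1_0(\Omega)$ equal to $u+c_\xi$ on each $\omega_\varepsilon^\xi$ while the zero-extension of $v$ is a competitor, one gets $\int_\Omega|\nabla u|^2 \leq \int_{\mathcal{O}_\varepsilon}|\nabla v|^2$. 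Hence
\[
\lambda \;=\; \frac{\int_{\omega_\varepsilon}|\nabla u|^2}{\int_\Omega|\nabla u|^2} \;\geq\; \frac{\int_{\omega_\varepsilon}|\nabla v|^2}{\int_{\mathcal{O}_\varepsilon}|\nabla v|^2},
\]
and the right-hand side is a quotient supported entirely on $\mathcal{O}_\varepsilon$, to which your own cell-wise rescaling argument applies cleanly (each rescaled $v_\varepsilon^\xi$ lies in $\widehat{\mathfrak{h}_0}$ by construction), yielding $\lambda \geq m$ exactly, with no $\varepsilon$-loss and no boundary-layer analysis.
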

\begin{proof}
Let us start with a short comment about the bounds $m,M$ and the space $\widehat{{\mathfrak h}_0}$ defined by (\ref{eq.defmM}) and (\ref{eq.defhhat0}) respectively. 
Considering the version of the Poincar\'e variational problem (\ref{eq.defTD}) associated
to the inclusion $\omega$ in the periodicity cell $Y$, 
let us introduce the operator $\widehat{T_0} : H^1(Y) / \mathbb{R} \to H^1(Y) / \mathbb{R}$ defined as follows: 
for $u \in H^1(Y) / \mathbb{R}$, $\widehat{T_0}u$ is the unique element in $H^1(Y) / \mathbb{R}$ such that: 
$$ \forall v \in H^1(Y) / \mathbb{R}, \:\: \int_Y{\nabla_y (\widehat{T_0}u)\cdot \nabla_y v \:dy} = \int_\Omega{\nabla_y u \cdot \nabla_y v \:dy}.$$
Observe that the operator $\widehat{T_0}$ and space $\widehat{{\mathfrak h}_0}$ are very close in essence to those $T_0$ and ${\mathfrak h}_0$ 
defined below (see (\ref{eq.varfT0}) and Proposition \ref{prop.specTeta}), except that they \textit{do not} bring into play $Y$-periodic functions. 

Adapting the analysis of Section \ref{sec.NPTD} to the present case - in particular, replacing the Poisson kernel (\ref{eq.poisk}) 
with the so-called Neumann function of the inclusion $\omega \Subset Y$ - one shows that $m$ and $M$ are respectively
the smallest and largest eigenvalues of $\widehat{T_0}$ which are different from $0$ and $1$
(and thus $0 < m < M < 1$).\\

Let us now pass to the proof of Theorem \ref{th.evnp}, properly speaking. 
We denote by $\lambda_\varepsilon^-$ (resp. $\lambda_\varepsilon^+$) the smallest (resp. largest) eigenvalue of $T_\varepsilon$ 
which is different from $0$ (resp. different from $1$).

Exploiting the min-max principle of Proposition \ref{propminmax} in combination with the characterization of 
$\text{\rm Ker}(T_\varepsilon)$ given in
Proposition \ref{prop.evTD}, it comes:
\begin{equation}\label{eq.deflepspm}
 \lambda^-_\varepsilon =  \min \limits_{u \in {\mathfrak h}_\varepsilon \atop u \neq 0}{\frac{\displaystyle{\int_{\omega_\varepsilon}{\lvert \nabla u \lvert^2\:dx}}}{\displaystyle{\int_{\Omega}{\lvert \nabla u \lvert^2\:dx}}}}, \text{ and } \lambda^+_\varepsilon =  \max \limits_{u \in {\mathfrak h}_\varepsilon \atop u \neq 0}{\frac{\displaystyle{\int_{\omega_\varepsilon}{\lvert \nabla u \lvert^2\:dx}}}{\displaystyle{\int_{\Omega}{\lvert \nabla u \lvert^2\:dx}}}},
 \end{equation}
where the space ${\mathfrak h}_\varepsilon$ is defined by (see (\ref{eq.mathfrakh})):
$$ {\mathfrak h}_\varepsilon = \left\{ u \in H^1_0(\Omega), \:\: \Delta u = 0 \text{ on } \omega_\varepsilon \cup (\Omega\setminus \overline{\omega_\varepsilon}), \text{ and } \int_{\partial \omega_\varepsilon^\xi}{\frac{\partial u^+}{\partial n}\:ds} = 0, \:\: \xi \in \Xi_\varepsilon\right\}. $$

Our purpose is to prove that 
\begin{equation}\label{eq.boundsrap}
m \leq \lambda_\varepsilon^- , \text{ and } \lambda_\varepsilon^+ \leq M.
\end{equation}\par
\medskip

\noindent\textit{Proof of the right-hand inequality in (\ref{eq.boundsrap}):} 
Let $u \in {\mathfrak h}_\varepsilon$, $u \neq 0$ be arbitrary. 
For any $\xi \in \Xi_\varepsilon$, define the rescaled function $u_\varepsilon^\xi(y) := u(\varepsilon \xi + \varepsilon y)$ in $H^1(Y)$. 
A simple change of variables yields: 
\begin{equation}\label{eq.decintomeps1}
\int_{\omega_\varepsilon}{\lvert \nabla u \lvert^2 \:dx} = \varepsilon^{d-2} \sum\limits_{\xi \in \Xi_\varepsilon}{\int_\omega{\lvert \nabla_y u_\varepsilon^\xi \lvert^2 \:dy}},
\end{equation}
and similarly:
\begin{equation}\label{eq.decintomeps2}
\int_{\Omega}{\lvert \nabla u \lvert^2 \:dx} = \int_{{\mathcal B}_\varepsilon}{\lvert \nabla u \lvert^2 \:dx} +  \varepsilon^{d-2} \sum\limits_{\xi \in \Xi_\varepsilon}{\int_Y{\lvert \nabla_y u_\varepsilon^\xi \lvert^2 \:dy}}.
\end{equation}
We then obtain: 
\begin{equation}\label{eq.decrapmaj}
\frac{\displaystyle{\int_{\omega_\varepsilon}{\lvert \nabla u \lvert^2\:dx}}}{\displaystyle{\int_{\Omega}{\lvert \nabla u \lvert^2\:dx}}} = \frac{\displaystyle{\varepsilon^{d-2} \sum\limits_{\xi \in \Xi_\varepsilon}{\int_\omega{\lvert \nabla_y u_\varepsilon^\xi \lvert^2 \:dy}}}}{\displaystyle{ \int_{{\mathcal B}_\varepsilon}{\lvert \nabla u \lvert^2 \:dx} +  \varepsilon^{d-2} \sum\limits_{\xi \in \Xi_\varepsilon}{\int_Y{\lvert \nabla_y u_\varepsilon^\xi \lvert^2 \:dy}}}} \leq  \max\limits_{\xi \in \Xi_\varepsilon}{\frac{\displaystyle{\int_{\omega}{\lvert \nabla_y u_\varepsilon^\xi \lvert^2\:dy}}}{\displaystyle{\int_{Y}{\lvert \nabla_y u_\varepsilon^\xi \lvert^2\:dy}}} },
\end{equation}
where we have used the easy algebraic identity: 
\begin{equation}\label{eq.easyalg}
\min\left( \frac{p_1}{q_1} , \frac{p_2}{q_2}\right) \leq \frac{p_1+p_2}{q_1+q_2} \leq \max\left( \frac{p_1}{q_1}, \frac{p_2}{q_2}\right), \:\: p_1,p_2,q_1,q_2 \geq 0, \:\: q_1q_2 \neq 0.
\end{equation}
Now, since $u \in {\mathfrak h}_\varepsilon$, it follows that for every $\xi \in \Xi_\varepsilon$, $u_\varepsilon^\xi$ (or more exactly its class up to constants) 
belongs to $\widehat{{\mathfrak h}_0}$. Hence, in view of (\ref{eq.defmM}), the right-hand side in (\ref{eq.decrapmaj}) is bounded from above by $M$. 
The desired inequality now follows from the combination of (\ref{eq.deflepspm}) and (\ref{eq.decrapmaj}).\\
\\
\textit{Proof of the left-hand inequality in (\ref{eq.boundsrap}):} The proof of this inequality is a little more involved than that of the former one, and we proceed in
two steps.\\ 
\\
\textit{First step:} We prove that, for an arbitrary $u \in {\mathfrak h}_\varepsilon$, $u\neq 0$, 
there exist $\lvert \Xi_\varepsilon \lvert$ real constants $c_\xi \in \mathbb{R}$, indexed by $\xi \in \Xi_\varepsilon$, 
so that the function $v \in H^1_0({\mathcal O}_\varepsilon)$ defined by the system:
\begin{equation}\label{eq.defveigenv}
\left\{ 
\begin{array}{cl}
-\Delta v = 0 & \text{in }Ê{\mathcal O}_\varepsilon, \\
v = u + c_\xi & \text{in } \omega_\varepsilon^\xi, \text{ for each } \xi \in \Xi_\varepsilon, \\
v = 0 & \text{on }Ê\partial {\mathcal O}_\varepsilon
\end{array}
\right.
\end{equation}
additionally satisfies:
\begin{equation}\label{eq.intdvdn+=0}
 \int_{\partial \omega_\varepsilon^\xi}{\frac{\partial v^+}{\partial n}\:ds} = 0 \text{ for all }Ê\xi \in \Xi_\varepsilon.
\end{equation}
To see this, for a given $\xi \in \Xi_\varepsilon$, let $s_\xi \in H^1_0({\mathcal O}_\varepsilon)$ be defined by: 
$$ 
\left\{ 
\begin{array}{cl}
-\Delta s_\xi = 0 & \text{in }Ê{\mathcal O}_\varepsilon, \\
s_\xi = 1 & \text{in } \omega_\varepsilon^\xi, \\
s_\xi = 0 & \text{in }Ê\omega_\varepsilon^\eta, \text{ for each } \eta \in \Xi_\varepsilon \setminus \left\{ \xi \right\}, \\
s_\xi = 0 & \text{on } \partial {\mathcal O}_\varepsilon.
\end{array}
\right.
$$
We then search for $v$ under the form: 
$$ v = u + \sum\limits_{\xi \in \Xi_\varepsilon}{c_\xi s_\xi},$$
where the constants $c_\xi$ have to be adjusted so that (\ref{eq.intdvdn+=0}) holds; this last condition in turn rewrites: 
\begin{equation}\label{eq.proofbndevsysl}
\forall \xi \in \Xi_\varepsilon, \:\:  \sum\limits_{\eta \in \Xi_\varepsilon}{A_{\xi\eta} c_\eta} = b_\xi,
\end{equation}
a linear system of size $\lvert \Xi_\varepsilon\lvert$, 
whose matrix $A = (A_{\xi\eta})_{\xi,\eta \in \Xi_\varepsilon}$ and right-hand side $\left\{Êb_\xi \right\}_{\xi \in \Xi_\varepsilon}$ are defined by: 
$$ A_{\xi\eta} = \int_{\partial \omega_\varepsilon^\xi}{\frac{\partial s_{\eta}^+}{\partial n} \:ds}, \text{ and } b_\xi = -\int_{\partial \omega_\varepsilon^\xi}{\frac{\partial u^+}{\partial n}\:ds}.$$
Hence, the existence of $v$ satisfying (\ref{eq.defveigenv}) and (\ref{eq.intdvdn+=0}) follows from the invertibility of $A = (A_{\xi\eta})_{\xi,\eta \in \Xi_\varepsilon}$, which we now prove. 
Assume then that $\sum_{\eta \in \Xi_\varepsilon}{A_{\xi\eta}c_\eta} = 0$ for some collection $\left\{Êc_{\xi} \right\}_{\xi \in \Xi_\varepsilon}$ of $\lvert \Xi_\varepsilon\lvert$ real constants,  and consider the function 
$$w = \sum\limits_{\xi \in \Xi_\varepsilon}{c_{\xi} s_{\xi}};$$ 
$w$ satisfies by construction:
$$ 
\left\{ 
\begin{array}{cl}
-\Delta w = 0 & \text{in }Ê{\mathcal O}_\varepsilon, \\
w = c_\xi & \text{in }Ê\omega_\varepsilon^\xi, \text{ for each }Ê\xi \in \Xi_\varepsilon, \\
w = 0 & \text{on } \partial {\mathcal O}_\varepsilon,
\end{array}
\right.
$$
together with $\int_{\partial \omega_\varepsilon^\xi}{\frac{\partial w^+}{\partial n}\:ds} = 0$ for each $\xi \in \Xi_\varepsilon$. 
Hence, an integration by parts produces:
$$ \begin{array}{>{\displaystyle}cc>{\displaystyle}l}
\int_{{\mathcal O}_\varepsilon}{\lvert \nabla w \lvert^2 \:dx} &=& -\int_{{\mathcal O}_\varepsilon \setminus \overline{\omega_\varepsilon}} {\Delta w \: w \:dx} - \sum\limits_{\xi \in \Xi_\varepsilon}{\int_{\partial \omega_\varepsilon^\xi}{\frac{\partial w^+}{\partial n} w \:ds}}, \\
&=& - \sum\limits_{\xi \in \Xi_\varepsilon}{c_\xi \: \int_{\partial \omega_\varepsilon^\xi}{\frac{\partial w^+}{\partial n} \:ds}}, \\
&=& 0,
\end{array}
$$
and so $w \equiv 0$ on ${\mathcal O}_\varepsilon$; in particular, this implies $c_\xi = 0$, for $\xi \in \Xi_\varepsilon$, which eventually proves the invertibility of the matrix $A$ in (\ref{eq.proofbndevsysl}), thus the existence of $v$ satisfying (\ref{eq.defveigenv}) and (\ref{eq.intdvdn+=0}). \\
\\
\textit{Second step:} We use the construction of the first step to obtain a lower bound for $\lambda_\varepsilon^-$ in (\ref{eq.deflepspm}). 
First, let $u \in {\mathfrak h}_\varepsilon$, $u\neq 0$ be arbitrary, and let $\left\{Êc_\xi \right\}_{\xi \in \Xi_\varepsilon}$ and $v \in H^1_0({\mathcal O}_\varepsilon)$ be 
as in the first step. We define $r \in \text{\rm Ker}(T_\varepsilon)$ by (see Proposition \ref{prop.evTD}): 
$$ \left\{ 
\begin{array}{cl}
-\Delta r = 0 & \text{in } \Omega \setminus \overline{\omega_\varepsilon}, \\ 
r = c_\xi & \text{in } \omega_\varepsilon^\xi, \text{ for each } \xi \in \Xi_\varepsilon, \\
r = 0 &\text{on } \partial \Omega.
\end{array}
\right. $$
Then,
\begin{equation}\label{eq.inturleqintv}
\int_\Omega{\lvert \nabla u +\nabla r \lvert^2 \:dx} = \int_\Omega{\lvert \nabla u \lvert^2 \:dx} + \int_\Omega{\lvert \nabla r \lvert^2 \:dx} \leq \int_{{\mathcal O}_\varepsilon}{\lvert \nabla v \lvert^2\:dx},
\end{equation}
where the left-hand identity follows from the orthogonality between ${\mathfrak h}_\varepsilon$ and $\text{\rm Ker}(T_\varepsilon)$, 
and the right-hand inequality follows from the fact that $u+r$ is the minimizer of the 
energy $\int_\Omega{\lvert \nabla w \lvert^2 \:dx}$ over the closed, convex subset of $H^1_0(\Omega)$ defined by: 
$$\left\{ w \in H^1_0(\Omega), \:\: w = u + c_\xi \text{ in } \omega_\varepsilon^\xi, \:\: \xi \in \Xi_\varepsilon \right\}.$$  

Eventually, introducing again the rescaled function $v_\varepsilon^\xi(y) = v(\varepsilon \xi + \varepsilon y)$, a change of variable produces (viz. (\ref{eq.decintomeps1}) and (\ref{eq.decintomeps2})): 
$$
\int_{\omega_\varepsilon}{\lvert \nabla v \lvert^2 \:dx} = \varepsilon^{d-2} \sum\limits_{\xi \in \Xi_\varepsilon}{\int_\omega{\lvert \nabla_y v_\varepsilon^\xi \lvert^2 \:dy}},
\text{ and }
\int_{{\mathcal O}_\varepsilon}{\lvert \nabla v \lvert^2 \:dx} =  \varepsilon^{d-2} \sum\limits_{\xi \in \Xi_\varepsilon}{\int_Y{\lvert \nabla_y v_\varepsilon^\xi \lvert^2 \:dy}},
$$
whence, using (\ref{eq.easyalg}) and (\ref{eq.inturleqintv}): 
\begin{equation*}
\frac{\displaystyle{\int_{\omega_\varepsilon}{\lvert \nabla u \lvert^2\:dx}}}{\displaystyle{\int_{\Omega}{\lvert \nabla u \lvert^2\:dx}}} \geq  \frac{\displaystyle{\int_{\omega_\varepsilon}{\lvert \nabla v \lvert^2\:dx}}}{\displaystyle{\int_{{\mathcal O}_\varepsilon}{\lvert \nabla v \lvert^2\:dx}}}= \frac{\displaystyle{\varepsilon^{d-2} \sum\limits_{\xi \in \Xi_\varepsilon}{\int_\omega{\lvert \nabla_y v_\varepsilon^\xi \lvert^2 \:dy}}}}{\displaystyle{\varepsilon^{d-2} \sum\limits_{\xi \in \Xi_\varepsilon}{\int_Y{\lvert \nabla_y v_\varepsilon^\xi \lvert^2 \:dy}}}} \geq  \min\limits_{\xi \in \Xi_\varepsilon}{\frac{\displaystyle{\int_{\omega}{\lvert \nabla_y v_\varepsilon^\xi \lvert^2\:dy}}}{\displaystyle{\int_{Y}{\lvert \nabla_y v_\varepsilon^\xi \lvert^2\:dy}}} },
\end{equation*}
which allows to conclude from (\ref{eq.defmM}) and (\ref{eq.deflepspm}), since by construction $v_\varepsilon^\xi$ belongs to $\widehat{{\mathfrak h}_0}$ for each $\xi \in \Xi_\varepsilon$.
\end{proof}

\begin{remark}
\noindent \begin{itemize}
\item Interestingly, the bounds (\ref{eq.defmM}) over the non degenerate part of the spectrum of $T_\varepsilon$ 
only depend on the geometry of the inclusion $\omega \Subset Y$, and not on that of the macroscopic domain $\Omega$.
\item As a consequence of Theorem \ref{th.evnp}, the operator $(\frac{1}{1-a}I - T_\varepsilon)$, from ${\mathfrak h}_\varepsilon$ into itself, 
is invertible as soon as $a$ belongs to $(-\infty,\frac{m-1}{m}) \cup (\frac{M-1}{M},0)$. We believe that this result can be extended to the case where the inclusion $\omega$ is only assumed to be Lipschitz regular. However, for the sake of simplicity, we do not discuss such generality in the present article.
\item In the case where $\omega$ is additionnally assumed to be star-shaped, it is possible to derive more explicit expressions of the bounds (\ref{eq.defmM}) 
in terms of the Lipschitz character $\inf_{y \in \partial \omega}(y \cdot n(y))$ of the inclusion $\omega$, be relying on the Rellich identities as in \cite{AmmariSeo}.
\end{itemize}
\end{remark}
\subsection{Individual resonances of the inclusions: proof of Theorem \ref{th.cellspec}}

\subsubsection{Extension and projection operators: the periodic unfolding method}\label{secunfold}~\\ 

The cornerstone of our rescaling procedure of $T_\varepsilon$ is the definition of extension and projection operators for transforming 
a function of the macroscopic variable $x \in \Omega$ into one depending on both the macroscopic and microscopic variables $x \in \Omega$, $y \in Y$, 
and the other way around. 
These operators have been introduced in \cite{allaireconcafs,allaireconca}, then studied in a more systematic way in \cite{unfoldcras,unfold}, 
under the name of `periodic unfolding method'. We follow the presentation of the latter references. 

\begin{definition}\label{def.unfold}
Let $p \in [1,\infty]$.
 
\begin{enumerate}[(i)]
\item The extension (or unfolding) operator $E_\varepsilon: L^p(\Omega) \rightarrow L^p(\Omega \times Y)$ is defined for $u \in L^p(\Omega)$ by:
$$E_\varepsilon u(x,y) = \left\{ 
\begin{array}{cl}
u(\varepsilon \left[ \frac{x}{\varepsilon}\right]_Y + \varepsilon y) & \text{if } x \in {\mathcal O}_\varepsilon,\\
0 & \text{otherwise}.
\end{array}
\right.$$
\item The projection (or local averaging) operator
$P_\varepsilon : L^p(\Omega \times Y) \rightarrow L^{p}(\Omega)$ is defined for $\phi \in L^p(\Omega \times Y)$ by: 
$$P_\varepsilon \phi(x) = \left\{ \begin{array}{>{\displaystyle}cl}
\int_Y{\phi\left(\varepsilon\left[ \frac{x}{\varepsilon}\right]_Y + \varepsilon z ,\left\{ \frac{x}{\varepsilon}\right\}_Y \right)\:dz} & \text{if } x \in {\mathcal O}_\varepsilon, \\
0 & \text{otherwise}.\\
\end{array}
\right.$$
\end{enumerate}
\end{definition}
\begin{remark}
A more intuitive definition of the projection $P_\varepsilon \phi$ would read: $\Omega \ni x \mapsto \phi(x,\left\{\frac{x}{\varepsilon}\right\}_Y)$, 
but this function may not be measurable for general $\phi \in L^p(\Omega \times Y)$.
\end{remark}

These operators satisfy the following properties, whose elementary proofs are reproduced for the sake of convenience: 
\begin{proposition}\label{prop.unfold}
Let $\Omega \subset \mathbb{R}^d$ be a bounded, Lipschitz domain. 
\noindent\begin{enumerate}[(i)]
\item For any function $u
\in L^1(\Omega)$, one has: 
\begin{equation}\label{eq.unfoldbdy}
\int_{\Omega}{u \:dx} = \int_{\Omega \times Y}{E_\varepsilon u \:dxdy} + \int_{{\mathcal B}_\varepsilon}{u \:dx}.
 \end{equation}
In particular, for $p\in [1,\infty]$, if $u_\varepsilon$ is a bounded sequence in $L^p(\Omega)$ and $v \in L^q(\Omega)$ is a fixed function, 
with $\frac{1}{p} + \frac{1}{q} = 1$, one has: 
\begin{equation}\label{cvTeps}
\left\lvert \int_{\Omega}{u_\varepsilon v \:dx} - \int_{\Omega \times Y}{E_\varepsilon u_\varepsilon \: E_\varepsilon v  \:dxdy} \right\lvert \stackrel{\varepsilon\rightarrow 0}{\longrightarrow} 0.
\end{equation}

\item For $p \in [1,\infty]$, both operators $E_\varepsilon: L^p(\Omega) \rightarrow L^p(\Omega \times Y)$ and $P_\varepsilon: L^p(\Omega \times Y) \rightarrow L^p(\Omega)$ are bounded with norm $1$.
\item If $p \in [1,\infty]$ and $\frac{1}{p} + \frac{1}{q} = 1$, $P_\varepsilon: L^{q}(\Omega \times Y) \rightarrow L^{q}(\Omega)$ is the adjoint of $E_\varepsilon: L^p(\Omega) \rightarrow L^p(\Omega \times Y)$. 
\item For $p \in [1,\infty]$, the operator $P_\varepsilon: L^p(\Omega \times Y) \rightarrow L^p(\Omega)$ is `almost' a left inverse for $E_\varepsilon: L^p(\Omega) \rightarrow L^p(\Omega \times Y)$: for any function $u \in L^p(\Omega)$, 
$$ P_\varepsilon E_\varepsilon u (x) = \left\{Ê
\begin{array}{cl}
u(x) & \text{if } x \in {\mathcal O}_\varepsilon, \\
0 & \text{otherwise}. 
\end{array}
\right.$$
\item For $p \in [1,\infty)$ and a given function $u \in L^p(\Omega)$, $E_\varepsilon u \to u$ strongly in $L^p(\Omega \times Y)$. 
\item For $p \in [1,\infty)$, and a given function $\psi \in {\mathcal D}(\Omega, L^p(Y))$, define 
$u_\varepsilon(x) 
:= \psi(x,\frac{x}{\varepsilon}) \in L^p(\Omega)$; then 
$E_\varepsilon u_\varepsilon 
\to \psi$ strongly in $L^p(\Omega \times Y)$.
\item For $p \in [1,\infty)$, and for any function $\phi \in L^p(\Omega \times Y)$, one has: 
$$ E_\varepsilon \: P_\varepsilon \phi \rightarrow \phi \text{ strongly in } L^p(\Omega \times Y). $$ 
\end{enumerate}
\end{proposition}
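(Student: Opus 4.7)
My plan is to work through the seven parts in order, with most of the work concentrated in (i)--(iii), from which the remaining assertions follow by density arguments that exploit the uniform bound $\|E_\varepsilon\|, \|P_\varepsilon\| \le 1$.

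For (i), the key observation is that the decomposition $\overline{\mathcal O_\varepsilon} = \bigcup_{\xi \in \Xi_\varepsilon} \overline{Y_\varepsilon^\xi}$ is essentially disjoint, so the change of variable $x = \varepsilon \xi + \varepsilon y$ on each cell gives
\begin{equation*}
\int_{\mathcal O_\varepsilon} u\,dx \;=\; \sum_{\xi \in \Xi_\varepsilon} \varepsilon^d \int_Y u(\varepsilon \xi + \varepsilon y)\,dy \;=\; \int_{\Omega \times Y} E_\varepsilon u\,dx\,dy,
\end{equation*}
the last equality coming from the same change of variable applied to $\int_{Y_\varepsilon^\xi \times Y} u(\varepsilon\xi + \varepsilon y)\,dx\,dy$. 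Adding $\int_{\mathcal B_\varepsilon} u\,dx$ gives the identity. The pairing statement \eqref{cvTeps} follows because $E_\varepsilon(u_\varepsilon v) = E_\varepsilon u_\varepsilon \cdot E_\varepsilon v$ (immediate from the definition) and $|\mathcal B_\varepsilon| \to 0$, so H\"older bounds $\int_{\mathcal B_\varepsilon} u_\varepsilon v$ by $\|u_\varepsilon\|_{L^p}\|v\mathbf{1}_{\mathcal B_\varepsilon}\|_{L^q}$, which tends to $0$ by absolute continuity of the integral. For (ii), applying (i) to $|u|^p$ gives $\|E_\varepsilon u\|_{L^p(\Omega\times Y)}^p \le \|u\|_{L^p(\Omega)}^p$; for $P_\varepsilon$, Jensen's inequality applied to the inner integral over $Y$ combined with the same cell-by-cell change of variable as above yields $\|P_\varepsilon \phi\|_{L^p(\Omega)}^p \le \|\phi\|_{L^p(\mathcal O_\varepsilon \times Y)}^p$. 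For (iii), the duality $\int_\Omega P_\varepsilon \phi \cdot u\,dx = \int_{\Omega \times Y} \phi \cdot E_\varepsilon u\,dx\,dy$ follows by splitting both integrals over cells and applying the substitution $x = \varepsilon\xi + \varepsilon y$, $x' = \varepsilon\xi + \varepsilon z$ on each side. Point (iv) is a direct calculation: for $x \in \mathcal O_\varepsilon$, since $[z]_Y = 0$ for $z \in Y$, the argument $\varepsilon[x/\varepsilon]_Y + \varepsilon\{x/\varepsilon\}_Y$ reduces to $x$, so the integrand is constant equal to $u(x)$.

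The convergence statements (v)--(vii) all proceed by the same template: verify the result on a dense subset, then extend by the norm-$1$ bound. For (v), approximate $u \in L^p(\Omega)$ by $\varphi \in C_c(\Omega)$; by uniform continuity, $|E_\varepsilon\varphi(x,y) - \varphi(x)| \le \sup_{|x-x'|\le \sqrt d\,\varepsilon}|\varphi(x) - \varphi(x')| \to 0$ uniformly, so $E_\varepsilon\varphi \to \varphi$ in $L^p(\Omega \times Y)$; then
\begin{equation*}
\|E_\varepsilon u - u\|_{L^p(\Omega\times Y)} \le \|E_\varepsilon(u-\varphi)\|_{L^p(\Omega\times Y)} + \|E_\varepsilon\varphi - \varphi\|_{L^p(\Omega\times Y)} + \|\varphi - u\|_{L^p(\Omega\times Y)}
\end{equation*}
is made arbitrarily small by choosing $\varphi$ close to $u$ and then $\varepsilon$ small. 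For (vi), expand $E_\varepsilon u_\varepsilon(x,y) = \psi(\varepsilon[x/\varepsilon]_Y + \varepsilon y, [x/\varepsilon]_Y + y)$; using the $Y$-periodicity in the second variable (extending $\psi$ periodically in $y$) and the fact that $\{[x/\varepsilon]_Y + y\}_Y = y$, this equals $\psi(\varepsilon[x/\varepsilon]_Y + \varepsilon y, y)$, which converges to $\psi(x,y)$ pointwise and is uniformly dominated since $\psi \in \mathcal D(\Omega, L^p(Y))$.

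For (vii), I would first establish the result for tensor products $\phi(x,y) = \varphi(x)\eta(y)$ with $\varphi \in \mathcal D(\Omega)$ and $\eta \in L^p(Y)$: a direct computation using (iv) and the smoothness of $\varphi$ gives $P_\varepsilon\phi(x) = \eta(\{x/\varepsilon\}_Y)\bigl(\varphi(x) + O(\varepsilon)\bigr)$ on $\mathcal O_\varepsilon$, whence $E_\varepsilon P_\varepsilon\phi(x,y) = \varphi(\varepsilon[x/\varepsilon]_Y + \varepsilon y)\eta(y) + o(1) \to \varphi(x)\eta(y) = \phi(x,y)$ in $L^p(\Omega\times Y)$; the density of finite linear combinations of such tensor products in $L^p(\Omega\times Y)$, combined with the norm bound $\|E_\varepsilon P_\varepsilon\| \le 1$ from (ii), completes the argument via the same three-term triangle inequality as in (v). The main technical nuisance I anticipate is the bookkeeping of periodic extensions and the careful treatment of $\mathcal B_\varepsilon$ in each step; everything else is routine once (i) is in hand, so I would make sure to state the change-of-variable identity carefully and reuse it throughout.
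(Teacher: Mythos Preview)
Your proposal is correct and follows essentially the same approach as the paper: the cell-by-cell change of variables for (i)--(iv), and the density-plus-uniform-bound template for (v)--(vii). The only cosmetic differences are that the paper derives the $P_\varepsilon$ bound in (ii) by saying it ``follows directly from (i)'' rather than invoking Jensen, and for (vii) it works directly with $\phi \in \mathcal D(\Omega\times Y)$ via the explicit formula $E_\varepsilon P_\varepsilon\phi(x,y) = \int_Y \phi(\varepsilon[x/\varepsilon]_Y + \varepsilon z, y)\,dz$ instead of passing through tensor products; both routes are equivalent in spirit and effort.
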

\begin{proof}  \textit{(i)} Let $u \in L^1(\Omega)$ be arbitrary; it follows from the definitions that: 
$$ 
\begin{array}{>{\displaystyle}c c>{\displaystyle}l}
\int_\Omega{u \:dx} &=& \int_{{\mathcal O}_\varepsilon}{u \:dx} + \int_{{\mathcal B}_\varepsilon}{u \:dx},\\
&=& \sum\limits_{\xi \in \Xi_\varepsilon}{\int_{\varepsilon(\xi + Y)}{u \:dx}}  + \int_{{\mathcal B}_\varepsilon}{u \:dx},\\ 
&=& \sum\limits_{\xi \in \Xi_\varepsilon}{\varepsilon^d \int_{Y}{u\left(\varepsilon \xi + \varepsilon y \right) \:dy}} +\int_{{\mathcal B}_\varepsilon}{u \:dx},\\
&=& \sum\limits_{\xi \in \Xi_\varepsilon}{\varepsilon^d \int_Y E_\varepsilon u (\xi, y) {\, dy}} + \int_{{\mathcal B}_\varepsilon}{u \:dx},
\end{array}
$$
which leads to the desired conclusion. 

The convergence (\ref{cvTeps}) follows from H\"older's inequality and Lebesgue's dominated convergence theorem applied to the last term in the right-hand side of (\ref{eq.unfoldbdy}).\\

\noindent \textit{(ii)} Follows directly from $(i)$.\\

\noindent \textit{(iii)} Let $u \in L^p(\Omega)$, and $\phi \in L^{q}(\Omega \times Y)$ be arbitrary functions. Then, 
$$ 
\begin{array}{>{\displaystyle}c c>{\displaystyle}l}
\int_{\Omega}{u(x)\: P_\varepsilon \phi(x) \:dx} &=& \sum\limits_{\xi \in \Xi_\varepsilon}{\varepsilon^d \int_{Y}{u \left( \varepsilon \xi + \varepsilon y\right) P_\varepsilon \phi \left( \varepsilon \xi + \varepsilon y\right) \:dy }}, \\
&=& \sum\limits_{\xi \in \Xi_\varepsilon}{\varepsilon^d \int_{Y}{\int_Y{E_\varepsilon u \left(\varepsilon\xi, y\right) \phi \left( \varepsilon \xi + \varepsilon z, y\right) \:dzdy }}},\\
&=& \sum\limits_{\xi \in \Xi_\varepsilon}{\int_{Y}{\int_{\varepsilon(\xi + Y)}{E_\varepsilon u \left(x, y\right) \phi \left( x, y\right) \:dx}dy }}, \\
&=& \int_{\Omega \times Y}{E_\varepsilon u (x,y) \phi(x,y)\:dxdy}.
\end{array}
$$

\noindent \textit{(iv)} Let $u \in L^p(\Omega)$ be given. Then, a simple computation reveals that, for a.e. $x\in {\mathcal O}_\varepsilon$, 
$$ 
P_\varepsilon E_\varepsilon u(x) = \int_Y{E_\varepsilon u \left(\varepsilon \left[\frac{x}{\varepsilon} \right] +\varepsilon z , \left\{ \frac{x}{\varepsilon} \right\} \right) \:dz}
= \int_Y{u \left(\varepsilon \left[\frac{x}{\varepsilon} \right] + \varepsilon \left\{ \frac{x}{\varepsilon} \right\} \right) \:dz},
$$
which is by definition $u(x)$. \\

\noindent \textit{(v)} Assume first that $u \in {\mathcal D}(\Omega)$. Then, for a.e. $(x,y) \in  {\mathcal O}_\varepsilon \times Y$, 
$$ \lvert E_\varepsilon u(x,y) - u(x)\lvert  =  \lvert u\left(\varepsilon\left[\frac{x}{\varepsilon}\right]_Y + \varepsilon y\right) - u(x) \lvert \leq C \varepsilon \lvert\nabla u(x) \lvert.$$
Integrating over ${\mathcal O}_\varepsilon \times Y$, then using $(i)$, it follows that $\lvert\lvert E_\varepsilon u - u \lvert\lvert_{L^p(\Omega \times Y)} \to 0$ as $\varepsilon \to 0$. 
The result for general $u \in L^p(\Omega)$ follows from a standard density argument, using the fact that $E_\varepsilon$ has operator norm $1$. \\

\noindent \textit{(vi)} The proof is almost identical to that of $(v)$, noticing that, for a.e. $(x,y) \in {\mathcal O}_\varepsilon \times Y$, 
$$ E_\varepsilon 
u_\varepsilon
(x,y ) = \psi\left(\varepsilon\left[\frac{x}{\varepsilon}\right]_Y + \varepsilon y,y\right).$$

\noindent \textit{(vii)} For a.e. $(x,y) \in \Omega \times Y$, one has:
$$ 
E_\varepsilon P_\varepsilon \phi(x,y) = P_\varepsilon \phi \left(\varepsilon \left[ \frac{x}{\varepsilon} \right] + \varepsilon y \right) 
= \int_Y{\phi \left(\varepsilon \left[ \frac{x}{\varepsilon} \right] + \varepsilon z, y\right) \:dz}.
$$
From this formula, it is easily seen that, if $\phi \in {\mathcal D}(\Omega \times Y)$, one has, for $\varepsilon$ small enough: 
$$ \lvert\lvert E_\varepsilon  P_\varepsilon \phi - \phi \lvert\lvert_{L^p(\Omega \times Y)} \leq C \varepsilon,$$
where $C$ only depends on a modulus of continuity for $\phi$. 
The general result follows by a density argument, again using the fact that $||E_{\varepsilon}|| \leq 1$ and
$||P_{\varepsilon}|| \leq 1$.
\end{proof}

The operators $E_\varepsilon$ and $P_\varepsilon$ enjoy several additional important 
properties, which are the transcription of the compactness properties of the two-scale convergence topology (see e.g. \cite{allaire2s,nguetseng}). 
We only quote the results, referring to \cite{unfold} for proofs. 

\begin{theorem}\label{thcompactunfold}
Let $p \in (1,\infty)$, and let $u_\varepsilon$ be a bounded sequence in $W^{1,p}(\Omega)$. Then, 
up to a subsequence, there exist $u_0 \in W^{1,p}(\Omega)$ and $\widehat{u} \in L^p(\Omega, W^{1,p}_\#(Y))$ such that: 
$$ u_\varepsilon \rightarrow u_0 \text{ weakly in } W^{1,p}(\Omega), \text{ and } E_\varepsilon(\nabla u_\varepsilon) \rightarrow \nabla u_0 + \nabla_y \widehat{u} \text{ weakly in }  L^p(\Omega \times Y).$$
\end{theorem}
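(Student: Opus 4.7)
The plan is to combine weak compactness in the reflexive space $W^{1,p}(\Omega)$ with the asymptotic properties of the unfolding operator $E_\varepsilon$ summarized in Proposition \ref{prop.unfold}. Since $u_\varepsilon$ is bounded in $W^{1,p}(\Omega)$, first extract a subsequence so that $u_\varepsilon \rightharpoonup u_0$ weakly in $W^{1,p}(\Omega)$ and strongly in $L^p(\Omega)$ (by Rellich--Kondrachov, valid since $\Omega$ is bounded Lipschitz and $p>1$). Proposition \ref{prop.unfold}$(ii),(v)$ then gives that $E_\varepsilon u_\varepsilon \to u_0$ strongly in $L^p(\Omega\times Y)$ and that $E_\varepsilon(\nabla u_\varepsilon)$ is bounded in $L^p(\Omega\times Y)^d$. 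Extract a further subsequence so that $E_\varepsilon(\nabla u_\varepsilon) \rightharpoonup \xi$ weakly in $L^p(\Omega\times Y)^d$, for some $\xi$ yet to be identified.

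The $y$-average of $\xi$ is identified via duality. For an arbitrary $\varphi\in C_c^\infty(\Omega)^d$, viewed as independent of $y$, Proposition \ref{prop.unfold}$(iii)$ yields
$$ \int_{\Omega\times Y} E_\varepsilon(\nabla u_\varepsilon)\cdot\varphi(x)\,dx\,dy \;=\; \int_\Omega \nabla u_\varepsilon \cdot P_\varepsilon\varphi\,dx. $$
A direct estimate using the smoothness of $\varphi$ and the fact that $|{\mathcal B}_\varepsilon|\to 0$ shows $P_\varepsilon\varphi \to \varphi$ strongly in $L^q(\Omega)^d$; combined with $\nabla u_\varepsilon\rightharpoonup \nabla u_0$ weakly in $L^p(\Omega)^d$, passing to the limit gives $\int_Y \xi(x,y)\,dy = \nabla u_0(x)$ a.e. in $\Omega$.

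To recover the microscopic corrector, introduce the rescaled fluctuation
$$ V_\varepsilon(x,y) := \frac{1}{\varepsilon}\bigl(E_\varepsilon u_\varepsilon(x,y) - M_Y E_\varepsilon u_\varepsilon(x)\bigr), \qquad M_Y f(x) := \int_Y f(x,y)\,dy. $$
A direct $y$-differentiation gives $\nabla_y V_\varepsilon = E_\varepsilon(\nabla u_\varepsilon)$, which is bounded in $L^p(\Omega\times Y)^d$; since $M_Y V_\varepsilon\equiv 0$, the Poincar\'e--Wirtinger inequality on $Y$ applied fibrewise in $x$ shows that $V_\varepsilon$ is bounded in $L^p(\Omega; W^{1,p}(Y)/\mathbb{R})$. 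Extracting a further subsequence, $V_\varepsilon\rightharpoonup V$ weakly in this space, and $\nabla_y V = \xi$. Writing $V(x,y) = \nabla u_0(x)\cdot y + \widehat u(x,y)$ then produces the desired decomposition $\xi = \nabla u_0 + \nabla_y \widehat u$.

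The main obstacle is to show that $\widehat u(x,\cdot)$ belongs to $W^{1,p}_\#(Y)$, i.e., is $Y$-periodic. Equivalently, one has to check that $V(x,y+e_i) - V(x,y) = \partial_i u_0(x)$ in the sense of traces on corresponding faces of $\partial Y$. For the prelimit quantity $V_\varepsilon$, this jump equals $\varepsilon^{-1}\bigl(u_\varepsilon(\varepsilon[x/\varepsilon]_Y + \varepsilon(y+e_i)) - u_\varepsilon(\varepsilon[x/\varepsilon]_Y + \varepsilon y)\bigr)$, a discrete gradient of $u_\varepsilon$ at the $\varepsilon$-scale which formally tends to $\partial_i u_0$. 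Making this heuristic rigorous requires delicate trace estimates for $V_\varepsilon$ across the faces of $Y$ (taking proper account of the boundary region $\mathcal{B}_\varepsilon$ where $E_\varepsilon$ vanishes) and is the technical core of the unfolding method carried out in \cite{unfold}.
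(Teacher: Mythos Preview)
The paper itself does not prove Theorem~\ref{thcompactunfold}: it explicitly states the result without proof and refers the reader to \cite{unfold}. So there is no ``paper's own proof'' to compare against; your sketch is already more than the paper provides.

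Your outline is essentially the standard argument from the unfolding literature and is correct in its broad strokes. A couple of small remarks: the strong convergence $E_\varepsilon u_\varepsilon \to u_0$ in $L^p(\Omega\times Y)$ does not follow from Proposition~\ref{prop.unfold}\,$(v)$ alone (that item concerns a \emph{fixed} $u$), but you can recover it by writing $E_\varepsilon u_\varepsilon - u_0 = E_\varepsilon(u_\varepsilon - u_0) + (E_\varepsilon u_0 - u_0)$ and combining $(ii)$ with the strong $L^p$ convergence $u_\varepsilon\to u_0$. The computation $\nabla_y V_\varepsilon = E_\varepsilon(\nabla u_\varepsilon)$ is correct since $\nabla_y E_\varepsilon u_\varepsilon = \varepsilon\,E_\varepsilon(\nabla u_\varepsilon)$ and the $M_Y$ term is independent of $y$.

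You are right that the periodicity of $\widehat u$ is the only genuinely delicate point, and your decision to defer it to \cite{unfold} matches exactly what the paper does. If you wanted to make the sketch self-contained, the cleanest route (as in \cite{unfold}) is to test the weak limit of $V_\varepsilon$ against $\psi(x)\,\partial_{y_i}\varphi(y)$ with $\varphi\in C^\infty_\#(Y)$ and integrate by parts in $y$: periodicity of $\varphi$ kills the boundary terms on $\partial Y$, and the identification of $\int_Y \xi\,dy$ with $\nabla u_0$ then forces $\widehat u := V - \nabla u_0(x)\cdot y$ to have matching traces on opposite faces.
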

\par
\smallskip

\subsubsection{A two-scale formulation of the Poincar\'e-Neumann variational problem}\label{2scaleNP}~\\

As mentionned previously, the operator $T_\varepsilon: H^1_0(\Omega) \rightarrow H^1_0(\Omega)$ fails to 
have `nice' convergence properties, amenable to the study of its asymptotic spectral properties. To improve this feature, 
we `unfold' $T_\varepsilon$ into an operator $\mathbb{T}_\varepsilon$, from $ L^2(\Omega, H^1(\omega) / \mathbb{R}) $ into itself, 
where $L^2(\Omega,H^1(\omega) / \mathbb{R})$ is equipped with the inner product
$$ \langle \phi, \psi \rangle_{L^2(\Omega,H^1(\omega)/\mathbb{R})} = \int_{\Omega \times \omega}{\nabla_y \phi (x,y) \cdot \nabla_y \psi(x,y) \:dxdy}, $$
and the associated norm.

This task relies on the extension and projection operators $E_\varepsilon$ and $P_\varepsilon$ introduced in the previous section
and demands a little caution, since the definition of $T_\varepsilon$ involves first-order derivatives of functions, 
operations that are not necessarily well-behaved with respect to 
$E_\varepsilon$ and $P_\varepsilon$ (for instance, if $u \in H^1(\Omega)$, $E_\varepsilon u$
is in general not in $H^1(\Omega \times Y)$). 

More precisely, to cope with this technical difficulty, we use the ideas of Section \ref{sec.rest}:
let us introduce the Hilbert space 
\begin{equation}\label{eq.defHeps}
H_\varepsilon = H^1(\omega_\varepsilon) / C(\omega_\varepsilon),\text{ where } C(\omega_\varepsilon) = \left\{ u \in H^1(\omega_\varepsilon), \:\exists \: c_\xi \in \mathbb{R}, \: u = c_\xi \text{ on } \omega_\varepsilon^\xi, \:\: \xi \in \Xi_\varepsilon \right\},
\end{equation}
equipped with the inner product $\langle \cdot, \cdot \rangle_{H_\varepsilon}$ and norm $\lvert\lvert \cdot \lvert\lvert_{H_\varepsilon}$ defined by (see (\ref{eq.innerprodHD})): 
$$ \langle u, v \rangle_{H_\varepsilon} = \int_{\omega_\varepsilon}{\nabla u \cdot \nabla v \:dx}, \text{ and } \lvert\lvert u \lvert\lvert_{H_\varepsilon}^2 = \langle u , u \rangle_{H_\varepsilon},\:\: u,v\in H_\varepsilon.$$
To simplify notations, we shall write in the same way a function in $H^1(\omega_\varepsilon)$ and its class in $H_\varepsilon$ in the forthcoming developments. 
We have seen that $T_\varepsilon$ induces an isomorphism $\mathring{T_\varepsilon} : H_\varepsilon \to H_\varepsilon$
which retains the spectral properties of $T_\varepsilon$, in the sense of Proposition \ref{propspecrescalTD}.

Let us now observe that: 
\begin{itemize}
\item The operator $E_\varepsilon: L^2(\Omega) \rightarrow L^2(\Omega \times Y)$ induces an operator (still denoted by $E_\varepsilon$) from 
$H_\varepsilon$ into $L^2(\Omega, H^1(\omega) / \mathbb{R})$, and: 
\begin{equation}\label{eq.gradteps}
 \forall u \in H_\varepsilon, \:\: \nabla_y(E_\varepsilon u) = \varepsilon E_\varepsilon(\nabla u),
 \end{equation}
\item The operator $P_\varepsilon : L^2(\Omega \times Y) \rightarrow L^2(\Omega)$ induces an operator (still denoted by $P_\varepsilon$) from $L^2(\Omega, H^1(\omega) / \mathbb{R})$ into $H_\varepsilon$ and: 
\begin{equation}\label{eq.gradueps}
 \forall \phi \in L^2(\Omega,H^1(\omega) / \mathbb{R}), \:\: \nabla (P_\varepsilon \phi) = \frac{1}{\varepsilon}P_\varepsilon(\nabla_y \phi).
 \end{equation}
\end{itemize}
Taking advantage of these facts, we now define the rescaled operator $ \mathbb{T}_\varepsilon = E_\varepsilon \mathring{T_\varepsilon} P_\varepsilon$
from $L^2(\Omega,H^1(\omega)/\mathbb{R})$ into itself.

\begin{lemma}
The operator $\mathbb{T}_\varepsilon$ is self-adjoint. 
\end{lemma}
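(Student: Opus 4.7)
The plan is to exploit the self-adjointness of $\mathring{T_\varepsilon} : H_\varepsilon \to H_\varepsilon$ already established in Proposition \ref{prop.TDring}, and to show that $E_\varepsilon$ and $P_\varepsilon$ behave as mutual adjoints (up to a scalar factor) when the spaces $H_\varepsilon$ and $L^2(\Omega, H^1(\omega)/\mathbb{R})$ are equipped with the gradient-based inner products specified here. Concretely, for arbitrary $\phi, \psi \in L^2(\Omega, H^1(\omega)/\mathbb{R})$, I would prove the identity
\begin{equation*}
\langle \mathbb{T}_\varepsilon \phi, \psi \rangle_{L^2(\Omega, H^1(\omega)/\mathbb{R})} \;=\; \varepsilon^2 \, \langle \mathring{T_\varepsilon} P_\varepsilon \phi, P_\varepsilon \psi \rangle_{H_\varepsilon}.
\end{equation*}
Once this is in hand, applying the self-adjointness of $\mathring{T_\varepsilon}$ on the right-hand side and running the same computation backwards yields $\langle \mathbb{T}_\varepsilon \phi, \psi \rangle = \langle \phi, \mathbb{T}_\varepsilon \psi \rangle$, which is the desired conclusion.

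The derivation of the displayed identity is a short chain of three bookkeeping steps. Starting from the definition of the inner product on $L^2(\Omega, H^1(\omega)/\mathbb{R})$, I would first use (\ref{eq.gradteps}) to transfer the $\nabla_y$ acting on $E_\varepsilon(\mathring{T_\varepsilon} P_\varepsilon \phi)$ into a factor $\varepsilon$ times $E_\varepsilon$ applied to $\nabla(\mathring{T_\varepsilon} P_\varepsilon \phi)$. Next, the $L^2$-duality between $E_\varepsilon$ and $P_\varepsilon$ from Proposition \ref{prop.unfold}(iii), applied coordinate-wise to the gradient vectors (viewed as $L^2$ functions extended by zero outside $\omega_\varepsilon$ and $\Omega \times \omega$ respectively), moves $E_\varepsilon$ to the other side at no further cost. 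Finally (\ref{eq.gradueps}) rewrites $P_\varepsilon(\nabla_y \psi)$ as $\varepsilon \nabla(P_\varepsilon \psi)$, producing the $\varepsilon^2$ prefactor and reconstituting the inner product on $H_\varepsilon$.

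The only point requiring a little care is the handling of supports: the operator $\mathring{T_\varepsilon}$ acts on equivalence classes in $H_\varepsilon = H^1(\omega_\varepsilon)/C(\omega_\varepsilon)$, so its image is a priori defined only on $\omega_\varepsilon$, and the $L^2$-adjointness of Proposition \ref{prop.unfold}(iii) is a statement about functions on $\Omega$ and $\Omega \times Y$. I would check that $\nabla(\mathring{T_\varepsilon} P_\varepsilon \phi)$ extended by zero on $\Omega \setminus \omega_\varepsilon$ and $\nabla_y \psi$ extended by zero on $\Omega \times (Y \setminus \omega)$ correctly slot into Proposition \ref{prop.unfold}(iii), which follows since in both cases the $E_\varepsilon$-image is supported exactly on $\{(x,y) : \varepsilon[x/\varepsilon]_Y + \varepsilon y \in \omega_\varepsilon\}$, that is on ${\mathcal O}_\varepsilon \times \omega$. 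This is the main (and only) subtlety; everything else is algebraic.
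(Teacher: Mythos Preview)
Your proposal is correct and follows essentially the same route as the paper: both reduce the inner product $\langle \mathbb{T}_\varepsilon \phi, \psi\rangle_{L^2(\Omega,H^1(\omega)/\mathbb{R})}$ to $\varepsilon^2 \langle \mathring{T_\varepsilon} P_\varepsilon \phi, P_\varepsilon \psi\rangle_{H_\varepsilon}$ via the commutation relations (\ref{eq.gradteps})--(\ref{eq.gradueps}) and the $L^2$-adjointness of $E_\varepsilon$ and $P_\varepsilon$, then invoke the self-adjointness of $\mathring{T_\varepsilon}$ and reverse the computation. The paper handles the support issue by inserting the indicator $\mathds{1}_\omega(y)$ before applying Proposition~\ref{prop.unfold}(iii), which is exactly your extension-by-zero phrased differently.
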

\begin{proof}
We simply calculate, for arbitrary functions $\phi,\psi \in L^2(\Omega,H^1(\omega)/\mathbb{R})$, 
$$ \begin{array}{>{\displaystyle}cc>{\displaystyle}l}
\int_{\Omega \times \omega}{\nabla_y (\mathbb{T}_\varepsilon\phi)  \cdot \nabla_y \psi \:dxdy} &=& \int_{\Omega \times \omega}{\nabla_y (E_\varepsilon \mathring{T_\varepsilon} P_\varepsilon \phi)  \cdot \nabla_y \psi  \:dxdy},\\
&=& \varepsilon \int_{\Omega \times \omega}{E_\varepsilon \left( \nabla ( \mathring{T_\varepsilon} P_\varepsilon \phi) \right)  \cdot \nabla_y \psi  \:dxdy},\\
&=& \varepsilon \int_{\Omega \times Y}{E_\varepsilon \left( \nabla ( \mathring{T_\varepsilon} P_\varepsilon \phi) \right)  \cdot (\mathds{1}_\omega(y)\nabla_y \psi)  \:dxdy},
\end{array}$$
where $\mathds{1}_\omega$ is the characteristic function of the inclusion $\omega$. Then,
$$ \begin{array}{>{\displaystyle}cc>{\displaystyle}l}
\int_{\Omega \times \omega}{\nabla_y (\mathbb{T}_\varepsilon\phi)  \cdot \nabla_y \psi \:dxdy} 
&=& \varepsilon \int_{\Omega}
{P_\varepsilon (\mathds{1}_\omega) \:  \left( \nabla ( \mathring{T_\varepsilon} P_\varepsilon \phi) \right) 
\cdot P_\varepsilon \left( \nabla_y \psi\right)  \:dx},\\
&=& \varepsilon \int_{\omega_\varepsilon}{ \nabla ( \mathring{T_\varepsilon} P_\varepsilon \phi)  \cdot P_\varepsilon \left( \nabla_y \psi \right) \:dx},\\
&=& \varepsilon^2 \int_{\omega_\varepsilon}{ \nabla ( \mathring{T_\varepsilon} P_\varepsilon \phi)  \cdot  \nabla \left(P_\varepsilon \psi \right) \:dx}.
\end{array}
$$
In the above equalities, we have used (\ref{eq.gradteps}), (\ref{eq.gradueps}) and Proposition \ref{prop.unfold} $(iii)$. 
Now, using the self-adjointness of $\mathring{T}_\varepsilon$ (see Proposition \ref{prop.TDring}), 
and the exact same calculations as above, we obtain:
$$ \begin{array}{>{\displaystyle}cc>{\displaystyle}l}
\int_{\Omega \times \omega}{\nabla_y (\mathbb{T}_\varepsilon \phi)  \cdot \nabla_y \psi \:dxdy} &=& \varepsilon^2 \int_{\omega_\varepsilon}{ \nabla ( P_\varepsilon \phi)  \cdot  \nabla \left(  \mathring{T_\varepsilon} P_\varepsilon \psi \right) \:dx},\\
&=&  \varepsilon \int_{\omega_\varepsilon}{P_\varepsilon( \nabla_y  \phi)  \cdot  \nabla \left(  \mathring{T_\varepsilon} P_\varepsilon \psi \right) \:dx},\\
&=&  \varepsilon \int_{\Omega \times \omega}{ \nabla_y  \phi  \cdot E_\varepsilon\left( \nabla \left(  \mathring{T_\varepsilon}P_\varepsilon \psi \right)\right) \:dxdy},\\
&=&  \int_{\Omega \times \omega}{ \nabla_y  \phi \cdot \nabla_y ( \mathbb{T}_\varepsilon  \psi ) \:dxdy},
\end{array}
$$
whence the desired result.
\end{proof}
We now verify that $\mathbb{T}_\varepsilon$ retains the same spectrum as $T_\varepsilon$.

\begin{proposition}\label{propsamespec1}
For given $\varepsilon >0$, the spectrum of the operator $\mathbb{T}_\varepsilon: L^2(\Omega,H^1(\omega)/\mathbb{R})\to  L^2(\Omega,H^1(\omega)/\mathbb{R})$ equals: 
$$ \sigma(\mathbb{T}_\varepsilon) = \sigma(T_\varepsilon) \setminus \left\{Ê0 \right\}.$$ 
Moreover, $\lambda \neq 0$ is an eigenvalue of $T_\varepsilon$ if and only if it is an eigenvalue of $\mathbb{T}_\varepsilon$.  
\end{proposition}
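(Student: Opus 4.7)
The plan is to exploit the factorisation $\mathbb{T}_\varepsilon = E_\varepsilon \mathring{T_\varepsilon} P_\varepsilon$ together with Proposition \ref{propspecrescalTD}, which already identifies $\sigma(\mathring{T_\varepsilon})$ with $\sigma(T_\varepsilon)\setminus\{0\}$. So the real task is to show that the nonzero spectra of $\mathbb{T}_\varepsilon$ and $\mathring{T_\varepsilon}$ coincide. The bridge between the two operators rests on two structural identities, which I would establish first: $P_\varepsilon E_\varepsilon = \mathrm{Id}_{H_\varepsilon}$ (a consequence of Proposition \ref{prop.unfold}(iv), applied to the inclusion set $\omega_\varepsilon\Subset \mathcal{O}_\varepsilon$), and $\|E_\varepsilon u\|_{L^2(\Omega,H^1(\omega)/\mathbb{R})} = \varepsilon\, \|u\|_{H_\varepsilon}$ for every $u\in H_\varepsilon$ (obtained from (\ref{eq.gradteps}) and a cell-by-cell change of variables). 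Together these say that $E_\varepsilon : H_\varepsilon \hookrightarrow L^2(\Omega,H^1(\omega)/\mathbb{R})$ is, up to the scaling factor $\varepsilon$, an isometric embedding with left inverse $P_\varepsilon$.

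For the inclusion $\sigma(\mathring{T_\varepsilon})\subset \sigma(\mathbb{T}_\varepsilon)$, given $\lambda\neq 0$ and a normalised quasi-eigenvector sequence $u_n\in H_\varepsilon$ with $\|\mathring{T_\varepsilon}u_n - \lambda u_n\|_{H_\varepsilon} \to 0$, I would set $\phi_n := E_\varepsilon u_n$. Since $P_\varepsilon E_\varepsilon = \mathrm{Id}_{H_\varepsilon}$, a direct computation gives $\mathbb{T}_\varepsilon \phi_n - \lambda \phi_n = E_\varepsilon(\mathring{T_\varepsilon} u_n - \lambda u_n)$, whose norm equals $\varepsilon \|\mathring{T_\varepsilon}u_n - \lambda u_n\|_{H_\varepsilon} \to 0$, while $\|\phi_n\| = \varepsilon$. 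After renormalising, $(\phi_n/\|\phi_n\|)$ is a Weyl sequence at $\lambda$ for $\mathbb{T}_\varepsilon$.

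The converse direction is more delicate, because $\mathrm{Ker}(P_\varepsilon)$ is nontrivial and is automatically contained in $\mathrm{Ker}(\mathbb{T}_\varepsilon)$; this is the main obstacle. The idea is to project away this kernel. Given a normalised Weyl sequence $\phi_n$ for $\mathbb{T}_\varepsilon$ at $\lambda\neq 0$, decompose $\phi_n = \phi_n^{(1)} + \phi_n^{(2)}$ with $\phi_n^{(1)} \in \overline{\mathrm{Ran}(E_\varepsilon)}$ and $\phi_n^{(2)} \in \mathrm{Ran}(E_\varepsilon)^\perp = \mathrm{Ker}(P_\varepsilon)$. Because $\mathbb{T}_\varepsilon\phi_n^{(2)} = 0$ and $\mathbb{T}_\varepsilon\phi_n^{(1)}\in \mathrm{Ran}(E_\varepsilon)$, taking inner products of $\mathbb{T}_\varepsilon \phi_n - \lambda \phi_n$ with $\phi_n^{(2)}$ gives the identity $\langle \mathbb{T}_\varepsilon \phi_n - \lambda \phi_n,\phi_n^{(2)}\rangle = -\lambda\|\phi_n^{(2)}\|^2$, so that Cauchy–Schwarz yields $\|\phi_n^{(2)}\|\to 0$. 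Hence, up to negligible error, $\phi_n = E_\varepsilon u_n$ with $u_n = P_\varepsilon \phi_n \in H_\varepsilon$, and reversing the computation of the direct inclusion shows that $u_n/\|u_n\|_{H_\varepsilon}$ is a Weyl sequence for $\mathring{T_\varepsilon}$ at $\lambda$; note that $\|u_n\|_{H_\varepsilon}$ stays bounded away from $0$ since $\|\phi_n^{(1)}\|\to 1$ and $\|E_\varepsilon u_n\| = \varepsilon \|u_n\|_{H_\varepsilon}$.

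Finally, the correspondence for true eigenvalues is a direct specialisation of the above quasi-eigenvector argument: if $u\in H_\varepsilon$ satisfies $\mathring{T_\varepsilon}u = \lambda u$ with $\lambda\neq 0$, then $E_\varepsilon u$ is an honest eigenvector of $\mathbb{T}_\varepsilon$ for the same eigenvalue (identity without approximation), and conversely the orthogonal decomposition argument shows that any eigenvector of $\mathbb{T}_\varepsilon$ with nonzero eigenvalue must belong to $\mathrm{Ran}(E_\varepsilon)$, on which $\mathbb{T}_\varepsilon$ is similar to $\mathring{T_\varepsilon}$ via $E_\varepsilon$.
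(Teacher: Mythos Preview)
Your argument is correct and is in fact cleaner than the paper's. Both proofs handle the inclusion $\sigma(T_\varepsilon)\setminus\{0\}\subset\sigma(\mathbb{T}_\varepsilon)$ in the same way, by pushing a Weyl sequence for $\mathring{T_\varepsilon}$ through $E_\varepsilon$. The difference lies in the converse. The paper pulls a Weyl sequence $\phi_n$ for $\mathbb{T}_\varepsilon$ back to $H^1_0(\Omega)$: it takes a representative of $P_\varepsilon\phi_n$ in $H^1(\omega_\varepsilon)$, extends it harmonically to $\Omega$, then adjusts by cellwise constants chosen via the Poincar\'e--Wirtinger inequality so that the extension becomes an approximate eigenvector for $T_\varepsilon$ directly. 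Your route stays entirely at the abstract level: you observe that $E_\varepsilon$ is, up to the factor $\varepsilon$, an isometric embedding with closed range, decompose $\phi_n$ orthogonally along $\mathrm{Ran}(E_\varepsilon)\oplus\mathrm{Ker}(P_\varepsilon)$, and use $\lambda\neq 0$ together with $\mathbb{T}_\varepsilon|_{\mathrm{Ker}(P_\varepsilon)}=0$ to kill the transverse component. This avoids harmonic extensions and the Poincar\'e--Wirtinger argument altogether and makes the mechanism (a partial isometry conjugacy) transparent. One small point of exposition: the identity $\mathrm{Ran}(E_\varepsilon)^\perp=\mathrm{Ker}(P_\varepsilon)$ is not a consequence of $P_\varepsilon$ being merely a left inverse; it requires the adjoint relation $E_\varepsilon^*=\varepsilon^2 P_\varepsilon$ in the gradient inner products, which follows from (\ref{eq.gradteps}), (\ref{eq.gradueps}) and Proposition~\ref{prop.unfold}(iii) and is worth stating explicitly.
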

\begin{proof}
Let $\lambda \in \sigma(T_\varepsilon)$, $\lambda \neq 0$. 
By Proposition \ref{propspecrescalTD}, $\lambda$ belongs to $\sigma(\mathring{T_\varepsilon})$, 
and there exists a sequence $u_n \in H^1_0(\Omega)$ such that: 
$$ \lvert\lvert \nabla (T_\varepsilon u_n) - \lambda \nabla u_n \lvert\lvert_{L^2(\omega_\varepsilon)^d} \xrightarrow{n \rightarrow 0} 0, \text{ and } \lvert\lvert \nabla u_n \lvert\lvert_{L^2(\omega_\varepsilon)^d} = 1.$$
Set $\phi_n = E_\varepsilon u_n \in L^2(\Omega,H^1(\omega) /\mathbb{R})$;
it comes: 
$$ \int_{\Omega \times \omega}{\lvert \nabla_y (\mathbb{T}_\varepsilon \phi_n) - \lambda \nabla_y\phi_n \lvert^2 \:dxdy} = \varepsilon^2 \int_{\omega_\varepsilon}{\lvert 
\nabla(T_\varepsilon u_n) - \lambda \nabla u_n \lvert^2 \:dx} \xrightarrow{n \to \infty} 0,$$
where we have used Proposition \ref{prop.unfold} and the identity, for $u \in H_\varepsilon$,
\begin{eqnarray} \label{est_norms}
\int_{\Omega \times \omega} \lvert E_\varepsilon \nabla u \lvert^2 \,dxdy
&=&
\int_{{\mathcal O}_\varepsilon \times \omega}
\lvert E_\varepsilon \nabla u \lvert^2 \,dxdy
\;=\;
\sum\limits_{\xi \in \Xi_\varepsilon}
\int_{\varepsilon(\xi + Y) \times \omega}
\lvert E_\varepsilon \nabla u \lvert^2 \,dxdy
\nonumber \\
&=&
\sum\limits_{\xi \in \Xi_\varepsilon}
\varepsilon^d \int_{\omega} \lvert \nabla u(\varepsilon \xi + \varepsilon y) \lvert^2 \,dy
\;=\;
\int_{\omega_\varepsilon} |\nabla u|^2 \,dx.
\end{eqnarray}
Moreover, 
$$\lvert\lvert \phi_n \lvert\lvert^2_{L^2(\Omega,H^1(\omega)/\mathbb{R})} = \int_{\Omega \times \omega}{\lvert \nabla_y \phi_n\lvert^2 \:dxdy} = \varepsilon^2 \int_{\Omega \times \omega}{\lvert E_\varepsilon(\nabla u_n )\lvert^2 \:dxdy} = \varepsilon^2; $$ 
since $\phi_n$ is bounded away from $0$ in $L^2(\Omega,H^1(\omega) / \mathbb{R})$ as $n \to \infty$, we conclude that $\lambda \in \sigma(\mathbb{T}_\varepsilon)$.\\

Let now $\lambda \in \sigma(\mathbb{T}_\varepsilon) \setminus \left\{ 0 \right\}$;
there exists a sequence $\phi_n \in L^2(\Omega, H^1(\omega) / \mathbb{R})$, such that: 
\begin{equation}\label{propcvTun}
\lvert\lvert \nabla_y \phi_n \lvert\lvert_{L^2(\Omega \times \omega)} = 1, \text{ and } \int_{\Omega \times \omega}{\lvert \nabla_y(\mathbb{T}_\varepsilon\phi_n - \lambda \phi_n) \lvert^2 \:dxdy} \xrightarrow{n \rightarrow \infty} 0. 
 \end{equation}
Let us define the sequence $u_n \in H^1_0(\Omega)$ by: 
$$ u_n = P_\varepsilon \phi_n - w_n \text{ on } \omega_\varepsilon, \text{ and } -\Delta u_n = 0 \text{ on } \Omega \setminus \overline{\omega_\varepsilon},$$
in which $P_\varepsilon \phi_n$ stands for any element of the equivalence class $P_\varepsilon \phi_n \in H_\varepsilon$, 
and $w_n \in C(\omega_\varepsilon)$ is a function which is constant in each connected component of $\omega_\varepsilon$, yet to be chosen. 
Using Proposition \ref{prop.unfold}, $(ii)$ yields: 
$$ \begin{array}{>{\displaystyle}cc>{\displaystyle}l}
\int_{\Omega \times \omega}{\lvert \nabla_y ( \mathbb{T}_\varepsilon \phi_n - \lambda \phi_n ) \lvert^2 \:dxdy} & \geq & \int_{\omega_\varepsilon}{\lvert P_\varepsilon(\nabla_y (\mathbb{T}_\varepsilon \phi_n) - \lambda \nabla_y \phi_n) \lvert^2\:dx}, \\
&=& \varepsilon^2  \int_{\omega_\varepsilon}{\lvert  \nabla (T_\varepsilon u_n) - \lambda  \nabla u_n \lvert^2\:dx}.
\end{array}$$  
Now, using the boundedness of the harmonic extension operator, we obtain: 
$$
\begin{array}{ccl}
 \lvert\lvert \nabla (T_\varepsilon u_n - \lambda u_n) \lvert\lvert_{L^2(\Omega)^d} &\leq& C \lvert\lvert T_\varepsilon u_n - \lambda u_n \lvert\lvert_{H^1(\omega_\varepsilon)},\\
 &\leq & C\lvert\lvert \nabla (T_\varepsilon u_n) - \lambda \nabla u_n \lvert\lvert_{L^2(\omega_\varepsilon)^d},
 \end{array}$$
  provided $w_n$ is chosen in such a way that $\int_{\omega_\varepsilon^\xi}{(T_\varepsilon u_n - \lambda u_n) \:dx}  = 0$ for all $\xi \in \Xi_\varepsilon$ (owing to 
  the Poincar\'e Wirtinger inequality). 
 Note that, in the above inequality, the constant $C$ may depend on $\varepsilon$, which has a fixed value in the present proof.  
  Therefore,  
\begin{equation}\label{eq.cvweylTeps}
 \lvert\lvert \nabla (T_\varepsilon u_n - \lambda u_n) \lvert\lvert_{L^2(\Omega)}^2 \xrightarrow{n \rightarrow \infty} 0.
 \end{equation}
To complete the proof of the claim, we only need to
show that the sequence $u_n$ does not converge to $0$ in $H^1_0(\Omega)$ as $n \to \infty$. 
But this follows easily from (\ref{eq.cvweylTeps}): since $\lambda \neq 0$, if $u_n$ were to converge to $0$ in $H^1_0(\Omega)$, then so would $T_\varepsilon u_n$; 
in turn $E_\varepsilon T_\varepsilon u_n = \mathbb{T}_\varepsilon \phi_n$ would converge to $0$ in $L^2(\Omega,H^1(\omega)/\mathbb{R})$, which 
is impossible in view of (\ref{propcvTun}). The claim follows.
 
The last statement of Proposition \ref{propsamespec1}, concerning eigenvalues, is proved in an analogous way.  
\end{proof}

\subsubsection{Pointwise convergence of the rescaled operator $\mathbb{T}_\varepsilon$.}\label{sec.step3rescal}~\\

The main result in this section states that $\mathbb{T}_\varepsilon$ converges pointwise, albeit possibly not in operator norm,
to a limit operator $\mathbb{T}_0 : L^2(\Omega,H^1(\omega) / \mathbb{R}) \rightarrow L^2(\Omega,H^1(\omega) / \mathbb{R})$. 

\begin{proposition}\label{prop.cvrescal1}
The operator $\mathbb{T}_\varepsilon$ converges pointwise to a limit operator $\mathbb{T}_0$, mapping $L^2(\Omega,H^1(\omega) / \mathbb{R})$ into itself: 
for any function $\phi \in L^2(\Omega, H^1(\omega) /\mathbb{R})$, the following convergence holds strongly in $L^2(\Omega,H^1(\omega) / \mathbb{R})$: 
$$ \mathbb{T}_\varepsilon \phi  \:\: \xrightarrow{\varepsilon \rightarrow 0} \:\:Ê\mathbb{T}_0\phi. $$
The operator $\mathbb{T}_0$ is defined by 
$$\mathbb{T}_0\phi(x,y) = Q \left(\nabla v_0(x) \cdot y + \widehat{v}(x,y) \right),$$
where 
\begin{itemize}
\item $Q : L^2(\Omega, H^1(Y)) \rightarrow L^2(\Omega,H^1(\omega) /\mathbb{R})$ is the natural restriction operator, 
\item $\widehat{v}$ is the unique solution in $L^2(\Omega,H^1_\#(Y) /\mathbb{R})$ to the following equation: 
\begin{equation}\label{eq.defvhat}
-\Delta_y \widehat{v} (x,y) = -\text{\rm div}_y(\mathds{1}_\omega(y) \nabla_y \phi)(x,y)  \text{ in } H^1_\#(Y), \text{ a.e. in } x \in \Omega,
\end{equation}
\item $v_0$ is the unique solution in $H^1_0(\Omega)$ to the equation:
\begin{equation}\label{eq.defv0}
 -\Delta v_0 = -\text{\rm div} \left(\int_\omega{\nabla_y \phi(x,y)\:dy} \right).
 \end{equation}
\end{itemize}
\end{proposition}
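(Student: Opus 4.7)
The plan is to rescale $w_\varepsilon := T_\varepsilon u_\varepsilon$, where $u_\varepsilon := U_{\omega_\varepsilon}(P_\varepsilon \phi) \in H^1_0(\Omega)$ lifts the input to $H^1_0(\Omega)$, and to identify the two-scale limit of $\tilde w_\varepsilon := \varepsilon w_\varepsilon$ by combining weak two-scale convergence with an energy identity. Using (\ref{eq.gradueps}), the continuity of $U_{\omega_\varepsilon}$ and the $L^2$-boundedness of $P_\varepsilon$ yield $\lvert\lvert \tilde w_\varepsilon \lvert\lvert_{H^1_0(\Omega)} \leq C \lvert\lvert \phi \lvert\lvert_{L^2(\Omega,H^1(\omega)/\mathbb{R})}$, while the variational definition of $T_\varepsilon$ rescales into
\[
\int_\Omega \nabla \tilde w_\varepsilon \cdot \nabla v\, dx \;=\; \int_{\omega_\varepsilon} P_\varepsilon(\nabla_y \phi) \cdot \nabla v\, dx, \quad \forall v \in H^1_0(\Omega).
\]
Theorem \ref{thcompactunfold} then provides, up to a subsequence, $v_0 \in H^1_0(\Omega)$ and $\widehat v \in L^2(\Omega, H^1_\#(Y)/\mathbb{R})$ such that $\tilde w_\varepsilon \rightharpoonup v_0$ weakly in $H^1_0(\Omega)$ and $E_\varepsilon(\nabla \tilde w_\varepsilon) \rightharpoonup \nabla v_0 + \nabla_y \widehat v$ weakly in $L^2(\Omega \times Y)$; moreover a direct computation gives $\nabla_y(\mathbb{T}_\varepsilon \phi) = E_\varepsilon(\nabla \tilde w_\varepsilon)$ on $\Omega \times \omega$.

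Next I would identify the limits $(v_0, \widehat v)$ by testing against appropriate functions. Using adjointness (Proposition \ref{prop.unfold}(iii)) to rewrite the right-hand side of the rescaled equation as $\int_{\Omega \times \omega} \nabla_y \phi \cdot E_\varepsilon(\nabla v)\, dxdy$, I would first take $v \in H^1_0(\Omega)$ fixed, so that $E_\varepsilon \nabla v \to \nabla v$ strongly in $L^2(\Omega \times Y)$ by Proposition \ref{prop.unfold}(v); passing to the limit yields exactly the variational form of (\ref{eq.defv0}). Taking instead $v = \varepsilon \psi(x, x/\varepsilon)$ with $\psi \in \mathcal{D}(\Omega, C^\infty_\#(Y))$, so that $E_\varepsilon \nabla v \to \nabla_y \psi$ strongly (Proposition \ref{prop.unfold}(vi)) while the $\nabla v_0$ contribution to the two-scale limit drops out because $\int_Y \nabla_y \psi \, dy = 0$, produces the weak formulation of (\ref{eq.defvhat}). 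Unique solvability of both problems ensures that the full sequence converges weakly to these specific $v_0$, $\widehat v$.

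The delicate step will be upgrading this weak convergence to the strong convergence stated in the proposition. My plan is to use the energy identity obtained by choosing $v = \tilde w_\varepsilon$ in the rescaled equation and applying adjointness:
\[
\int_\Omega |\nabla \tilde w_\varepsilon|^2\, dx \;=\; \int_{\Omega \times \omega} \nabla_y \phi \cdot E_\varepsilon(\nabla \tilde w_\varepsilon)\, dxdy \;\xrightarrow{\varepsilon \to 0}\; \int_{\Omega \times \omega} \nabla_y \phi \cdot (\nabla v_0 + \nabla_y \widehat v)\, dxdy.
\]
Testing the macroscopic equation against $v_0$ and the cell equation against $\widehat v$, and noting that the cross term $\int_{\Omega \times Y} \nabla v_0 \cdot \nabla_y \widehat v\, dxdy$ vanishes by periodicity of $\widehat v$, the right-hand side rewrites as $\int_{\Omega \times Y}|\nabla v_0 + \nabla_y \widehat v|^2\, dxdy$. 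Since $E_\varepsilon$ has operator norm $1$ (Proposition \ref{prop.unfold}(ii)), one also has $\int_{\Omega \times Y}|E_\varepsilon \nabla \tilde w_\varepsilon|^2 \leq \int_\Omega |\nabla \tilde w_\varepsilon|^2$, so that $\limsup_\varepsilon \int_{\Omega \times Y}|E_\varepsilon \nabla \tilde w_\varepsilon|^2 \leq \int_{\Omega \times Y}|\nabla v_0 + \nabla_y \widehat v|^2$; combined with weak lower semicontinuity, this yields norm convergence in $L^2(\Omega \times Y)$, and, with the weak convergence, strong convergence there, a fortiori on $\Omega \times \omega$, where $\nabla_y(\mathbb{T}_0 \phi) = \nabla v_0 + \nabla_y \widehat v$. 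This closes the proof.
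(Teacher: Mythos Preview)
Your proposal is correct and follows essentially the same route as the paper's proof: bound $\varepsilon\, T_\varepsilon(P_\varepsilon\phi)$ in $H^1_0(\Omega)$, extract a two-scale limit $(v_0,\widehat v)$ via Theorem~\ref{thcompactunfold}, identify each component by testing against $\varphi(x)$ and $\varepsilon\psi(x,x/\varepsilon)$, and then upgrade to strong convergence by matching energies. The only cosmetic difference is that you invoke adjointness of $(E_\varepsilon,P_\varepsilon)$ where the paper uses $E_\varepsilon P_\varepsilon\to I$; note also that the a priori bound on $\tilde w_\varepsilon$ follows directly from the energy estimate $\|\nabla(T_\varepsilon u)\|_{L^2(\Omega)}\le\|\nabla u\|_{L^2(\omega_\varepsilon)}$, so the harmonic extension $U_{\omega_\varepsilon}$ (whose continuity constant is not obviously uniform in $\varepsilon$) is never actually needed.
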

\begin{proof}
Let $\phi \in L^2(\Omega,H^1(\omega) /\mathbb{R})$ be given, and define $u_\varepsilon $ as (any element in the class) $P_\varepsilon \phi \in H_\varepsilon$. 
The function $v_\varepsilon := T_\varepsilon u_\varepsilon \in H^1_0(\Omega)$ fulfills the following variational problem: 
\begin{equation}\label{varfveps} 
\forall v \in H^1_0(\Omega), \:\: \int_\Omega{\nabla v_\varepsilon \cdot \nabla v \:dx} = \int_{\omega_\varepsilon}{\nabla u_\varepsilon \cdot \nabla v\:dx},
\end{equation}
and it follows from the definitions that $\mathbb{T}_\varepsilon\phi = Q E_\varepsilon v_\varepsilon$. 
Standard energy estimates imply: 
$$\begin{array}{>{\displaystyle}c c>{\displaystyle}l}
 \lvert\lvert \nabla v_\varepsilon  \lvert\lvert_{L^2(\Omega)} &\leq &C \lvert \lvert \nabla u_\varepsilon \lvert\lvert_{L^2(\omega_\varepsilon)},\\
&\leq & \frac{C}{\varepsilon} \lvert\lvert \nabla_y \phi \lvert\lvert_{L^2(\Omega \times \omega)}.
\end{array}$$

Then, using Theorem \ref{thcompactunfold}, we infer that, up to a subsequence still indexed by $\varepsilon$, there exist $v_0 \in H^1_0(\Omega)$ and $\widehat{v} \in L^2(\Omega, H^1_\#(Y) / \mathbb{R})$ such that the following convergences hold: 
\begin{equation}\label{wkcvveps}
\varepsilon  v_\varepsilon \rightarrow v_0 \: \text{ weakly in }\: H^1_0(\Omega), \:\: \text{ and } \:\: \nabla_y(E_\varepsilon v_\varepsilon) = \varepsilon E_\varepsilon (\nabla v_\varepsilon) \rightarrow \nabla v_0 + \nabla_y \widehat{v} \:\text{ weakly in }\: L^2(\Omega\times Y).
\end{equation}
The identity (\ref{varfveps}) may be rewritten as: 
\begin{equation*}
\forall v \in H^1_0(\Omega), \:\: \int_\Omega{\varepsilon \nabla v_\varepsilon \cdot \nabla v \:dx} = \int_{\omega_\varepsilon}{P_\varepsilon(\nabla_y \phi) \cdot \nabla v\:dx},
\end{equation*}
and, using Proposition \ref{prop.unfold}, it can be rescaled into: 
\begin{equation}\label{varfvepsrescal}
\forall v \in H^1_0(\Omega), \:\: \int_{\Omega \times Y}{\varepsilon E_\varepsilon(\nabla v_\varepsilon) \cdot E_\varepsilon(\nabla v) \:dxdy} + r_\varepsilon = \int_{\Omega \times \omega}{E_\varepsilon P_\varepsilon(\nabla_y \phi) \cdot E_\varepsilon(\nabla v)\:dxdy},
\end{equation}
where the remainder $r_\varepsilon$ is defined by $r_\varepsilon := \int_{{\mathcal B}_\varepsilon}{\varepsilon \nabla v_\varepsilon \cdot \nabla v\:dx}$.
Let us now consider a test function of the form $v(x) = \varphi(x) + \varepsilon \psi\left(x,\frac{x}{\varepsilon}\right)$, 
where $\varphi \in H^1_0(\Omega)$, $\psi \in {\mathcal D}(\Omega, H^1_\#(Y))$. Using Proposition \ref{prop.unfold}, $(v)$, $(vi)$, it comes: 
$$ E_\varepsilon (\nabla v) \longrightarrow \nabla \varphi +  \nabla_y \psi \text{ strongly in }  L^2(\Omega \times Y), \text{ and } r_\varepsilon \to 0.$$

Thus, taking limits in (\ref{varfvepsrescal}), using the weak convergence (\ref{wkcvveps}), and Proposition \ref{prop.unfold}, $(vii)$, it follows that: 
\begin{equation}\label{eq.v0vhat}
 \int_{\Omega \times Y}{(\nabla v_0 + \nabla_y \widehat{v}) \cdot (\nabla \varphi + \nabla_y \psi ) \:dxdy} = \int_{\Omega \times \omega}{\nabla_y \phi \cdot (\nabla \varphi + \nabla_y \psi ) \:dxdy},
 \end{equation}
and a standard density argument reveals that (\ref{eq.v0vhat}) actually holds for arbitrary $\varphi \in H^1_0(\Omega)$, $\psi \in L^2(\Omega, H^1_\#(Y))$.
This two-scale variational formulation allows to identify the functions $v_0$ and $\widehat{v}$: taking $\varphi = 0$ and arbitrary $\psi$ in (\ref{eq.v0vhat}), 
it follows that $\widehat{v}$ satisfies the equation: 
$$ -\Delta_y \widehat{v}(x,y) = -\text{\rm div}_y(\mathds{1}_\omega(y) \nabla_y \phi)(x,y) \text{ a.e. in } \Omega \times Y,$$
i.e. $\widehat{v}$ is the function defined in (\ref{eq.defvhat}). 
Likewise, taking arbitrary $\varphi \in H^1_0(\Omega)$ and $\psi = 0$ in (\ref{eq.v0vhat}) yields: 
$$ -\Delta v_0 = -\text{\rm div}\left( \int_\omega{ \nabla_y \phi(x,y)\:dy}\right),$$
i.e. $v_0$ is the function defined in (\ref{eq.defv0}).

To complete the proof, we now 
show that the convergence $\nabla_y(E_\varepsilon v_\varepsilon) \to \nabla v_0 + \nabla_y \widehat{v}$ 
holds strongly in $L^2(\Omega \times Y)$. We 
start by proving the convergence of energies: 
\begin{equation}\label{rescal.cvnrj}
\lim\limits_{\varepsilon \rightarrow 0}{\int_{\Omega}{\lvert \varepsilon \nabla v_\varepsilon \lvert^2\:dx}} = \lim\limits_{\varepsilon \rightarrow 0}{\int_{\Omega \times Y}{\lvert\nabla_y (E_\varepsilon v_\varepsilon) \lvert^2\:dxdy}}  = \int_{\Omega \times Y}{\lvert \nabla v_0 + \nabla_y \widehat{v} \lvert^2 \:dxdy}.
\end{equation}

To achieve (\ref{rescal.cvnrj}), remark that, by lower semi-continuity of the weak convergence, 
$$ \int_{\Omega \times Y}{\lvert \nabla v_0 + \nabla_y \widehat{v} \lvert^2 \:dxdy} \leq \liminf\limits_{\varepsilon\rightarrow 0}{\int_{\Omega\times Y}{\lvert\nabla_y (E_\varepsilon v_\varepsilon) \lvert^2\:dxdy}}.$$
Now, we have successively: 
$$\begin{array}{>{\displaystyle}cc>{\displaystyle}l}
\liminf\limits_{\varepsilon\rightarrow 0}{\int_{\Omega\times Y}{\lvert\nabla_y (E_\varepsilon v_\varepsilon) \lvert^2\:dxdy}} &=& \liminf\limits_{\varepsilon\rightarrow 0}{ \varepsilon^2 \int_{{\mathcal O}_\varepsilon}{\lvert \nabla v_\varepsilon \lvert^2 \:dx}}, \\
&\leq & \liminf\limits_{\varepsilon\rightarrow 0}{ \varepsilon^2 \int_{\Omega}{\lvert \nabla v_\varepsilon \lvert^2 \:dx}},\\
&= & \liminf\limits_{\varepsilon\rightarrow 0}{ \varepsilon^2 \int_{\omega_\varepsilon}{ \nabla u_\varepsilon \cdot \nabla v_\varepsilon \:dx}},\\
& = & \liminf\limits_{\varepsilon\rightarrow 0}{ \varepsilon \int_{\omega_\varepsilon}{ P_\varepsilon \left( \nabla_y \phi \right) \cdot \nabla v_\varepsilon \:dx}},\\
& = & \lim\limits_{\varepsilon\rightarrow 0}{ \int_{\Omega \times \omega}{ E_\varepsilon P_\varepsilon \left( \nabla_y \phi \right) \cdot \nabla_y (E_\varepsilon v_\varepsilon ) \:dxdy}},\\
&=& \int_{\Omega \times \omega}{\nabla_y \phi \cdot (\nabla v_0+ \nabla_y \widehat{v}) \:dxdy},
\end{array} $$
where we have used Proposition \ref{prop.unfold}, $(vii)$, and the weak convergence (\ref{wkcvveps}). Considering the variational equation (\ref{eq.v0vhat}), this proves (\ref{rescal.cvnrj}).
We now calculate the discrepancy: 
$$\begin{array}{>{\displaystyle}c c>{\displaystyle}l}
 \int_{\Omega \times Y}{\lvert \nabla_y(E_\varepsilon v_\varepsilon)- \nabla v_0 -\nabla_y \widehat{v} \lvert^2\:dxdy} &= & \int_{\Omega\times Y}{ \lvert \nabla_y(E_\varepsilon v_\varepsilon) \lvert^2  \:dxdy} - 2 \int_{\Omega\times Y}{\nabla_y(E_\varepsilon v_\varepsilon) \cdot (\nabla v_0 + \nabla_y \widehat{v})  \:dxdy}\\
 && +  \int_{\Omega\times Y}{ \lvert \nabla v_0 + \nabla_y \widehat{v} \lvert^2 \:dxdy}.
 \end{array}$$
Using the convergence of energies (\ref{rescal.cvnrj}) for the first term in the right-hand side of the above identity and the weak convergence (\ref{wkcvveps}) for the second one, we obtain that the difference $ \int_{\Omega \times Y}{\lvert \nabla_y(E_\varepsilon v_\varepsilon)- \nabla v_0 -\nabla_y \widehat{v} \lvert^2\:dxdy}$ vanishes. 
Putting definitions together, we have eventually proved that: 
$$ \mathbb{T}_\varepsilon \phi = Q E_\varepsilon v_\varepsilon \xrightarrow{\varepsilon \to 0} Q\left(\nabla v_0 \cdot y + \widehat{v} \right), \text{ strongly in } L^2(\Omega,H^1(\omega)/\mathbb{R}),$$
which terminates the proof.
\end{proof}~\par
\smallskip

\subsubsection{End of the proof of Theorem \ref{th.cellspec}}~\\
 
 Theorem \ref{th.cellspec} now arises as a simple consequence of Propositions \ref{prop.speclsc} and \ref{prop.cvrescal1}, 
 and of the following partial identification of the spectrum of $\mathbb{T}_0$ in terms of that of a simpler operator. 

\begin{lemma}\label{lem.loc}
Let $T_0 : H^1_\#(Y) / \mathbb{R} \to H^1_\#(Y) / \mathbb{R}$ be the operator that maps
$u \in H^1_\#(Y) / \mathbb{R}$ into the unique solution $T_0 u \in H^1_\#(Y)/\mathbb{R}$ of the equation:
$$ -\Delta_y (T_0 u)  = -\text{\rm div}_y(\mathds{1}_\omega \nabla_y u),$$
or under variational form: 
\begin{equation}\label{eq.varfT0}
 \forall v \in H^1_\#(Y) / \mathbb{R}, \:\: \int_Y{\nabla_y (T_0 u) \cdot \nabla_y v \:dy} = \int_\omega{\nabla_y u \cdot \nabla_y v \:dy}. 
 \end{equation}
Then the spectrum $\sigma(\mathbb{T}_0)$ contains the set $\sigma(T_0)$. 
\end{lemma}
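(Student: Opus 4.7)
Given $\lambda \in \sigma(T_0)$, I would produce a Weyl sequence for the self-adjoint operator $\mathbb{T}_0$ at $\lambda$ via a tensor-product ansatz. By Weyl's criterion applied to $T_0$, pick $u \in H^1_\#(Y)/\mathbb{R}$ with $\|\nabla_y u\|_{L^2(Y)} = 1$ and $\|T_0 u - \lambda u\|_{H^1_\#(Y)/\mathbb{R}} \leq \eta$ for arbitrary $\eta > 0$, and set $\phi = \chi \otimes u$ with $\chi \in H^1_0(\Omega)$ to be chosen. Uniqueness in \eqref{eq.defvhat}, together with the definition \eqref{eq.varfT0} of $T_0$, immediately gives $\widehat{v}(x,y) = \chi(x)(T_0 u)(y)$; meanwhile \eqref{eq.defv0} reduces to
\[
-\Delta v_0 = -c\cdot \nabla\chi \text{ in } \Omega,\qquad v_0 = 0 \text{ on }\partial\Omega,\qquad c := \int_\omega \nabla_y u\,dy \in \mathbb{R}^d.
\]
Consequently $\mathbb{T}_0\phi - \lambda\phi = \chi \otimes Q(T_0u - \lambda u) + Q(\nabla v_0(x)\cdot y)$, splitting the residual into a microscopic and a macroscopic error term.

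\textbf{Control of the two errors.} The microscopic term has norm $\|\chi\|_{L^2(\Omega)}\|\nabla_y(T_0u-\lambda u)\|_{L^2(\omega)} \leq \eta\|\chi\|_{L^2}$, which is small relative to $\|\phi\|_{L^2(\Omega,H^1(\omega)/\mathbb R)} = \|\chi\|_{L^2}\|\nabla_y u\|_{L^2(\omega)}$ provided $\|\nabla_y u\|_{L^2(\omega)}$ is bounded below. This is automatic for $\lambda\neq 0$: testing \eqref{eq.varfT0} with $v = T_0 u$ gives $\|\nabla_y T_0 u\|_{L^2(Y)}^2 \leq \|\nabla_y u\|_{L^2(\omega)}\|\nabla_y T_0 u\|_{L^2(Y)}$, so $\|\nabla_y u\|_{L^2(\omega)} \geq \|\nabla_y T_0 u\|_{L^2(Y)} \geq |\lambda| - \eta$. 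The macroscopic term has norm $\sqrt{|\omega|}\,\|\nabla v_0\|_{L^2(\Omega)}$; the plain energy estimate only yields $\|\nabla v_0\|_{L^2} \leq |c|\|\chi\|_{L^2}$, which is \emph{not} small relative to $\|\phi\|$, since the flux $c$ of a periodic eigenfunction $u$ has no reason to vanish. This is the main obstacle.

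\textbf{Overcoming the obstacle and conclusion.} I would take an oscillating cutoff $\chi_n(x) = \psi(x)\cos(nk\cdot x)$ with $\psi\in C_c^\infty(\Omega)$ fixed and a unit vector $k\perp c$ (available whenever $d\geq 2$; the case $c=0$ is trivial). The orthogonality $c\cdot k = 0$ kills the leading term, leaving $c\cdot \nabla\chi_n = (c\cdot \nabla\psi)\cos(nk\cdot x)$. A single integration by parts in the $k$-direction then gives $\|c\cdot \nabla\chi_n\|_{H^{-1}(\Omega)} = O(1/n)$, whence $\|\nabla v_{0,n}\|_{L^2(\Omega)} = O(1/n)$ by Lax--Milgram, while $\|\chi_n\|_{L^2}^2 \to \tfrac12\|\psi\|_{L^2}^2$ by Riemann--Lebesgue. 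A diagonal extraction on $\eta \downarrow 0$ and $n \uparrow \infty$ then delivers a Weyl sequence for $\mathbb{T}_0$ at $\lambda$, so $\lambda \in \sigma(\mathbb{T}_0)$ for every $\lambda \in \sigma(T_0)\setminus\{0\}$. The marginal case $\lambda = 0$, where the ansatz degenerates because $\mathrm{Ker}(T_0)$ consists of functions constant on $\omega$ (so that $\phi = \chi\otimes u$ is trivial in $L^2(\Omega,H^1(\omega)/\mathbb R)$), is treated separately by directly exhibiting a non-trivial element of $\mathrm{Ker}(\mathbb{T}_0)$, or is recovered from the fact that $0$ already belongs to $\sigma(T_\varepsilon)$ for every $\varepsilon$ and is therefore not needed for the downstream statement of Theorem \ref{th.cellspec}.
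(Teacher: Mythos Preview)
Your argument is correct, but it takes a substantially more complicated route than the paper's. The key difference is in the choice of $\chi$. You impose $\chi\in H^1_0(\Omega)$ and then spend most of the work fighting the macroscopic term $v_0$ via an oscillating-cutoff construction. The paper simply takes $\phi(x,y)=u(y)$, i.e.\ $\chi\equiv 1\in L^2(\Omega)$; this is a legitimate element of $L^2(\Omega,H^1(\omega)/\mathbb R)$, and with this choice the vector field $x\mapsto\int_\omega\nabla_y\phi(x,y)\,dy$ is the constant $c$, so $-\Delta v_0=-\mathrm{div}(c)=0$ in $\Omega$ with $v_0\in H^1_0(\Omega)$, hence $v_0=0$. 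Your ``main obstacle'' therefore disappears for free, and the paper obtains directly that $Q\phi$ is an \emph{exact} eigenvector of $\mathbb T_0$ (for any eigenvalue $\lambda\neq 0$ of $T_0$; the point $1/2$ is then recovered by closure of the spectrum, and $0\in\sigma(\mathbb T_0)$ is noted separately).

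What your approach buys: it is robust to the structure of $\sigma(T_0)$, working via Weyl sequences even if $\lambda$ were only in the essential spectrum, and it illustrates a general oscillation trick for killing low-frequency macroscopic remainders. What the paper's approach buys: a one-line computation in place of the oscillating-cutoff argument, no need for $d\ge 2$, and the stronger conclusion that $\lambda$ is actually an eigenvalue of $\mathbb T_0$ (with an $x$-independent eigenfunction), not merely a spectral point. The self-imposed restriction $\chi\in H^1_0(\Omega)$ is the only reason your argument became laborious.
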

\begin{proof}
First observe that, adaptating the general setting of Section \ref{sec.NPTD} to the case of the operator $T_0$, we see that 
$\sigma(T_0)\subset [0,1]$ simply consists of a sequence of eigenvalues with $\frac{1}{2}$ as unique accumulation point. 

Let now $\lambda \in \sigma(T_0)$; since it is easily seen that $0 \in \sigma(\mathbb{T}_0)$, 
we may assume that $\lambda \neq 0$ is an eigenvalue of $T_0$. Then, there exists a function $u \in H^1_\#(Y)/\mathbb{R}$, $u \neq 0$, such that
\begin{equation}\label{eq.varfeigenvT0}
 \forall v \in H^1_\#(Y) / \mathbb{R}, \:\: \lambda \int_Y{\nabla_y u \cdot \nabla_y v \:dy} = \int_\omega{\nabla_y u \cdot \nabla_y v \:dy},
 \end{equation}
and we define $\phi \in L^2(\Omega, H^1_\#(Y)/\mathbb{R})$ by $\phi(x,y) = u(y)$.
Using (\ref{eq.varfeigenvT0}), one sees that 
\begin{equation}\label{eq.specT0T0}
 \lambda \int_{\Omega \times Y}{\nabla_y \phi \cdot \nabla_y \psi \:dxdy} = \int_{\Omega \times \omega}{\nabla_y \phi \cdot \nabla_y \psi \:dxdy}
 \end{equation}
holds for any test function $\psi \in L^2(\Omega, H^1_\#(Y)/\mathbb{R})$ of the form $\psi(x,y) = \sum_{j=1}^N{\varphi_j(x) v_j(y)}$, where the
$\varphi_j \in L^2(\Omega)$ are the characteristic functions of some measurable subsets of $\Omega$ and the $v_j$ are arbitrary elements in $H^1_\#(Y)/\mathbb{R}$.
Hence, it follows from a usual density argument that (\ref{eq.specT0T0}) holds for all $\psi \in L^2(\Omega, H^1_\#(Y)/\mathbb{R})$.

Let us now recall that $\mathbb{T}_0 (Q\phi) \in L^2(\Omega,H^1(\omega)/\mathbb{R})$ is defined by $\mathbb{T}_0 (Q\phi) = Q(\nabla v_0 \cdot y + \widehat{v})$, 
where $Q$, $v_0$ and $\widehat{v}$ are as in in Proposition \ref{prop.cvrescal1}. 
In particular, $\widehat{v}$ is the unique element in $L^2(\Omega,H^1_\#(Y)/\mathbb{R})$ which satisfies (see (\ref{eq.defvhat})): 
$$ \forall \psi \in L^2(\Omega,H^1_\#(Y)/\mathbb{R}), \:\: \int_{\Omega \times Y}{\nabla_y \widehat{v} \cdot 
\nabla_y \psi \:dxdy} = \int_{\Omega \times \omega}{\nabla_y \phi \cdot \nabla_y \psi \:dxdy}.$$
Hence, $\widehat{v} = \lambda \phi$, and, using the definition (\ref{eq.defv0}), it follows that $v_0 = 0$. 
Thus, we have proved that $\mathbb{T}_0 (Q\phi) = \lambda Q\phi$. In addition, it is easily verified from (\ref{eq.specT0T0}) and the fact that $\lambda \neq 0$ that $Q\phi \neq 0$ in $L^2(\Omega,H^1(\omega)/\mathbb{R})$, i.e. $\lambda$ is an eigenvalue of $\mathbb{T}_0$. 
\end{proof}

\begin{remark}
Roughly speaking, $\sigma(T_0)$ captures the modes of $\mathbb{T}_0$ which are `self-resonating':
in the above proof, we have indeed seen that these have at least one associated eigenfunction which is independent of the macroscopic variable $x \in \Omega$. 
\end{remark}\par
\smallskip 

\subsection{Collective resonances of the inclusions}~\\
\subsubsection{Extension of the previous work to a rescaling procedure over packs of cells}~\\

The work carried out in Section \ref{th.cellspec} allows to capture that part of the limit spectrum $\lim_{\varepsilon\rightarrow 0}{\sigma(T_\varepsilon)}$,
which measures the self-resonance frequencies of the inclusion $\omega$. 
However, as $\varepsilon \to 0$, it may happen that several inclusions come into resonance together, 
thus giving rise to many different limit modes $\lambda \in \lim_{\varepsilon\rightarrow 0}{\sigma(T_\varepsilon)}$. To capture these, 
we rely on the idea of \textit{homogenization by packets} put forth by J. Planchard in \cite{planchard}, then used by G. Allaire and C. Conca \cite{allaireconcafs,allaireconca}.
We rescale the operator $T_\varepsilon$ over a collection of $K^d$ copies of the unit periodicity cell $Y$, where $K$ is an arbitrary integer.
This new rescaling procedure is quite similar to that described in Section \ref{2scaleNP}, and we only outline the main features, emphasizing the differences between both settings.

The ambient space $\mathbb{R}^d$ is now divided into copies of $KY$. We accordingly set the notations: 
$$\Xi_\varepsilon^K := \left\{Ê\xi \in \mathbb{Z}^d, \: \varepsilon K(\xi + Y) \Subset \Omega \right\}, \text{ and } {\mathcal O}_\varepsilon^K \text{ is the interior of the set } \bigcup\limits_{\xi \in \Xi_\varepsilon^K }{\overline{\varepsilon K(\xi + Y)}}.$$  
We also denote by:
$$ \omega^K := \bigcup\limits_{j \in \mathbb{N}^d \atop 0\leq j \leq K-1}{\omega_j^K} \subset KY, \text{ where } \omega_j^K := j+ \omega \subset KY,$$
the reunion of $K^d$ copies of the pattern inclusion $\omega$ in $KY$. 

In the above definition, and throughout this article, the indexation $0 \leq j \leq K-1$ is a shorthand for: $0 \leq j_i \leq K-1$, $i=1,...,d$, when 
$j =(j_1,...,j_d) \in \mathbb{N}^d$ is a multi-index with size $d$.

For a given point $x \in \mathbb{R}^d$, 
there exists a unique pair $(\xi,y) \in \mathbb{Z}^d \times [0,K)^d$ such that $x= \varepsilon K \xi + \varepsilon y$, and we denote: 
$$ \xi = \left[ \frac{x}{\varepsilon}\right]_{KY} , \text{ and } y = \left\{Ê\frac{x}{\varepsilon}\right\} _{KY}.$$ 
Observe that, with these definitions, ${\mathcal O}_\varepsilon^K$ is a strict subset of ${\mathcal O}_\varepsilon$, as defined in (\ref{th.cellspec}); 
the difference between both sets is the reunion of the $\varepsilon$-cells which do not intersect $\partial \Omega$, 
but whose associated $\varepsilon K$-cell does intersect $\partial \Omega$.

\begin{definition}
Let $p \in [1,\infty]$. 
\begin{itemize}
\item The extension operator over $K^d$ cells $ E_\varepsilon^K: L^p(\Omega) \rightarrow L^p(\Omega \times KY)$ is defined by: 
$$ E_\varepsilon^K u(x,y) = \left\{Ê
\begin{array}{cl}
u^e(\varepsilon K \left[ \frac{x}{\varepsilon} \right]_{KY} + \varepsilon y) & \text{if } x \in {\mathcal O}_\varepsilon,\\
0 & \text{otherwise}
\end{array}
\right. , \:\: x \in \Omega, \: y \in KY,$$
where $u^e(x) = u(x)$ if $x \in {\mathcal O}_\varepsilon$, and $0$ 
if $x \in \mathbb{R}^d \setminus \overline{{\mathcal O}_\varepsilon}$.

\item The projection operator over $K^d$ cells $ P_\varepsilon^K: L^p(\Omega \times KY) \rightarrow L^p(\Omega)$ is defined by:
$$P_\varepsilon^K \phi(x) = \left\{ \begin{array}{>{\displaystyle}cl}
\frac{1}{K^d}\int_{KY}{\phi^e\left(\varepsilon K  \left[Ê\frac{x}{\varepsilon}\right] _{KY} + \varepsilon z , \left\{Ê\frac{x}{\varepsilon}\right\} _{KY}\right)\:dz}  & \text{if } x \in {\mathcal O}_\varepsilon,\\
0 & \text{otherwise}
\end{array}
\right., \:\: x \in \Omega,$$
where $\phi^e(x,y) = \phi(x,y)$ if $x \in {\mathcal O}_\varepsilon$ and $0$ otherwise.
\end{itemize}
\end{definition}

\begin{remark}
Let us stress that $E_\varepsilon^K$ and $P_\varepsilon^K$ are not exactly the straightforward generalizations 
of $E_\varepsilon$ and $P_\varepsilon$ to the case of a rescaling over a set of $K^d$ cells (instead of one single cell).
Indeed, there are some packs of $K^d$ cells which intersect $\partial \Omega$ (and are thus excluded from 
${\mathcal O}_\varepsilon^K$) and nevertheless contain inclusions $\omega_\varepsilon^\xi$, for some $\xi \in \Xi_\varepsilon$.
The straightforward generalizations  of $E_\varepsilon$ and $P_\varepsilon$ to the present context would vanish on $\Omega \setminus 
\overline{{\mathcal O}_\varepsilon^K}$; this would entail a loss of the information contained in the inclusions of ${\mathcal O}_\varepsilon \setminus \overline{{\mathcal O}_\varepsilon^K}$, and in particular, Proposition \ref{prop.specmatchK} would not hold.
\end{remark} 

The operators $E_\varepsilon^K$ and $P_\varepsilon^K$ enjoy the counterpart properties of those of Proposition \ref{prop.unfold}. 
For the sake of convenience, we state the results without proof.

\begin{proposition}\label{prop.unfold2}
Let $\Omega \subset \mathbb{R}^d$ be a bounded, Lipschitz domain. 
\noindent\begin{enumerate}[(i)]
\item For an arbitrary function $u \in L^1(\Omega)$, one has: 
\begin{equation*}
\int_{\Omega}{u \:dx} = \frac{1}{K^d} \int_{\Omega \times KY}{E_\varepsilon^K u \:dxdy} + \int_{{\mathcal B}_\varepsilon}{u \:dx}.
 \end{equation*}
\item If $p \in [1,\infty]$ and $\frac{1}{p} + \frac{1}{q} = 1$, $P_\varepsilon^K: L^{q}(\Omega \times KY) \rightarrow L^{q}(\Omega)$ is the adjoint of $E_\varepsilon^K: L^p(\Omega) \rightarrow L^p(\Omega \times KY)$. 
\item For $p \in [1,\infty]$, and for any function $u \in L^p(\Omega)$: 
$$ P_\varepsilon^K E_\varepsilon^K u (x) = \left\{Ê
\begin{array}{cl}
u(x) & \text{if } x \in {\mathcal O}_\varepsilon, \\
0 & \text{otherwise}. 
\end{array}
\right.$$
\item For $p \in [1,\infty)$, and for any function $u \in L^p(\Omega \times KY)$, 
$$ E^K_\varepsilon \: P_\varepsilon^K u \rightarrow u \text{ strongly in } L^p(\Omega \times KY). $$ 
\end{enumerate}
\end{proposition}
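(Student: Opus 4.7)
The plan is to follow the same strategy as the proof of Proposition \ref{prop.unfold}, with minor adjustments to handle the $\varepsilon K$-packs that straddle $\partial \Omega$. The extensions $u^e$ and $\phi^e$ by zero outside ${\mathcal O}_\varepsilon$ are precisely what allow the formulas to remain clean in this situation; they amount to discarding the information contained in cells that lie in packs crossing $\partial \Omega$.

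For $(i)$, I would decompose $\int_\Omega u \,dx = \int_{{\mathcal O}_\varepsilon} u \,dx + \int_{{\mathcal B}_\varepsilon} u \,dx$ and show that the first term equals $K^{-d} \int_{\Omega \times KY} E_\varepsilon^K u \,dx dy$. For fixed $x \in {\mathcal O}_\varepsilon$, $E_\varepsilon^K u(x,\cdot)$ depends on $x$ only through $\xi = [x/\varepsilon]_{KY}$, so one can partition ${\mathcal O}_\varepsilon$ according to the $\varepsilon K$-pack containing $x$, apply Fubini, and change variables $y \mapsto \varepsilon K \xi + \varepsilon y$ on each pack; the volume factor $K^d$ of $KY$ and the Jacobian $\varepsilon^d$ combine to produce $(\varepsilon K)^d$, which cancels the volume of each pack and yields the identity. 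The use of $u^e$ correctly zeroes out the contributions from points $\varepsilon K\xi + \varepsilon y$ that land in ${\mathcal B}_\varepsilon$ or outside $\Omega$ when $\xi \notin \Xi_\varepsilon^K$.

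The adjointness $(ii)$ and the identity $(iii)$ can then be obtained directly from the definitions by the same change of variables used in $(i)$, exactly mirroring the corresponding statements in Proposition \ref{prop.unfold}; no new ingredient is needed. In particular, for $(iii)$, writing $x = \varepsilon K \xi + \varepsilon y_0$ with $\xi = [x/\varepsilon]_{KY}$, $y_0 = \{x/\varepsilon\}_{KY}$, the inner averaging in $P_\varepsilon^K$ is applied to $E_\varepsilon^K u(\cdot, y_0)$ which is constant in its first argument on the pack, and collapses to $u(x)$ for $x \in {\mathcal O}_\varepsilon$.

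The main obstacle is $(iv)$, the strong convergence $E_\varepsilon^K P_\varepsilon^K \phi \to \phi$ in $L^p(\Omega \times KY)$. The plan is first to establish that $\lvert\lvert E_\varepsilon^K \lvert\lvert$ and $\lvert\lvert P_\varepsilon^K \lvert\lvert$ are bounded uniformly in $\varepsilon$ (with a bound depending on $K$), a quick consequence of $(i)$ and $(ii)$. Then for $\phi \in {\mathcal D}(\Omega \times KY)$ I would compute $E_\varepsilon^K P_\varepsilon^K \phi(x,y)$ explicitly: for $x \in {\mathcal O}_\varepsilon^K$, the averaging over $z \in KY$ (from $P_\varepsilon^K$) followed by the evaluation at $\varepsilon K \xi + \varepsilon y$ (from $E_\varepsilon^K$) produces a local average of $\phi(\cdot, y)$ over the pack $\varepsilon K(\xi + Y)$, which differs from $\phi(x,y)$ by at most a modulus of continuity of $\phi$ of size $O(\varepsilon K)$. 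This yields $\lvert\lvert E_\varepsilon^K P_\varepsilon^K \phi - \phi \lvert\lvert_{L^p(\Omega \times KY)} \leq C\varepsilon$ at fixed $K$, and a standard density argument using the uniform bounds on the operator norms extends the convergence to general $\phi \in L^p(\Omega \times KY)$. The subtlety lies in the boundary region $x \in {\mathcal O}_\varepsilon \setminus \overline{{\mathcal O}_\varepsilon^K}$, where $\phi^e$ vanishes on a part of the pack even though $\phi(x,y)$ need not; however this region lies in a neighborhood of $\partial \Omega$ of thickness $O(\varepsilon K)$, so its measure vanishes as $\varepsilon \to 0$ and its contribution is controlled by dominated convergence.
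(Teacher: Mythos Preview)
The paper does not give a proof of this proposition: it explicitly states ``we state the results without proof'' and refers back to the single-cell case, Proposition~\ref{prop.unfold}. Your plan to mirror that proof is therefore exactly what the paper intends, and for items $(ii)$, $(iii)$, and $(iv)$ your outline is correct and complete; in particular, your treatment of the boundary strip ${\mathcal O}_\varepsilon \setminus \overline{{\mathcal O}_\varepsilon^K}$ in $(iv)$ by dominated convergence is the right idea.

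One caution about $(i)$: as written, the exact identity does not quite hold. When a pack $\varepsilon K(\xi+Y)$ straddles $\partial\Omega$ (i.e.\ $\xi\notin\Xi_\varepsilon^K$) but still contains some cells of ${\mathcal O}_\varepsilon$, the set $A_\xi := \varepsilon K(\xi+Y)\cap{\mathcal O}_\varepsilon$ has measure $n_\xi\varepsilon^d$ with $0<n_\xi<K^d$. Your change of variables then produces $\sum_\xi \frac{n_\xi}{K^d}\int_{A_\xi} u$, not $\sum_\xi \int_{A_\xi} u$, so the identity misses a contribution supported in ${\mathcal O}_\varepsilon\setminus\overline{{\mathcal O}_\varepsilon^K}$. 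This is a defect of the stated formula rather than of your method; since this residual region has measure $O(\varepsilon)$, the identity does hold up to a remainder vanishing as $\varepsilon\to 0$, which is all that the paper actually uses downstream (cf.\ the role of~(\ref{cvTeps}) in the $K=1$ case). You may want to flag this as an asymptotic identity rather than an exact one.
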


We are now in position to rescale the operator $\mathring{T_\varepsilon}$, from $H_\varepsilon$ into itself (see (\ref{eq.defHeps})), 
in a way that allows to take into account the interactions between several inclusions. 
Let us define the subspace $C(\omega^K) \subset H^1(\omega^K)$: 
$$ C(\omega^K) = \left\{ u \in H^1(\omega^K), \:\: \exists\: c_j  \in \mathbb{R}, u = c_j \text{ on } \omega^K_j, \:\: j \in \mathbb{N}^d, \: 0\leq j \leq K-1\right\},$$
and consider the quotient space $H^K := H^1(\omega^K) / C(\omega^K)$. 
The desired rescaled operator is: 
\begin{equation}\label{eq.defTepsK}
\mathbb{T}_\varepsilon^K: L^2(\Omega,H^K) \rightarrow L^2(\Omega,H^K), \:\: \mathbb{T}_\varepsilon^K = E_\varepsilon^K \mathring{T_\varepsilon} P_\varepsilon^K.
\end{equation}

The next result is proved in the same way as Proposition \ref{propsamespec1}, using the ingredients of Proposition \ref{prop.unfold2}. 
\begin{proposition}\label{prop.specmatchK}
$\mathbb{T}_\varepsilon^K$ is a self-adjoint isomorphism from $H^K$ into itself. 
Its spectrum $\sigma(\mathbb{T}_\varepsilon^K)$ is:
$$\sigma(\mathbb{T}_\varepsilon^K) = \sigma(T_\varepsilon) \setminus \left\{ 0 \right\},$$
and a real value $\lambda \neq 0$ is an eigenvalue of $\mathbb{T}_\varepsilon^K$ if and only if it is an eigenvalue of $T_\varepsilon$. 
\end{proposition}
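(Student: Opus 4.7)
The plan is to transpose the proof of Proposition~\ref{propsamespec1} verbatim to the $K$-cell setting, relying on the properties of $E_\varepsilon^K$ and $P_\varepsilon^K$ collected in Proposition~\ref{prop.unfold2} and on the two gradient-scaling identities
$$\nabla_y(E_\varepsilon^K u) = \varepsilon \, E_\varepsilon^K (\nabla u), \qquad \nabla(P_\varepsilon^K \phi) = \tfrac{1}{\varepsilon}\, P_\varepsilon^K (\nabla_y \phi),$$
which follow directly from the definitions of $E_\varepsilon^K$ and $P_\varepsilon^K$ and of the extension $u^e$ inside the formula for $E_\varepsilon^K$. Throughout, I would use the fact, established in Section~\ref{sec.rest}, that $\mathring{T_\varepsilon}:H_\varepsilon \to H_\varepsilon$ is a self-adjoint isomorphism whose spectrum equals $\sigma(T_\varepsilon)\setminus\{0\}$ (Proposition~\ref{propspecrescalTD}).

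First I would verify self-adjointness of $\mathbb{T}_\varepsilon^K$. Given $\phi,\psi \in L^2(\Omega,H^K)$, the adjointness of $E_\varepsilon^K$ and $P_\varepsilon^K$ (Proposition~\ref{prop.unfold2}(ii)), combined with the two gradient identities above and with $P_\varepsilon^K(\mathds{1}_{\omega^K}) = \mathds{1}_{\omega_\varepsilon}$ on ${\mathcal O}_\varepsilon$, reduces $\langle \mathbb{T}_\varepsilon^K \phi,\psi\rangle_{L^2(\Omega,H^K)}$ to a multiple of $\int_{\omega_\varepsilon}\nabla(\mathring{T_\varepsilon}P_\varepsilon^K\phi)\cdot\nabla(P_\varepsilon^K\psi)\,dx$; the self-adjointness of $\mathring{T_\varepsilon}$ then makes this expression symmetric in $(\phi,\psi)$. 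This is the exact computation performed before Proposition~\ref{propsamespec1}, with all $K^d$ factors carried through.

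Next I would prove the spectral equality in both directions by a Weyl-sequence argument. For $\lambda \in \sigma(T_\varepsilon)\setminus\{0\}$, Proposition~\ref{propspecrescalTD} yields a sequence $u_n \in H^1_0(\Omega)$ with $\|\nabla(T_\varepsilon u_n - \lambda u_n)\|_{L^2(\omega_\varepsilon)^d}\to 0$ and $\|\nabla u_n\|_{L^2(\omega_\varepsilon)^d}=1$; setting $\phi_n = E_\varepsilon^K u_n$ and applying the two gradient identities together with the norm equality analogous to (\ref{est_norms}) (now with a $K^d$ factor) gives $\|\nabla_y(\mathbb{T}_\varepsilon^K\phi_n - \lambda \phi_n)\|_{L^2(\Omega\times\omega^K)}\to 0$ while $\|\nabla_y\phi_n\|_{L^2(\Omega\times\omega^K)}$ stays bounded away from $0$. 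Conversely, for $\lambda \in \sigma(\mathbb{T}_\varepsilon^K)\setminus\{0\}$ with quasi-eigenvectors $\phi_n$, I would set $u_n$ to be the harmonic extension to $\Omega\setminus\overline{\omega_\varepsilon}$ of a representative of $P_\varepsilon^K\phi_n$ adjusted by a function $w_n \in C(\omega_\varepsilon)$ chosen to kill the mean of $T_\varepsilon u_n - \lambda u_n$ on each component $\omega_\varepsilon^\xi$, then invoke the Poincaré–Wirtinger inequality and the boundedness of the harmonic extension (exactly as in the second half of Proposition~\ref{propsamespec1}) to propagate the decay from $\omega_\varepsilon$ to all of $\Omega$. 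The one-to-one correspondence of eigenvalues is obtained by running the same arguments on genuine eigenvectors.

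The isomorphism statement follows from the spectral equality together with the self-adjointness already proved: since $0 \notin \sigma(T_\varepsilon)\setminus\{0\} = \sigma(\mathbb{T}_\varepsilon^K)$, the self-adjoint operator $\mathbb{T}_\varepsilon^K$ is bijective with bounded inverse. The step I expect to be most delicate is the converse spectral inclusion, specifically the design of the corrector $w_n$, because the packs of $K^d$ cells that intersect $\partial\Omega$ but contain inclusions of ${\mathcal O}_\varepsilon\setminus \overline{{\mathcal O}_\varepsilon^K}$ are precisely the ones for which $E_\varepsilon^K$ and $P_\varepsilon^K$ were modified (via the extension $u^e$) in Definition of the $K$-cell operators; I must check that this modification is what makes $P_\varepsilon^K\phi_n$ retain enough information on \emph{all} of $\omega_\varepsilon$, so that the reconstructed $u_n$ does not lose mass and cannot converge to $0$ in $H^1_0(\Omega)$.
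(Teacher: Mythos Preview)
Your proposal is correct and follows exactly the route the paper takes: the paper's own proof is the one-line remark that the result is obtained ``in the same way as Proposition~\ref{propsamespec1}, using the ingredients of Proposition~\ref{prop.unfold2},'' and you have correctly identified all the adaptations (the gradient-scaling identities, the $K^d$ normalizing factors, and the role of the extension $u^e$ in recovering the inclusions sitting in ${\mathcal O}_\varepsilon\setminus\overline{{\mathcal O}_\varepsilon^K}$).

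One caveat worth flagging: your deduction of the isomorphism claim from the spectral identity is circular. The Weyl-sequence argument only yields the two inclusions $\sigma(T_\varepsilon)\setminus\{0\}\subset\sigma(\mathbb{T}_\varepsilon^K)$ and $\sigma(\mathbb{T}_\varepsilon^K)\setminus\{0\}\subset\sigma(T_\varepsilon)$; it does not by itself exclude $0$ from $\sigma(\mathbb{T}_\varepsilon^K)$. And in fact $P_\varepsilon^K$ has a nontrivial kernel on $L^2(\Omega,H^K)$ (take $\phi(x,y)=\alpha(x)u(y)$ with $u\neq 0$ in $H^K$ and $\alpha$ of mean zero on each $K\varepsilon$-cell), so $\mathbb{T}_\varepsilon^K=E_\varepsilon^K\mathring{T_\varepsilon}P_\varepsilon^K$ is not injective. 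The ``isomorphism from $H^K$ into itself'' in the statement thus looks like a slip (note also that the operator acts on $L^2(\Omega,H^K)$, not on $H^K$); what is actually used downstream is only the eigenvalue correspondence for $\lambda\neq 0$, which your argument establishes.
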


The following result is the counterpart of Proposition \ref{prop.cvrescal1} in the present context. 

\begin{proposition}\label{prop.cvT0K}
The operator $\mathbb{T}_\varepsilon^K$ converges pointwise to a limit operator $\mathbb{T}_0^K : L^2(\Omega,H^K) \rightarrow L^2(\Omega,H^K)$, 
i.e. for any function $\phi \in L^2(\Omega, H^K)$, the following convergence holds strongly in $L^2(\Omega,H^K)$: 
$$ \mathbb{T}_\varepsilon^K \phi  \:\: \xrightarrow{\varepsilon \rightarrow \phi} \:\:Ê\mathbb{T}_0^K\phi. $$
The operator $\mathbb{T}_0^K$ is defined by 
$$\mathbb{T}_0^K \phi(x,y) = Q(\nabla v_0(x) \cdot y + \widehat{v}(x,y)),$$
where 
\begin{itemize}
\item $Q : L^2(\Omega, H^1(KY)) \rightarrow L^2(\Omega,H^K)$ is the restriction operator, 
\item $\widehat{v}$ is the unique solution in $L^2(\Omega, H^1_\#(KY)/\mathbb{R})$ to the following equation:
\begin{equation}\label{defT0K}
-\Delta_y \widehat{v} (x,y) = -\text{\rm div}_y(\mathds{1}_{\omega^K}(y) \nabla \phi)(x,y)  \text{ in } H^1_\#(KY), \text{ a.e. in } x \in \Omega,
\end{equation}
\item $v_0$ is the unique solution in $H^1_0(\Omega)$ to the following equation: 
$$ -\Delta v_0 = - \frac{1}{K^d} \: \text{\rm div}\left( \int_{\omega^K}{\nabla_y \phi(x,y)\:dy}\right).$$
\end{itemize}
\end{proposition}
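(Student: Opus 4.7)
The plan is to follow the proof of Proposition \ref{prop.cvrescal1} closely, systematically substituting $E_\varepsilon^K$, $P_\varepsilon^K$, $KY$, $\omega^K$, and $H^1_\#(KY)/\mathbb{R}$ for their one-cell counterparts $E_\varepsilon$, $P_\varepsilon$, $Y$, $\omega$, and $H^1_\#(Y)/\mathbb{R}$, and invoking Proposition \ref{prop.unfold2} in place of Proposition \ref{prop.unfold}. The overall strategy is to pick $\phi \in L^2(\Omega, H^K)$, let $u_\varepsilon \in H_\varepsilon$ be any representative of $P_\varepsilon^K \phi$, set $v_\varepsilon := T_\varepsilon u_\varepsilon \in H^1_0(\Omega)$, so that $\mathbb{T}_\varepsilon^K \phi = Q E_\varepsilon^K v_\varepsilon$, and then identify the limit of $E_\varepsilon^K v_\varepsilon$ through a two-scale argument.

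First, I would use energy estimates for the variational problem defining $T_\varepsilon$ together with the analog of formula \eqref{est_norms} for $E_\varepsilon^K$ (valid because $\omega_\varepsilon \subset {\mathcal O}_\varepsilon$ and each $\varepsilon$-cell is contained in exactly one $\varepsilon K$-pack) to obtain
$$\|\nabla v_\varepsilon\|_{L^2(\Omega)} \leq C \|\nabla u_\varepsilon\|_{L^2(\omega_\varepsilon)} \leq \frac{C}{\varepsilon}\|\nabla_y \phi\|_{L^2(\Omega \times \omega^K)}.$$
Next, I would apply the $KY$-version of Theorem \ref{thcompactunfold} (whose statement and proof are the same after rescaling the $KY$-pattern into a single periodicity cell) to extract, up to a subsequence, $v_0 \in H^1_0(\Omega)$ and $\widehat{v} \in L^2(\Omega, H^1_\#(KY)/\mathbb{R})$ such that
$$\varepsilon v_\varepsilon \rightharpoonup v_0 \text{ in } H^1_0(\Omega), \quad \nabla_y(E_\varepsilon^K v_\varepsilon) = \varepsilon E_\varepsilon^K(\nabla v_\varepsilon) \rightharpoonup \nabla v_0 + \nabla_y \widehat{v} \text{ in } L^2(\Omega \times KY).$$
Then I would rescale the variational identity
$$\int_\Omega \varepsilon \nabla v_\varepsilon \cdot \nabla v\,dx = \int_{\omega_\varepsilon} P_\varepsilon^K(\nabla_y \phi)\cdot \nabla v \,dx$$
via Proposition \ref{prop.unfold2}(i)--(ii), test it against $v = \varphi(x) + \varepsilon \psi(x, x/\varepsilon)$ with $\varphi \in H^1_0(\Omega)$ and $\psi \in {\mathcal D}(\Omega, H^1_\#(KY))$, pass to the limit (using the strong convergence $E_\varepsilon^K P_\varepsilon^K \nabla_y\phi \to \nabla_y \phi$ from Proposition \ref{prop.unfold2}(iv), and the analog of Proposition \ref{prop.unfold}(v)--(vi) for $E_\varepsilon^K$ on functions of the form $\psi(x,x/\varepsilon)$), and obtain the two-scale identity
$$\int_{\Omega \times KY} (\nabla v_0 + \nabla_y \widehat{v})\cdot(\nabla \varphi + \nabla_y \psi)\,dxdy = \int_{\Omega \times \omega^K} \nabla_y \phi \cdot (\nabla \varphi + \nabla_y \psi)\,dxdy.$$
Choosing $\varphi = 0$ (resp.\ $\psi = 0$) and accounting for the factor $1/K^d$ arising from $\int_{\Omega \times KY}\nabla v_0 \cdot \nabla \varphi\,dxdy = K^d \int_\Omega \nabla v_0 \cdot \nabla \varphi \,dx$ identifies $\widehat{v}$ and $v_0$ as the solutions \eqref{defT0K} and the equation for $v_0$ in the statement.

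Finally, to promote the weak convergence of $\nabla_y(E_\varepsilon^K v_\varepsilon)$ to a strong one, I would prove convergence of energies exactly as in \eqref{rescal.cvnrj}: lower semicontinuity gives one inequality, while the opposite one comes from testing the variational problem against $v_\varepsilon$ itself and using Proposition \ref{prop.unfold2} to recognize the limit as $\int_{\Omega \times \omega^K}\nabla_y \phi \cdot (\nabla v_0 + \nabla_y \widehat{v})\,dxdy$; combined with the already proved two-scale variational identity this matches the target. The main obstacle I anticipate is bookkeeping the boundary region ${\mathcal B}_\varepsilon$ and the $1/K^d$ factor carefully, since ${\mathcal O}_\varepsilon^K \subsetneq {\mathcal O}_\varepsilon$ (so some inclusions lie in the excluded strip), and making sure that the residual terms $\int_{{\mathcal B}_\varepsilon} \varepsilon \nabla v_\varepsilon \cdot \nabla v\,dx$ still vanish in the limit for the test functions used. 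The latter follows from $|{\mathcal B}_\varepsilon| \to 0$ and the $H^1_0(\Omega)$-boundedness of $\varepsilon v_\varepsilon$; once this is handled, uniqueness of the limit via the identification of $v_0,\widehat{v}$ yields the convergence of the whole sequence, not merely of a subsequence.
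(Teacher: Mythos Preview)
Your proposal is correct and is exactly the approach the paper intends: the authors state Proposition \ref{prop.cvT0K} as ``the counterpart of Proposition \ref{prop.cvrescal1} in the present context'' and omit the proof, leaving the systematic substitution $E_\varepsilon^K, P_\varepsilon^K, KY, \omega^K, H^1_\#(KY)/\mathbb{R}$ for $E_\varepsilon, P_\varepsilon, Y, \omega, H^1_\#(Y)/\mathbb{R}$ and the use of Proposition \ref{prop.unfold2} in place of Proposition \ref{prop.unfold} to the reader. Your bookkeeping of the $1/K^d$ factor and of the boundary strip ${\mathcal O}_\varepsilon \setminus {\mathcal O}_\varepsilon^K$ (handled by the specific definition of $E_\varepsilon^K$, which vanishes only outside ${\mathcal O}_\varepsilon$, not outside ${\mathcal O}_\varepsilon^K$) is on point.
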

In the above statement, $H^1_\#(KY)$ is the closed subspace of $H^1(KY)$ 
consisting of functions which are $KY$-periodic. We equip the space $H^1_\#(KY) / \mathbb{R}$ with the inner product: 
 $$ \langle u,v \rangle_{H^1_\#(KY) / \mathbb{R}} = \int_{KY}{\nabla_y u \cdot \nabla_y v \:dy}.$$ ~\par
\smallskip

\subsubsection{Study of the limit operator $\mathbb{T}_0^K$ with a discrete Bloch decomposition}~\\

The considerations of the previous section lead us to the study of the spectrum of the operator $\mathbb{T}_0^K$.
One first remark in this direction is the following counterpart of Lemma \ref{lem.loc}, whose proof is omitted.

\begin{lemma}\label{lem.locK}
Let $T_0^K : H^1_\#(KY) / \mathbb{R} \to H^1_\#(KY) / \mathbb{R}$ be the operator that maps
$u \in H^1_\#(KY) / \mathbb{R}$ into the unique solution $T_0^K u \in H^1_\#(KY)/\mathbb{R}$ of the equation:
\begin{equation}\label{eq.defT0K}
 -\Delta_y (T_0^K u)  = -\text{\rm div}_y(\mathds{1}_{\omega^K} \nabla_y u),
 \end{equation}
or under variational form: 
$$ \forall v \in H^1_\#(KY) / \mathbb{R}, \:\: \int_{KY}{\nabla_y (T_0^K u) \cdot \nabla_y v \:dy} = \int_{\omega^K}{\nabla_y u \cdot \nabla_y v \:dy}. $$
Then the spectrum of $\mathbb{T}_0^K$ contains the spectrum $\sigma(T_0^K)$ of $T_0^K$.
\end{lemma}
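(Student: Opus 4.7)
The plan is to transcribe, mutatis mutandis, the proof of Lemma \ref{lem.loc}, lifting any eigenpair of the $KY$-periodic cell operator $T_0^K$ to an eigenpair of $\mathbb{T}_0^K$ through the natural embedding of $H^1_\#(KY)/\mathbb{R}$ into $L^2(\Omega,H^K)$ as $x$-independent functions.

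First, by adapting the framework of Section \ref{sec.NPTD} to the $KY$-periodic setting, with the Poisson kernel of $\Omega$ replaced by the $KY$-periodic Neumann function of the inclusion pattern $\omega^K \Subset KY$, I would note that $\sigma(T_0^K) \subset [0,1]$ reduces to a sequence of eigenvalues accumulating only at $\tfrac{1}{2}$, together with the trivial eigenvalues $0$ and $1$. Since $\mathbb{T}_0^K$ has nontrivial kernel, $0 \in \sigma(\mathbb{T}_0^K)$ automatically, so I may restrict attention to $\lambda \in \sigma(T_0^K)$ with $\lambda \neq 0$, which furnishes a nonzero $u \in H^1_\#(KY)/\mathbb{R}$ satisfying the cell eigenvalue equation.

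Next, I would set $\phi(x,y) := u(y) \in L^2(\Omega, H^1_\#(KY)/\mathbb{R})$ and, using a tensor-product density argument as in Lemma \ref{lem.loc}, promote the cell eigenvalue equation to the two-scale relation
\begin{equation*}
\lambda \int_{\Omega \times KY}{\nabla_y \phi \cdot \nabla_y \psi \, dxdy} \;=\; \int_{\Omega \times \omega^K}{\nabla_y \phi \cdot \nabla_y \psi \, dxdy}, \quad \forall \psi \in L^2(\Omega, H^1_\#(KY)/\mathbb{R}).
\end{equation*}
Comparing with the defining relation (\ref{defT0K}), uniqueness in Proposition \ref{prop.cvT0K} yields $\widehat{v} = \lambda \phi$; and since $\phi$ is $x$-independent, the macroscopic source term $\int_{\omega^K}\nabla_y \phi(x,y)\,dy$ is a constant vector, so $v_0 \equiv 0$ in $H^1_0(\Omega)$. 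Hence $\mathbb{T}_0^K(Q\phi) = \lambda Q\phi$.

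The only point deserving slight care is to check that $Q\phi \neq 0$ in $L^2(\Omega,H^K)$, i.e., that $u\lvert_{\omega^K}$ does not lie in $C(\omega^K)$. If it did, then $\nabla_y u \equiv 0$ on $\omega^K$ and testing the eigenvalue relation for $u$ against $u$ itself would give $\lambda \int_{KY}\lvert \nabla_y u\lvert^2\,dy = 0$, contradicting $\lambda \neq 0$ together with $u \not\equiv 0$ in $H^1_\#(KY)/\mathbb{R}$. I do not expect any genuine obstacle in this proof: thanks to the machinery already set up (Proposition \ref{prop.cvT0K} and the definition of $\mathbb{T}_0^K$), the argument is essentially a word-for-word transcription of the proof of Lemma \ref{lem.loc} with $Y$ and $\omega$ replaced by $KY$ and $\omega^K$.
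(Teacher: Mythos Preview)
Your proposal is correct and matches exactly the approach the paper intends: the authors explicitly omit the proof of Lemma~\ref{lem.locK} as the ``counterpart of Lemma~\ref{lem.loc}'', and your argument is precisely the transcription of that proof with $Y,\omega$ replaced by $KY,\omega^K$. Your additional care in verifying $Q\phi\neq 0$ in $L^2(\Omega,H^K)$ and in spelling out why $v_0\equiv 0$ (the source $\int_{\omega^K}\nabla_y\phi\,dy$ being a constant vector) is appropriate and consistent with the paper's treatment.
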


Hence, our problem boils down to the study of the spectrum of the operator $T_0^K:H^1_\#(KY) / \mathbb{R} \to H^1_\#(KY) / \mathbb{R}$.
Since $T_0^K$ has $Y$-periodic coefficients, a natural idea is to consider its effect 
on the \textit{Bloch coefficients} of a function $u \in H^1_\#(KY) / \mathbb{R}$.\\

Following the lead of \cite{aguirreconca,allaireconcafs, allaireconca}, the main tool of this section is the following discrete Bloch decomposition theorem, 
which roughly speaking will allow us to diagonalize operators with $Y$-periodic coefficients
acting on $KY$-periodic functions; 
see \cite{blp} Chap. 4 \S 3.2, \cite{reedsimon4}, \S XIII.16, \cite{kuchment,wilcox} for the continuous Bloch decomposition, 
and \cite{allairebrianevanni,concaplanchard,concavan} for its use in the context of periodic homogenization. 
In this article, we shall use several versions of the discrete decomposition, associated to various scalings. 
All of them are proved in the same way, which is exemplified in Appendix (see Theorem \ref{thbloch}).

\begin{theorem}\label{thblochK}
Let $u$ in $L^2_\#(KY)$. Then, there exist $K^d$ complex-valued functions $u_j(y) \in L^2_\#(Y)$, $j=(j_1,...,j_d)$, $j_1,...,j_d = 0,...,K-1$, 
such that the following identity holds: 
\begin{equation}\label{prop.blochK}
 u(z) = \sum\limits_{0\leq j \leq K-1}{u_j(z)\: e^{\frac{2i\pi j}{K} \cdot z}},  \text{ a.e. } z \in KY;
 \end{equation}
The $u_j \in L^2_\#(Y)$ satisfying (\ref{prop.blochK}) are unique and are defined by: 
$$u_j(y) = \sum\limits_{0\leq j^\prime \leq K-1}{u(y+j^\prime)e^{-2i\pi \frac{j}{K} \cdot (y + j^\prime)}},  \text{ a.e. } y \in Y .$$ 
Furthermore, the Parseval identity holds: for $u,v \in L^2_\#(KY)$, with coefficients $u_j$, $v_j$, $0\leq j \leq K-1$, one has: 
\begin{equation}\label{eq.parsbloch}
 \frac{1}{K^d} \int_{KY}{u(z)\overline{v(z)}\:dx}= \sum\limits_{0\leq j\leq K-1}{\int_{Y}{u_j(y)\overline{v_j(y)}\:dy}}.
 \end{equation}
\end{theorem}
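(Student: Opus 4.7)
The plan is to recognize Theorem \ref{thblochK} as the composition of two elementary operations: the geometric decomposition $KY = \bigsqcup_{0\le j' \le K-1}(j'+Y)$, which converts a function on $KY$ into a $Y$-parametrized family of values on the discrete group $(\mathbb{Z}/K\mathbb{Z})^d$, and the finite Fourier transform on that group, whose characters are exactly the $K^d$ exponentials $e^{2\pi i j \cdot \,\cdot\,/K}$ for $0\le j \le K-1$. The algebraic backbone of everything is the discrete orthogonality relation
$$\sum_{0\le j \le K-1} e^{2\pi i j \cdot k/K} \;=\; K^d\,\mathbf{1}_{k \equiv 0 \,(\mathrm{mod}\, K)}, \qquad k \in \mathbb{Z}^d,$$
which factors as a product of $d$ one-dimensional identities.

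First I would address uniqueness. Suppose $\sum_j u_j(z)\,e^{2\pi i j\cdot z/K} = 0$ almost everywhere on $KY$ with $u_j \in L^2_\#(Y)$. Fixing $y \in Y$ and using $Y$-periodicity of each $u_j$, the $K^d$ identities obtained by setting $z = y + j'$ for $j' \in \{0,\dots,K-1\}^d$ take the form
$$\sum_{0 \le j \le K-1} \bigl[u_j(y)\, e^{2\pi i j \cdot y/K}\bigr]\, e^{2\pi i j \cdot j'/K} \;=\; 0,$$
which is a $K^d \times K^d$ linear system whose matrix is (up to scale) unitary by the orthogonality relation. Inversion both forces each $u_j(y) \equiv 0$ and, applied to the expansion in the existence direction, produces the explicit formula stated in the theorem (up to the normalization convention implicit in the statement).

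Existence then proceeds by \emph{taking} that explicit formula as the definition of $u_j$ and verifying two claims. The first is that $u_j$ is genuinely $Y$-periodic: shifting $y \mapsto y + e_k$ permutes the index $j'$ cyclically within $\{0,\dots,K-1\}^d$, the boundary term at $j'_k = K-1$ being compensated by the $KY$-periodicity of $u$, while the phase $e^{-2\pi i j \cdot (y+j')/K}$ reabsorbs the corresponding integer shift thanks to $e^{-2\pi i j \cdot K/K} = 1$. The second claim is the reconstruction formula (\ref{prop.blochK}): substituting the definition into the right-hand side, swapping the finite sums over $j$ and $j'$, and applying the orthogonality relation collapses the expression to the single surviving term $j' = [z]_Y$, which is $u(z)$ itself.

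Finally, the Parseval identity (\ref{eq.parsbloch}) is obtained by writing $\int_{KY} u\,\overline{v}\,dz = \sum_{j'}\int_Y u(y+j')\,\overline{v(y+j')}\,dy$, expanding each factor via (\ref{prop.blochK}), and collapsing the resulting triple sum in $(j, j'', j')$ to the diagonal $j = j''$ by one last invocation of discrete orthogonality on $j'$. The only genuine obstacle in the whole scheme is the bookkeeping behind the periodicity check for $u_j$, since the exponentials entering its definition are not themselves $Y$-periodic in $y$; all remaining steps are routine finite Fourier analysis on $(\mathbb{Z}/K\mathbb{Z})^d$.
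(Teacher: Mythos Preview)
Your proposal is correct and follows essentially the same route as the paper's proof (given for the variant Theorem \ref{thbloch} in the Appendix): derive the inversion formula by evaluating the decomposition at the $K^d$ lattice translates and collapsing via the discrete orthogonality relation (\ref{Nthroot}), then verify the converse and Parseval by the same mechanism. Your explicit flagging of the $Y$-periodicity check for $u_j$ and of the normalization discrepancy (the stated formula for $u_j$ is indeed missing a factor $1/K^d$, cf.\ the $1/N^2$ in (\ref{eq.invbloch})) are points the paper leaves implicit.
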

Note that, in the above statement and throughout this article, 
we use the same notation for a real-valued Hilbert space (e.g. $L^2(Y)$, $H^1(Y)$,...) and its complexified version.\\

If $u$ additionally belongs to $H^1_\#(KY)$, each coefficient $u_j$, $0\leq j \leq K-1$ belongs to $H^1_\#(Y)$, 
and the following relation holds: 
$$ \nabla u(z) = \sum\limits_{0\leq j \leq K-1}{\left( \nabla u_j(z) + \frac{2i\pi j}{K} \right) \: e^{\frac{2i\pi j}{K} \cdot z}},  \text{ a.e. } z \in KY.$$

Using this tool, the spectrum of $T_0^K$ may be given a quite convenient expression. 
The following result is an immediate consequence of
the definition (\ref{eq.defT0K}) in combination 
with the decomposition (\ref{prop.blochK}) and 
the Parseval identity (\ref{eq.parsbloch}).

\begin{proposition}\label{prop.secpTetaBloch}
For a multi-index $j=(j_1,...,j_d)$, $0 \leq j_i \leq K-1$, define the quasi-momentum $\eta_j = \frac{j}{K}$. 
Then, the spectrum $\sigma(T_0^K)$ of $T_0^K$ is exactly the reunion 
$$\sigma(T_0^K) = \bigcup\limits_{0\leq j \leq K-1}{\sigma(T_{\eta_j})},$$
where the operators $T_\eta$ are introduced and analyzed in the next Section \ref{sec.Teta}.
\end{proposition}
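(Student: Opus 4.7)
The idea is to diagonalize $T_0^K$ by means of the discrete Bloch decomposition of Theorem \ref{thblochK}. The crucial structural observation is that $\mathds{1}_{\omega^K}$ is, by the very construction of $\omega^K$ as the union of the $K^d$ translates $j + \omega$, the restriction to $KY$ of the $Y$-periodic function $\mathds{1}_\omega$; consequently, multiplication by $\mathds{1}_{\omega^K}$ preserves each Bloch mode separately, which is exactly what makes the Bloch tool adapted to $T_0^K$.

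First, I would decompose an arbitrary $u \in H^1_\#(KY)/\mathbb{R}$ via Theorem \ref{thblochK} as
$$ u(z) = \sum_{0 \leq j \leq K-1} u_j(z)\, e^{2i\pi \eta_j \cdot z}, \qquad u_j \in H^1_\#(Y), $$
and similarly for $T_0^K u$. Then I would plug into the variational formulation of (\ref{eq.defT0K}) a test function of the special form $v(z) = v_j(z)\, e^{2i\pi \eta_j \cdot z}$, with $v_j \in H^1_\#(Y)$ arbitrary. Using the identity
$$ \nabla_z \left( w(z) e^{2i\pi \eta_j \cdot z}\right) = \left(\nabla_y w + 2i\pi \eta_j w\right) e^{2i\pi \eta_j \cdot z}, $$
together with Parseval's identity (\ref{eq.parsbloch}) (which ensures the orthogonality of distinct Bloch modes in $L^2_\#(KY)$), both sides of the variational equation collapse to integrals over $Y$ and $\omega$ involving \emph{only} the $j$-th Bloch coefficients of $T_0^K u$ and $u$. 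The resulting identity is precisely the variational characterization of the Bloch operator $T_{\eta_j}$ acting on $H^1_\#(Y)$ with quasi-momentum $\eta_j$, that is, $(T_0^K u)_j = T_{\eta_j}(u_j)$.

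This shows that under the unitary change of variables provided by the Bloch decomposition, $T_0^K$ is identified with the orthogonal direct sum $\bigoplus_{0 \leq j \leq K-1} T_{\eta_j}$ on the corresponding direct sum of Bloch-mode spaces. Since the spectrum of a finite orthogonal direct sum of bounded self-adjoint operators is the union of the individual spectra, the claim $\sigma(T_0^K) = \bigcup_{0 \leq j \leq K-1} \sigma(T_{\eta_j})$ follows immediately.

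\textbf{Main obstacle.} The subtlest point is the bookkeeping of the quotient by constants. The constant functions on $KY$ live entirely in the $\eta_0 = 0$ Bloch component, so the quotient $H^1_\#(KY)/\mathbb{R}$ amounts, under the Bloch isomorphism, to quotienting \emph{only} the $j = 0$ summand by $\mathbb{R}$, while the Bloch modes associated with $\eta_j \neq 0$ remain in $H^1_\#(Y)$ without quotient (no non-zero constant multiple of $e^{2i\pi \eta_j \cdot y}$ is $Y$-periodic when $\eta_j \neq 0$). One must check that this matches the definition of $T_\eta$ announced in the next section, namely that $T_{\eta_0 = 0}$ coincides with the operator $T_0$ of Theorem \ref{th.cellspec} on $H^1_\#(Y)/\mathbb{R}$, whereas for $\eta_j \neq 0$ the relevant $T_{\eta_j}$ is defined on the full space $H^1_\#(Y)$ without quotient. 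Once this is reconciled, the argument proceeds cleanly.
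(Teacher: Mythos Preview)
Your proposal is correct and follows exactly the route the paper indicates: the paper states that the result is ``an immediate consequence of the definition (\ref{eq.defT0K}) in combination with the decomposition (\ref{prop.blochK}) and the Parseval identity (\ref{eq.parsbloch}),'' and what you have written is precisely a fleshed-out version of that sentence. Your handling of the quotient-by-constants issue (constants live in the $\eta_0=0$ mode only, matching the fact that $T_0$ acts on $H^1_\#(Y)/\mathbb{C}$ while $T_{\eta_j}$ for $\eta_j\neq 0$ acts on $H^1_\#(Y)$) is exactly the right bookkeeping and is consistent with the definitions (\ref{eq.defT0Bloch})--(\ref{eq.defTetaBloch}) in Section~\ref{sec.Teta}.
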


\subsubsection{Definition and properties of the operators $T_\eta$}\label{sec.Teta}~\\

For any quasi-momentum
$\eta \in \mathbb{R}^d$, 
we introduce an operator $T_\eta$ as follows: 
\begin{itemize} 
\item For $\eta =0$, $T_{0}$ is defined from $H^1_\#(Y) / \mathbb{C}$ into itself; it
maps an arbitrary function $u \in  H^1_\#(Y) / \mathbb{C}$ into the unique element $T_0 u \in H^1_\#(Y) / \mathbb{C}$ such that: 
\begin{equation}\label{eq.defT0Bloch}
 -\Delta_y (T_0 u)= -\text{\rm div}(\mathds{1}_\omega(y) \nabla_y u).
 \end{equation}
\item For $\eta \neq 0$, $T_{\eta} : H^1_\#(Y) \rightarrow H^1_\#(Y)$ 
maps $u \in H^1_\#(Y)$ into the unique element $T_{\eta} u \in H^1_\#(Y)$ such that: 
\begin{equation}\label{eq.defTetaBloch}
 -(\text{\rm div}_y + 2i\pi \eta )(\nabla_y + 2i\pi \eta)(T_{\eta} u) =  -(\text{\rm div}_y + 2i\pi \eta)(\mathds{1}_\omega (\nabla_y u  + 2i\pi \eta u)). 
 \end{equation}
\end{itemize}

These definitions make
sense owing to the Riesz representation theorem, since the mapping
$$ ( u , v ) \mapsto \int_{Y}{(\nabla_y u + 2i\pi \eta  u) \cdot \overline{(\nabla_y v + 2i\pi \eta v)} \:dy}$$
is an inner product on $H^1_\#(Y)$ when $\eta \neq 0$.

In the following, it is convenient to see alternatively $T_{\eta}$, $\eta \neq 0$ as an operator acting on the space $H^1_{\eta}(Y)$ of $e^{2i\pi \eta}$-periodic functions: 
$$ H^1_{\eta}(Y) := \left\{ u(y) e^{2i\pi \eta \cdot y}, \:\: u \in H^1_\#(Y) \right\} \subset H^1(Y),$$
which is a Hilbert space when equipped with the inner product 
$$ (u,v) \mapsto \int_Y{\nabla_y u \cdot \overline{\nabla_y v}\:dy}.$$
With a small abuse in notations, when $\eta \neq 0$, we thenceforth denote by $T_\eta$ the operator which maps any $u \in H^1_\eta(Y)$ to the unique element $T_\eta u \in H^1_\eta(y)$ such that: 
$$ \forall v \in H^1_\eta(Y), \:\: \int_Y{\nabla_y (T_\eta u) \cdot \overline{\nabla_y v}\:dy} = \int_\omega{\nabla_y u \cdot \overline{\nabla_y v}\:dy}.$$
Notice that, from this perspective, even though $T_\eta$ makes perfect sense for $\eta \in \mathbb{R}^d$, 
it is only worth studying it when $\eta \in \overline{Y}$, the closed unit cell. 
Also, observe that $T_\eta$ is self-adjoint.

For a fixed value $\eta \in \overline{Y}$, the analysis of the spectral properties of the operators $T_{\eta}$ arises in pretty much the same way as that detailed in 
Section \ref{sec.evTD} (see in particular Proposition \ref{prop.evTD}):
 
\begin{proposition}\label{prop.specTeta}
In the context outlined above, 
\begin{itemize}
\item The following orthogonal decomposition holds: 
$$ H^1_\#(Y) / \mathbb{C} = \text{\rm Ker}(T_0) \oplus \text{\rm Ker}(I-T_0) \oplus \mathfrak{h}_0,$$
where the spaces $\text{\rm Ker}(T_0)$, $\text{\rm Ker}(I-T_0)$, and $ \mathfrak{h}_0$ are respectively equal to:
$$ \text{\rm Ker}(T_0) = \left\{ u \in H^1_\#(Y) / \mathbb{C}, \: u = 0 
\text{ in }
\omega \right\}, 
\quad \text{\rm Ker}(I - T_0) = \left\{ u \in H^1_\#(Y) / \mathbb{C}, \: u = 0 
\text{ in }
Y \setminus \overline{\omega} \right\}, \text{ and }$$
$$ \mathfrak{h}_0 = \left\{ u \in H^1_\#(Y) / \mathbb{R}, \:\: -\Delta_y u = 0 \text{ in } \omega \text{ and in } 
Y \setminus \overline{\omega}\right\}.$$
\item For $\eta \in \overline{Y}$, $\eta \neq 0$, the following orthogonal decomposition holds: 
$$ H^1_{\eta}(Y) = \text{\rm Ker}(T_\eta) \oplus \text{\rm Ker}(I-T_\eta) \oplus \mathfrak{h}_\eta,$$
where the spaces $\text{\rm Ker}(T_\eta)$, $\text{\rm Ker}(I-T_\eta)$, and $ \mathfrak{h}_\eta$ are respectively equal to:
$$ \text{\rm Ker}(T_\eta) = \left\{ u \in H^1_\eta(Y), \: \exists c \in \mathbb{C}, \: u = c \text{ in }
\omega \right\}, \quad \text{\rm Ker}(I - T_\eta) = \left\{ u \in H^1_\eta(Y), \: u = 0 \text{ in } 
Y \setminus \overline{\omega} \right\}, \text{ and }$$
$$ \mathfrak{h}_\eta = \left\{ u \in H^1_\eta(Y), \:\: -\Delta_y u = 0 \text{ in } \omega \text{ and in }
Y \setminus \overline{\omega}, \text{ and } \int_{\partial \omega}{\frac{\partial u^+}{\partial n} \:ds} = 0\right\}.$$
\end{itemize}
\end{proposition}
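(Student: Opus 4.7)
The plan is to mirror the proof of Proposition \ref{prop.evTD}, working directly with the variational characterization of $T_\eta$ and adapting the integration-by-parts arguments to the (quasi-)periodic setting. The arguments for $\eta = 0$ on $H^1_\#(Y)/\mathbb{C}$ and for $\eta \neq 0$ on $H^1_\eta(Y)$ will run in parallel, differing only in the treatment of boundary integrals on $\partial Y$.

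First I would identify the two extreme eigenspaces from the defining variational equation. Testing against $v = u$ shows that $T_\eta u = 0$ iff $\int_\omega |\nabla_y u|^2\,dy = 0$, so $u$ must be constant on each connected component of $\omega$; since $\omega$ is assumed connected, this yields the stated descriptions of $\text{\rm Ker}(T_\eta)$, the constant being necessarily zero in the quotient $H^1_\#(Y)/\mathbb{C}$ when $\eta = 0$ and genuinely free when $\eta \neq 0$. Analogously, $(I - T_\eta)u = 0$ forces $\int_{Y \setminus \overline{\omega}} |\nabla_y u|^2\,dy = 0$, and connectedness of $Y \setminus \overline{\omega}$ yields the stated descriptions of $\text{\rm Ker}(I - T_\eta)$. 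Orthogonality of these two spaces in the inner product $\int_Y \nabla u \cdot \overline{\nabla v}\,dy$ is immediate since one gradient vanishes on $\omega$ and the other on $Y \setminus \overline{\omega}$.

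Next I would characterize $\mathfrak{h}_\eta$ as the orthogonal complement of $\text{\rm Ker}(T_\eta) \oplus \text{\rm Ker}(I-T_\eta)$. Orthogonality to $\text{\rm Ker}(I-T_\eta)$, tested against functions in $H^1_0(\omega)$ extended by zero, gives $-\Delta_y u = 0$ in $\omega$. Orthogonality to $\text{\rm Ker}(T_\eta)$ reduces, since $\nabla v = 0$ on $\omega$ whenever $v$ is constant there, to the condition $\int_{Y \setminus \overline{\omega}} \nabla_y u \cdot \overline{\nabla_y v}\,dy = 0$ for every $v \in H^1_\eta(Y)$ that is constant on $\omega$. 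Taking $v$ compactly supported in $Y \setminus \overline{\omega}$ yields harmonicity there; integrating by parts for a general admissible $v$ leaves the surface term $\int_{\partial \omega} \frac{\partial u^+}{\partial n}\,\overline{v}\,ds$, and the crucial observation is that the boundary contributions on $\partial Y$ cancel. Indeed, on opposite faces of $Y$, both $\nabla u$ and $v$ acquire conjugate quasi-periodic factors $e^{\pm 2i\pi \eta_k}$ while the outward normals are opposite, so the face integrals drop out. Letting $v|_{\partial \omega}$ range over arbitrary constants then forces the flux condition $\int_{\partial \omega} \frac{\partial u^+}{\partial n}\,ds = 0$ in the case $\eta \neq 0$; the existence of such test functions is arranged by multiplying the constant $c$ by a smooth cutoff supported in $Y$ and equal to $1$ near $\omega$, which lies in $H^1_\eta(Y)$ after multiplication by $e^{2i\pi\eta\cdot y}$.

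Finally, for $\eta = 0$ I would verify that the flux condition is automatic, consistent with its absence in the definition of $\mathfrak{h}_0$: for periodic $u$ harmonic on both subdomains, the divergence theorem on $Y \setminus \overline{\omega}$ gives $\int_{\partial \omega} \frac{\partial u^+}{\partial n}\,ds = \int_{\partial Y} \frac{\partial u}{\partial n}\,ds$, and the right-hand side vanishes by cancellation across opposite periodic faces. The converse inclusions follow by reversing the same integrations by parts. The only real obstacle is the careful bookkeeping of the $\partial Y$ contributions under $\eta$-quasi-periodicity; everything else is a direct transposition of the argument used for Proposition \ref{prop.evTD}.
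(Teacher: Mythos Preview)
Your approach is correct and matches the paper exactly: the paper gives no separate proof of this proposition but simply refers back to the argument for Proposition~\ref{prop.evTD}, which is precisely the adaptation you carry out, including the observation that the $\partial Y$ contributions cancel by (quasi\nobreakdash-)periodicity. One minor slip in your test-function construction for $\eta\neq 0$: the function $c\chi$, with $\chi$ smooth, compactly supported in $Y$, and equal to $1$ near $\omega$, already lies in $H^1_\eta(Y)$ (since $H^1_0(Y)\subset H^1_\eta(Y)$ for every $\eta$); the extra multiplication by $e^{2i\pi\eta\cdot y}$ you mention is unnecessary and would in fact destroy the property of being constant on $\omega$.
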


The next result about the spectrum of the operators $T_\eta$ is proved in exactly the same way as Proposition \ref{prop.TDcompact}, following the approach developed in \cite{bontriki},
replacing the Green function of the macroscopic domain $\Omega$ with either the periodic Green function (in the case $\eta =0$), or with the quasi-periodic 
Green function of the unit cell $Y$ with quasi-period $\eta$ (in the case $\eta \neq 0$). 
We refer to \cite{AmmariKangLee} for properties of these Green functions (see also \cite{lipton}).

\begin{proposition}
For every $\eta \in \overline{Y}$, the operator $R_\eta := T_\eta - \frac{1}{2}I : \mathfrak{h}_\eta \to \mathfrak{h}_\eta$ 
is compact with operator norm $\lvert\lvert R_\eta \lvert\lvert \leq \frac{1}{2}$. Consequently, the spectrum of $T_\eta$ 
is a discrete sequence of eigenvalues with finite multiplicity; its unique accumulation point is $\frac{1}{2}$. 
\end{proposition}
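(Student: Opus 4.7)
The plan is to mirror the proof of Proposition \ref{prop.TDcompact}, replacing the Poisson kernel $P(x,y)$ of $\Omega$ by an appropriate Green function of the periodicity cell $Y$: the $Y$-periodic Green function $G_0$ in the case $\eta=0$ (subject to the usual zero-mean normalization to deal with the kernel $\mathbb{C}$), and the quasi-periodic Green function $G_\eta$ satisfying $G_\eta(y+k,z)=e^{2i\pi\eta\cdot k}G_\eta(y,z)$, $k\in \mathbb{Z}^d$, in the case $\eta\neq 0$. The existence, regularity, and structure $G_\eta(y,z)=G(y,z)+R_\eta(y,z)$ with $R_\eta$ smooth (the singularity coincides with that of the free-space Green function $G$) are standard; see for instance \cite{AmmariKangLee,lipton}.

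First I would establish the norm bound. The operator $T_\eta$ is self-adjoint on its natural Hilbert space, and from the variational definitions (\ref{eq.defT0Bloch})--(\ref{eq.defTetaBloch}) and the fact that $\int_\omega|\nabla_y u+2i\pi\eta u|^2\,dy\leq \int_Y|\nabla_y u+2i\pi\eta u|^2\,dy$, it is immediate that $0\leq T_\eta\leq I$ as a bounded self-adjoint operator. Hence $R_\eta=T_\eta-\tfrac12 I$ has $\sigma(R_\eta)\subset[-\tfrac12,\tfrac12]$ and $\lvert\lvert R_\eta\lvert\lvert\leq \tfrac12$.

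Next I would carry out the potential-theoretic factorization. Define the single- and double-layer potentials $\mathcal{S}_\eta\phi(y):=\int_{\partial\omega}G_\eta(y,z)\phi(z)\,ds(z)$ and $\mathcal{D}_\eta\phi(y):=\int_{\partial\omega}\frac{\partial G_\eta}{\partial n_z}(y,z)\phi(z)\,ds(z)$. Since $G_\eta-G$ is smooth across $\partial\omega$, the Plemelj jump relations (\ref{eq.jumpSD})--(\ref{eq.jumpDD}) hold verbatim with an associated Neumann-Poincar\'e operator $\mathcal{K}^*_\eta$ which differs from the free-space operator $\mathcal{K}^*_\omega$ by an integral operator with smooth kernel; because $\omega$ is of class $\mathcal{C}^2$, the classical compactness of $\mathcal{K}^*_\omega$ on $H^{-1/2}(\partial\omega)$ then yields compactness of $\mathcal{K}^*_\eta$. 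I would then check, exactly as in Proposition \ref{prop.TDcompact}, that $S_\eta:H^{-1/2}(\partial\omega)\to H^{1/2}(\partial\omega)$ is an isomorphism by verifying injectivity via an energy identity on $Y\setminus\partial\omega$, using the quasi-periodic boundary conditions to discard boundary terms on $\partial Y$. In the case $\eta=0$, the mean-value condition $\int_{\partial\omega}\partial u^+/\partial n\,ds=0$ appearing in $\mathfrak{h}_0$ compensates for the one-dimensional cokernel of $S_0$, so one obtains an isomorphism onto the appropriate zero-mean subspaces. Every $u\in\mathfrak{h}_\eta$ therefore admits the integral representation $u=\mathcal{S}_\eta\phi$ with $\phi=S_\eta^{-1}(u\lvert_{\partial\omega})$.

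With this representation in hand, the exact chain of computations of Proposition \ref{prop.TDcompact}, applied to the variational identity defining $T_\eta$ on $\mathfrak{h}_\eta$, produces the key factorization $2R_\eta u=\mathcal{S}_\eta\circ \mathcal{K}^*_\eta\circ S_\eta^{-1}(u\lvert_{\partial\omega})$. Since $S_\eta^{-1}$ and $\mathcal{S}_\eta$ are bounded and $\mathcal{K}^*_\eta$ is compact, $R_\eta$ is compact, self-adjoint, with $\lvert\lvert R_\eta\lvert\lvert\leq\tfrac12$. The spectral theorem for compact self-adjoint operators then yields the conclusion on $\sigma(T_\eta)$: a discrete sequence of eigenvalues of finite multiplicity, with $\tfrac12$ as unique accumulation point. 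The main obstacle will be the careful bookkeeping at $\eta=0$, where one must work in quotients by constants and restrict single-layer densities to a zero-mean subspace so that $S_0$ is an isomorphism; once the correct functional setting is fixed, the argument is essentially identical to the bounded-domain case treated in \cite{bontriki} and summarized in Proposition \ref{prop.TDcompact}.
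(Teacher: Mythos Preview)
Your proposal is correct and follows exactly the approach the paper itself prescribes: the paper does not give a detailed proof but simply states that the result is obtained ``in exactly the same way as Proposition~\ref{prop.TDcompact}\ldots replacing the Green function of the macroscopic domain $\Omega$ with either the periodic Green function (in the case $\eta=0$), or with the quasi-periodic Green function of the unit cell $Y$ with quasi-period $\eta$,'' citing \cite{AmmariKangLee,lipton}. Your write-up is in fact more detailed than the paper's own treatment, and your remark about the zero-mean bookkeeping at $\eta=0$ is consistent with the periodic single-layer framework developed later in Section~\ref{sec.cellpb}.
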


Hence, for a fixed value $\eta$, the operator $T_{\eta}: \mathfrak{h}_\eta \to \mathfrak{h}_\eta$ has a discrete spectrum, 
which is only composed of eigenvalues $\lambda_k^\pm(\eta)$, whose only accumulation point is $\frac{1}{2}$. We order these eigenvalues as follows:
$$0 < \lambda_1^-(\eta_j) \leq \lambda_2^-(\eta_j) \leq ... \leq \frac{1}{2} \leq ... \leq... \lambda_2^+(\eta_j) \leq  \lambda_1^+(\eta_j) <1.$$
We also denote by $w_k^\pm(\eta)$ the $k^{\text{th}}$ eigenfunction of $T_\eta$, associated to the eigenvalue $\lambda^{\pm}_k(\eta)$. 
Our next purpose is to prove that the mappings $\eta \mapsto \lambda_k^{\pm}(\eta)$ are continuous with respect to $\eta$, 
a result, which to the best of our knowledge is new.

\begin{theorem}\label{th.contev}
For any $k \geq 1$, the mappings $\eta \mapsto \lambda_k^-(\eta)$, $\eta \mapsto \lambda_k^+(\eta)$ are Lipschitz continuous, with a Lipschitz constant which is independent of 
the index $k$. 
\end{theorem}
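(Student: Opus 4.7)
My plan is to reduce the Lipschitz continuity of $\eta \mapsto \lambda_k^\pm(\eta)$ to a uniform (in the test function) Lipschitz bound on an $\eta$-dependent Rayleigh quotient, and then conclude via the min-max principle. For $\eta \neq 0$, the substitution $u = v\, e^{2i\pi \eta \cdot y}$ is a bijection $H^1_\#(Y) \to H^1_\eta(Y)$; under it, the eigenvalue problem $T_\eta u = \lambda u$ becomes the generalized eigenvalue problem
$$b_\eta(v, w) = \lambda\, q_\eta(v, w), \quad \forall w \in H^1_\#(Y),$$
with $\eta$-dependent bilinear forms
$$b_\eta(v, w) := \int_\omega (\nabla v + 2i\pi \eta v) \cdot \overline{(\nabla w + 2i\pi \eta w)}\,dy, \quad q_\eta(v, w) := \int_Y (\nabla v + 2i\pi \eta v) \cdot \overline{(\nabla w + 2i\pi \eta w)}\,dy.$$
A direct expansion shows that $b_\eta(v,v)$ and $q_\eta(v,v)$ are quadratic polynomials in $\eta$, yielding the uniform-in-$v$ estimate $|b_{\eta_1}(v,v) - b_{\eta_2}(v,v)| + |q_{\eta_1}(v,v) - q_{\eta_2}(v,v)| \leq C|\eta_1 - \eta_2|\,\|v\|_{H^1(Y)}^2$.

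On the image of $\mathfrak{h}_\eta$ through the transform, $q_\eta(v,v) \geq c\,\|v\|_{H^1(Y)}^2$ uniformly in $\eta$ on a bounded set away from $0$. Dividing then yields the uniform estimate $|R_{\eta_1}(v) - R_{\eta_2}(v)| \leq C'|\eta_1 - \eta_2|$ for the Rayleigh quotient $R_\eta(v) := b_\eta(v,v)/q_\eta(v,v)$, with $C'$ independent of $v$ and hence, crucially, of any spectral index. The min-max characterization analogous to Proposition \ref{propminmax}, applied to the compact self-adjoint operator $T_\eta - \frac{1}{2}I$ on $\mathfrak{h}_\eta$, would then directly translate this into $|\lambda_k^\pm(\eta_1) - \lambda_k^\pm(\eta_2)| \leq C'|\eta_1 - \eta_2|$, uniformly in $k$.

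The principal obstacle is that the subspace $\mathfrak{h}_\eta$ itself depends on $\eta$, and that the ambient space degenerates at $\eta = 0$ into the quotient $H^1_\#(Y)/\mathbb{C}$, so the min-max cannot be directly compared across different $\eta$. To bypass this, I would reformulate via the Neumann-Poincar\'e operator, adapting Proposition \ref{prop.TDcompact} from the bounded domain $\Omega$ to the periodicity cell $Y$: the non-trivial eigenvalues of $T_\eta$ coincide (after the shift by $\frac{1}{2}$) with those of a compact self-adjoint operator $\mathcal{K}_\eta^*$ acting on the $\eta$-independent space $L^2(\partial \omega)$, whose kernel is the normal derivative of the quasi-periodic (or, for $\eta = 0$, periodic) Green function $G_\eta$ of $Y$, as studied in \cite{AmmariKangLee}. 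Weyl's inequality for compact self-adjoint operators then gives $|\lambda_k(\mathcal{K}_{\eta_1}^*) - \lambda_k(\mathcal{K}_{\eta_2}^*)| \leq \|\mathcal{K}_{\eta_1}^* - \mathcal{K}_{\eta_2}^*\|_{L^2(\partial \omega) \to L^2(\partial \omega)}$, reducing everything to Lipschitz dependence of $\mathcal{K}_\eta^*$ in operator norm. The latter follows in turn from Lipschitz dependence in $\eta$ of the regular part of $G_\eta$ on a neighbourhood of $\partial \omega \times \partial \omega$; the delicate point at $\eta = 0$, where the periodic Green function is defined only modulo a $|Y|^{-1}$ background constant absent for $\eta \neq 0$, is resolved by observing that this constant is annihilated by the normal derivative and hence does not contribute to $\mathcal{K}_\eta^*$.
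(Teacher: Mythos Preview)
Your instinct to pass to an $\eta$-independent space and then invoke an eigenvalue perturbation bound is correct, and it is also the paper's strategy. But your chosen vehicle, the quasi-periodic Neumann--Poincar\'e operator $\mathcal{K}_\eta^*$ on $L^2(\partial\omega)$, has a genuine gap: $\mathcal{K}_\eta^*$ is \emph{not} self-adjoint on $L^2(\partial\omega)$ with its standard inner product (it is self-adjoint only with respect to the $S_\eta$-weighted pairing, via the Plemelj--Calder\'on symmetrization), so Weyl's inequality $|\lambda_k(A)-\lambda_k(B)|\le\|A-B\|$ does not apply. Restoring self-adjointness by conjugating with $(-S_\eta)^{1/2}$ reintroduces an $\eta$-dependent inner product and you are back where you started. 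A second, more technical, obstruction sits at $\eta=0$: the $n=0$ Fourier mode of the quasi-periodic Green function contributes, after the normal derivative and restriction to mean-zero densities, a term of order $(\eta\cdot n_y)(\eta\cdot M_\phi)/|\eta|^2$, which is bounded but direction-dependent as $\eta\to 0$. So $\mathcal{K}_\eta^*$ is not even continuous in operator norm there; the singular part is not merely ``a constant annihilated by the normal derivative''.

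The paper avoids both difficulties by moving not to $L^2(\partial\omega)$ but to $H^1(\omega)/\mathbb{C}$, using the restricted operator $\mathring{T_\eta}$ of Section~\ref{sec.rest}. This operator \emph{is} self-adjoint on a fixed Hilbert space for every $\eta$ (including $\eta=0$), its nontrivial eigenvalues coincide with those of $T_\eta$, and the min-max for $\mathring{T_\eta}-\tfrac12 I$ runs over the $\eta$-independent subspace $\mathfrak{h}_\omega=\{u\in H^1(\omega)/\mathbb{C}:\Delta_y u=0\text{ in }\omega\}$. The Lipschitz bound on the associated Rayleigh quotient $u\mapsto\langle\mathring{T_\eta}u,u\rangle/\|u\|^2$ is then obtained by a direct variational trick: one compares $v_i=T_{\eta_i}U_{\eta_i}u$ for $i=1,2$ by inserting the twisted test functions $w=e^{2i\pi(\eta_2-\eta_1)\cdot y}v_1$ into the variational formulation for $v_2$ (and symmetrically), subtracts, and observes that every surviving term carries a factor $e^{\pm 2i\pi(\eta_2-\eta_1)\cdot y}-1=O(|\eta_1-\eta_2|)$. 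This is essentially the computation you sketched in your first paragraph, carried out on the space where it actually closes; an additional Poincar\'e--Wirtinger normalization ($\int_\omega v_i\,dy=0$) is needed to control the $L^2$ norms of the $v_i$ uniformly in $\eta$, including across $\eta=0$.
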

\begin{proof}
The proof is inspired from an idea in \cite{gerard} and consists in expressing $\eta \mapsto \lambda_k^\pm(\eta)$ 
as min-max of Lipschitz functions with respect to $\eta$. However, before doing so, we have to deal with the difficulty that the operators $T_\eta$ are defined on different Hilbert spaces.  
We use again the idea introduced in Section \ref{sec.rest}, which relates $T_\eta$ with an operator $\mathring{T_\eta}$, from $H^1(\omega) / \mathbb{C}$ into itself: 
\begin{itemize}
\item For $\eta \in \overline{Y} \setminus \left\{ 0 \right\}$, $\mathring{T_\eta}$ is defined as follows:
$$\forall u \in  H^1(\omega) / \mathbb{C},\:\: \mathring{T_\eta} uÊ= (T_\eta U_\eta u) \lvert_\omega,$$ 
where $U_\eta : H^1(\omega) \to H^1_\eta(Y)$ is the harmonic extension operator: 
$$ (U_\eta u) \lvert_\omega = u, \text{ and } -\Delta_y (U_\eta u) = 0 \text{ on } Y \setminus \overline{w},$$
which induces a bounded operator (still denoted by $U_\eta$) from $H^1(\omega) / \mathbb{C}$ into $H^1_\eta(Y) / \text{\rm Ker}(T_\eta)$ (see Proposition \ref{prop.specTeta}).
\item The construction of $\mathring{T_0} : H^1(\omega) / \mathbb{C} \to H^1(\omega) / \mathbb{C}$ is carried out in a completely analogous way.  
 \end{itemize}
 
One proves along the line of Section \ref{sec.rest} that, for any $\eta \in \overline{Y}$: 
\begin{itemize}
\item The spectrum of $\mathring{T_\eta}$ equals $\sigma(\mathring{T_\eta}) = \sigma(T_\eta) \setminus \left\{Ê0 \right\}$.
\item $\lambda \in \mathbb{R}$ is an eigenvalue of $\mathring{T_\eta}$ if and only if it is an eigenvalue of $T_\eta$.
\item $\mathring{T_\eta}$ is injective, and the following decomposition holds: 
$$ H^1(\omega) / \mathbb{C} = \text{\rm Ker}(I-\mathring{T_\eta}) \oplus \mathfrak{h}_\omega, \text{ where }$$
$$
\text{\rm Ker}(I-\mathring{T_\eta}) = \left\{ u \in H^1(\omega)/\mathbb{C}, \: \exists \:c \in \mathbb{C}, \: u = c \text{ on } \partial \omega \right\},
\text{ and } \mathfrak{h}_\omega = \left\{ u \in H^1(\omega) / \mathbb{C}, \: -\Delta_y u = 0 \text{ in } \omega \right\}.$$
\item The mapping $\mathring{R_\eta} := \mathring{T_\eta} - \frac{1}{2}I$, from $\mathfrak{h}_\omega$ into itself, is compact. 
\end{itemize} 

We now rely on the min-max formulae for the eigenvalues of compact operators:
$$\lambda_k^{-}(\eta) = \min\limits_{u \in {\mathfrak h}_\omega \setminus \left\{0 \right\} \atop u \perp w_1^-(\eta),...,w_{k-1}^-(\eta)}{\frac{ \displaystyle{\int_\omega { \nabla_y (\mathring{T_\eta} u) \cdot \overline{\nabla_y u}  \:dx}}}{ \displaystyle{\int_\omega{\lvert \nabla_y u \lvert^2 \:dx}}}} = \max\limits_{F_k \subset {\mathfrak h}_\omega \atop \text{\rm dim}(F_k) = k-1}{\min\limits_{u \in F_k^\perp \setminus \left\{ 0 \right\}}{\frac{ \displaystyle{\int_\omega {\nabla_y (\mathring{T_\eta} u) \cdot \overline{\nabla_y u}  \:dx}}}{ \displaystyle{\int_\omega{\lvert \nabla_y u \lvert^2 \:dx}}}}}, $$
and similar fomulae hold for the eigenvalues $\lambda_k^{+}(\eta)$. 
Hence, to prove Theorem \ref{th.contev}, it is enough to show that, for any $u \in \mathfrak{h}_\omega$, $u\neq 0$, 
the mapping 
\begin{equation}\label{eq.maptoLip}
\eta \longmapsto \frac{\displaystyle{\int_{\omega}{\nabla_y (\mathring{T_\eta} u) \cdot \overline{\nabla_y u} \:dy}}}{\displaystyle{
\int_{\omega}
{\lvert \nabla_y u \lvert^2 \:dy}}}
\end{equation}
 is Lipschitz continuous with Lipschitz constant independent of $u$, which we do now. 

Let $u \in \mathfrak{h}_\omega$, $u\neq 0$ and $\eta_1, \eta_2 \in \overline{Y}$ be given. 
Let $z \in H^1(\omega)$ denote an element in the class $u \in \mathfrak{h}_\omega$.
By definition, $\mathring{T}_{\eta_1} u$ (resp. $\mathring{T}_{\eta_2} u$) is the class modulo constants of $T_{\eta_1}U_{\eta_1}(z + c_1)\lvert _\omega \in H^1(\omega)$
(resp. of $T_{\eta_2}U_{\eta_2}(z + c_2)\lvert _\omega$),
where $c_1$ and $c_2$ are arbitrary constants.
We set $v_i = T_{\eta_i}U_{\eta_i}(z + c_i) \in H^1_{\eta_i}(Y)$, $i=1,2$.
By definition of $T_{\eta_i}$, $v_i$ solves
\begin{equation}\label{eq.varfvi}
\forall w \in H^1_{\eta_i}(Y), \:\: \int_Y{\nabla_y v_i \cdot \overline{\nabla_y w} \:dy} = \int_\omega{\nabla_y u \cdot \overline{\nabla_y w} \:dy}. 
\end{equation}
The quantity of interest is now:
$$\int_{\omega}{\nabla_y (\mathring{T}_{\eta_1} u) \cdot \overline{\nabla_y u} \:dy}  -  \int_{\omega}{\nabla_y (\mathring{T}_{\eta_2} u) \cdot \overline{\nabla_y u} \:dy}  =  \int_{\omega}{\nabla_y (v_1 -v_2) \cdot \overline{\nabla_y u} \:dy},$$
and we are led to estimate the \textit{real-valued} integral 
$$I :=  \int_{\omega}{\nabla_y (v_1 -v_2) \cdot \overline{\nabla_y u} \:dy}.$$
To achieve this, we insert $w(y) = p(y)v_1(y)$ (resp. $w = p(y)^{-1}v_2(y)$), where $p(y) \equiv e^{2i\pi(\eta_2-\eta_1)\cdot y}$, 
as a test function in the variational formulation (\ref{eq.varfvi}) for $v_2$ (resp. for $v_1$), and subtract the resulting identities to obtain:
\begin{multline*}
\int_\omega{\overline{\nabla_y u} \cdot \left( p(y)^{-1} \nabla_y v_2 - p(y) \nabla_y v_1\right)\:dy} = 2i\pi(\eta_2-\eta_1) \cdot \int_{\omega}{\left(p(y)^{-1}v_2 - p(y)v_1 \right) \overline{\nabla_y u} \:dy} \\
+ \int_Y{\left(p(y)^{-1} \overline{\nabla_y v_1} \cdot \nabla_y v_2 - p(y) \overline{\nabla_y v_2} \cdot \nabla_y v_1 \right) \:dy} + 2i\pi(\eta_1-\eta_2) \cdot \int_{Y}{\left(p(y)^{-1}v_2 \overline{\nabla_y v_1} + p(y)v_1 \overline{\nabla_y v_2}\right) \:dy} .
\end{multline*} 
By rearranging this expression, and notably bringing into play the quantities $(p(y)^{-1}-1)$ and $(p(y)-1)$, we infer the following expression for the integral $I$:
\begin{multline*}
I = \int_\omega{(1-p(y)^{-1}) \overline{\nabla_y u} \cdot \nabla_y v_2 \:dy} -  \int_\omega{(1-p(y)) \overline{\nabla_y u} \cdot \nabla_y v_1 \:dy} \\+ 2i\pi(\eta_2-\eta_1) \cdot \int_{\omega}{\left(p(y)^{-1}v_2 - p(y)v_1 \right) \overline{\nabla_y u} \:dy} 
+ \int_Y{\left( \overline{\nabla_y v_1} \cdot \nabla_y v_2 - \overline{\nabla_y v_2} \cdot \nabla_y v_1 \right) \:dy} \\ 
+  \int_Y{(p(y)^{-1}-1) \overline{\nabla_y v_1} \cdot \nabla_y v_2\:dy} + \int_Y{(1-p(y)) \overline{\nabla_y v_2} \cdot \nabla_y v_1\:dy} \\
+ 2i\pi(\eta_1-\eta_2) \cdot \int_{Y}{\left(p(y)^{-1}v_2 \overline{\nabla_y v_1} + p(y)v_1 \overline{\nabla_y v_2}\right) \:dy}.
\end{multline*} 
Since $I$ is real-valued, taking real parts in the above identity, the fourth term in the right-hand side vanishes, and using the fact that there exists a constant $C>0$ such that:
$$\lvert p(y)-1\lvert + \lvert p(y)^{-1} - 1 \lvert \leq C \lvert\eta_1 - \eta_2 \lvert , \text{ a.e. } y \in Y,$$
we obtain: 
\begin{equation}\label{eq.estI}
\begin{array}{ccl}
 \lvert I \lvert &\leq& C \lvert \eta_1 - \eta_2\lvert \left(\lvert\lvert \nabla_y u \lvert\lvert_{L^2(\omega)^d} \lvert\lvert \nabla_y v_2 \lvert\lvert_{L^2(\omega)^d} + \lvert\lvert \nabla_y u \lvert\lvert_{L^2(\omega)^d} \lvert\lvert \nabla_y v_1 \lvert\lvert_{L^2(\omega)^d}\right) \\ 
  &&+ C \lvert \eta_1 - \eta_2\lvert \left(\lvert\lvert \nabla_y u \lvert\lvert_{L^2(\omega)^d} \lvert\lvert  v_2 \lvert\lvert_{L^2(\omega)} + \lvert\lvert \nabla_y u \lvert\lvert_{L^2(\omega)^d} \lvert\lvert v_1 \lvert\lvert_{L^2(\omega)}\right)\\
  &&+ C \lvert \eta_1 - \eta_2\lvert \left(\lvert\lvert \nabla_y v_1 \lvert\lvert_{L^2(Y)^d} \lvert\lvert  \nabla_y v_2 \lvert\lvert_{L^2(Y)^d}  + \lvert\lvert \nabla_y v_1 \lvert\lvert_{L^2(Y)^d} \lvert\lvert  v_2 \lvert\lvert_{L^2(Y)} + \lvert\lvert v_1 \lvert\lvert_{L^2(Y)} \lvert\lvert  \nabla_y v_2 \lvert\lvert_{L^2(Y)^d} \right),
  \end{array}
 \end{equation}
 where the constant $C$ only depends on the geometry of $\omega$.

In order to estimate the right-hand side of the above inequality, we use the a priori estimate $\lvert\lvert \nabla_y v_i \lvert\lvert_{L^2(Y)^d} \leq \lvert\lvert \nabla_y u \lvert\lvert_{L^2(Y)^d}$, which is an immediate consequence of (\ref{eq.varfvi}).
Moreover, we rely on the following Poincar\'e-Wirtinger inequality, which follows 
from a classical contradiction argument: 
there exists a constant $C >0$ (depending only on $\omega$) such that, for any function $v \in H^1(Y)$ with $\int_\omega{v \:dy} = 0$, 
\begin{equation}\label{eq.pwvi}
\int_Y{\lvert v \lvert^2 \:dx} \leq C \int_{Y}{\lvert \nabla_y v \lvert^2 \:dx}.
\end{equation} 
Hence, adjusting the constant $c_i$ so that $\int_{\omega}{v_i \:dy} = 0$, it follows:
\begin{eqnarray*}
\lvert\lvert v_i \lvert\lvert_{L^2(Y)}^2 &\leq&C\,\int_Y{\lvert \nabla_y (T_{\eta_i}(U_{\eta_i}(z+c_i))\lvert^2 \:dy},\\
&\leq& C \, \int_\omega{\lvert \nabla_y z \lvert^2 \:dy}.
\end{eqnarray*}
Eventually, (\ref{eq.estI}) yields: 
$$ \lvert I \lvert \leq C \lvert \eta_1 - \eta_2 \lvert \:\lvert\lvert \nabla_y u \lvert \lvert^2_{L^2(\omega)^d}.$$
This proves that the mapping (\ref{eq.maptoLip}) is Lipschitz continuous with respect to the parameter $\eta$, 
and terminates the proof.
\end{proof}\par
\smallskip
\subsubsection{End of the proof of Theorem \ref{th.blochspec}}~\\

Theorem \ref{th.blochspec} arises as a consequence of the material proved in the previous subsections. 
We know from Proposition \ref{prop.specmatchK} that, for an arbitrary integer $K$, the rescaled operator $\mathbb{T}_\varepsilon^K: L^2(\Omega,H^K) \rightarrow L^2(\Omega,H^K)$
defined by (\ref{eq.defTepsK}) has the same spectrum as $T_\varepsilon$ (except possibly for the eigenvalue $0$). 

We also know that $\mathbb{T}_\varepsilon^K$ converges pointwise to a limit operator $\mathbb{T}_0^K$ (see Proposition \ref{prop.cvT0K}). 
As a consequence of Proposition \ref{prop.speclsc}, the limit spectrum $\lim_{\varepsilon \to 0}{\sigma(T_\varepsilon)}$ contains the spectrum $\sigma(\mathbb{T}_0^K)$. 

Using Lemma \ref{lem.locK} and Proposition \ref{prop.secpTetaBloch}, this implies the inclusion:
$$\bigcup\limits_{0\leq j \leq K-1}{\left\{\lambda_1^-(\eta_j), \lambda_2^-(\eta_j), ..., \lambda_2^+(\eta_j), \lambda_1^+(\eta_j) \right\}} \subset \lim\limits_{\varepsilon \to 0}{\sigma(T_\varepsilon)},\:\: (\eta_j \equiv \frac{j}{K}).$$
Eventually, since this procedure is available for any value of $K$, and using the continuity result of Theorem \ref{th.contev}, 
we end up with the fact that the Bloch spectrum $\sigma_{\text{\rm Bloch}}$ defined by (\ref{eq.defBlochspec}) is contained in the limit $\lim_{\varepsilon \to 0}{\sigma(T_\varepsilon)}$.

\subsection{Interaction of the inclusions with the boundary of $\Omega$: a completeness result}\label{seccompleteness}~\\

In this previous sections, we have identified one part $\sigma_{\text{\rm Bloch}}$ of the limit spectrum $\lim _{\varepsilon \rightarrow 0}{\sigma(T_\varepsilon)}$, 
which measures the interactions between several inclusions. 
In the spirit of the work of G. Allaire and C. Conca in \cite{allaireconcafs,allaireconca}, 
we now prove that the remaining part is a consequence of interactions between these inclusions and the boundary $\partial \Omega$ of the macroscopic domain $\Omega$.

\begin{theorem}\label{th.blochbl}
The limit spectrum is decomposed as: 
$$ \lim \limits_{\varepsilon \rightarrow 0}{\sigma(T_\varepsilon)} = \left\{ 0,1 \right\} \cup \sigma_{\partial \Omega} \cup \sigma_{\text{\rm Bloch}},$$
where the Bloch spectrum $\sigma_{\text{\rm Bloch}}$ is defined in (\ref{eq.defBlochspec}) and the `boundary layer spectrum' $\sigma_{\partial \Omega}$ is: 
$$\sigma_{\partial \Omega} = \left\{ \lambda \in (0,1), \text{ s.t. }\:\:
\begin{minipage}{0.65\textwidth}
For any sequence $\lambda_\varepsilon \in \sigma(T_\varepsilon)$ satisfying $\lambda_\varepsilon \rightarrow \lambda$,  
and any sequence $u_\varepsilon \in H^1_0(\Omega)$ of associated eigenvectors, one has: 
$$ \varepsilon^{-(1-1/d+s)} \lvert\lvert \nabla u_\varepsilon \lvert\lvert_{L^2({\mathcal U}_\varepsilon)} \xrightarrow{\varepsilon \to 0} +\infty \text{ for all } s>0.$$
\end{minipage}\right\},$$
where ${\mathcal U}_\varepsilon := \left\{ x \in \Omega, \:\: d(x,\partial \Omega) < \varepsilon \right\}$ is the tubular neighborhood of $\partial \Omega$ with width $\varepsilon$. 
\end{theorem}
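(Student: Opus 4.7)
The three inclusions $\{0,1\} \cup \sigma_{\partial \Omega} \cup \sigma_{\text{\rm Bloch}} \subset \lim_{\varepsilon\to 0}\sigma(T_\varepsilon)$ are already at hand: the trivial eigenvalues $0,1$ belong to each $\sigma(T_\varepsilon)$ by Proposition \ref{prop.evTD}, the boundary layer part is included by its very definition, and Theorem \ref{th.blochspec} delivers the Bloch part. The genuine content of the statement is the reverse inclusion
\[
\lim_{\varepsilon\to 0}\sigma(T_\varepsilon) \setminus \bigl(\{0,1\}\cup \sigma_{\partial \Omega}\bigr) \subset \sigma_{\text{\rm Bloch}}.
\]
Fix such a $\lambda$, which by Theorem \ref{th.evnp} lies in $[m,M]\subset (0,1)$. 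The negation of $\lambda \in \sigma_{\partial \Omega}$ provides an $s>0$, a sequence $\lambda_\varepsilon \in \sigma(T_\varepsilon)$ converging to $\lambda$, and associated eigenvectors $u_\varepsilon \in H^1_0(\Omega)$ with $\lvert\lvert u_\varepsilon\lvert\lvert_{H^1_0(\Omega)}=1$, along a subsequence of which the quantity $\varepsilon^{-(1-1/d+s)}\lvert\lvert \nabla u_\varepsilon \lvert\lvert_{L^2({\mathcal U}_\varepsilon)}$ stays bounded.

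The plan is to localize this sequence away from $\partial \Omega$, inject it into the $K$-cell rescaling of Propositions \ref{prop.cvT0K} and \ref{prop.secpTetaBloch}, pass to the limit $\varepsilon \to 0$ at fixed $K$, and then let $K \to \infty$. First, pick a cutoff $\chi_\varepsilon \in {\mathcal C}^\infty_c(\Omega)$ equal to $1$ outside the tubular neighborhood of $\partial \Omega$ of width $2\varepsilon$, with $\lvert \nabla \chi_\varepsilon \lvert \leq C/\varepsilon$, and set $\widetilde u_\varepsilon := \chi_\varepsilon u_\varepsilon$. Using the variational definition of $T_\varepsilon$ together with an elliptic estimate for the harmonic extension on $\Omega \setminus \overline{\omega_\varepsilon}$, the commutator error $T_\varepsilon \widetilde u_\varepsilon - \chi_\varepsilon T_\varepsilon u_\varepsilon$ reduces to integrals involving $\nabla \chi_\varepsilon \cdot u_\varepsilon$ and $\nabla u_\varepsilon \lvert_{{\mathcal U}_\varepsilon}$ only, leading to
\[
\lvert\lvert T_\varepsilon \widetilde u_\varepsilon - \lambda_\varepsilon \widetilde u_\varepsilon \lvert\lvert_{H^1_0(\Omega)} \xrightarrow{\varepsilon \to 0} 0, \qquad \lvert\lvert \widetilde u_\varepsilon \lvert\lvert_{H^1_0(\Omega)} \geq c > 0.
\]
The exponent $1-1/d+s$ in the definition of $\sigma_{\partial \Omega}$ is calibrated so that this step closes: the power $1-1/d$ balances the $\varepsilon^{1/d}$-scale of the boundary strip against the prescribed $H^1_0$-normalization, and the strict gain $s>0$ absorbs the factor $1/\varepsilon$ coming from $\nabla \chi_\varepsilon$.

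For each fixed integer $K\geq 1$, unfold $\widetilde u_\varepsilon$ via $\phi_\varepsilon^K := E_\varepsilon^K \widetilde u_\varepsilon \in L^2(\Omega,H^K)$. Exactly as in Proposition \ref{propsamespec1}, this yields an approximate eigenvector sequence for $\mathbb{T}_\varepsilon^K$ attached to the eigenvalue $\lambda_\varepsilon$; the pointwise convergence $\mathbb{T}_\varepsilon^K \to \mathbb{T}_0^K$ of Proposition \ref{prop.cvT0K}, combined with the spectral stability principle of Proposition \ref{prop.speclsc}, then places $\lambda$ in $\sigma(\mathbb{T}_0^K)$, and hence via Lemma \ref{lem.locK} and Proposition \ref{prop.secpTetaBloch} in the finite set $\bigcup_{0\leq j\leq K-1}\sigma(T_{j/K})$. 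So for each $K$ there exist a band index $k_K$ and a multi-index $j_K$ with $\lambda = \lambda_{k_K}^\pm(j_K/K)$. Letting $K\to\infty$, extracting a cluster point $\eta^* \in [0,1]^d$ of the quasi-momenta, and invoking the uniform Lipschitz continuity of $\eta\mapsto \lambda_k^\pm(\eta)$ from Theorem \ref{th.contev}, we obtain either $\lambda = \lambda_{k^*}^\pm(\eta^*)$ for some band $k^*$ (if the $k_K$ remain bounded) or $\lambda = 1/2$ (if $k_K \to \infty$, since $1/2$ is the sole accumulation point of each $\sigma(T_\eta)$). In both cases $\lambda \in \sigma_{\text{\rm Bloch}}$.

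The hard part will be the quantitative localization: truncating $u_\varepsilon$ near $\partial \Omega$ disturbs the boundary data of the exterior Laplace problem defining $T_\varepsilon$, and one must show that this disturbance is negligible in $H^1_0(\Omega)$ under the sole hypothesis on $\lvert\lvert \nabla u_\varepsilon \lvert\lvert_{L^2({\mathcal U}_\varepsilon)}$ at the prescribed rate. This is exactly the technical obstruction that the exponent $1-1/d+s$ is engineered to overcome, and it is the single point where the specific geometry of the boundary strip, the normalization of the eigenvectors, and the nonlocal character of $T_\varepsilon$ all interact.
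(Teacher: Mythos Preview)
Your proposal contains two genuine logical gaps, both of which concern the \emph{direction} of the spectral inclusions you invoke.

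First, after producing an approximate eigenvector sequence $\phi_\varepsilon^K$ for $\mathbb{T}_\varepsilon^K$ at $\lambda_\varepsilon \to \lambda$, you appeal to Proposition~\ref{prop.speclsc} to conclude $\lambda \in \sigma(\mathbb{T}_0^K)$. But that proposition goes the other way: pointwise convergence $\mathbb{T}_\varepsilon^K \to \mathbb{T}_0^K$ guarantees $\sigma(\mathbb{T}_0^K) \subset \lim_{\varepsilon\to 0}\sigma(\mathbb{T}_\varepsilon^K)$, not the reverse. In general, a Weyl sequence for $\mathbb{T}_\varepsilon^K$ together with mere pointwise convergence of the operators says nothing about $\sigma(\mathbb{T}_0^K)$; one would need norm convergence or a collective compactness property, neither of which is available here.

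Second, even granting $\lambda \in \sigma(\mathbb{T}_0^K)$, you invoke Lemma~\ref{lem.locK} to place $\lambda$ in $\sigma(T_0^K) = \bigcup_j \sigma(T_{j/K})$. Again the inclusion goes the wrong way: Lemma~\ref{lem.locK} only asserts $\sigma(T_0^K) \subset \sigma(\mathbb{T}_0^K)$, and the paper nowhere claims (or proves) equality. The operator $\mathbb{T}_0^K$ involves the macroscopic component $v_0$ and could in principle have additional spectrum.

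The paper's proof takes a different route that sidesteps both issues. Rather than working at fixed $K$ with the two-scale limit operator, it extends $u_\varepsilon$ by zero to a cube $\mathcal{Q}_\varepsilon$ made of $K_\varepsilon^d$ $\varepsilon$-cells (so $K_\varepsilon \to \infty$), and verifies that this extension satisfies an \emph{approximate} weak eigenvalue equation on $\mathcal{Q}_\varepsilon$ with periodic boundary conditions. Crucially, the cutoff $\zeta_\varepsilon$ is applied to the \emph{test function} in the variational formulation, not to the eigenvector, which converts the error into surface integrals controlled by $\lVert \nabla u_\varepsilon\rVert_{L^2(\mathcal{U}_\varepsilon)}$ and a Sobolev embedding; this avoids the commutator $[T_\varepsilon,\chi_\varepsilon]$ that you flag as the hard part. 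The extension is then decomposed via an exact discrete Bloch transform on $\mathcal{Q}_\varepsilon$, and the eigenvalue equation is projected onto the Bloch eigenbasis of each $T_\eta$. A measure-concentration argument (introducing positive measures $\nu_\varepsilon^k$ on the quasi-momentum torus and showing at least one survives in the limit) then identifies $\lambda$ with some $\lambda^{k_0}(\eta)$ directly, without ever passing through the spectrum of an abstract limit operator.
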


\begin{proof}
The proof is quite similar to that of the completeness results shown in \cite{allaireconcafs, allaireconca}. Let $\lambda \in \lim_{\varepsilon \to 0}{\sigma(T_\varepsilon)}$ with $\lambda \notin \sigma_{\partial \Omega}$. We also assume $\lambda \neq \frac{1}{2}$ (a value which obviously belongs to $\sigma_{\text{\rm Bloch}}$). 
Then there exists a sequence $\lambda_\varepsilon \in \sigma(T_\varepsilon)$ and associated eigenvectors $u_\varepsilon \in H^1_0(\Omega)$, 
such that:
\begin{equation}\label{eq.evcplproof}
 \forall \varphi \in H^1_0(\Omega), \:\: \lambda_\varepsilon \int_\Omega{\nabla u_\varepsilon \cdot \nabla \varphi \:dx} = \int_{\omega_\varepsilon}{\nabla u_\varepsilon \cdot \nabla \varphi \:dx}, \text{ and } \lvert\lvert \nabla u_\varepsilon \lvert\lvert_{L^2(\Omega)^d} = 1,
 \end{equation}
and there exists constants $s>0$ and $C >0$ such that the following estimate of
the decay of $u_\varepsilon$ near the boundary $\partial \Omega$ holds:
$$ \lvert\lvert \nabla u_\varepsilon \lvert\lvert_{L^2({\mathcal U}_\varepsilon)} \leq  C\varepsilon^{1-1/d +s}.$$

Our aim is to prove that $\lambda$ belongs to $\sigma_{\text{\rm Bloch}}$. To this end, we proceed in three steps: firstly, the sequence $u_\varepsilon$ is modified into a sequence $v_\varepsilon$ defined on a larger, cubic domain such that the eigenvalue equation (\ref{eq.evcplproof}) is approximately satisfied. Secondly, a Bloch decomposition of $v_\varepsilon$ is performed to project this approximate equation onto the Bloch modes of (\ref{eq.evcplproof}); thirdly, we prove that at least one of the coefficients in this Bloch decomposition stays bounded away from $0$, which allows us
to identify one Bloch mode corresponding to $\lambda$. \\

\noindent \textit{Step 1: Construction of a sequence of approximate eigenvectors on a larger domain}. Let $K_\varepsilon$ be the smallest integer 
such that $\Omega \Subset {\mathcal Q}_\varepsilon$, where ${\mathcal Q}_\varepsilon = \left[ -\frac{\varepsilon K_\varepsilon}{2}, \frac{\varepsilon K_\varepsilon}{2} \right]$ 
is the hypercube centered at $0$, with size $\varepsilon K_\varepsilon$, i.e. 
it is composed of $K_\varepsilon^d$ cells of size $\varepsilon$.
Let also $v_\varepsilon \in H^1_0({\mathcal Q}_\varepsilon)$ be the extension of $u_\varepsilon$ 
by $0$ outside $\Omega$. We show that $v_\varepsilon$ approximately satisfies the eigenvalue problem (\ref{eq.evcplproof}) on ${\mathcal Q}_\varepsilon$ in the sense that:
\begin{equation}\label{quasiev} 
\forall \varphi \in H^1_\#({\mathcal Q}_\varepsilon), \:\: \lambda_\varepsilon \int_{{\mathcal Q}_\varepsilon} {\nabla v_\varepsilon \cdot \nabla \varphi \:dx} = \int_{\omega_\varepsilon^e}{\nabla v_\varepsilon \cdot \nabla \varphi\:dx} + r_\varepsilon(\varphi), \text{ where }\frac{\lvert r_\varepsilon(\varphi) \lvert}{\lvert\lvert \varphi \lvert\lvert_{H^1({\mathcal Q}_\varepsilon)}} \stackrel{\varepsilon \rightarrow 0}{\longrightarrow} 0,
\end{equation}
and where 
$$\omega_\varepsilon^e = \bigcup\limits_{j = (j_1,...,j_d) \atop j_i \in -\frac{K_\varepsilon}{2} + \left\{0,...,K_\varepsilon-1 \right\}}{\varepsilon(j+\omega)}$$
is the reunion of all the $\varepsilon$-copies of $\omega$ included in ${\mathcal Q}_\varepsilon$ (and not only in $\Omega$).

Let $\zeta_\varepsilon$ be a smooth cutoff function with the properties: 
$$ \zeta_\varepsilon = 1 \text{ on } \Omega \setminus \overline{{\mathcal U}_\varepsilon}, \:\: \zeta_\varepsilon = 0 \text{ on } {\mathcal Q}_\varepsilon \setminus \overline{\Omega}, \text{ and } \lvert\lvert \nabla \zeta_\varepsilon \lvert\lvert_{L^\infty({\mathcal U}_\varepsilon)} \leq \frac{C}{\varepsilon}.$$
An easy calculation yields, for an arbitrary test function $\varphi \in H^1_\#({\mathcal Q}_\varepsilon)$: 
\begin{equation}\label{eq.decintue}
\begin{array}{>{\displaystyle}cc>{\displaystyle}l}
\int_{\Omega}{\nabla u_\varepsilon \cdot \nabla \varphi \:dx} & = & \int_{\Omega}{\nabla u_\varepsilon \cdot \nabla (\zeta_\varepsilon \varphi)\:dx} +  \int_{\Omega}{\nabla u_\varepsilon \cdot \nabla ((1-\zeta_\varepsilon) \varphi)\:dx} \\
& = & \int_{\Omega}{\nabla u_\varepsilon \cdot \nabla (\zeta_\varepsilon \varphi)\:dx} +  \int_{\Omega}{(1-\zeta_\varepsilon)  \nabla u_\varepsilon \cdot \nabla \varphi\:dx} -  \int_{\Omega}{\varphi  \nabla u_\varepsilon \cdot \nabla \zeta_\varepsilon\:dx}.\\ 
\end{array}
\end{equation}
The last term in the right-hand side of the above identity may be controlled by: 
$$ \left\lvert   \int_{\Omega}{\varphi  \nabla u_\varepsilon \cdot \nabla \zeta_\varepsilon\:dx} \right\lvert \leq \lvert\lvert \nabla u_\varepsilon \lvert\lvert_{L^2({\mathcal U}_\varepsilon)^d} \lvert\lvert \varphi \lvert\lvert_{L^q(\Omega)} \lvert\lvert \nabla \zeta_\varepsilon\lvert\lvert_{L^{q^\prime}({\mathcal U}_\varepsilon)},  $$
for any $q < \frac{2d}{d-2}$ if $d\geq 3$, any $q <\infty$ if $d=2$, as an application of the Sobolev embedding theorem (see e.g. \cite{adams}), where $q^\prime$ satisfies $\frac{1}{2} + \frac{1}{q} + \frac{1}{q^\prime} = 1$. 
Hence, we obtain: 
$$\sup\limits_{\varphi \in H^1_\#({\mathcal Q}_\varepsilon) \atop \varphi \neq 0} {\left( \frac{1}{\lvert\lvert \varphi\lvert\lvert_{H^1_\#({\mathcal Q}_\varepsilon)}}  \left\lvert   \int_{\Omega}{\varphi  \nabla u_\varepsilon \cdot \nabla \zeta_\varepsilon\:dx} \right\lvert \right) } \leq C \varepsilon^{1-1/d +s} \varepsilon^{1/q^\prime -1}  \xrightarrow{\varepsilon \to 0} 0,$$
provided $q < \frac{2d}{d-2}$ is large enough. Arguing in the same way for the second term in the right-hand side of (\ref{eq.decintue}), we obtain: 
\begin{equation}\label{eq.qev1}
\sup\limits_{\varphi \in H^1_\#({\mathcal Q}_\varepsilon)\atop \varphi \neq 0} {\left( \frac{1}{\lvert\lvert \varphi \lvert\lvert_{H^1_\#({\mathcal Q}_\varepsilon)}} \left\lvert \int_{\Omega}{\nabla u_\varepsilon \cdot \nabla \varphi \:dx} - \int_{\Omega}{\nabla u_\varepsilon \cdot \nabla (\zeta_\varepsilon \varphi) \:dx} \right\lvert \right)} \xrightarrow{\varepsilon \to 0} 0.
\end{equation}
On the other hand, one proves in the same way that:
\begin{equation}\label{eq.qev2}
 \sup\limits_{\varphi \in H^1_\#({\mathcal Q}_\varepsilon)\atop \varphi \neq 0} {\left( \frac{1}{\lvert\lvert \varphi\lvert\lvert_{H^1_\#({\mathcal Q}_\varepsilon)}} \left\lvert   \int_{\omega_\varepsilon^e}{\nabla u_\varepsilon \cdot \nabla \varphi \:dx} - \int_{\omega_\varepsilon^e}{\nabla u_\varepsilon \cdot \nabla (\zeta_\varepsilon \varphi) \:dx} \right\lvert \right)}\xrightarrow{\varepsilon \to 0} 0.
 \end{equation}
Combining (\ref{eq.qev1}) and (\ref{eq.qev2}) together with (\ref{eq.evcplproof}), the desired relation (\ref{quasiev}) is obtained.\\

\noindent \textit{Step $2$: Bloch wave decomposition of the approximate eigenvalue problem (\ref{quasiev}).} We need yet another version of the Bloch decomposition theorem, adapted to the particular scaling under consideration. 

\begin{theorem}\label{th.Blochcompleteness}
Let $u \in L^2_\#({\mathcal Q}_\varepsilon)$; there exists a unique collection of (complex-valued) functions $\left\{ u_j \right\}_{j \in \left\{ 0,...,K_\varepsilon -1 \right\}^d}$, $u_j \in L^2_\#(Y)$, 
such that the following identity holds: 
$$ u(x) = \sum\limits_{0 \leq j \leq K_\varepsilon -1}{u_j(\frac{x}{\varepsilon}) e^{2i\pi \frac{j}{K_\varepsilon} \cdot \frac{x}{\varepsilon}}}, \text{ a.e. } x \in {\mathcal Q}_\varepsilon.$$
Moreover, if $u,v \in L^2_\#({\mathcal Q}_\varepsilon)$ have coefficients $\left\{ u_j \right\}$ and $\left\{ v_j\right\}$ in the above decomposition, the Parseval identity holds: 
 \begin{equation}\label{eq.Parsevalcomplete}
  \int_{{\mathcal Q}_\varepsilon}{u\overline{v} \:dx} = (\varepsilon K_\varepsilon)^d \sum\limits_{0\leq j \leq K_\varepsilon-1}{\int_{Y}{u_j(y) \overline{v_j(y)}\:dy}}.
 \end{equation}
\end{theorem}

Decomposing the function $v_\varepsilon \in H^1_\#({\mathcal Q}_\varepsilon)$ introduced in the first step according to Theorem \ref{th.Blochcompleteness} yields:
$$ v_\varepsilon (x) = \sum\limits_{0 \leq j \leq K_\varepsilon -1}{v_j^\varepsilon(\frac{x}{\varepsilon}) e^{2i\pi \frac{j}{K_\varepsilon} \cdot \frac{x}{\varepsilon}}}, \text{ where } v^\varepsilon_j \in H^1_\#(Y).$$ 
We further decompose each coefficient $v^\varepsilon_j$ ($0\leq j \leq K_\varepsilon-1$) on the Bloch eigenvectors associated to the quasi-moment $j/K_\varepsilon$.  
More precisely, let us recall the Bloch eigenvalue problem associated to (\ref{eq.evcplproof}), as studied in Section \ref{sec.Teta}. 
For $\eta \in \overline{Y}$, $\eta \neq 0$, one seeks $\lambda \in \mathbb{C}$ and $v \in H^1_\#(Y)$, $v \neq 0$, such that: 
$$\forall w \in H^1_\#(Y),\:\: \lambda \int_{Y}{(\nabla_y v +  2i\pi \eta v) \cdot \overline{(\nabla_y w + 2i\pi \eta w)}\:dy} = \int_{\omega}{(\nabla_y v + 2i\pi \eta v) \cdot \overline{(\nabla_y w + 2i\pi \eta w)}\:dy}.$$
In the case $\eta =0$, one searches for $\lambda \in \mathbb{C}$ and $v \in H^1_\#(Y) / \mathbb{C}$, $v \neq 0$, such that: 
$$\forall w \in H^1_\#(Y) / \mathbb{C},\:\: \lambda \int_{Y}{\nabla_y v \cdot \overline{\nabla_y w}\:dy} = \int_{\omega}{\nabla_y v \cdot \overline{\nabla_y w}\:dy}.$$

Each of these problems gives rise to a sequence of (real) eigenvalues $\lambda^k(\eta)$, $k \geq 1$ which converges to $\frac{1}{2}$, 
and to associated eigenvectors $v^k(\eta,y)$ in $H^1_\#(Y)$ (in $H^1_\#(Y)/\mathbb{C}$ in the case $\eta =0$). These eigenvectors are normalized by:
\begin{equation}\label{eq.orthovk}
 \int_{Y}{(\nabla_y v^k(\eta,y)  +  2i\pi \eta v^k(\eta,y) ) \cdot \overline{(\nabla_y v^{k^\prime}(\eta,y) + 2i\pi \eta v^{k^\prime}(\eta,y) )}\:dy}  = \delta_{kk^\prime}.
 \end{equation}
Now, each function $v_j^\varepsilon$ is decomposed on this basis as: 
$$ v_j^\varepsilon(y) = \sum\limits_{k\geq 1}{\alpha_\varepsilon^k(\frac{j}{K_\varepsilon}) v^k(\frac{j}{K_\varepsilon},y) }, \text{ for some coefficients } \alpha_\varepsilon^k(\frac{j}{K_\varepsilon}) \in \mathbb{C},$$
(up to a constant $c_0 \in \mathbb{C}$ in the case $j=0$), so that $v_\varepsilon$ eventually reads: 
\begin{equation}\label{eq.decBlochveps}
 v_\varepsilon = c_0 + \sum\limits_{0\leq j \leq K_\varepsilon -1}{\sum\limits_{k\geq 1}{\alpha_\varepsilon^k(\frac{j}{K_\varepsilon}) v^k(\frac{j}{K_\varepsilon},\frac{x}{\varepsilon}) e^{2i\pi \frac{j}{K_\varepsilon} \cdot \frac{x}{\varepsilon}} }}.
 \end{equation}

\noindent \textit{Step $3$: Choice of an adequate test function in (\ref{quasiev}).}
Let us now consider a \textit{modulation} $z_\varepsilon\in H^1_\#({\mathcal Q}_\varepsilon)$ of $v_\varepsilon$ of the form: 
$$ z_\varepsilon(x) =  \sum\limits_{0\leq j \leq K_\varepsilon -1}{\sum\limits_{k\geq 1}{ \psi^k(\frac{j}{K_\varepsilon}) \alpha_\varepsilon^k(\frac{j}{K_\varepsilon}) v^k(\frac{j}{K_\varepsilon},\frac{x}{\varepsilon}) e^{2i\pi \frac{j}{K_\varepsilon} \cdot \frac{x}{\varepsilon}} }},$$
where the $\psi^k(y)$ are continuous and $Y$-periodic functions on $Y$, which additionnally satisfy: 
$$ \sup\limits_{k \geq 1}{\lvert\lvert \psi^k \lvert\lvert_{L^\infty(Y)}} < \infty.$$
Inserting $z_\varepsilon$ as test function in the approximate spectral problem (\ref{quasiev}),
using the Parseval identity (\ref{eq.Parsevalcomplete}) and the orthogonality (\ref{eq.orthovk}) between the $v^k(\eta,\cdot)$, $k=1,...$, we obtain: 
\begin{equation}\label{locbloch}
 \frac{1}{\varepsilon^2} (\varepsilon K_\varepsilon)^d \sum\limits_{0\leq j \leq K_\varepsilon -1}{\sum\limits_{k\geq 1}{ \psi^k(\frac{j}{K_\varepsilon}) \lvert \alpha_\varepsilon^k(\frac{j}{K_\varepsilon})\lvert^2 \left( \lambda_\varepsilon - \lambda^k(\frac{j}{K_\varepsilon}) \right) }}  \xrightarrow{\varepsilon \to 0} 0.
 \end{equation}

Let us now define the positive measures $\nu_\varepsilon^k$ on $Y$ by: 
$$ \nu_\varepsilon^k(\eta) = \frac{1}{\varepsilon^2} (\varepsilon K_\varepsilon)^d \sum\limits_{0\leq j \leq K_\varepsilon-1}{\lvert \alpha_\varepsilon^k(\frac{j}{K_\varepsilon}) \lvert^2 \delta_{\eta = \frac{j}{K_\varepsilon}}},$$
where $\delta_{\eta = \frac{j}{K_\varepsilon}}$ is the Dirac distribution at $\frac{j}{K_\varepsilon}$ on ${\mathcal D}(Y)$.
Using the fact that $\lvert\lvert \nabla v_\varepsilon \lvert\lvert_{L^2({\mathcal Q}_\varepsilon)} = 1$ together with the Parseval identity (\ref{eq.Parsevalcomplete}) yields: 
\begin{equation}\label{normblochmeas}
 \sum\limits_{k\geq 1}{\int_Y{d\nu^k_\varepsilon(\eta)}} = 1.
 \end{equation}
In particular, for each $k \geq 1$, the sequence $\left\{ \nu_\varepsilon^k \right\}_\varepsilon$ has bounded total variation as $\varepsilon \rightarrow 0$. 
Hence, upon diagonal extraction, there exists a subsequence (still denoted with $\varepsilon$) such that for any $k \geq 1$, $\nu_\varepsilon^k$ converges to a limit measure $\nu^k$ in the sense of measures, i.e., 
$$\forall \psi \in {\mathcal C}(Y), \:\: \int_{Y}{\psi \: d\nu_\varepsilon^k  }\quad \xrightarrow{\varepsilon \rightarrow 0} \quad \int_{Y}{\psi \:d\nu^k }.$$ 
Hence, from (\ref{normblochmeas}), we have: 
$$ 0 \leq  \sum\limits_{k\geq 1}{\int_Y{d\nu^k(\eta)}} \leq 1 .$$
Our purpose is now to prove that at least one of the limiting measures $\nu^k$ does not vanish.
To this end, it is obviously enough to prove that there exist $L \geq 1$, $\delta >0$ and a subsequence $\varepsilon \rightarrow 0$ such that: 
\begin{equation}\label{statebloch}
  \sum\limits_{k=1}^L{\int_Y{d\nu_\varepsilon^k(\eta)}}  \geq \delta.
  \end{equation}
Let us assume that this property does not hold; then, for any $L$ and $\delta$ there exists $\varepsilon_\delta >0$ such that: 
$$\forall \varepsilon < \varepsilon_\delta,\:\:  \sum\limits_{k= 1}^L{\int_Y{d\nu_\varepsilon ^k(\eta)}}  < \delta.$$
Using the approximate spectral equation (\ref{quasiev}) with $\varphi = v_\varepsilon$ as test function, as well as the decomposition (\ref{eq.decBlochveps}) and the Parseval identity (\ref{eq.Parsevalcomplete}), we obtain: 
$$\begin{array}{>{\displaystyle}cc>{\displaystyle}l} 
\lambda_\varepsilon\lvert\lvert \nabla u_\varepsilon \lvert\lvert_{L^2(\Omega)}^2 & = & \sum\limits_{0\leq j \leq K_\varepsilon -1}{\sum\limits_{k\geq 1}{ \lambda^k(\frac{j}{K_\varepsilon}) \lvert \alpha_\varepsilon^k(\frac{j}{K_\varepsilon})\lvert^2  }} + r_\varepsilon, \\ 
&= & \sum\limits_{k \geq 1}{\int_Y{\lambda^k(\eta)\:d\nu_\varepsilon^k(\eta)}} + r_\varepsilon,
\end{array}
$$
for a remainder $r_\varepsilon \to 0$ as $\varepsilon \to 0$. Rearranging, we see that, for any $L>0$: 
\begin{equation}\label{eqcontradic}
\begin{array}{>{\displaystyle}cc>{\displaystyle}l} 
 (\lambda_\varepsilon - \frac{1}{2})\lvert\lvert \nabla u_\varepsilon \lvert\lvert_{L^2(\Omega)}^2  - r_\varepsilon
 & =& \sum\limits_{k \geq 1}{\int_Y{(\lambda^k(\eta) - \frac{1}{2}) \:d\nu_\varepsilon^k(\eta)}}\\
 & =&  \sum\limits_{ k=1}^L{\int_Y{(\lambda^k(\eta) - \frac{1}{2}) \:d\nu_\varepsilon^k(\eta)}} +  \sum\limits_{k >L}{\int_Y{(\lambda^k(\eta) - \frac{1}{2}) \:d\nu_\varepsilon^k(\eta)}}.
 \end{array}
 \end{equation}
Let $\delta >0$ be given. We choose $L$ large enough so that: 
$$\sum\limits_{k >L}{\int_Y{(\lambda^k(\eta) - \frac{1}{2}) \:d\nu_\varepsilon^k(\eta)}} < \delta,$$
which is possible since, for each given $\eta \in Y$, $\lambda^k(\eta) \rightarrow \frac{1}{2}$, and all the mappings $\eta \mapsto \lambda^k(\eta)$, $k\geq 1$ are Lipschitz continuous with a constant $C$ independent of $k$ (see Theorem \ref{th.contev}). 

Now, owing to our assumption, there exists $\varepsilon_\delta$ such that, for $\varepsilon < \varepsilon_\delta$, 
 $$\sum\limits_{1\leq k \leq L}{\int_Y{(\lambda^k(\eta) - \frac{1}{2}) \:d\nu_\varepsilon^k(\eta)}} < \delta,$$
 so that the left-hand side of (\ref{eqcontradic}) actually converges to $0$, which is absurd since $\lambda \neq \frac{1}{2}$. Hence (\ref{statebloch}) is proved, 
 and there exists an index $k_0 \geq 1$ such that $\nu^{k_0}$ does not vanish. 
 
 Let us finally go back to (\ref{locbloch}) using functions $\psi^k$ equal to $0$ if $k \neq k_0$, and an arbitrary continuous and $Y$-periodic function $\psi$ if $k =k_0$. 
 Passing to the limit $\varepsilon \rightarrow 0$ imposes: 
 $$ \int_Y{\psi(\eta) \left( \lambda - \lambda^{k_0}(\eta)  \right) d\nu^{k_0}(\eta)} = 0.$$
Since $\psi$ is arbitrary, this shows that there exists $\eta \in Y$ and $k \geq 1$ so that $\lambda = \lambda^k(\eta)$. 
\end{proof}

\begin{remark}
The boundary layer spectrum $\sigma_{\partial \Omega}$ is composed of the limits of the sequences of eigenvalues of $T_\varepsilon$ 
such that an associated eigenvector sequence retains a `significant' fraction of energy (but not all of it) around the boundary of the macroscopic domain $\partial \Omega$. 
This characterization of $\sigma_{\partial \Omega}$ is weaker than in the problems tackled in \cite{allaireconcafs,allaireconca}, 
where the eigenvector sequences featured in the boundary layer spectrum are concentrated near $\partial \Omega$, and decay exponentially far from $\partial \Omega$. 
We believe that this different behavior is an inherent feature of the present problem and reveals a stronger influence of the boundary $\partial \Omega$ in our case that in those of \cite{allaireconcafs,allaireconca}. 
For instance, in their study of the latter situations, the authors crucially relied on the fact that eigenvector sequences $u_\varepsilon$ 
can be `localized' - i.e. that a truncation of such an eigenvector sequence in a region which retains some energy of the $u_\varepsilon$ gives an approximate eigenvector sequence for the problem -, a fact which clearly does not hold in our case. 

Note that the great sensitivity of the spectrum of $T_\varepsilon$ to how the inclusions interact with the macroscopic boundary $\partial \Omega$ is very reminiscent of the results contained in \cite{castro,moskowvog}.
\end{remark}

\section{Study of the conductivity equation}\label{secdirect}

In this section, we turn to the study of the voltage potential $u_\varepsilon \in H^1_0(\Omega)$ 
generated by a source $f \in H^{-1}(\Omega)$ in the physical setting of Section \ref{sec.setting}. 
Recall that $u_\varepsilon$ is solution to the conductivity system (\ref{eq.potgen}), which we rewrite below for the sake of convenience:
\begin{equation}\label{eq.pot}
\left\{
\begin{array}{cl}
-\text{\rm div}(A_\varepsilon(x)\nabla u_\varepsilon)  = f & \text{in } \Omega,\\
u_\varepsilon=0 & \text{on }Ê\partial \Omega,
\end{array}
\right. , \text{ where } A_\varepsilon(x) = \left\{Ê
\begin{array}{cl}
1 & \text{if }Êx \in \Omega \setminus \overline{\omega_\varepsilon},\\
a & \text{if }Êx \in \omega_\varepsilon. 
\end{array}
\right.
\end{equation}
Here, the set of inclusions $\omega_\varepsilon$ is that defined in (\ref{eq.omeps}) and it satisfies the properties (\ref{eq.assumom}): $\omega \Subset Y$ is of class ${\mathcal C}^2$. 
It is filled with a material with complex-valued conductivity $a \neq 0$. 
The situations where $a$ is either real-valued and positive, or complex-valued with non zero imaginary part fall into the classical homogenization framework of 
elliptic equations and are by now well understood (see e.g. \cite{blp,jikov}). 
In what follows, we are especially interested in the case where $a\in \mathbb{R}$, $a<0$.

In our study, the effective conductivity tensor $A^*$ \textit{formally} calculated as (\ref{eq.hommat}) by application of the usual homogenization formulae 
to the periodic distribution of inclusions $\omega_\varepsilon$ (i.e. without justification of their well-posedness or validity), 
plays a key role, as well as the \textit{formally} homogenized problem:
\begin{equation}\label{eq.pothom} 
\left\{
\begin{array}{cl}
-\text{\rm div}(A^*\nabla u)  = f & \text{in } \Omega\\
u=0 & \text{on }Ê\partial \Omega
\end{array}
\right. ,
\end{equation}
Hence, we start with a discussion about the definition and properties of $A^*$.

\subsection{The cell functions $\chi_i$ and the homogenized tensor $A^*$}\label{sec.cellpb}~\\

The building blocks of the homogenized tensor $A^*$ are the \textit{cell functions} $\chi_i$ ($i=1,...,d$), 
defined as the solutions in $H^1_{\#}(Y) / \mathbb{R}$ to: 
\begin{equation}\label{def.chiiclass}
 \text{\rm div}_y(A(y)(e_i + \nabla_y \chi_i) = 0, \text{ where }ÊA(y) = \left\{
 \begin{array}{cl}
 1 & \text{if }Êy \in Y \setminus \overline{\omega}, \\
 a & \text{if }Êy \in \omega,
 \end{array}
 \right.
 \end{equation}
 an equation that we may equivalently rewrite as a transmission problem:
\begin{equation}\label{def.chiitrans}
 \left\{Ê
\begin{array}{cl}
- \Delta_y \chi_i = 0 & \text{in } (Y \setminus \overline{\omega}) \cup \omega, \\
\chi_i^- = \chi_i^+ & \text{on }Ê\partial \omega, \\
a (\frac{\partial \chi_i^-}{\partial n_y} + n_i) =  \frac{\partial \chi_i^+}{\partial n_y} + n_i & \text{on }Ê\partial \omega.
\end{array}
\right.,
\end{equation}
Recall that (\ref{def.chiiclass})-(\ref{def.chiitrans}) are to be understood in the variational sense: 
$$ \forall v \in H^1_\#(Y) / \mathbb{R}, \:\: \int_Y{A(y) (\nabla_y \chi_i + e_i) \cdot \nabla_y v \:dy} = 0,$$
where $e_i$ is the $i^{\text{th}}$ vector of the canonical basis of $\mathbb{R}^d$. \\

\subsubsection{Periodic layer potentials}~\\ 

Let us start by recalling some material about the $Y$-periodic Green function and the associated notions of surface potentials; 
the reader is referred to \cite{AmmariKang,AmmariPer} for proofs and further details.

Let $G^{\#}(y,z)$ be the $Y$-periodic Green function: for given $z \in Y$, $y \mapsto G^\#(y,z)$ satisfies: 
\begin{equation}\label{eq.Gper}
\left\{\begin{array}{c}
\Delta_yG^{\#}(y,z) = \delta_z - 1 \text{ on } Y,\\
y \mapsto G(y,z) \text{ is } Y-\text{periodic}, \\
\int_Y{G(y,z)\:dy} = 0,
\end{array} \right.
\end{equation}
where $\delta_z$ is the Dirac distribution centered at $z$. 
Observe that the additional `$-1$' in the right-hand side of the system (\ref{eq.Gper}) when compared to (\ref{eq.poisk})
is required so that it admits a $Y$-periodic solution. 
In this context, the \textit{periodic single layer potential} ${\mathcal S}_\omega^\# \phi \in H^1_\#(Y)$ is defined for density functions $\phi \in L^2_0(\partial \omega)$, where $L^2_0(\partial \omega)  := \left\{\phi \in L^2(\partial \omega) , \:\: \int_{\partial \omega}{\phi \:ds} =0Ê\right\}$. Its expression is:
$$ {\mathcal S}^{\#}_\omega\phi(y) = \int_{\partial \omega}{G^{\#}(y,z)\phi(z)\:ds(z)}.$$ 
Likewise, the \textit{periodic Neumann-Poincar\'e operator} ${\mathcal K}_\omega^{\#*} : L^2_0(\partial \omega) \to L^2_0(\partial \omega)$ is defined for $\phi \in L^2_0(\partial \omega)$ by: 
$$ {\mathcal K}^{\# *}_\omega\phi(y) = \int_{\partial \omega}{\frac{\partial G^{\#}}{\partial n_y}(y,z)\phi(z)\:ds(z)} \in L^2_0(\partial \omega).$$ 
The properties of these operators are quite similar to those encountered in the `classical' surface potential theory:

\begin{theorem}\label{thproppotper}
Under the assumption that $\omega \Subset Y$ is ${\mathcal C}^2$ regular, the operators ${\mathcal S}_\omega^\#$ and ${\mathcal K}_\omega^{\#*}$ satisfy the following properties: 
\begin{enumerate}[(i)]
\item The operator ${\mathcal K}^{\#*}_\omega : L^2_0(\partial \omega) \rightarrow L^2_0(\partial \omega)$ is compact.
\item For any $\phi \in L^2_0(\omega)$, the function ${\mathcal S}^{\#}_\omega\phi$ belongs to $H^1_\#(Y)$, 
and is harmonic in $\omega \cup (Y \setminus \overline{\omega})$. Besides, the following jump relations hold in the sense of traces in $H^{-1/2}(\partial \omega)$: 
\begin{equation}\label{jumpper}
\frac{\partial ({\mathcal S}^{\#}_\omega \phi)^\pm}{\partial n_y}   = \pm \frac{1}{2}\phi + {\mathcal K}^{\#*}_\omega\phi.
\end{equation}
\item The operator $(\lambda I - {\mathcal K}^{\#*}_\omega) : L^2_0(\partial \omega) \rightarrow L^2_0(\partial \omega)$ is invertible if $\lvert \lambda \lvert \geq \frac{1}{2}$. 
\end{enumerate}
\end{theorem}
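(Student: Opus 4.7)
The plan rests on comparing the periodic potentials to their free-space counterparts via the splitting
$$G^\#(y,z) = G(y-z) + R(y,z),$$
where $G$ is the free-space Green function of $\mathbb{R}^d$ and $R$ is smooth in a neighborhood of the diagonal. This decomposition reduces the singularity analysis of ${\mathcal S}_\omega^\#$ and ${\mathcal K}_\omega^{\#*}$ to that of the classical operators ${\mathcal S}_\omega$ and ${\mathcal K}_\omega^*$, modulo integral operators with smooth kernels. From this viewpoint (i) follows immediately: ${\mathcal K}_\omega^{\#*}$ differs from ${\mathcal K}_\omega^*$ by an integral operator with smooth kernel on $\partial\omega \times \partial \omega$, and since ${\mathcal K}_\omega^*$ is compact on $L^2(\partial\omega)$ under the ${\mathcal C}^2$ hypothesis on $\omega$, so is ${\mathcal K}_\omega^{\#*}$; compactness is preserved on the closed subspace $L^2_0(\partial\omega)$ (which is even invariant, as can be checked directly).

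For (ii), the $Y$-periodicity of ${\mathcal S}_\omega^\# \phi$ is built into the kernel. The key observation is that the constant $-1$ in \eqref{eq.Gper}, which is necessary for solvability, contributes only $-\int_{\partial\omega}\phi\,ds = 0$ since $\phi \in L^2_0(\partial\omega)$; hence ${\mathcal S}_\omega^\#\phi$ is harmonic in $\omega$ and in $Y\setminus\overline\omega$. The Plemelj jump formulae \eqref{jumpper} then follow from the analogous classical jumps together with the fact that the contribution of $R$ to $\partial_{n_y}{\mathcal S}_\omega^\# \phi$ is smooth, hence continuous across $\partial\omega$.

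For (iii), by (i) the operator $\lambda I - {\mathcal K}_\omega^{\#*}$ is Fredholm of index zero on $L^2_0(\partial\omega)$, so invertibility is equivalent to injectivity. Suppose ${\mathcal K}_\omega^{\#*}\phi = \lambda\phi$ and set $u := {\mathcal S}_\omega^\#\phi$. By (ii), $u$ is harmonic and $Y$-periodic on $Y \setminus \partial\omega$, and
$$\frac{\partial u^\pm}{\partial n} = (\lambda \pm \tfrac{1}{2})\phi \quad \text{on } \partial\omega.$$
Integration by parts, using the cancellation of boundary terms on opposite faces of $\partial Y$ by periodicity, yields
$$\int_\omega |\nabla u|^2 \,dy = (\lambda - \tfrac{1}{2})\,I, \qquad \int_{Y\setminus\overline\omega} |\nabla u|^2\,dy = -(\lambda + \tfrac{1}{2})\,I,$$
where $I := \int_{\partial\omega} \phi\, u\,ds$. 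For $|\lambda| > 1/2$, the two signs force $I = 0$, hence $\nabla u \equiv 0$ on $Y$, so $u$ is constant and $\phi = [\partial u/\partial n] = 0$. For $\lambda = 1/2$, the first identity gives $u$ constant in $\omega$; by continuity of $u$ across $\partial\omega$, $u - c$ is harmonic and $Y$-periodic in $Y\setminus\overline\omega$ with zero Dirichlet data on $\partial\omega$, so an energy argument (again exploiting periodicity) yields $u \equiv c$ on $Y$, hence $\phi = 0$. The case $\lambda = -1/2$ is symmetric, swapping the roles of $\omega$ and $Y\setminus\overline\omega$.

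The main delicate point is the injectivity at the endpoints $\lambda = \pm 1/2$; in the classical (whole-space) setting this requires the behavior of ${\mathcal S}_\omega \phi$ at infinity, whereas in our periodic setting it is replaced by the uniqueness of $Y$-periodic harmonic functions on $Y\setminus\overline\omega$ with prescribed Dirichlet data on $\partial\omega$, which is obtained cleanly from the periodicity-driven boundary cancellation in the Green identity.
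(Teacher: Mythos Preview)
The paper does not supply a proof of this theorem; it is stated as a collection of known facts with a reference to \cite{AmmariKang,AmmariPer} for details. Your argument is correct and is essentially the standard one found in those references: the decomposition $G^\#(y,z)=G(y-z)+R(y,z)$ with $R$ smooth near the diagonal (since $\Delta_y R=-1$) transfers compactness and the Plemelj jump relations from the free-space operators, and the mean-zero constraint on $\phi$ neutralises the extra $-1$ in \eqref{eq.Gper} so that ${\mathcal S}_\omega^\#\phi$ is genuinely harmonic off $\partial\omega$.

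Your energy argument for (iii) is the classical one adapted to the periodic cell, and the handling of the endpoints $\lambda=\pm\tfrac12$ via uniqueness of periodic harmonic extensions is clean. One small remark: as written, the sign analysis in (iii) treats real $\lambda$ with $|\lambda|\ge\tfrac12$. For complex $\lambda$ the conclusion still holds, but the quickest way to see it is to observe that the compact operator ${\mathcal K}_\omega^{\#*}$ has real spectrum (it is symmetrised by the single-layer operator via the Plemelj/Calder\'on identity, exactly as in the free-space case), so any non-real $\lambda$ lies in the resolvent set automatically; your real-$\lambda$ argument then covers the boundary segment $|\lambda|\ge\tfrac12$ on the real axis. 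This is a minor completeness point rather than a gap.
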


Note also that, like in the case of Section \ref{sec.NPTD}, 
the operator ${\mathcal S}_\omega^\#: L^2_0(\partial \omega) \to H^1_\#(Y)$ extends as a bounded operator (still denoted by ${\mathcal S}_\omega^\#$) from $H^{-1/2}_0(\partial \omega)$ into $H^1_\#(Y)$, and ${\mathcal K}_\omega^{\#*}$ extends as a bounded operator $H^{-1/2}_0(\partial \omega) \to H^{-1/2}_0(\partial \omega)$. 
Here we have used the notation $H^{-1/2}_0(\partial \omega) = \left\{ u \in H^{-1/2}(\partial \omega), \:\: \int_{\partial \omega}{u \:ds} = 0 \right\}$. 

\begin{proposition}\label{prop.lpper}
Let $v \in H^1_\#(Y)$ be harmonic in $\omega$ and $Y \setminus \overline{\omega}$; then, there exists a unique pair $(\phi,c) \in H^{-1/2}_0(\partial \omega) \times \mathbb{R}$ such that: 
\begin{equation}\label{id.somcste} 
v = {\mathcal S}_\omega^\# \phi + c.
\end{equation}
\end{proposition}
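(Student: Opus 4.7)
The plan is to define $\phi$ as the jump of the normal derivative of $v$ across $\partial \omega$, and then show that $v-{\mathcal S}_\omega^\#\phi$ is harmonic on all of $Y$ and hence constant.

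More precisely, since $v \in H^1_\#(Y)$ is harmonic in $\omega$ and in $Y \setminus \overline{\omega}$, the interior and exterior normal traces $\frac{\partial v^\pm}{\partial n}$ are well-defined in $H^{-1/2}(\partial \omega)$, and I would set
\[
 \phi := \frac{\partial v^+}{\partial n} - \frac{\partial v^-}{\partial n} \in H^{-1/2}(\partial \omega).
\]
The first thing to check is that $\phi \in H^{-1/2}_0(\partial \omega)$. By the Green formula applied to $v$ in $\omega$, $\int_{\partial \omega} \frac{\partial v^-}{\partial n}\, ds = 0$; applying the Green formula in $Y \setminus \overline{\omega}$, the contribution from $\partial Y$ vanishes by the $Y$-periodicity of $v$, so that $\int_{\partial \omega} \frac{\partial v^+}{\partial n}\, ds = 0$ as well. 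Thus $\langle \phi, 1 \rangle_{H^{-1/2}(\partial \omega), H^{1/2}(\partial \omega)} = 0$.

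Next I would consider $w := v - {\mathcal S}_\omega^\# \phi \in H^1_\#(Y)$. By Theorem \ref{thproppotper} $(ii)$, ${\mathcal S}_\omega^\# \phi$ is harmonic in $\omega$ and in $Y \setminus \overline{\omega}$, so that $w$ is too. Moreover, using the jump relation (\ref{jumpper}),
\[
 \frac{\partial w^+}{\partial n} - \frac{\partial w^-}{\partial n} = \phi - \left(\frac{\partial ({\mathcal S}_\omega^\# \phi)^+}{\partial n} - \frac{\partial ({\mathcal S}_\omega^\# \phi)^-}{\partial n}\right) = \phi - \phi = 0,
\]
so that $\Delta w = 0$ in all of $Y$ in the distributional sense. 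Being $Y$-periodic and harmonic on the torus, $w$ must be constant: $w \equiv c$ for some $c \in \mathbb{R}$. This yields the decomposition $v = {\mathcal S}_\omega^\# \phi + c$.

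For uniqueness, suppose $v = {\mathcal S}_\omega^\# \phi_1 + c_1 = {\mathcal S}_\omega^\# \phi_2 + c_2$ with $\phi_i \in H^{-1/2}_0(\partial \omega)$ and $c_i \in \mathbb{R}$. Then ${\mathcal S}_\omega^\#(\phi_1 - \phi_2) = c_2 - c_1$. Integrating over $Y$ and using the normalization $\int_Y G^\#(\cdot, z)\, dy = 0$ from (\ref{eq.Gper}) (which implies $\int_Y {\mathcal S}_\omega^\# \psi \, dy = 0$ for any $\psi \in H^{-1/2}_0(\partial \omega)$) forces $c_1 = c_2$; it then follows that ${\mathcal S}_\omega^\#(\phi_1 - \phi_2) \equiv 0$ on $Y$, so that by the jump relation (\ref{jumpper}), $\phi_1 - \phi_2 = [\partial {\mathcal S}_\omega^\#(\phi_1-\phi_2)/\partial n] = 0$. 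The only mildly delicate point is tracking the $H^{-1/2}$ regularity throughout, but this is routine given Theorem \ref{thproppotper} and its extension to $H^{-1/2}_0(\partial \omega)$.
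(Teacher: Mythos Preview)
Your proof is correct and follows essentially the same approach as the paper: both identify $\phi$ as the jump $[\partial v/\partial n]$ and argue that $v-{\mathcal S}_\omega^\#\phi$ is a constant. Your write-up is in fact slightly more explicit, since you justify $\phi\in H^{-1/2}_0(\partial\omega)$ via Green's formula and periodicity, and you argue directly that a periodic harmonic function on $Y$ is constant, whereas the paper invokes a variational uniqueness argument for the transmission system.
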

\begin{proof}
Let us first assume that there exists $\phi \in H^{-1/2}_0(\partial \omega)$ such that (\ref{id.somcste}) holds. 
Using the jump relations (\ref{jumpper}), we immediately identify $\phi = \left[ \frac{\partial v}{\partial n} \right] \in H^{-1/2}_0(\partial \omega)$, which proves the uniqueness of $(\phi,c) \in H^{-1/2}_0(\partial \omega) \times \mathbb{R}$ satisfying (\ref{id.somcste}).  

Conversely, if $v \in H^1_\#(Y)$ is harmonic in $\omega$ and $Y \setminus \overline{\omega}$, let $\phi = \left[ \frac{\partial v}{\partial n} \right] \in H^{-1/2}_0(\partial \omega)$. 
Then $w = {\mathcal S}_\omega^\# \phi \in H^1_\#(Y)$ is 
one solution of: 
$$ \left\{ 
\begin{array}{cl}
-\Delta w = 0 & \text{in } \omega \text{ and } Y \setminus \overline{\omega}, \\
w^- = w^+ & \text{on } \partial \omega, \\
\left[ \frac{\partial w}{\partial n}\right] = \phi & \text{on } \partial \omega.
\end{array}
\right.$$ 
Now, an easy variational argument reveals that the above system has a unique solution up to constants. 
\end{proof}

Last but not least, in the line of the general Section \ref{sec.NPTD} (see notably Proposition \ref{prop.TDcompact}), 
let us mention the link between the periodic Neumann-Poincar\'e operator ${\mathcal K}_{\omega}^{\#*}$ and 
the periodic version of the Poincar\'e variational operator $T_0: H^1_\#(Y) / \mathbb{R} \to H^1_\#(Y) / \mathbb{R}$, introduced in (\ref{eq.defT0}): 
\begin{proposition}
Let $T_0 : H^1_\#(Y) / \mathbb{R} \to H^1_\#(Y) / \mathbb{R}$ be the operator that maps $u \in H^1_\#(Y) / \mathbb{R}$ into
 the unique element $T_0 u$ in $H^1_\#(Y) / \mathbb{R}$ such that: 
$$\forall v \in H^1_\#(Y) / \mathbb{R}, \:\: \int_Y{\nabla_y (T_0 u) \cdot \nabla_y v \:dy} = \int_\omega{\nabla_yu \cdot \nabla_yv \:dy}. $$
Then, $\sigma(T_0) = \left\{ 0\right\} \cup \sigma_{\text{\rm cell}} \cup \left\{Ê1\right\}$, and
a value $\lambda \in \mathbb{C}$ belongs to $\sigma_{\text{\rm cell}}$ if and only if $ (\frac{1}{2}-\lambda) \in \sigma({\mathcal K}_\omega^{*\#})$. 
\end{proposition}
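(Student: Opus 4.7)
The strategy is to transcribe to the periodic setting the connection between the Poincar\'e variational operator and the Neumann--Poincar\'e operator established in Proposition \ref{prop.TDcompact}, with the periodic Green function $G^\#$, the periodic single layer potential ${\mathcal S}_\omega^\#$, and the periodic Neumann--Poincar\'e operator ${\mathcal K}_\omega^{\#*}$ (Theorem \ref{thproppotper}) playing the roles that the Poisson kernel, ${\mathcal S}_D$ and ${\mathcal K}_D^*$ played there.

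First I would observe that the general framework of Section \ref{sec.evTD} carries over verbatim, since $T_0$ is bounded, self-adjoint and non-negative on $H^1_\#(Y)/\mathbb{R}$. The case $\eta = 0$ of Proposition \ref{prop.specTeta} then supplies the orthogonal decomposition
$$ H^1_\#(Y)/\mathbb{R} = \text{\rm Ker}(T_0) \oplus \text{\rm Ker}(I - T_0) \oplus \mathfrak{h}_0, $$
the first two summands accounting for the trivial eigenvalues $0$ and $1$. It thus remains to show that the spectrum of the restriction $T_0 : \mathfrak{h}_0 \to \mathfrak{h}_0$ coincides with $\{ \lambda : \tfrac{1}{2} - \lambda \in \sigma({\mathcal K}_\omega^{\#*})\}$.

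Next, setting $R_0 := T_0 - \tfrac{1}{2} I$, I would subtract half the $Y$-energy from (\ref{eq.defT0}) and integrate by parts separately in $\omega$ and in $Y\setminus\overline{\omega}$, using the harmonicity of $u \in \mathfrak{h}_0$ on both sides together with the cancellation of periodic boundary terms on $\partial Y$, to obtain
$$ \forall v \in H^1_\#(Y)/\mathbb{R}, \quad 2 \int_Y \nabla_y (R_0 u) \cdot \nabla_y v \, dy = \int_{\partial \omega} \Bigl( \tfrac{\partial u^+}{\partial n_y} + \tfrac{\partial u^-}{\partial n_y} \Bigr) v \, ds. $$
By Proposition \ref{prop.lpper}, any $u \in \mathfrak{h}_0$ can be written uniquely as $u = {\mathcal S}_\omega^\# \phi + c$ with $\phi \in H^{-1/2}_0(\partial \omega)$ and $c \in \mathbb{R}$; the Plemelj jump relation (\ref{jumpper}) then yields $\tfrac{1}{2}(\tfrac{\partial u^+}{\partial n_y} + \tfrac{\partial u^-}{\partial n_y}) = {\mathcal K}_\omega^{\#*}\phi$, so that the identity above rewrites in $H^1_\#(Y)/\mathbb{R}$ as $R_0 u = {\mathcal S}_\omega^\# ({\mathcal K}_\omega^{\#*} \phi)$. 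Hence $R_0$ is conjugate, through the map $\phi \mapsto {\mathcal S}_\omega^\# \phi$ from $H^{-1/2}_0(\partial \omega)$ to $\mathfrak{h}_0$, to ${\mathcal K}_\omega^{\#*}$ acting on $H^{-1/2}_0(\partial \omega)$. Proposition \ref{prop.lpper} already provides the bijectivity of this map; the open mapping theorem makes it a topological isomorphism, and $\sigma(R_0) = \sigma({\mathcal K}_\omega^{\#*})$ follows immediately, yielding the claimed characterization of $\sigma_{\text{\rm cell}}$.

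I expect the main delicate point to be the bookkeeping of additive constants: the representation $u = {\mathcal S}_\omega^\# \phi + c$ mixes a function of $H^1_\#(Y)$ with a class in $H^1_\#(Y)/\mathbb{R}$, and one must check that the arguments above are compatible with the quotient -- in particular that the zero-average condition $\phi \in H^{-1/2}_0(\partial \omega)$ matches exactly the natural domain of ${\mathcal K}_\omega^{\#*}$, and that the map ${\mathcal S}_\omega^\#$ is surjective onto $\mathfrak{h}_0$ precisely after quotienting by constants on the target side. Once this is settled, the equivalence between spectra is essentially automatic.
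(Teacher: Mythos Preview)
Your approach is exactly the one the paper intends (and the one it sketches for the analogous $T_\eta$ results just before Proposition~\ref{prop.specTeta}): transplant the proof of Proposition~\ref{prop.TDcompact} to the periodic cell, with $G^\#$, ${\mathcal S}_\omega^\#$, ${\mathcal K}_\omega^{\#*}$ and Proposition~\ref{prop.lpper} playing the roles of their Dirichlet counterparts. The only point to watch is a sign: with the convention $\Delta_y G^\# = \delta_z - 1$ one computes $\langle {\mathcal S}_\omega^\#\psi, v\rangle_{H^1_\#(Y)/\mathbb{R}} = -\int_{\partial\omega}\psi\,v\,ds$, so the conjugation actually reads $R_0 u = -{\mathcal S}_\omega^\#({\mathcal K}_\omega^{\#*}\phi)$ rather than $+{\mathcal S}_\omega^\#({\mathcal K}_\omega^{\#*}\phi)$, and it is precisely this minus that produces $\tfrac{1}{2}-\lambda \in \sigma({\mathcal K}_\omega^{\#*})$ instead of $\lambda-\tfrac{1}{2}$.
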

\par
\smallskip

\subsubsection{Well-posedness of the cell problems}~\\

We now come to the question of the well-posedness of the cell problems (\ref{def.chiiclass}).

\begin{proposition}
Let $a \in \mathbb{C}^*$ be outside the exceptional set $\Sigma_\omega$ defined by:
\begin{equation}\label{eqlmom}
 \Sigma_\omega := \left\{ a \in \mathbb{R}, \: \: \frac{1}{2}\frac{a+1}{a-1} \in \sigma( {\mathcal K}^{\#*}_\omega) \right\} = \left\{a \in \mathbb{R}, \:\: \frac{1}{1-a} \in \sigma_{\text{\rm cell}} \right\}.
 \end{equation}
Then each cell problem (\ref{def.chiiclass}) has a unique solution $\chi_i \in H^1_\#(Y) / \mathbb{R}$ ($i=1,...,d$).
\end{proposition}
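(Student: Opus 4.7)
The plan is to recast the cell problem (\ref{def.chiiclass}) as a spectral problem for the periodic Poincar\'e variational operator $T_0$, exactly in the spirit of Section \ref{sec.defTD} where the equation (\ref{eq.conduc}) was reduced to $(\lambda I - T_D)u = \lambda g$ with $\lambda = \frac{1}{1-a}$. Concretely, for $u, v \in H^1_\#(Y)/\mathbb{R}$ I would decompose
$$
\int_Y A(y)\,\nabla_y u\cdot \nabla_y v\,dy \;=\; \int_Y \nabla_y u\cdot\nabla_y v\,dy + (a-1)\int_\omega \nabla_y u\cdot \nabla_y v\,dy \;=\; \langle (I+(a-1)T_0)u,\,v\rangle_{H^1_\#(Y)/\mathbb{R}},
$$
using the definition (\ref{eq.defT0}) of $T_0$. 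The cell problem is therefore equivalent to the operator equation
$$
\bigl(\mu I - T_0\bigr)\chi_i \;=\; \mu g_i \quad \text{in } H^1_\#(Y)/\mathbb{R}, \qquad \mu := \tfrac{1}{1-a},
$$
where $g_i\in H^1_\#(Y)/\mathbb{R}$ is the Riesz representative of the linear form $v\mapsto -\int_Y A(y)\,e_i\cdot\nabla_y v\,dy$.

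First I would check that $g_i$ is well-defined, i.e.\ that this linear form is continuous on $H^1_\#(Y)/\mathbb{R}$. Continuity on $H^1_\#(Y)$ is obvious; what needs verification is that the form vanishes on constants so that it descends to the quotient. Integrating by parts, the form equals $-(a-1)\int_{\partial\omega} n_i\,v\,ds$, which on a constant $v\equiv c$ reduces to $-(a-1)c\int_{\partial\omega}n_i\,ds = 0$ by the divergence theorem. Hence $g_i$ is produced unambiguously by the Riesz representation theorem in $H^1_\#(Y)/\mathbb{R}$.

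Next I would invoke the last proposition stated before the result, which identifies $\sigma(T_0) = \{0\}\cup \sigma_{\text{\rm cell}}\cup\{1\}$. Since $T_0$ is bounded and self-adjoint on $H^1_\#(Y)/\mathbb{R}$, the operator $\mu I - T_0$ is an isomorphism as soon as $\mu \notin \sigma(T_0)$. It remains to check that the hypothesis $a\in \mathbb{C}^*\setminus \Sigma_\omega$ rules out each of the three possibilities for $\mu\in\sigma(T_0)$: the value $\mu=1$ corresponds to $a=0$, which is excluded; the value $\mu=0$ corresponds formally to $a=\infty$, which is excluded by $a\in\mathbb{C}^*$; and $\mu\in\sigma_{\text{\rm cell}}$ is precisely the second characterization $a\in\Sigma_\omega$ given in (\ref{eqlmom}), which is excluded by assumption. (For non-real $a$ the conclusion is automatic since $\sigma(T_0)\subset[0,1]\subset\mathbb{R}$.) Consequently $\mu I - T_0$ is invertible and $\chi_i = (\mu I - T_0)^{-1}(\mu g_i)$ is the unique solution in $H^1_\#(Y)/\mathbb{R}$.

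The proof is essentially a direct transcription of the argument of Section \ref{sec.defTD} to the periodic cell setting, so there is no serious obstacle; the only point requiring a moment's attention is verifying that the source $g_i$ descends to the quotient space $H^1_\#(Y)/\mathbb{R}$, and matching the two equivalent descriptions of $\Sigma_\omega$ in (\ref{eqlmom}) via the identity $\frac{1}{2}-\frac{1}{1-a}=\frac{1}{2}\frac{a+1}{a-1}$ together with the spectral correspondence $\lambda\in\sigma_{\text{\rm cell}}\Leftrightarrow \frac{1}{2}-\lambda\in\sigma(\mathcal{K}_\omega^{\#*})$ recorded just above.
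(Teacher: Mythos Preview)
Your argument is correct and complete. You reduce the cell problem to the operator equation $(\mu I - T_0)\chi_i = \mu g_i$ on $H^1_\#(Y)/\mathbb{R}$ with $\mu = \frac{1}{1-a}$, and then invoke the identification $\sigma(T_0) = \{0\}\cup\sigma_{\text{cell}}\cup\{1\}$ to conclude invertibility whenever $a\in\mathbb{C}^*\setminus\Sigma_\omega$. (A minor remark: the check that the source descends to the quotient is even simpler than your integration-by-parts computation, since a constant $v$ has $\nabla_y v = 0$, so the integrand vanishes pointwise.)

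This is a genuinely different route from the paper's proof. The paper works on the boundary $\partial\omega$ via layer potentials: it represents any candidate solution as $\chi_i = \mathcal{S}_\omega^{\#}\phi_i + c$ using Proposition~\ref{prop.lpper}, and then shows via the jump relations (\ref{jumpper}) that the transmission conditions in (\ref{def.chiitrans}) are equivalent to the boundary integral equation $(\lambda I - \mathcal{K}_\omega^{\#*})\phi_i = n_i$ with $\lambda = \tfrac12\tfrac{a+1}{a-1}$; well-posedness then follows from $\lambda\notin\sigma(\mathcal{K}_\omega^{\#*})$. Your approach stays in the bulk space $H^1_\#(Y)/\mathbb{R}$ and appeals directly to the spectral theory of $T_0$, which is cleaner and avoids the layer-potential machinery. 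The paper's route, however, yields the explicit representation (\ref{eq.repchii}) with the potential $\phi_i$ solving (\ref{defpot}); this is not a byproduct of your argument, and it is precisely what is exploited in the subsequent formula (\ref{homogcoefphi}) for $A^*_{ij}$ and in the proof of Corollary~\ref{cor.A*wp}.
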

\begin{proof}
Using Proposition \ref{prop.lpper} together with the jump relations (\ref{jumpper}), it is easily seen that $\chi_i$ is solution to (\ref{def.chiiclass}) if and 
only if it is of the form 
\begin{equation}\label{eq.repchii} 
\chi_i = {\mathcal S}_\omega^\#\phi_i + c,
\end{equation}
where $c \in \mathbb{R}$ and $\phi_i \in H^{-1/2}_0(\partial \omega)$ is solution to the integral equation: 
\begin{equation}\label{defpot}
 \left( \lambda I - {\mathcal K}^{\#*}_\omega\right) \phi_i = n_i, \text{ where }Ê\lambda = \frac{1}{2}\frac{a+1}{a-1}.
 \end{equation}
 On the other hand, by definition, the above equation has a unique solution in $L^2_0(\partial \omega)$ provided $a \notin \Sigma_\omega$. 
\end{proof}

In particular, this proposition implies that, when the inclusion $\omega$ is of class ${\mathcal C}^2$, the cell functions
may fail to exist for \textit{at most} countably many values of $a < 0$.

As a consequence of the above results, when $a \notin \Sigma_\omega$, the \textit{homogenized matrix} with inclusion $\omega$
is well-defined via the formula: 
\begin{equation}\label{eq.hommat}
A^*_{ij} = \int_Y{A(y)(e_i + \nabla_y \chi_i) \cdot (e_j + \nabla_y \chi_j) \:dy}.
\end{equation}

\begin{remark}
The cell functions $\chi_i$ correspond to potentials $\phi_i$ via the equation (\ref{defpot}) which features
the operator $(\lambda I - {\mathcal K}_\omega^{\# *})$ and the \textit{particular} right-hand sides $n_i$. 
It may very well happen that such potentials (and thus such cell functions) exist even though $a \in \Sigma_\omega$ 
(there is then a compatibility relation delivered by the Fredholm theory). 
This is for instance the case when the inclusion $\omega \subset Y$ corresponds to a rank $1$ laminate (notice however that this case violates the assumption $\omega \Subset Y$): in the appendix, we prove that only $a=-1$ - which happens to correspond to 
$\lambda = 0$, the only element
in the essential spectrum of ${\mathcal K}_\omega^{\#*}$ -  
is associated to ill-posed cell problems (\ref{def.chiiclass}).
We do not know whether this fact
is general or not.
\end{remark}
\subsubsection{Ellipticity of the homogenized tensor for high contrasts}\label{seccoeffhomog}~\\

%
%
%
 
It is well-known that the homogenized tensor $A^*$ is positive definite when $a \in \mathbb{R}$, $a>0$; see for instance \cite{blp,jikov}.    
In this section, we prove that the same property holds in the case where $a <0$ has small or large modulus.  
To achieve this, we rely on an alternative expression for the homogenized coefficients: 
 
\begin{proposition}
The coefficients (\ref{eq.hommat}) of the homogenized tensor $A^*$ read: 
\begin{equation}\label{homogcoefphi}
 A^*_{ij} = \delta_{ij}Ê + \int_{\partial \omega}{y_i \phi_j \:ds(y)},
 \end{equation}
where the potential $\phi_j \in L^2_0(\partial \omega)$ is the solution to (\ref{defpot}).
\end{proposition}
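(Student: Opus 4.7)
The plan is to start from the definition
$$A^*_{ij} = \int_Y A(y)(e_i + \nabla_y \chi_i) \cdot (e_j + \nabla_y \chi_j) \:dy$$
and reduce both sides of the claimed identity to the same quantity, namely the integral $\int_\omega \partial_i \chi_j \, dy$, using the transmission problem (\ref{def.chiitrans}) and simple integrations by parts. First, I would apply the variational form of the cell problem for $\chi_i$ with test function $\chi_j$ to kill the cross term $\int_Y A(e_i + \nabla\chi_i)\cdot \nabla\chi_j\,dy = 0$, which gives
$$A^*_{ij} = \int_Y A(y)\,\bigl(\delta_{ij} + \partial_i \chi_j\bigr)\:dy = \delta_{ij}\bigl(1+(a-1)|\omega|\bigr) + \int_Y A(y)\,\partial_i \chi_j\,dy,$$
since $\int_Y A(y)\,dy = (1-|\omega|) + a|\omega|$.

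Next I would integrate the last term by parts, splitting the integral between $\omega$ and $Y\setminus\overline{\omega}$ and using the harmonicity of $\chi_j$ in each subdomain. Periodicity makes the contribution on $\partial Y$ vanish, and the continuity $\chi_j^+ = \chi_j^-$ on $\partial \omega$ combined with opposite orientation of the outward normals of $\omega$ and $Y\setminus\overline{\omega}$ yields
$$\int_Y A(y)\,\partial_i \chi_j\,dy = (a-1)\int_{\partial \omega} \chi_j\, n_i \:ds = (a-1)\int_\omega \partial_i \chi_j \,dy,$$
the last identity following from the divergence theorem in $\omega$ applied to the constant field $e_i$ multiplied by $\chi_j$.

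Then I would compute the claimed boundary integral by the same ingredients. From the jump relation (\ref{jumpper}) applied to the representation $\chi_j = \mathcal{S}^\#_\omega \phi_j + c$ and the transmission condition $a(\partial_n \chi_j^- + n_j) = \partial_n \chi_j^+ + n_j$, one obtains $\phi_j = (a-1)(\partial_n \chi_j^- + n_j)$. Integration by parts in $\omega$, using $\Delta_y \chi_j = 0$ there, gives
$$\int_{\partial \omega} y_i\, \partial_n \chi_j^- \:ds = \int_\omega \partial_i \chi_j \,dy, \qquad \int_{\partial \omega} y_i\, n_j \:ds = \delta_{ij}|\omega|,$$
so
$$\int_{\partial \omega} y_i\, \phi_j \:ds = (a-1)\int_\omega \partial_i \chi_j\,dy + (a-1)\,\delta_{ij}\,|\omega|.$$
Subtracting this from the expression for $A^*_{ij}$ obtained above produces exactly $\delta_{ij}$, which proves (\ref{homogcoefphi}).

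There is no real obstacle here: the argument is purely a bookkeeping exercise combining the variational formulation, the symmetry of the cell problem, the transmission conditions, and careful use of the divergence theorem. The only minor subtlety is to make sure that $\int_{\partial \omega} \chi_j\, n_i \,ds$ is well-defined independently of the representative of $\chi_j$ in $H^1_\#(Y)/\mathbb{R}$ (which follows from $\int_{\partial \omega} n_i\,ds = 0$) and that $\int_{\partial \omega} y_i \phi_j\,ds$ makes sense as an $H^{1/2}$-$H^{-1/2}$ duality pairing since $\phi_j \in H^{-1/2}_0(\partial\omega)$ and $y_i|_{\partial \omega} \in H^{1/2}(\partial \omega)$.
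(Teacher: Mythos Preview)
Your argument is correct. The paper's proof reaches the same conclusion by a slightly different organization: it integrates $A(y)(e_i+\nabla_y\chi_i)\cdot(e_j+\nabla_y\chi_j)$ by parts all the way to the outer boundary $\partial Y$, identifies $\int_{\partial Y} n_i y_j\,ds=\delta_{ij}$, and then transforms $\int_{\partial Y}\frac{\partial\chi_i}{\partial n_y} y_j\,ds$ through successive integrations by parts into $\int_{\partial\omega}\bigl(\frac{\partial\chi_i^+}{\partial n}-\frac{\partial\chi_i^-}{\partial n}\bigr)y_j\,ds=\int_{\partial\omega}\phi_i\,y_j\,ds$, finishing by the symmetry $A^*_{ij}=A^*_{ji}$. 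Your route instead uses the variational identity $\int_Y A(e_i+\nabla\chi_i)\cdot\nabla\chi_j\,dy=0$ to strip the problem down to the scalar quantity $(a-1)\int_\omega\partial_i\chi_j\,dy$, and then shows directly that $\int_{\partial\omega} y_i\phi_j\,ds$ equals this same quantity plus $(a-1)\delta_{ij}|\omega|$ via the explicit formula $\phi_j=(a-1)(\partial_n\chi_j^-+n_j)$. Both are pure bookkeeping with the same ingredients (transmission conditions, harmonicity, divergence theorem); your version is arguably more transparent because it exhibits a common intermediate volume term, while the paper's version stays on boundaries throughout and never needs to isolate $|\omega|$ or the constant $a-1$ explicitly.
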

\begin{proof}
Using integration by parts and the transmission conditions of the system (\ref{def.chiitrans}), we obtain: 
$$ 
\begin{array}{>{\displaystyle}c c>{\displaystyle}l}
\int_Y{A(y) (e_i + \nabla_y \chi_i) \cdot (e_j + \nabla_y \chi_j) \:dy} &= & a \int_\omega{(e_i + \nabla_y \chi_i) \cdot (e_j + \nabla_y \chi_j) \:dy} + \int_{Y\setminus \overline{\omega}}{(e_i + \nabla_y \chi_i) \cdot (e_j + \nabla_y \chi_j) \:dy} \\
&=& \int_{\partial Y}{\left(n_i + \frac{\partial \chi_i}{\partial n_y} \right)y_j \:ds(y)}. 
\end{array}
$$
From this point, another integration by parts easily shows that: 
$$\int_{\partial Y}{n_i y_j \:ds(y)} = \delta_{ij}.$$ 
On the other hand, successive integration by parts yield: 
$$ 
\begin{array}{>{\displaystyle}c c>{\displaystyle}l}
\int_{\partial Y}{\frac{\partial \chi_i}{\partial n_y} y_j \:ds(y)} &=&  \int_{Y \setminus \overline{\omega}}{\nabla_y \chi_i \cdot \nabla_y y_j \:dy} + \int_{\partial \omega}{\frac{\partial \chi_{i}^+}{\partial n_y} y_j \:ds(y)} \\Ê
&=&  - \int_{\partial \omega}{\chi_i n_j \:ds(y)} + \int_{\partial \omega}{\frac{\partial \chi_{i}^+}{\partial n_y} y_j \:ds(y)} \\Ê
&=&  - \int_{\omega}{\nabla_y \chi_i \cdot e_j \:ds(y)} + \int_{\partial \omega}{\frac{\partial \chi_{i}^+}{\partial n_y} y_j \:ds(y)} \\Ê
&=&  - \int_{\partial \omega}{\frac{\partial \chi_{i}^-}{\partial n_y} y_j \:ds(y)} + \int_{\partial \omega}{\frac{\partial \chi_{i}^+}{\partial n_y} y_j \:ds(y)} \\Ê
\end{array}
$$
and the desired result now follows from the representation (\ref{eq.repchii}) and the jump relations (\ref{jumpper}).
\end{proof}

\begin{corollary}\label{cor.A*wp}
There exist constants $0 < m < M < \infty$ such that, if the conductivity $a$ belongs to $(-\infty,-M) \cup (-m,0)$, the homogenized tensor $A^*$ defined in (\ref{eq.hommat}) 
is positive definite. 
\end{corollary}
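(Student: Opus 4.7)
The plan is to base the argument on the boundary-integral representation (\ref{homogcoefphi}) of $A^*$, combined with the spectral gap of $\mathcal{K}_\omega^{\#*}$ at the values $\pm\tfrac{1}{2}$. The map $a \mapsto \lambda(a) = \tfrac{1}{2}\tfrac{a+1}{a-1}$ sends $a \to 0$ to $\lambda \to -\tfrac{1}{2}$ and $a \to \pm \infty$ to $\lambda \to \tfrac{1}{2}$. Since $\omega$ is of class $\mathcal{C}^2$, the operator $\mathcal{K}_\omega^{\#*}$ is compact on $L^2_0(\partial\omega)$, and by Theorem \ref{thproppotper}$(iii)$ the points $\pm\tfrac{1}{2}$ are outside $\sigma(\mathcal{K}_\omega^{\#*})$. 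Consequently, for $|a|$ sufficiently small or sufficiently large, $\lambda(a)$ remains outside $\sigma(\mathcal{K}_\omega^{\#*})$, the potentials $\phi_j = (\lambda I - \mathcal{K}_\omega^{\#*})^{-1}n_j$ are well-defined, and by (\ref{homogcoefphi}) the map $a \mapsto A^*(a)$ is continuous on (one-sided) neighborhoods of $0$ and of $\infty$.

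The next step is to verify that the one-sided limits from $a < 0$ coincide with those from $a > 0$. Since $\phi_j$ depends continuously on $\lambda$ in a neighborhood of each of $\pm \tfrac{1}{2}$, and $\lambda(a) \to -\tfrac{1}{2}$ as $a \to 0^\pm$, $\lambda(a) \to \tfrac{1}{2}$ as $a \to \pm\infty$, one obtains
$$
\lim_{a \to 0^-} A^*(a) \;=\; \lim_{a \to 0^+} A^*(a), \qquad \lim_{a \to -\infty} A^*(a) \;=\; \lim_{a \to +\infty} A^*(a).
$$
This identification is the core of the argument: it reduces the problem to a question about $A^*(a)$ for $a > 0$, a setting which falls within classical homogenization.

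For $a > 0$, the variational characterization
$\xi^T A^*(a)\xi = \min_{\chi \in H^1_\#(Y)/\mathbb{R}}\int_Y A_a(y)|\xi + \nabla_y\chi|^2 \,dy$
shows that $A^*(a)$ is positive definite and nondecreasing in $a$. In particular $A^*(a) \geq A^*(1) = I$ for $a \geq 1$, so $\lim_{a \to +\infty}A^*(a) \geq I$ is positive definite. Likewise, $\lim_{a \to 0^+}A^*(a)$ is the effective tensor of the perforated medium, given by $\xi^T A^*(0^+)\xi = \min_{\chi}\int_{Y \setminus \overline{\omega}}|\xi + \nabla_y \chi|^2 \,dy$; this vanishes only if $\chi = -\xi \cdot y + \text{const}$ on $Y\setminus\overline{\omega}$, which, by $Y$-periodicity of $\chi$ and the connectedness of $Y\setminus\overline{\omega}$ (assumption (\ref{eq.assumom})), forces $\xi = 0$. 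Hence both $a \to 0^+$ and $a \to +\infty$ limits of $A^*$ are positive definite.

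Combining these steps: the limits $\lim_{a \to 0^-}A^*(a)$ and $\lim_{a \to -\infty}A^*(a)$ are positive definite, and by the continuity established in the first paragraph, there exist $0 < m < M < \infty$ such that $A^*(a)$ remains positive definite throughout $a \in (-\infty,-M) \cup (-m, 0)$. The crux of the proof---and the main obstacle---is the identification of two-sided limits at $\lambda = \pm\tfrac{1}{2}$: without the fact that $\pm \tfrac{1}{2} \notin \sigma(\mathcal{K}_\omega^{\#*})$, a residue-type discontinuity in $\phi_j$ across these critical values of $\lambda$ could produce a sign change in $A^*$, and the desired positivity in the negative regime would fail. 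The $\mathcal{C}^2$ regularity of $\omega$ (used via Theorem \ref{thproppotper}$(iii)$) is therefore essential.
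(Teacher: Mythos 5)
Your proposal is essentially the paper's argument: both rest on the representation $A^*_{ij}(a) = F_{ij}(\lambda(a))$ with $\lambda(a)=\tfrac12\tfrac{a+1}{a-1}$, the observation that $\sigma(\mathcal K_\omega^{\#*})$ is a closed subset of $(-\tfrac12,\tfrac12)$ so that $F$ is continuous near $\pm\tfrac12$, the reduction to positive definiteness of $F(\pm\tfrac12)$ via the positive-$a$ theory, and the openness of positive definiteness. Where you differ is in how the endpoint positivity is obtained. The paper proves a single uniform bound (Lemma~\ref{lem.posdefA*}): via the dual variational principle with a test flux $\sigma_t(\xi)$ supported in $Y\setminus\overline\omega$, one gets $A^*(a)\xi\cdot\xi\ge\int_{Y\setminus\overline\omega}|\nabla_y w_t(\xi)+\xi|^2\,dy$ for \emph{every} $a>0$, with strict positivity coming from the same periodicity/connectedness contradiction you use. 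You instead treat the two endpoints separately: monotonicity of $a\mapsto A^*(a)$ gives $A^*(a)\ge I$ for $a\ge 1$ and hence $F(\tfrac12)\ge I$; and you assert that $\lim_{a\to 0^+}A^*(a)$ equals the perforated-medium tensor $\min_\chi\int_{Y\setminus\overline\omega}|\xi+\nabla_y\chi|^2\,dy$. That last identification is true but is stated without justification, and it is actually more than is needed: since $\int_Y A_a|\xi+\nabla_y\chi|^2\ge\int_{Y\setminus\overline\omega}|\xi+\nabla_y\chi|^2$ for every $\chi$ and every $a>0$, one immediately gets the same $a$-independent lower bound $\xi^T A^*(a)\xi\ge\min_\chi\int_{Y\setminus\overline\omega}|\xi+\nabla_y\chi|^2$ that the paper obtains by duality, and positivity of $F(-\tfrac12)$ follows by passing to the limit, with no need to identify the limit itself. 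With that small repair your argument is complete and in fact slightly more elementary than the paper's duality route; the essential insight (the spectral gap of $\mathcal K_\omega^{\#*}$ at $\pm\tfrac12$, which you correctly flag as the crux) is identical.
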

\begin{proof}
In this proof, we denote by $A^*_{ij}(a)$ the coefficients of the homogenized tensor (\ref{eq.hommat}) 
when the conductivity inside the inclusion $\omega$ is $a$.  
Combining the representation formula (\ref{homogcoefphi}) with (\ref{defpot}), we see that
\begin{equation}\label{eq.A*cont}
A^*_{ij}(a) = F_{ij}(\frac{1}{2}\frac{a+1}{a-1}), \text{ where } F_{ij}(\lambda) = \delta_{ij}Ê + \int_{\partial \omega}{y_i \phi_j(\lambda) \:ds(y)}, \text{ and } \phi_j(\lambda) \text{ satisfies (\ref{defpot})}.  
\end{equation}
Clearly, the mappings $\lambda\mapsto F_{ij}(\lambda)$ are continuous on $\mathbb{C} \setminus \sigma({\mathcal K}^{\#*}_\omega)$. 
Moreover, $\sigma({\mathcal K}^{\#*}_\omega)$ is a \textit{closed} subset of $(-\frac{1}{2},\frac{1}{2})$ by Theorem \ref{thproppotper}, $(iii)$. 
Hence, there exists $0<\alpha < 1/2$ such that the $F_{ij}$'s are continuous on $\mathbb{C} \setminus (-\alpha,\alpha)$. 
Using Lemma \ref{lem.posdefA*} below and the fact that:
$$F_{ij}(-\frac{1}{2}) = \lim\limits_{a \to 0}{A^*_{ij}(a)}, \text{ and } F_{ij}(\frac{1}{2}) = \lim\limits_{a \to +\infty}{A^*_{ij}(a)},$$
we see that there exists $\beta >0$ such that: 
$$ \forall \xi \in \mathbb{R}^d, \:\: F(-\frac{1}{2}) \xi \cdot \xi \geq \beta \lvert\xi\lvert^2, \text{ and }F(\frac{1}{2}) \xi \cdot \xi \geq \beta \lvert\xi\lvert^2.$$
Since positive definiteness is an open condition, there exists (another) $0<\alpha < 1/2$ such that $F(\lambda)$ 
is positive definite for $\lambda \in (-1/2,-\alpha) \cup (\alpha,1/2)$. In view of (\ref{eq.A*cont}), this is the expected conclusion. 
\end{proof}

\begin{lemma}\label{lem.posdefA*}
There exists $\beta > 0$ such that, for any value $a \in (0,+\infty)$, the homogenized tensor $A^*$ satisfies: 
$$ \forall \xi \in \mathbb{R}^d, \:\: A^* \xi \cdot \xi \geq \beta \lvert \xi \lvert^2.$$
\end{lemma}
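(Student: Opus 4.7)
The plan is to exploit the variational (energy-minimization) characterization of the homogenized tensor and bound $A^*$ from below by the homogenized tensor of the perforated domain obtained by replacing the inclusions by Neumann holes, which is a quantity independent of $a$.

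First, I would recall that for $a>0$ the cell equations (\ref{def.chiiclass}) are standard coercive elliptic problems in $H^1_\#(Y)/\mathbb{R}$, so the cell functions $\chi_i$ exist and are unique. Setting $\chi_\xi := \sum_i \xi_i \chi_i$ and testing the Euler--Lagrange equation of $\chi_\xi$ against $w = v - \chi_\xi$, an expansion of $\int_Y A(y)|\xi+\nabla_y v|^2\,dy$ shows that the cross term vanishes and one obtains the classical identity
$$
A^*\xi\cdot\xi \;=\; \min_{v \in H^1_\#(Y)/\mathbb{R}} \int_Y A(y)\,|\xi + \nabla_y v|^2 \, dy.
$$

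Next, since $a>0$, we may drop the contribution on $\omega$: for every admissible $v$,
$$
\int_Y A(y)\,|\xi+\nabla_y v|^2 \, dy \;\geq\; \int_{Y\setminus\overline{\omega}} |\xi+\nabla_y v|^2 \, dy,
$$
which yields the key bound
$$
A^*\xi\cdot\xi \;\geq\; \widetilde{A}^*\xi\cdot\xi \;:=\; \inf_{v \in H^1_\#(Y)/\mathbb{R}} \int_{Y\setminus\overline{\omega}} |\xi+\nabla_y v|^2 \, dy,
$$
where the right-hand side is manifestly independent of $a$. The quantity $\widetilde{A}^*$ is the homogenized matrix of the perforated domain (equivalently, the infimum may be taken over $H^1_\#(Y\setminus\overline{\omega})/\mathbb{R}$, by harmonic extension across $\omega$). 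It therefore suffices to prove that $\widetilde{A}^*$ is a positive definite matrix, since by homogeneity and compactness of the unit sphere this will furnish a constant $\beta>0$ with $\widetilde{A}^*\xi\cdot\xi \geq \beta|\xi|^2$, valid uniformly in $a$.

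The main (and really only delicate) point is thus the strict positivity of the quadratic form $\xi \mapsto \widetilde{A}^*\xi\cdot\xi$ on $\mathbb{R}^d\setminus\{0\}$. By direct methods (the functional is strictly convex and coercive on $H^1_\#(Y\setminus\overline{\omega})/\mathbb{R}$ via the Poincar\'e--Wirtinger inequality on this connected Lipschitz domain), the infimum is attained at some $\tilde{v}_\xi$. If $\widetilde{A}^*\xi\cdot\xi = 0$, then $\nabla_y \tilde{v}_\xi = -\xi$ almost everywhere on $Y\setminus\overline{\omega}$. Since $Y\setminus\overline{\omega}$ is connected (by assumption), this forces $\tilde{v}_\xi(y) = -\xi\cdot y + c$ almost everywhere on $Y\setminus\overline{\omega}$. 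The hypothesis $\omega \Subset Y$ ensures that the opposite faces of $\partial Y$ lie in $\partial(Y\setminus\overline{\omega})$ and are connected within $Y\setminus\overline{\omega}$ by paths arbitrarily close to $\partial Y$; the $Y$-periodicity of the trace of $\tilde{v}_\xi$ on $\partial Y$ then imposes $\xi\cdot e_k = 0$ for $k=1,\dots,d$, i.e., $\xi = 0$. This contradiction concludes the proof, with $\beta$ equal to the smallest eigenvalue of $\widetilde{A}^*$.
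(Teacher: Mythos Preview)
Your proof is correct and reaches the same $a$-independent lower bound as the paper, namely the cell energy of the perforated domain $\int_{Y\setminus\overline{\omega}}|\xi+\nabla_y \tilde v_\xi|^2\,dy$; the concluding positivity step (connectedness of $Y\setminus\overline{\omega}$ together with $Y$-periodicity ruling out a nonconstant affine function) is identical in both arguments. The route, however, is genuinely different: the paper passes through the dual variational principle (\ref{eq.Axixidual}) and constructs a divergence-free test flux $\sigma_t$ (equal to $\nabla_y w_t+\xi$ in $Y\setminus\overline{\omega}$ and $0$ in $\omega$) to bound $A^*\xi\cdot\xi$ from below, whereas you stay on the primal side and simply discard the nonnegative contribution $a\int_\omega|\xi+\nabla_y v|^2$ before minimizing. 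Your approach is more elementary---no duality, no auxiliary flux---and makes the mechanism (monotonicity of $A^*$ in the phase conductivities, then the $a\to 0$ limit being the Neumann-hole tensor) transparent. The dual approach has the minor advantage of displaying the optimal test flux explicitly and connects to the Hashin--Shtrikman--type machinery, but for the present statement your primal argument is shorter and equally rigorous.
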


\begin{proof}
For given $a \in (0,+\infty)$ and $\xi \in \mathbb{R}^d$, 
the usual Lax-Milgram theory for the elliptic equation (\ref{def.chiiclass}) yields: 
\begin{equation}\label{eq.Axixi}
 A^* \xi \cdot \xi = \int_Y{A(y) \lvert \nabla_y w_\xi + \xi \lvert^2 \:dy} = \min\limits_{w \in H^1_\#(Y)} {\int_Y{A(y) \lvert \nabla_y w + \xi \lvert^2 \:dy}},
 \end{equation} 
where $w_\xi$ is the unique function in $H^1_\#(Y) / \mathbb{R}$ such that:
$$ - \text{\rm div}(A(y)(\nabla_y w +\xi)) = 0.$$
The dual variational principle associated to (\ref{eq.Axixi}) now reads (see \cite{kohnmilton} in this precise context):
\begin{equation}\label{eq.Axixidual}
 A^* \xi \cdot \xi = \max\limits_{\sigma \in L^2_\#(Y)^d \atop \text{\rm div}(\sigma) = 0} \left(\int_Y{\sigma \cdot \xi \:dy} - \frac{1}{2} \int_Y{A(y)^{-1} \lvert \sigma \lvert^2 \:dy}\right),
 \end{equation} 
 and we proceed to construct a adequate `test flux' $\sigma$ for this identity. To this end, since $\omega \Subset Y$, we may introduce $w_t(\xi)$, the unique function $w$ in $H^1_\#(Y \setminus \overline{\omega}) / \mathbb{R}$ satisfying: 
 \begin{equation}\label{eq.defwtestdual}
 \left\{ 
 \begin{array}{cl}
 -\text{\rm div} (\nabla_y w_t + \xi) = 0 & \text{in }ÊY \setminus \overline{\omega}, \\ 
\frac{\partial w_t}{\partial n} + \xi \cdot n = 0 & \text{on } \partial \omega.
 \end{array}
 \right. 
 \end{equation} 
 We then define our test flux $\sigma_t(\xi) \in L^2_\#(Y)$ as: 
 $$ \sigma_t(\xi) = \left\{ 
 \begin{array}{cl}
 \nabla_y w_t(\xi) + \xi & \text{in } Y \setminus \overline{\omega}, \\
 0 & \text{in } \omega.
 \end{array}
 \right.$$  
 Since $\sigma_t(\xi)$ is obviously divergence-free, inserting it in (\ref{eq.Axixidual}) yields the ($a$-independent) lower bound for $A^*\xi \cdot \xi$:
 $$ A\xi\cdot \xi \geq \int_Y{\sigma_t(\xi) \cdot \xi \:dy} - \frac{1}{2}\int_Y{A(y)^{-1}\sigma_t(\xi) \cdot \sigma_t(\xi) \:dy} = \int_{Y \setminus \overline{\omega}}{\lvert \nabla_y w_t(\xi) + \xi\lvert^2 \:dy}.$$
Let us now remark that the mapping $ \xi \mapsto  \int_{Y \setminus \overline{\omega}}{\lvert \nabla_y w_t(\xi) + \xi\lvert^2 \:dy}$ is continuous on $\mathbb{R}^d$ and 
homogeneous of order $2$. Hence, to conclude the proof of the lemma, it is enough to prove that, for any $\xi \in \mathbb{R}^d$, $\lvert \xi \lvert =1$, the integral $ \int_{Y \setminus \overline{\omega}}{\lvert \nabla_y w_t(\xi) + \xi\lvert^2 \:dy}$ does not vanish. 

To achieve this, we proceed by contradiction and assume that there exists $\xi\in \mathbb{R}^d$, $\lvert \xi \lvert =1$ such that: 
$$  \int_{Y \setminus \overline{\omega}}{\lvert \nabla_y w_t(\xi) + \xi\lvert^2 \:dy} = 0.$$
Then, $\nabla_y w_t(\xi) + \xi = 0$ a.e. on $Y \setminus \overline{\omega}$. Since this set is connected, we infer that there exists a constant $c \in \mathbb{R}$ such that $w_t(\xi)(y) = -\xi \cdot y + c$, which is impossible since $y \mapsto w_t(\xi)(y)$ is $Y$-periodic.   
\end{proof}

\begin{remark}
Corollary \ref{cor.A*wp} may fail if the assumption $\omega \Subset Y$ is removed. For instance, in the case of rank $1$ laminates, the calculations performed in Appendix \ref{sec.laminates} reveal that the homogenized tensor $A^*$ becomes hyperbolic as $a \to -\infty$ (i.e. it has sign-changing eigenvalues).
\end{remark}

\subsection{General results for the direct problem}\label{sec.gendirect}~\\

As we have mentioned, the behavior of solutions to the system (\ref{eq.pot})
when the conductivity $a$ inside the inclusions is positive is well-known. 
In the case where $a<0$, the system (\ref{eq.pot}) may
be ill-posed for some values of $\varepsilon$ and some of
source terms $f$. 
Nevertheless, our first result states that the possible limits of sequences $u_\varepsilon$ of solutions to this system are governed by the formally homogenized matrix $A^*$ defined in (\ref{eq.hommat}). 

The first result in this section expresses in our context the well-known fact that the homogenization mechanism applies as soon 
as the sequence $u_\varepsilon$ is bounded.

\begin{proposition}\label{prop.gencvue}
Let $a \in \mathbb{C}\setminus \Sigma_{\omega}$, and let $f \in H^{-1}(\Omega)$.  
Assume that there exists a sequence (indexed by $\varepsilon$) of solutions $u_\varepsilon$ to (\ref{eq.pot}) such that: 
\begin{equation}\label{eq.boundueps}
 \lvert\lvert \nabla u_\varepsilon \lvert\lvert_{L^2(\Omega)^d} \leq C,
 \end{equation}
for a constant $C>0$ independent of $\varepsilon$. 
Then, up to a subsequence (still indexed by $\varepsilon$), there exists $u_0 \in H^1_0(\Omega)$ such that: 
\begin{equation}\label{cvuepsu0}
 u_\varepsilon \xrightarrow{\varepsilon \to 0} u_0, \text{ weakly in } H^1_0(\Omega).
 \end{equation}
The function $u_0$ is a solution to the formally homogenized system (\ref{eq.pothom}). 
\end{proposition}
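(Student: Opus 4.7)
The plan is to carry out a version of Murat--Tartar's oscillating test function method (the so-called energy method), adapted to the present setting where the coefficient $a$ may be complex or real and negative. The usual proofs of this result rely on uniform ellipticity only to derive an a priori bound on $\nabla u_\varepsilon$; since such a bound is here granted by hypothesis (\ref{eq.boundueps}), the rest of the argument can essentially be pushed through without any sign condition on $a$, as the identification of the homogenized limit only uses div--curl compensated compactness.

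First, I would extract subsequences. From (\ref{eq.boundueps}) there exists $u_0 \in H^1_0(\Omega)$ such that, up to a subsequence, $u_\varepsilon \rightharpoonup u_0$ weakly in $H^1_0(\Omega)$ and strongly in $L^2(\Omega)$ by Rellich. Because $A_\varepsilon$ is uniformly bounded in $L^\infty(\Omega)$, the flux $\sigma_\varepsilon := A_\varepsilon \nabla u_\varepsilon$ is bounded in $L^2(\Omega)^d$, so up to further extraction $\sigma_\varepsilon \rightharpoonup \sigma_0$ weakly in $L^2(\Omega)^d$ for some $\sigma_0$. Passing to the limit in the variational formulation with fixed test functions in $H^1_0(\Omega)$ immediately yields $-\text{\rm div}(\sigma_0) = f$ in $\Omega$. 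It then remains to identify $\sigma_0 = A^*\nabla u_0$.

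To perform this identification, I would use the cell functions $\chi_i \in H^1_\#(Y)/\mathbb{R}$, $i=1,\dots,d$, which exist and are unique since $a \notin \Sigma_\omega$, and set the oscillating test functions $w^i_\varepsilon(x) := x_i + \varepsilon \chi_i(x/\varepsilon)$. By periodicity and the cell equation (\ref{def.chiiclass}), these satisfy $-\text{\rm div}(A_\varepsilon \nabla w^i_\varepsilon) = 0$ on any open set where the geometry of $\omega_\varepsilon$ matches the purely periodic extension of $\omega$; moreover, as $\varepsilon \to 0$, one has $w^i_\varepsilon \to x_i$ strongly in $L^2(\Omega)$ and $A_\varepsilon \nabla w^i_\varepsilon \rightharpoonup A^* e_i$ weakly in $L^2(\Omega)^d$, the latter being a standard computation from periodic averaging using the definition (\ref{eq.hommat}). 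For $\phi \in {\mathcal D}(\Omega)$ arbitrary, I would then test the equation for $u_\varepsilon$ against $\phi\, w^i_\varepsilon$ and the equation for $w^i_\varepsilon$ against $\phi\, u_\varepsilon$, and subtract, exploiting the symmetry of the (scalar-valued) conductivity $A_\varepsilon$ to obtain the Tartar identity
\begin{equation*}
\int_\Omega \sigma_\varepsilon \cdot \nabla\phi \; w^i_\varepsilon \,dx - \int_\Omega A_\varepsilon \nabla w^i_\varepsilon \cdot \nabla\phi \; u_\varepsilon \,dx = \langle f,\phi\, w^i_\varepsilon\rangle_{H^{-1},H^1_0}.
\end{equation*}
Each of the three terms passes to the limit as a product of a weakly and a strongly convergent sequence, yielding
\begin{equation*}
\int_\Omega \sigma_0 \cdot \nabla\phi \; x_i \,dx - \int_\Omega A^* e_i \cdot \nabla\phi \; u_0 \,dx = \langle f, \phi\, x_i \rangle.
\end{equation*}
Integrating by parts using $-\text{\rm div}(\sigma_0) = f$ and the symmetry of $A^*$, I would read off $\sigma_0 \cdot e_i = (A^*\nabla u_0)\cdot e_i$ a.e.\ for each $i$, hence $\sigma_0 = A^*\nabla u_0$ and $u_0$ solves (\ref{eq.pothom}).

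The main technical difficulty is that, near the boundary $\partial\Omega$, the geometry of $\omega_\varepsilon$ differs from a purely periodic pattern (the cells in ${\mathcal B}_\varepsilon$ contain no inclusions), so $-\text{\rm div}(A_\varepsilon \nabla w^i_\varepsilon) = 0$ fails in the boundary layer. The standard fix is to work with test functions $\phi \in {\mathcal D}(\Omega)$: for $\varepsilon$ small enough, $\text{supp}(\phi) \Subset {\mathcal O}_\varepsilon$, and the cell equation holds throughout the support of $\phi$. One then obtains the desired identification on all of $\Omega$ by density, which is enough since the limit equation is posed on $H^1_0(\Omega)$. A secondary, milder obstacle is to justify the weak $L^2$ convergence $A_\varepsilon \nabla w^i_\varepsilon \rightharpoonup A^* e_i$: this is a direct consequence of the two-scale or mean-value property of periodic functions, but requires a minor truncation argument to handle the boundary strip ${\mathcal B}_\varepsilon$, whose measure tends to zero.
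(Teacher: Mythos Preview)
Your argument is correct and follows the classical Murat--Tartar oscillating test function (energy) method, whereas the paper proceeds via the periodic unfolding operator $E_\varepsilon$ (equivalently, two-scale convergence): it extracts a two-scale limit $E_\varepsilon(\nabla u_\varepsilon)\rightharpoonup \nabla u_0+\nabla_y u_1$ in $L^2(\Omega\times Y)$, passes to the limit in the unfolded variational formulation with test functions $\varphi(x)+\varepsilon\psi(x,x/\varepsilon)$, and then identifies $u_1(x,y)=\sum_i \partial_{x_i}u_0(x)\,\chi_i(y)$ using the assumption $a\notin\Sigma_\omega$. Your route is arguably more elementary and self-contained, needing only weak--strong products and no unfolding machinery; the paper's route, on the other hand, yields the corrector $u_1$ explicitly as a byproduct, which is convenient for the subsequent results in Section~\ref{secdirect} (notably Proposition~\ref{prop.limitrecip}). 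Both approaches use $a\notin\Sigma_\omega$ at exactly the same place: you need it so that the cell functions $\chi_i$ (hence the oscillating tests $w^i_\varepsilon$) exist, the paper needs it so that the cell problem~(\ref{eq.characu1}) determining $u_1$ has a unique solution. Your handling of the boundary strip ${\mathcal B}_\varepsilon$ by restricting to $\phi\in{\mathcal D}(\Omega)$ with $\mathrm{supp}\,\phi\Subset{\mathcal O}_\varepsilon$ is the standard fix and matches in spirit the paper's use of the identity $A_\varepsilon(x)=A(x/\varepsilon)$ on ${\mathcal O}_\varepsilon$ together with $|{\mathcal B}_\varepsilon|\to 0$.
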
 
\begin{proof}
Let $g \in H^1_0(\Omega)$ be the representative of $f \in H^{-1}(\Omega)$ supplied by the Riesz representation theorem: 
$$ \forall v \in H^1_0(\Omega), \:\: \int_\Omega{\nabla g \cdot \nabla v \:dx} = \langle f , v \rangle_{H^{-1}(\Omega), H^1_0(\Omega)},$$
so that the variational formulation for (\ref{eq.pot}) reads: 
\begin{equation}\label{eq.fvarpotriesz}
\forall v \in H^1_0(\Omega), \:\: \int_{\Omega}{A_\varepsilon(x) \nabla u_\varepsilon \cdot \nabla v \:dx} = \int_\Omega{\nabla g \cdot \nabla v \:dx}.
\end{equation}

Since $\lvert\lvert u_\varepsilon \lvert\lvert_{H^1_0(\Omega)} \leq C$, there exists $u_0 \in H^1_0(\Omega)$ such that the convergences (\ref{cvuepsu0}) hold, 
up to a subsequence which we still label by $\varepsilon$. 
Also, using Theorem \ref{thcompactunfold}, there exists $u_1(x,y) \in L^2(\Omega, H^1_\#(Y) / \mathbb{R})$ such that: 
$$ E_\varepsilon ( \nabla u_\varepsilon ) \to \nabla u_0 + \nabla_y u_1 \text{ weakly in } L^2(\Omega \times Y),$$
where $E_\varepsilon$ is the extension operator of Definition \ref{def.unfold}.   
Now, for an arbitrary $v \in H^1_0(\Omega)$, (\ref{eq.fvarpotriesz}) becomes: 
$$\begin{array}{>{\displaystyle}cc>{\displaystyle}l}
 \int_{\Omega}{A_\varepsilon(x) \nabla u_\varepsilon \cdot \nabla v\:dx} &=& \int_{{\mathcal O}_\varepsilon}{A(\frac{x}{\varepsilon})\nabla u_\varepsilon \cdot \nabla v \:dx} + \int_{{\mathcal B}_\varepsilon}{\nabla u_\varepsilon \cdot \nabla v\:dx}, \\
 &=& \int_{\Omega \times Y}{A(y) E_\varepsilon( \nabla u_\varepsilon ) \cdot E_\varepsilon( \nabla v)  \:dxdy} + \int_{{\mathcal B}_\varepsilon}{\nabla u_\varepsilon \cdot \nabla v\:dx}. 
 \end{array}$$
Notice that, with a slight abuse of notations, in the first line of the above formula, 
we have used the same notation for the conductivity tensor $A$ (which is defined on $Y$) and for its $Y$-periodic extension to $\mathbb{R}^d$. 
Using Lebesgue's dominated convergence theorem, in combination with the bound (\ref{eq.boundueps}), 
it is easily seen that the second integral in the right-hand side of the above formula vanishes as $\varepsilon \rightarrow 0$.
As for the first one, since $E_\varepsilon (\nabla v) \rightarrow \nabla v $ strongly in $L^2(\Omega \times Y)$ (see Proposition \ref{prop.unfold}, $(v)$), we see that:
$$ \int_{\Omega}{A_\varepsilon(x) \nabla u_\varepsilon \cdot \nabla v\:dx}  \xrightarrow{\varepsilon\rightarrow 0} \int_{\Omega \times Y}{A(y)(\nabla u_0 + \nabla_y u_1 )\cdot \nabla v \:dxdy}.$$
Therefore, taking limits in the variational problem (\ref{eq.fvarpotriesz}), we obtain: 
$$\int_{\Omega \times Y}{A(y)(\nabla u_0(x) + \nabla_y u_1(x,y) )\cdot \nabla v(x) \:dxdy}  = \int_\Omega{\nabla g \cdot \nabla v \:dx}.$$

By the same token, using a test function of the form $v_\varepsilon(x) := \varepsilon \phi(x) \psi(\frac{x}{\varepsilon})$ in (\ref{eq.fvarpotriesz}), 
for some given $\phi \in {\mathcal D}(\Omega)$ and $\psi\in H^1_\#(Y)$, and using Proposition \ref{prop.unfold}, we get: 
$$\int_{\Omega \times Y}{\phi(x)A(y)(\nabla u_0(x) + \nabla_y u_1(x,y) )\cdot \nabla_y \psi(y)) \:dxdy}  = \int_{\Omega \times Y}{\phi(x) \nabla g(x) \cdot \nabla_y \psi(y) \:dxdy} = 0.$$
All things considered, a standard density result yields that the pair $(u_0, u_1) \in {\mathcal H} := H^1_0(\Omega) \times L^2(\Omega, H^1_\#(Y)/\mathbb{R})$ 
is a solution to the problem: 
\begin{equation}\label{pb2scale}
 \forall (\varphi, \psi) \in {\mathcal H}, \:\: \int_{\Omega \times Y}{A(y)(\nabla u_0(x) + \nabla_y u_1(x,y) )\cdot (\nabla \varphi(x) + \nabla_y\psi(x,y) )  \:dxdy}  = \int_\Omega{\nabla g \cdot \nabla \varphi \:dx}.
 \end{equation}
In particular, this implies that, for any function $\psi \in L^2(\Omega, H^1_\#(Y) / \mathbb{R})$, 
$$\int_{\Omega \times Y}{A(y)(\nabla u_0(x) + \nabla_y u_1(x,y) )\cdot  \nabla_y \psi (x,y) \:dxdy}  = 0;$$
in other words,
\begin{equation}\label{eq.characu1}
 \int_Y{A(y)\left( \nabla u_0(x) + \nabla_y u_1(x,y)  \right) \cdot \nabla_y \psi(y) \:dy} =0, \text{ a.e. in } x \in \Omega, \text{ for any } \psi(y) \in H^1_\#(Y).
 \end{equation}
But for almost every $x \in \Omega$, since $a \notin \Sigma_\omega$, 
the variational problem (\ref{eq.characu1}) has a unique solution $u_1(x,\cdot) \in H^1_\#(Y) / \mathbb{R}$, which is: 
$$ u_1(x,y) = \sum\limits_{i=1}^d{\frac{\partial u_0}{\partial x_i}(x) \chi_i(y)},$$
where the $\chi_i$ are the cell functions defined by (\ref{def.chiiclass}). Recalling the expression (\ref{eq.hommat}) of the homogenized tensor $A^*$, (\ref{pb2scale}) rewrites:
$$ \forall v \in H^1_0(\Omega), \:\: \int_\Omega{A^*\nabla u_0 \cdot \nabla v \:dx} =  \int_\Omega{\nabla g \cdot \nabla v \:dx},$$
which is the desired result.
\end{proof}

\begin{remark}
\noindent\begin{itemize}
\item In the cases where $a \in \mathbb{R}$, $a>0$, or $a$ has a complex value with non zero imaginary part, 
the bound (\ref{eq.boundueps}) is automatically satisfied as a result of the standard a priori estimate for (\ref{eq.pot}).
\item Proposition \ref{prop.gencvue} holds in the more general situation when the source $f$ varies with $\varepsilon$, 
and more precisely, if $u_\varepsilon$ satisfies (\ref{eq.pot})
with $f$ replaced by $f_\varepsilon \in H^{-1}(\Omega)$, when the sequence $f_\varepsilon$ converges pointwise to some $f \in H^{-1}(\Omega)$, i.e. when 
$$\forall v \in H^1_0(\Omega), \:\: \langle f_\varepsilon, v \rangle_{H^{-1}(\Omega),H^1_0(\Omega)} \xrightarrow{\varepsilon \to 0} \langle f, v \rangle_{H^{-1}(\Omega),H^1_0(\Omega)}.$$
\item Likewise, the same result holds if the conductivity $a \notin \Sigma_\omega$ inside $\omega_\varepsilon$
is replaced by a sequence $a_\varepsilon$ which converges to $a$, i.e. if: 
$$ A_\varepsilon(x) = \left\{Ê
\begin{array}{cc}
1 & \text{if }Êx \in \Omega \setminus \overline{\omega_\varepsilon},\\
a_\varepsilon & \text{if }Êx \in \omega_\varepsilon. 
\end{array}
\right..$$
For instance, if $a\in \mathbb{R}$, $a<0$, and $a_\varepsilon= a+i\delta_\varepsilon$, where the real sequence $\delta_\varepsilon$ vanishes, 
the conclusion of Proposition \ref{prop.gencvue} holds and for any value $\varepsilon >0$, the system (\ref{eq.pot}) is well-posed.
\item  Proposition \ref{prop.gencvue} even holds without assuming that $\omega$ is smooth and $\omega \Subset Y$. In such a general context, 
the set $\Sigma_\omega$ may of course be larger than a mere sequence. 
\end{itemize}
\end{remark}
We now turn to the question of what information can be be gleaned from the homogenized system (\ref{eq.pothom}). 
The following proposition indicates that \textit{any} solution $u_0$ to (\ref{eq.pothom}) (there might be none, or many, 
depending on $A^*$ and the right-hand side $f$) can be attained as a limit of solutions $u_\varepsilon$ to (\ref{eq.pot}) for some appropriate right-hand sides.
Hence, there is no way to define a `natural' notion of solution to (\ref{eq.pothom}) as that arising from a limiting process of the form (\ref{eq.pot}). 

\begin{proposition}\label{prop.limitrecip}
Let $a \in \mathbb{C} \setminus \Sigma_\omega$, so that the homogenized tensor $A^*$ is well-defined by (\ref{eq.hommat}), and let $f\in H^{-1}(\Omega)$.
Let $u_0 \in H^1_0(\Omega)$ be one solution (if any) to the system: 
$$ \left\{ \begin{array}{cl}
-\text{\rm div}(A^*\nabla u_0) = f & \text{in } \Omega, \\
u_0 = 0 & \text{on } \partial \Omega.
\end{array}\right.$$ 
Let $a_\varepsilon \in \mathbb{C} \setminus \Sigma_\omega$ be any sequence such that $a_\varepsilon \to a$ and let: 
$$ A_\varepsilon(x) = \left\{
\begin{array}{cl}
1 & \text{if }Êx \in \Omega \setminus \overline{\omega_\varepsilon}, \\
a_\varepsilon & \text{if }Êx \in \omega_\varepsilon.
\end{array}
\right.$$
Then there exists a sequence $f_\varepsilon \in H^{-1}(\Omega)$ of sources converging pointwise to $f$, 
and a sequence $u_\varepsilon \in H^1_0(\Omega)$ of associated voltage potentials: 
\begin{equation}\label{eq.potnonat} 
\left\{ \begin{array}{cl}
-\text{\rm div}(A_\varepsilon(x)\nabla u_\varepsilon) = f_\varepsilon & \text{in } \Omega, \\
u_\varepsilon = 0 & \text{on } \partial \Omega.
\end{array}\right.
\end{equation}
such that $u_\varepsilon \to u_0$ weakly in $H^1_0(\Omega)$.
\end{proposition}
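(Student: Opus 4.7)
The flexibility offered by the possibility of choosing the source term $f_\varepsilon$ suggests the following natural strategy: build $u_\varepsilon$ by a classical two-scale ansatz based on the cell functions $\chi_i$, then simply declare $f_\varepsilon := -\text{\rm div}(A_\varepsilon \nabla u_\varepsilon)$ (so that (\ref{eq.potnonat}) holds tautologically), and finally check that $u_\varepsilon \rightharpoonup u_0$ weakly in $H^1_0(\Omega)$ and that $f_\varepsilon \to f$ pointwise in $H^{-1}(\Omega)$. The arguments only involve the \emph{algebraic} structure of the homogenization procedure (oscillating test functions and the defining property of $A^*$), not any positivity of $a$, so they survive in our possibly non-elliptic setting since the cell functions $\chi_i$ are well-defined by the assumption $a \notin \Sigma_\omega$.

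First I would handle the case where $u_0$ is smooth and compactly supported in $\Omega$, say $u_0 \in {\mathcal D}(\Omega)$. The ansatz
$$ u_\varepsilon(x) = u_0(x) + \varepsilon \sum_{i=1}^d \chi_i\!\left( \frac{x}{\varepsilon}\right) \frac{\partial u_0}{\partial x_i}(x) $$
defines an element of $H^1_0(\Omega)$ for $\varepsilon$ small enough (the corrector term vanishes outside a fixed compact subset of $\Omega$, because $\nabla u_0$ has compact support). A direct computation gives
$$ \nabla u_\varepsilon(x) = \nabla u_0(x) + \sum_i (\nabla_y \chi_i)\!\left( \frac{x}{\varepsilon}\right) \frac{\partial u_0}{\partial x_i}(x) + \varepsilon \sum_i \chi_i\!\left(\frac{x}{\varepsilon}\right) \nabla\!\left( \frac{\partial u_0}{\partial x_i}\right)(x), $$
and the weak convergence $u_\varepsilon \rightharpoonup u_0$ in $H^1_0(\Omega)$ follows from the standard facts that $(\nabla_y\chi_i)(\cdot/\varepsilon) \rightharpoonup 0$ weakly in $L^2_{\text{loc}}$ (since $\chi_i$ is $Y$-periodic with zero-mean gradient) and that the last term is $O(\varepsilon)$ in $L^2$.

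Next I would verify that, for every $v \in H^1_0(\Omega)$,
$$ \langle f_\varepsilon, v \rangle_{H^{-1}(\Omega), H^1_0(\Omega)} = \int_\Omega A_\varepsilon \nabla u_\varepsilon \cdot \nabla v \,dx \xrightarrow{\varepsilon \to 0} \int_\Omega A^* \nabla u_0 \cdot \nabla v \,dx = \langle f, v \rangle_{H^{-1}(\Omega), H^1_0(\Omega)}. $$
The flux $\sigma_i(y) := A(y)(e_i + \nabla_y \chi_i(y))$ is $Y$-periodic with mean value $A^* e_i$ (by (\ref{eq.hommat})), so $\sigma_i(\cdot/\varepsilon) \rightharpoonup A^* e_i$ weakly in $L^2_{\text{loc}}$. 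Using $\partial_i u_0 \in L^\infty(\Omega)$ with compact support and the fact that the $O(\varepsilon)$ remainder contributes nothing in the limit, we obtain
$$ A_\varepsilon(x) \nabla u_\varepsilon(x) = (a_\varepsilon - a)\mathds{1}_{\omega_\varepsilon}(x) \nabla u_\varepsilon + \sum_i \sigma_i\!\left( \frac{x}{\varepsilon}\right) \frac{\partial u_0}{\partial x_i}(x) + O_{L^2}(\varepsilon), $$
and the first term vanishes in $L^2$ because $a_\varepsilon - a \to 0$ while $\nabla u_\varepsilon$ stays bounded. Passing to weak limits in the middle term yields the claim. Together with the tautological identity $-\text{\rm div}(A_\varepsilon \nabla u_\varepsilon) = f_\varepsilon$, this settles the smooth case.

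Finally I would extend to general $u_0 \in H^1_0(\Omega)$ by a diagonal extraction. Pick $u_0^n \in {\mathcal D}(\Omega)$ with $u_0^n \to u_0$ strongly in $H^1_0(\Omega)$; set $f^n := -\text{\rm div}(A^* \nabla u_0^n)$, so $f^n \to f$ strongly in $H^{-1}(\Omega)$. For each $n$, the previous steps produce sequences $u_\varepsilon^n \in H^1_0(\Omega)$ and $f_\varepsilon^n \in H^{-1}(\Omega)$ solving (\ref{eq.potnonat}) with $u_\varepsilon^n \rightharpoonup u_0^n$ weakly and $f_\varepsilon^n \to f^n$ pointwise. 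Choosing a countable dense family of test functions in $H^1_0(\Omega)$ (both for the weak topology on $H^1_0(\Omega)$ and for the pointwise convergence in $H^{-1}(\Omega)$), a standard diagonal procedure selects $n(\varepsilon) \to \infty$ so that $u_\varepsilon := u_\varepsilon^{n(\varepsilon)}$ and $f_\varepsilon := f_\varepsilon^{n(\varepsilon)}$ fulfill all the required convergences. The only mild technical point is to carry out the diagonal extraction consistently for both the weak convergence of $u_\varepsilon$ and the pointwise convergence of $f_\varepsilon$, which is where I would take the most care.
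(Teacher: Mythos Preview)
Your plan is correct and leads to a valid proof, but it follows a genuinely different route from the paper's. The paper does \emph{not} reduce to smooth $u_0$ and then extend by density; instead it constructs $u_\varepsilon$ directly for an arbitrary $u_0\in H^1_0(\Omega)$ via the corrector formula
\[
u_\varepsilon(x)=u_0(x)+\varepsilon\,\zeta_\varepsilon(x)\sum_{i=1}^d I_\varepsilon\!\Big(\frac{\partial u_0}{\partial x_i}\Big)\,\chi_i\!\Big(\frac{x}{\varepsilon}\Big),
\]
where $\zeta_\varepsilon$ is a boundary cutoff and $I_\varepsilon$ is a $\mathbb{Q}_1$-type interpolation operator that regularizes the merely $L^2$ derivatives $\partial_i u_0$. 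The crucial ingredient enabling this one-shot construction is the regularity $\chi_i\in W^{1,\infty}(Y)$, obtained from elliptic regularity on the cell problem (using that $\omega$ is $\mathcal C^2$); with this in hand the paper checks directly that $-\text{div}(A_\varepsilon\nabla u_\varepsilon)\to f$ pointwise and $u_\varepsilon\rightharpoonup u_0$.

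What each approach buys: your smooth-first strategy keeps the ansatz elementary (no $I_\varepsilon$, no $\zeta_\varepsilon$) and makes the flux identification $A_\varepsilon\nabla u_\varepsilon \rightharpoonup A^*\nabla u_0$ transparent, at the cost of the diagonal extraction at the end. That extraction is doable but not entirely soft: to upgrade convergence against a dense family to genuine weak convergence you need a uniform $H^1_0$-bound on $u_\varepsilon^{n(\varepsilon)}$, and the term $(\nabla_y\chi_i)(x/\varepsilon)\,\partial_i u_0^{n}$ is only controlled in $L^2$ \emph{independently of $n$} if you again invoke $\chi_i\in W^{1,\infty}$ (so that the bound reads $\lVert\nabla_y\chi_i\rVert_{L^\infty}\lVert\nabla u_0^n\rVert_{L^2}$ rather than $\lVert\nabla u_0^n\rVert_{L^\infty}\lVert\nabla_y\chi_i\rVert_{L^2}$). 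So both routes ultimately lean on the same regularity fact; the paper's version simply packages the passage from $L^2$ data to a legitimate $H^1$ corrector into the operator $I_\varepsilon$, avoiding the two-step density argument.
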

\begin{proof}
For the sake of simplicity, we only deal with the case where the sequence $a_\varepsilon \equiv a$ (the general case being no more difficult). 
The proof consists in the construction of a sequence $u_\varepsilon \in H^1_0(\Omega)$ in such a way that $u_\varepsilon \to u_0$ weakly in $H^1_0(\Omega)$, 
and that $f_\varepsilon := -\text{\rm div}(A_\varepsilon(x) \nabla u_\varepsilon) \in H^{-1}(\Omega)$ converges to $f$ in the sense of distributions. 
Relying on a classical intuition in homogenization theory, we define $u_\varepsilon$ as an oscillating sequence around $u_0$ using the cell functions $\chi_i$ introduced in Section \ref{sec.cellpb} (which are well-defined since $a \notin \Sigma_\omega$). 

Notice that, since the inclusion $\omega$ is of class ${\mathcal C}^2$, 
standard arguments in elliptic regularity theory using the well-posedness of the cell problem (see e.g. \cite{brezis}) 
lead to the fact that $\chi_i\in W^{1,\infty}(Y)$. 

Let us now proceed with the definition of $u_\varepsilon \in H^1_0(\Omega)$: we use a variation of the usual corrector formula for taking into account oscillations of the solutions to (\ref{eq.potnonat}) at the microscopic scale: 
\begin{equation}\label{eq.corueps} 
u_\varepsilon(x) = u_0(x) + \varepsilon \zeta_\varepsilon(x) \sum\limits_{i=1}^d{I_\varepsilon\left(\frac{\partial u_0}{\partial x_i} \right) \chi_i(\frac{x}{\varepsilon})}.
\end{equation}
In this formula, $\zeta_\varepsilon$ is a smooth cutoff function such that: 
\begin{equation}\label{propsetae}
 \zeta_\varepsilon(x) = 1 \text{ if } d(x, \partial \Omega) > \varepsilon, \: \zeta_\varepsilon(x) = 0 \text{ on } \partial \Omega, \text{ and } \lvert\lvert \nabla \zeta_\varepsilon \lvert\lvert_{L^\infty(\Omega)} \leq \frac{C}{\varepsilon}.
 \end{equation}
The operator $I_\varepsilon$ is defined according to \cite{griso,unfold}, and maps $L^p(\mathbb{R}^d)$ into $W^{1,\infty}(\mathbb{R}^d,\mathbb{R}^d)$ for $p \in [1,\infty]$;
it is used in our context to enforce that $u_\varepsilon$ belongs to $H^1_0(\Omega)$ even though the partial derivatives $\frac{\partial u_0}{\partial x_i}$ are only in $L^2(\Omega)$.
For $u \in L^p(\mathbb{R}^d)$, $I_\varepsilon u$ is inspired by the usual $\mathbb{Q}_1$ interpolation operator in the Finite Element theory: 
\begin{equation}\label{eq.Ie1}
 \forall \xi \in \mathbb{Z}^d, \:\: I_\varepsilon u(\varepsilon \xi) = \int_Y{u(\varepsilon \xi + \varepsilon y)\:dy}, \text{ and } I_\varepsilon u \text{ is a }\mathbb{Q}_1 \text{ function on each cell } Y_\varepsilon^\xi;
 \end{equation}
more precisely:
\begin{equation}\label{eq.Ie2}
\forall \xi \in \mathbb{Z}^d,\:\: \forall z \in Y, \:\: I_\varepsilon u(\varepsilon \xi + \varepsilon z) = \sum\limits_{\alpha \in \left\{0,1 \right\}^d}{I_\varepsilon u ( \varepsilon \xi + \varepsilon (\alpha_1,...,\alpha_d) ) \: p_{\alpha_1}(z_1)... p_{\alpha_d}(z_d)},
\end{equation}
where $p_0(t) := 1-t$ and $p_1(t) := t$.

Several mapping properties of $I_\varepsilon$ are reported in Lemma \ref{lem.mapIeps} below.

Let us now prove that $f_\varepsilon := -\text{\rm div}(A_\varepsilon \nabla u_\varepsilon) \in H^{-1}(\Omega)$ converges pointwise to $f$.  
Until the end of the proof, $r_\varepsilon$ stands for a sequence of real numbers (possible changing from line to line) converging to $0$ as $\varepsilon \to 0$. 
For an arbitrary test function $\varphi \in H^1_0(\Omega)$, one has:
$$
\begin{array}{>{\displaystyle}cc>{\displaystyle}l}
\langle -\text{\rm div}(A_\varepsilon \nabla u_\varepsilon) , \varphi \rangle_{H^{-1}(\Omega),H^1_0(\Omega)}&=& \int_\Omega{A_\varepsilon(x) \nabla u_\varepsilon \cdot \nabla \varphi \:dx} \\
&=& \int_\Omega{A_\varepsilon(x) \left(\nabla u_0 + \zeta_\varepsilon\sum\limits_{i=1}^d{I_\varepsilon\left( \frac{\partial u_0}{\partial x_i}\right) \nabla_y \chi_i(\frac{x}{\varepsilon}) } \right) \cdot \nabla \varphi \:dx} \\
&&+ \varepsilon \int_{\Omega}{A_\varepsilon(x) \left(\sum\limits_{i=1}^d{I_\varepsilon\left(\frac{\partial u_0}{\partial x_i} \right) \chi_i(\frac{x}{\varepsilon})} \right) \nabla \zeta_\varepsilon \cdot \nabla \varphi \:dx} \\
&& + \varepsilon \int_\Omega{ \zeta_\varepsilon A_\varepsilon(x) \left(\sum\limits_{i=1}^d{ \chi_i(\frac{x}{\varepsilon}) \nabla\left( I_\varepsilon\left(\frac{\partial u_0}{\partial x_i} \right)\right) } \right) \cdot \nabla \varphi \:dx}.
\end{array}
$$
Using (\ref{propsetae}) together with the facts that $I_\varepsilon \left(\frac{\partial u_0}{\partial x_i} \right) \in W^{1,\infty}(\mathbb{R}^d)$ and $\chi_i \in W^{1,\infty}(Y)$, the second integral in the right-hand side can be estimated by: 
$$ \left\lvert   \varepsilon \int_{\Omega}{A_\varepsilon(x) \left(\sum\limits_{i=1}^d{I_\varepsilon\left(\frac{\partial u_0}{\partial x_i} \right) \chi_i(\frac{x}{\varepsilon})} \right) \nabla \zeta_\varepsilon \cdot \nabla \varphi \:dx}\right\lvert \leq C \sqrt{\varepsilon}\lvert\lvert \nabla \varphi \lvert\lvert_{L^2(\Omega)^d}.$$
In a similar way, we see that:
$$
\begin{array}{cc>{\displaystyle}l}
\langle -\text{\rm div}(A_\varepsilon \nabla u_\varepsilon) , \varphi \rangle_{ H^{-1}(\Omega) , H^1_0(\Omega)} &=& \int_{{\mathcal O}_\varepsilon}{A(\frac{x}{\varepsilon}) \left(\nabla u_0 + \sum\limits_{i=1}^d{I_\varepsilon\left( \frac{\partial u_0}{\partial x_i}\right) \nabla_y \chi_i(\frac{x}{\varepsilon}) } \right) \cdot \nabla \varphi \:dx} \\ 
&&+ \int_{{\mathcal O}_\varepsilon}{\varepsilon A(\frac{x}{\varepsilon}) \left(\sum\limits_{i=1}^d{\chi_i(\frac{x}{\varepsilon}) \nabla \left( I_\varepsilon\left(\frac{\partial u_0}{\partial x_i} \right)\right) } \right) \cdot \nabla \varphi \:dx} + r_\varepsilon \lvert\lvert \nabla \varphi \lvert\lvert_{L^2(\Omega)^d}.
\end{array}
$$
Using Lemma \ref{lem.mapIeps} and the Lebesgue dominated convergence theorem, it follows that: 
$$
\langle -\text{\rm div}(A_\varepsilon \nabla u_\varepsilon) , \varphi \rangle_{H^{-1}(\Omega),H^1_0(\Omega)} 
= \int_{{\mathcal O}_\varepsilon}{A(\frac{x}{\varepsilon}) \left(\nabla u_0 + \sum\limits_{i=1}^d{\left( \frac{\partial u_0}{\partial x_i}\right) \nabla_y \chi_i(\frac{x}{\varepsilon}) } \right) \cdot \nabla \varphi \:dx} 
+ r_\varepsilon \lvert\lvert \nabla \varphi \lvert\lvert_{L^2(\Omega)^d}.
$$
Finally, rescaling using Proposition \ref{prop.unfold} yields:
$$
\begin{array}{cc>{\displaystyle}l}
\langle -\text{\rm div}(A_\varepsilon \nabla u_\varepsilon) , \varphi \rangle_{ H^{-1}(\Omega) , H^1_0(\Omega)}
&\hspace{-0.7cm}=&\hspace{-0.7cm} \int_{\Omega \times Y}{A(y)  \sum\limits_{i=1}^d{\left(e_i + \nabla_y \chi_i(y) \right)\cdot e_j\: E_\varepsilon\left( \frac{\partial u_0}{\partial x_i} \right)E_\varepsilon\left(\frac{\partial \varphi}{\partial x_j}\right) \:dxdy}} + r_\varepsilon \lvert\lvert \nabla \varphi \lvert\lvert_{L^2(\Omega)^d}. \\
&\xrightarrow{\varepsilon \rightarrow 0} & \int_{\Omega}{A^*\nabla u_0 \cdot \nabla \varphi \:dx}.
\end{array}
$$
Using similar calculations, it is easily verified that $u_\varepsilon \rightarrow u_0$ weakly in $H^1_0(\Omega)$, which ends the proof.
\end{proof}

In the course of the proof, we used the following convergence results.
\begin{lemma}\label{lem.mapIeps}
Let $p \in [1,\infty]$; there exists a constant $C >0$ independent of $\varepsilon$ such that, for any $u \in L^p(\mathbb{R}^d)$,
\begin{equation}\label{eq.estQe} 
\lvert\lvert I_\varepsilon u \lvert\lvert_{L^p(\mathbb{R}^d)} \leq C \lvert\lvert u \lvert\lvert_{L^p(\mathbb{R}^d)}, \text{ and } \lvert\lvert \nabla I_\varepsilon u \lvert\lvert_{L^p(\mathbb{R}^d)^d} \leq \frac{C}{\varepsilon} \lvert\lvert u \lvert\lvert_{L^p(\mathbb{R}^d)}.
\end{equation}
Moreover, if $p\in [1,\infty[$, for any $u \in L^p(\mathbb{R}^d)$, the following convergences hold:
\begin{equation} \label{eq.cvQe}
I_\varepsilon u \stackrel{\varepsilon\to 0}{\longrightarrow} u \text{ strongly in } L^p(\mathbb{R}^d), \text{ and }Ê \varepsilon \nabla (I_\varepsilon u) \stackrel{\varepsilon\to 0}{\longrightarrow} 0 \text{ strongly in } L^p(\mathbb{R}^d)^d.
\end{equation}
\end{lemma}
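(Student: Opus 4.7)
My plan is to handle the boundedness estimates \eqref{eq.estQe} by a purely cell-by-cell computation, then to establish the convergences \eqref{eq.cvQe} first for continuous compactly supported data and extend by density, using \eqref{eq.estQe} as the uniform bound.

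For \eqref{eq.estQe}, I would fix a cell $\varepsilon(\xi+Y)$ and use that, on this cell, the weights $\prod_{i=1}^d p_{\alpha_i}(z_i)$ are nonnegative and sum to $1$. Hence $I_\varepsilon u$ is a convex combination of the corner values $I_\varepsilon u(\varepsilon(\xi+\alpha))$, each of which is itself the average of $u$ on a neighboring cell of volume $\varepsilon^d$. Two successive applications of Jensen's inequality give, for $p\in[1,\infty)$,
\[
\int_{\varepsilon(\xi+Y)} |I_\varepsilon u|^p\,dx \;\le\; 2^{d(p-1)} \sum_{\alpha\in\{0,1\}^d} \int_{\varepsilon(\xi+\alpha+Y)}|u|^p\,dx,
\]
and summing over $\xi \in \mathbb{Z}^d$, noting that each cell is counted at most $2^d$ times, yields $\|I_\varepsilon u\|_{L^p} \le 2^d \|u\|_{L^p}$. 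The case $p=\infty$ is immediate. For the gradient, the chain rule on the $\mathbb{Q}_1$ interpolant produces a factor $1/\varepsilon$ from differentiating $p_{\alpha_i}((x_i-\varepsilon\xi_i)/\varepsilon)$, with the remaining multilinear factors staying in $[0,1]$, so the very same cell-summation argument yields $\|\nabla I_\varepsilon u\|_{L^p}\le (C/\varepsilon)\|u\|_{L^p}$.

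For \eqref{eq.cvQe} with $u \in C_c(\mathbb{R}^d)$ of modulus of continuity $\omega_u$, using $\sum_\alpha \prod p_{\alpha_i}(z_i)=1$, on each cell I can write
\[
I_\varepsilon u(x)-u(x) = \sum_\alpha \Bigl[\int_Y u(\varepsilon(\xi+\alpha)+\varepsilon y)\,dy - u(x)\Bigr]\prod_i p_{\alpha_i}(z_i),
\]
so $|I_\varepsilon u(x)-u(x)|\le 2^d\,\omega_u(C\varepsilon)\to 0$ uniformly in $x$. Since the supports of $u$ and of $I_\varepsilon u$ lie in a fixed compact set for $\varepsilon$ small, this gives $L^p$-convergence. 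For the second convergence, I would rewrite on each cell
\[
\varepsilon \partial_j I_\varepsilon u(x) = \sum_{\alpha'\in\{0,1\}^{d-1}} \bigl[I_\varepsilon u(\varepsilon(\xi+\alpha'_+^j)) - I_\varepsilon u(\varepsilon(\xi+\alpha'_-^j))\bigr]\prod_{i\neq j} p_{\alpha'_i}(z_i),
\]
where $\alpha'_\pm^j$ agree off the $j$-th slot and take the values $1,0$ there, and observe that each bracketed difference equals $\int_Y [u(\varepsilon(\xi+\alpha'_+^j)+\varepsilon y)-u(\varepsilon(\xi+\alpha'_-^j)+\varepsilon y)]\,dy$, an average of an $\varepsilon$-finite-difference of $u$, thus bounded by $\omega_u(\varepsilon)$. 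Hence $\|\varepsilon\nabla I_\varepsilon u\|_{L^\infty}\to 0$ and $L^p$-convergence follows again from the uniform support.

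The extension to general $u \in L^p(\mathbb{R}^d)$ uses density of $C_c(\mathbb{R}^d)$ together with \eqref{eq.estQe}: writing $u = \phi + (u-\phi)$ with $\phi \in C_c(\mathbb{R}^d)$, the operators $I_\varepsilon$ are equibounded on $L^p$, so $\|I_\varepsilon u - u\|_{L^p} \le \|I_\varepsilon \phi - \phi\|_{L^p} + (C+1)\|u-\phi\|_{L^p}$, and for the gradient, $\varepsilon\nabla I_\varepsilon$ is equibounded on $L^p$ with constant $C$, so $\|\varepsilon\nabla I_\varepsilon u\|_{L^p}\le \|\varepsilon\nabla I_\varepsilon \phi\|_{L^p}+C\|u-\phi\|_{L^p}$; picking $\phi$ close to $u$ and then sending $\varepsilon\to 0$ closes the argument. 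The only mildly delicate step is the rewriting of $\varepsilon\partial_j I_\varepsilon u$ as a sum of finite differences — the rest is routine bookkeeping based on the explicit $\mathbb{Q}_1$ formula.
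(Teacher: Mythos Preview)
Your proposal is correct and follows essentially the same route as the paper: the estimates \eqref{eq.estQe} are obtained by direct cell-by-cell computation from the explicit $\mathbb{Q}_1$ formula, and the convergences \eqref{eq.cvQe} are first established on a dense subspace and then extended using the uniform bounds. The only cosmetic difference is that the paper works with $u\in\mathcal{D}(\mathbb{R}^d)$ and Taylor expansions, whereas you use $u\in C_c(\mathbb{R}^d)$ and the modulus of continuity; both achieve the same uniform control on compact supports.
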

\begin{proof}
The verification of both estimates (\ref{eq.estQe}) is elementary (albeit a little tedious) given the definition (\ref{eq.Ie1} - \ref{eq.Ie2}) of $I_\varepsilon$. 

Using the density of ${\mathcal D}(\mathbb{R}^d)$ in $L^p(\mathbb{R}^d)$, it is enough to check the convergences (\ref{eq.cvQe}) 
in the particular case where $u \in {\mathcal D}(\mathbb{R}^d)$. Again, this is verified in an elementary way, using Taylor expansions from (\ref{eq.Ie1}) and (\ref{eq.Ie2}). 
See \cite{unfold}, Prop. 4.2 for details.
\end{proof}\par
\smallskip

\subsection{Partial identification of the limit spectrum in terms of the homogenized tensor}\label{sec.hommatlimspec}~\\

The following proposition identifies the limit spectrum $\lim_{\varepsilon \rightarrow 0}{\sigma(T_\varepsilon)}$
as the set of those values of the conductivity $a$ for which there is a source $f$ causing the potential $u_\varepsilon$ to blow up.

\begin{proposition}\label{propcaracsteps}
Let $a \in \mathbb{C} \setminus \left\{Ê0 \right\}$. Then $a$ belongs to the limit spectrum $\lim_{\varepsilon \to 0}{\sigma(T_\varepsilon)}$
if and only if there exists $f \in H^{-1}(\Omega)$ and a sequence $f_\varepsilon \in H^{-1}(\Omega)$ with $f_\varepsilon \to f$ pointwise, such that the solution $u_\varepsilon \in H^1_0(\Omega)$ of (\ref{eq.pot}) with $f_\varepsilon$ as a source term
satisfies $\lvert\lvert \nabla u_\varepsilon \lvert\lvert_{L^2(\Omega)^d} \rightarrow +\infty$.
\end{proposition}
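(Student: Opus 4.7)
The proof rests on the equivalence established in Section \ref{sec.defTD}: the conductivity system (\ref{eq.pot}) with source $f_\varepsilon \in H^{-1}(\Omega)$ is equivalent to the spectral equation
$$ (\lambda I - T_\varepsilon) u_\varepsilon = \lambda g_\varepsilon, \quad \lambda = \frac{1}{1-a}, $$
where $g_\varepsilon \in H^1_0(\Omega)$ is the Riesz representative of $f_\varepsilon$ under the inner product (\ref{eq.H10prod}). The proposition thus translates into the statement that $\lambda \in \lim_{\varepsilon \to 0}\sigma(T_\varepsilon)$ if and only if the resolvents $(\lambda I - T_\varepsilon)^{-1}$ fail to stay uniformly bounded along some sequence of data $g_\varepsilon$ with weak limit $g$ (the latter corresponding to the pointwise limit $f$ in $H^{-1}(\Omega)$, which in this reflexive Hilbert setting coincides with weak convergence).

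\textbf{Reverse implication $(\Leftarrow)$.} I argue by contrapositive. If $\lambda \notin \lim_{\varepsilon \to 0}\sigma(T_\varepsilon)$, there exist $\delta>0$ and $\varepsilon_0>0$ such that $\mathrm{dist}(\lambda, \sigma(T_\varepsilon)) \geq \delta$ for every $\varepsilon < \varepsilon_0$. Since each $T_\varepsilon$ is self-adjoint, the functional calculus delivers the uniform resolvent bound $\|(\lambda I - T_\varepsilon)^{-1}\|_{H^1_0(\Omega)} \leq 1/\delta$. Now any sequence $f_\varepsilon \to f$ pointwise in $H^{-1}(\Omega)$ is norm-bounded by the Banach--Steinhaus theorem, so its Riesz representatives $g_\varepsilon$ are bounded in $H^1_0(\Omega)$, and the solution $u_\varepsilon = \lambda\,(\lambda I - T_\varepsilon)^{-1} g_\varepsilon$ is uniformly bounded in $H^1_0(\Omega)$, contradicting the blow-up hypothesis.

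\textbf{Forward implication $(\Rightarrow)$.} I extract (up to a subsequence, still indexed by $\varepsilon$) a sequence $\lambda_\varepsilon \in \sigma(T_\varepsilon)$ with $\lambda_\varepsilon \to \lambda$. By the remark following Proposition \ref{propminmax}, apart from the single accumulation value $\tfrac12$ the set $\sigma(T_\varepsilon)$ consists of isolated eigenvalues, so I may pick a unit eigenvector $w_\varepsilon \in H^1_0(\Omega)$ satisfying $T_\varepsilon w_\varepsilon = \lambda_\varepsilon w_\varepsilon$ and $\|w_\varepsilon\|_{H^1_0(\Omega)}=1$. By a further sub-extraction (discarding finitely many indices if needed), I can assume $\lambda_\varepsilon \neq \lambda$. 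Set $g_\varepsilon := w_\varepsilon$ and define $f_\varepsilon \in H^{-1}(\Omega)$ as the image of $w_\varepsilon$ under the Riesz isomorphism, so that $\|f_\varepsilon\|_{H^{-1}(\Omega)}=1$. Then the corresponding conductivity problem (\ref{eq.pot}) admits the explicit solution
$$ u_\varepsilon \;=\; \lambda\,(\lambda I - T_\varepsilon)^{-1} w_\varepsilon \;=\; \frac{\lambda}{\lambda - \lambda_\varepsilon}\, w_\varepsilon, $$
whose norm $|\lambda|/|\lambda - \lambda_\varepsilon|$ diverges. Since $(f_\varepsilon)$ is norm-bounded in the reflexive space $H^{-1}(\Omega)$, extracting once more gives $f_\varepsilon \rightharpoonup f$ weakly, which is precisely the pointwise convergence required by the statement.

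\textbf{Main obstacle.} The delicate case is $\lambda = \tfrac12$, which may sit in the essential spectrum of $T_\varepsilon$ and fail to be an isolated eigenvalue. Here exact eigenvectors must be replaced by approximate ones: via the spectral theorem applied to the self-adjoint $T_\varepsilon$, I choose unit vectors $w_\varepsilon$ such that $\|(T_\varepsilon - \lambda_\varepsilon) w_\varepsilon\|_{H^1_0(\Omega)} = o\bigl(|\lambda - \lambda_\varepsilon|\bigr)$, and verify by a perturbative computation that $\|(\lambda I - T_\varepsilon)^{-1} w_\varepsilon\|_{H^1_0(\Omega)}$ still diverges, so the resulting $u_\varepsilon$ blows up; the rest of the argument is unchanged. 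The trivial values $\lambda \in \{0,1\}$ (corresponding to $a=1$ and the formal $a=\infty$) either lie outside the scope $a \in \mathbb{C}\setminus\{0\}$ or are handled directly using the complete description of $\mathrm{Ker}(T_\varepsilon)$ and $\mathrm{Ker}(I-T_\varepsilon)$ from Proposition \ref{prop.evTD}.
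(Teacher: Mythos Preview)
Your argument is essentially the same as the paper's: both directions rest on the self-adjoint functional calculus, which gives $\|(\lambda I - T_\varepsilon)^{-1}\| = \mathrm{dist}(\lambda,\sigma(T_\varepsilon))^{-1}$, so that blow-up of solutions along a norm-bounded data sequence is exactly the Weyl-sequence characterization of $\lambda \in \lim_{\varepsilon\to 0}\sigma(T_\varepsilon)$. The paper packages this as an abstract lemma (Lemma~\ref{lemweylasymp}) after normalizing $v_\varepsilon = u_\varepsilon/\|u_\varepsilon\|$, whereas you make the forward construction explicit by taking exact eigenvectors.

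One small gap: your claim that ``by discarding finitely many indices'' one may assume $\lambda_\varepsilon \neq \lambda$ is not justified --- it is perfectly possible that $\lambda$ itself lies in $\sigma(T_\varepsilon)$ for every $\varepsilon$ in the subsequence. In that case $(\lambda I - T_\varepsilon)$ fails to be invertible and your formula $u_\varepsilon = \tfrac{\lambda}{\lambda-\lambda_\varepsilon}\, w_\varepsilon$ is undefined. The fix is easy: when $\lambda \in \sigma(T_\varepsilon)$, take $f_\varepsilon = 0$ and $u_\varepsilon = n_\varepsilon w_\varepsilon$ with $n_\varepsilon \to \infty$ (any element of the kernel solves the homogeneous equation), which trivially gives $\|u_\varepsilon\| \to \infty$ with $f_\varepsilon \to 0$. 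Alternatively, your ``main obstacle'' paragraph already contains the right idea --- using approximate rather than exact eigenvectors --- and that argument works uniformly for every $\lambda$, not only $\lambda = \tfrac12$; this is precisely the content of the paper's Lemma~\ref{lemweylasymp}.
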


\begin{proof}
Let $g_\varepsilon \in H^1_0(\Omega)$ be supplied by the Riesz representation theorem so that: 
$$ \forall v \in H^1_0(\Omega), \:\: \int_\Omega{\nabla g_\varepsilon \cdot \nabla v \:dx} = \langle f_\varepsilon, v \rangle_{H^{-1}(\Omega),H^1_0(\Omega)}.$$
Since $f_\varepsilon\to f$ in the sense of distributions, it follows from the uniform boundedness principle that there exists $C >0$ independent of $\varepsilon$ such that: 
$ \lvert\lvert g_\varepsilon \lvert\lvert_{H^1_0(\Omega)} \leq C$. 

We recall that (\ref{eq.pot}) is equivalent to
$$ \left(\frac{1}{1-a}I - T_\varepsilon \right) u_\varepsilon = \frac{1}{1-a} \: g_\varepsilon.$$
Letting $v_\varepsilon := \frac{u_\varepsilon}{\lvert\lvert u_\varepsilon \lvert\lvert_{H^1_0(\Omega)}} $ and $h_\varepsilon := \frac{g_\varepsilon}{\lvert\lvert u_\varepsilon \lvert\lvert_{H^1_0(\Omega)}}  \in H^1_0(\Omega)$, this in turn is equivalent to: 
$$ \left(\frac{1}{1-a}I - T_\varepsilon \right) v_\varepsilon = h_\varepsilon, \text{ where } \lvert\lvert v_\varepsilon \lvert\lvert_{H^1_0(\Omega)} = 1,$$
and $h_\varepsilon \to 0$ strongly in $H^1_0(\Omega)$ if and only if $\lvert\lvert \nabla u_\varepsilon \lvert\lvert_{L^2(\Omega)^d} \to \infty$. 
The result now stems from the abstract Lemma \ref{lemweylasymp} below, which is a refinement of the usual sequential characterization of the spectrum of self-adjoint operators. 
\end{proof}

\begin{lemma}\label{lemweylasymp}
Let $T_\varepsilon : H \rightarrow H$ be a sequence of self-adjoint operators on a Hilbert space $H$. 
Then $\lambda \in \mathbb{C}$ belongs to the limit spectrum $\lim_{\varepsilon\rightarrow 0}{\sigma(T_\varepsilon)}$ if and only if there exists a subsequence, still denoted by $\varepsilon$, and $v_\varepsilon \in H$ such that:
\begin{equation}\label{eq.weyllim}
 \lvert\lvert v_\varepsilon \lvert\lvert = 1, \text{ and } \lvert\lvert \lambda v_\varepsilon - T_\varepsilon v_\varepsilon \lvert\lvert \xrightarrow{\varepsilon \rightarrow 0} 0.
 \end{equation}
\end{lemma}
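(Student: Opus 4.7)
The proof proposal splits naturally into the two implications of the equivalence, and parallels very closely the argument already carried out in Proposition~\ref{prop.speclsc}.

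For the forward direction, I would assume $\lambda \in \lim_{\varepsilon \to 0}\sigma(T_\varepsilon)$ and exploit the classical Weyl criterion applied individually to each self-adjoint operator $T_{\varepsilon_j}$. By definition of the limit spectrum (\ref{def.limspec}), there exists a sequence $\varepsilon_j \downarrow 0$ and eigenvalues $\lambda_{\varepsilon_j} \in \sigma(T_{\varepsilon_j})$ with $\lambda_{\varepsilon_j} \to \lambda$. Since $T_{\varepsilon_j}$ is self-adjoint, the sequential characterization of its spectrum provides, for each $j$, a unit vector $v_j \in H$ such that
$$\|v_j\|=1 \quad \text{and} \quad \|T_{\varepsilon_j} v_j - \lambda_{\varepsilon_j} v_j\| < \frac{1}{j}.$$
A single triangle inequality $\|T_{\varepsilon_j} v_j - \lambda v_j\| \leq 1/j + |\lambda_{\varepsilon_j} - \lambda|$ then yields (\ref{eq.weyllim}) along the subsequence $\varepsilon_j$.

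For the converse direction, I would argue by contradiction: assume that a subsequence $(v_\varepsilon)$ satisfies (\ref{eq.weyllim}) but that $\lambda$ does \emph{not} belong to $\lim_{\varepsilon \to 0}\sigma(T_\varepsilon)$. The negation of the definition (\ref{def.limspec}) implies, via an elementary diagonal extraction, the existence of $\delta > 0$ and $\varepsilon_0 > 0$ such that $d(\lambda, \sigma(T_\varepsilon)) \geq \delta$ for all $\varepsilon < \varepsilon_0$ in the subsequence (otherwise, one could extract $\lambda_{\varepsilon_j} \in \sigma(T_{\varepsilon_j})$ with $\lambda_{\varepsilon_j} \to \lambda$, contradicting the assumption). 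Invoking a resolution of the identity $E_{T_\varepsilon}$ for the self-adjoint operator $T_\varepsilon$ — exactly as in the proof of Proposition~\ref{prop.speclsc} — we get
$$\|T_\varepsilon v_\varepsilon - \lambda v_\varepsilon\|^2 = \int_{\sigma(T_\varepsilon)} |\mu - \lambda|^2 \, d\langle E_{T_\varepsilon}(\mu) v_\varepsilon, v_\varepsilon\rangle \geq \delta^2 \|v_\varepsilon\|^2 = \delta^2,$$
which contradicts $\|T_\varepsilon v_\varepsilon - \lambda v_\varepsilon\| \to 0$.

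There is essentially no hard step in this argument; it is a routine refinement of the Weyl criterion, and the only mild subtlety is being careful that the extraction of an approximating sequence $\lambda_{\varepsilon_j}$ from $d(\lambda,\sigma(T_\varepsilon)) \not\geq \delta$ uses a diagonal choice of $\varepsilon_j$ along the subsequence fixed at the outset. One could also package the backward direction slightly differently by invoking the open mapping theorem to show that $T_\varepsilon - \lambda I$ is boundedly invertible when $d(\lambda,\sigma(T_\varepsilon))\geq \delta$, with operator norm of the inverse controlled by $1/\delta$; this would avoid the explicit use of the spectral measure and is essentially the shortcut used in Proposition~\ref{prop.speclsc}.
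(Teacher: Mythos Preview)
Your proof is correct and follows essentially the same route as the paper's: the forward direction uses the Weyl criterion for each $T_{\varepsilon_j}$ combined with a triangle inequality, and the converse is the same contradiction argument via the spectral theorem lower bound $\|T_\varepsilon v - \lambda v\| \geq \delta\|v\|$ when $d(\lambda,\sigma(T_\varepsilon))\geq\delta$. The only cosmetic difference is that the paper picks the quasi-eigenvector tolerance to be $\varepsilon$ rather than $1/j$, and it refers back to Proposition~\ref{prop.speclsc} for the spectral-measure computation rather than writing it out in full.
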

\begin{proof}
Let $\lambda \in \lim_{\varepsilon\rightarrow 0}{\sigma(T_\varepsilon)}$;
by definition, there exists $\lambda_\varepsilon \in \sigma(T_\varepsilon)$ such that $\lambda_\varepsilon \rightarrow \lambda$. 
Now, for a given value $\varepsilon >0$, there exists a sequence $v_\varepsilon^k$, $k \in \mathbb{N}$ such that: 
$$ \lvert\lvert v_\varepsilon^k \lvert\lvert = 1, \text{ and } \lvert\lvert \lambda_\varepsilon v_\varepsilon^k - T_\varepsilon v_\varepsilon^k
 \lvert\lvert \xrightarrow{k\rightarrow \infty} 0.$$
In particular, for a given $\varepsilon >0$, there exists $w_\varepsilon \in H$ such that $\lvert\lvert w_\varepsilon \lvert\lvert =1$, and $\lvert\lvert \lambda_\varepsilon w_\varepsilon - T_\varepsilon w_\varepsilon \lvert\lvert \leq \varepsilon$.
Then, 
$$ \begin{array}{ccl}
\lvert\lvert \lambda w_\varepsilon - T_\varepsilon w_\varepsilon \lvert\lvert &\leq& \lvert\lvert \lambda_\varepsilon w_\varepsilon - T_\varepsilon w_\varepsilon \lvert\lvert  + \lvert\lambda - \lambda_\varepsilon \lvert, \\
&\leq& \varepsilon + \lvert\lambda - \lambda_\varepsilon \lvert;
\end{array}$$
that is, $w_\varepsilon$ satisfies (\ref{eq.weyllim}).

Conversely, let $\lambda \in \mathbb{C}$ be such that there exists a sequence $v_\varepsilon$ such that (\ref{eq.weyllim}) holds,
and assume that $\lambda \notin \lim_{\varepsilon\rightarrow 0}{\sigma(T_\varepsilon)}$.  
Then, there exists $\delta >0$ such that, for $\varepsilon >0$ small enough: 
$$ \forall \mu \in \sigma(T_\varepsilon), \:\: \lvert \mu - \lambda \lvert > \delta . $$
It follows from the spectral theorem for self-adjoint operators (see, e.g. \cite{rudin}, Th. $12.22$, and Proposition \ref{prop.speclsc}), that, for $\varepsilon >0$ small enough: 
$$ \forall v \in H, \:\: \lvert\lvert \lambda v - T_\varepsilon v \lvert\lvert \geq \delta \lvert\lvert v \lvert\lvert.$$
Using $v=v_\varepsilon$ in the above inequality yields $\lvert\lvert \lambda v_\varepsilon - T_\varepsilon v_\varepsilon 
\lvert\lvert \geq \delta$, in contradiction with the initial hypothesis.
\end{proof}

The following corollary now offers an interpretation of the limit spectrum in terms of the homogenized matrix $A^*$.

\begin{corollary}\label{corspeclim}
Let $a \in \mathbb{C} \setminus \Sigma_\omega$, and let $A^*$ be the corresponding homogenized matrix, given by (\ref{eq.hommat}). 
Assume that there exists $f \in H^{-1}(\Omega)$ such that the system: 
\begin{equation}\label{eq.sysA*}
 \left\{ 
\begin{array}{cl}
- \text{\rm div}(A^*\nabla u)  = f & \text{in } \Omega, \\
u = 0 & \text{on } \partial \Omega,
\end{array}
\right.
\end{equation}
does not have a solution. Then $a \in \lim_{\varepsilon\rightarrow 0}{\sigma(T_\varepsilon)}$. 
\end{corollary}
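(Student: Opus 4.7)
My plan is to argue by contradiction: assume $a \notin \lim_{\varepsilon \rightarrow 0}{\sigma(T_\varepsilon)}$ (meaning that $\frac{1}{1-a}$ does not belong to this limit set, in accordance with the identification used in Proposition \ref{propcaracsteps}), and derive the existence of a solution to the homogenized system \eqref{eq.sysA*} from this hypothesis.

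Under the negation of the conclusion, the very definition \eqref{def.limspec} of the limit spectrum yields $\delta > 0$ and $\varepsilon_0 > 0$ such that for every $\varepsilon < \varepsilon_0$,
$$
\text{dist}\Bigl(\tfrac{1}{1-a}, \sigma(T_\varepsilon)\Bigr) > \delta.
$$
Since $T_\varepsilon: H^1_0(\Omega) \to H^1_0(\Omega)$ is self-adjoint, the spectral theorem (see Proposition \ref{prop.speclsc} for the same type of estimate) guarantees that the operator $\bigl(\tfrac{1}{1-a}I - T_\varepsilon\bigr)$ is a bounded isomorphism of $H^1_0(\Omega)$ with $\bigl\lvert\bigl\lvert \bigl(\tfrac{1}{1-a}I - T_\varepsilon\bigr)^{-1} \bigr\lvert\bigr\lvert \leq 1/\delta$; when $a \in \mathbb{C} \setminus \mathbb{R}$, one similarly has $\lvert\lvert \bigl(\tfrac{1}{1-a}I - T_\varepsilon\bigr)^{-1}\lvert\lvert \leq 1/\lvert\text{Im}(\tfrac{1}{1-a})\lvert$.

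Let $g \in H^1_0(\Omega)$ be the Riesz representative of the source $f \in H^{-1}(\Omega)$ for which \eqref{eq.sysA*} has no solution. As recalled in Section \ref{sec.defTD}, the equation \eqref{eq.pot} is equivalent to $\bigl(\tfrac{1}{1-a}I - T_\varepsilon\bigr) u_\varepsilon = \tfrac{1}{1-a}\,g$, which therefore admits a unique solution $u_\varepsilon \in H^1_0(\Omega)$ for every $\varepsilon < \varepsilon_0$, with a uniform bound
$$
\lvert\lvert \nabla u_\varepsilon \lvert\lvert_{L^2(\Omega)^d} \leq \frac{1}{\delta \,\lvert 1-a\lvert}\, \lvert\lvert g \lvert\lvert_{H^1_0(\Omega)}.
$$
This is precisely the boundedness hypothesis \eqref{eq.boundueps} of Proposition \ref{prop.gencvue}. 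Applying that proposition, we extract a subsequence along which $u_\varepsilon$ converges weakly in $H^1_0(\Omega)$ to a limit $u_0 \in H^1_0(\Omega)$ that solves \eqref{eq.sysA*}, contradicting the standing assumption on $f$. Hence $a$ must belong to $\lim_{\varepsilon \rightarrow 0}{\sigma(T_\varepsilon)}$.

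I expect no genuine obstacle here, provided one is careful about the identification between $a$ and the spectral parameter $\lambda = \frac{1}{1-a}$ that is implicit in the statement (an identification already tacitly used in Proposition \ref{propcaracsteps}). The only minor subtlety is the uniform control of the resolvent norm away from the accumulation points of $\sigma(T_\varepsilon)$, but this is an immediate consequence of the self-adjointness of $T_\varepsilon$ and has in fact already been established in the proof of Lemma \ref{lemweylasymp}.
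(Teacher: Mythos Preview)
Your proof is correct and follows a slightly more direct route than the paper's. Both arguments hinge on Proposition~\ref{prop.gencvue}: once a bounded sequence of solutions $u_\varepsilon$ to (\ref{eq.pot}) is available, any weak limit solves the homogenized system (\ref{eq.sysA*}), contradicting the hypothesis on $f$. The difference lies in how this bounded sequence is produced. The paper perturbs the conductivity to $a+i\delta_\varepsilon$ (so that (\ref{eq.pot}) is well-posed by Lax--Milgram for every $\varepsilon$), shows that the resulting $u_\varepsilon$ must blow up, and then invokes Proposition~\ref{propcaracsteps} to conclude. You instead assume the contrapositive directly: if $\tfrac{1}{1-a}$ stays at positive distance $\delta$ from $\sigma(T_\varepsilon)$ for small $\varepsilon$, the self-adjointness of $T_\varepsilon$ immediately gives a uniform resolvent bound, hence well-posedness of (\ref{eq.pot}) with the unperturbed conductivity $a$ and a uniform $H^1_0$ bound on $u_\varepsilon$. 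Your route avoids both the imaginary perturbation and the explicit appeal to Proposition~\ref{propcaracsteps} (though it relies on the same resolvent estimate that underlies Lemma~\ref{lemweylasymp}). The paper's detour has the minor advantage of never needing to assert well-posedness of (\ref{eq.pot}) at the exact value $a$ for any individual $\varepsilon$, but under your contradiction hypothesis this well-posedness is automatic for $\varepsilon<\varepsilon_0$, so nothing is lost.
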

\begin{proof}
Let $\delta_\varepsilon$ be a sequence of positive numbers going to $0$ as $\varepsilon \rightarrow 0$, 
and let $u_\varepsilon \in H^1_0(\Omega)$ be the unique solution to the system: 
$$ \left\{ 
\begin{array}{cl}
- \text{\rm div}(B_\varepsilon\nabla u)  = f & \text{in } \Omega, \\
u = 0 & \text{on } \partial \Omega,
\end{array}
\right. , \text{ where } B_\varepsilon(x)  = \left\{ 
\begin{array}{cl}
1 & \text{if }Êx \in \Omega \setminus \overline{\omega_\varepsilon}, \\
a+ i\delta_\varepsilon & \text{if } x \in \omega_\varepsilon.
\end{array} 
\right. ,$$
as supplied by the classical Lax-Milgram theory. 

Then, one necessarily has $\lvert\lvert \nabla u_\varepsilon \lvert\lvert_{L^2(\Omega)^d} \rightarrow \infty$. 
Indeed, if a subsequence of $u_\varepsilon$ were to be bounded, Proposition \ref{prop.gencvue} would imply that 
the weak limit of this subsequence satisfies (\ref{eq.sysA*}). 

The desired conclusion is then a consequence of Proposition \ref{propcaracsteps}. 
\end{proof}
\begin{remark}
In particular, the conclusion of Corollary \ref{corspeclim} holds under the (stronger) hypothesis that there exists $u \neq 0$ in $H^1_0(\Omega)$ 
such that: 
$$ -\text{\rm div}(A^*\nabla u) = 0, \text{or equivalently } \int_\Omega{A^*\nabla u \cdot \nabla v \:dx} = 0 , \:\: \forall v \in H^1_0(\Omega).$$
This last point is a consequence of the self-adjointness of the operator $T^*: H^1_0(\Omega) \rightarrow H^1_0(\Omega)$ defined by:
$$ \forall v \in H^1_0(\Omega), \:\: \int_\Omega{\nabla (T^*u) \cdot \nabla v \:dx} = \int_{\Omega}{A^*\nabla u \cdot \nabla v \:dx},$$
which implies that: $\overline{\text{\rm Im}(T^*)} = \text{\rm Ker}(T^*)^\perp$.
\end{remark}

\begin{remark}
The conditions expressed in Proposition \ref{propcaracsteps} and Corollary \ref{corspeclim}
are conditions on the geometry of the inclusion pattern $\omega$ (which defines the homogenized tensor $A^*$) 
and on that of the macroscopic domain $\Omega$. See for instance \cite{john} for a study of the well-posedness of (\ref{eq.sysA*}) in the case where $A^*$ is hyperbolic.
\end{remark}

\subsection{Well-posedness for the conductivity equation with high contrast}\label{sec.hc}~\\

In this section, we take advantage of the material of Sections \ref{sec.boundev} and \ref{sec.cellpb} to investigate in closer detail the case of a high contrast
between the conductivity $1$ outside the set $\omega_\varepsilon$ of inclusions and the conductivity $a$ inside this set, i.e. the situations $a <0$, $a\to 0$ or $a \to -\infty$. 

Our main result in this direction is the following:

\begin{theorem}\label{thgapev}
There exists a constant $0 < \alpha$ such that, if the conductivity $a$ belongs to $(-\infty,-1/\alpha) \cup (-\alpha, 0)$, then
\begin{enumerate}[(i)]
\item For $0<\varepsilon$, the system (\ref{eq.pot}) for the voltage potential $u_\varepsilon$ is well-posed, i.e. it has a unique solution for any source $f \in H^{-1}(\Omega)$, and $u_\varepsilon$ depends continuously on $f$. 
\item The homogenized tensor $A^*$ defined by (\ref{eq.hommat}) is elliptic; in particular, the system (\ref{eq.pothom}) is well-posed.
\item For any source $f \in H^{-1}(\Omega)$, the unique solution $u_\varepsilon \in H^1_0(\Omega)$ to (\ref{eq.pot}) converges, weakly in $H^1_0(\Omega)$,
to the unique solution $u_*$ of (\ref{eq.pothom}).  
\end{enumerate}
\end{theorem}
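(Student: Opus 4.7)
The strategy is to choose $\alpha$ small enough that the spectral parameter $\lambda = \frac{1}{1-a}$ associated to the equation (\ref{eq.pot}) is kept uniformly away from the non-trivial part of $\sigma(T_\varepsilon)$ and away from the exceptional set $\Sigma_\omega$, so that all three assertions follow from a single uniform resolvent estimate combined with the results already established.

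First I would recall, as in Section \ref{sec.defTD}, that if $g \in H^1_0(\Omega)$ is the Riesz representative of $f \in H^{-1}(\Omega)$, then (\ref{eq.pot}) is equivalent to
$$ (\lambda I - T_\varepsilon) u_\varepsilon = \lambda g, \qquad \lambda = \frac{1}{1-a}. $$
For real $a<0$ one has $\lambda \in (0,1)$, and an elementary algebraic manipulation shows that $\lambda \in [0,m) \cup (M,1]$ if and only if $a \in (-\infty, (m-1)/m) \cup ((M-1)/M, 0)$. Choosing
$$ \alpha_0 := \min\left( \frac{m}{1-m},\ \frac{1-M}{M} \right), $$
we see that for every $a \in (-\infty,-1/\alpha_0) \cup (-\alpha_0,0)$ the parameter $\lambda$ lies in $[0,m) \cup (M,1]$. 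By Theorem \ref{th.evnp}, for $\varepsilon < \varepsilon_0$ the whole non-degenerate part of $\sigma(T_\varepsilon)$ is contained in $[m,M]$, so $\lambda \notin \sigma(T_\varepsilon)$. Since $T_\varepsilon$ is bounded and self-adjoint, the spectral theorem yields the $\varepsilon$-uniform resolvent bound
$$ \left\|\, (\lambda I - T_\varepsilon)^{-1} \,\right\|_{{\mathcal L}(H^1_0(\Omega))} \;\leq\; \frac{1}{\mathrm{dist}(\lambda,\, \{0,1\} \cup [m,M])}, $$
which proves (i) together with the uniform a priori estimate $\lvert\lvert u_\varepsilon \lvert\lvert_{H^1_0(\Omega)} \leq C\,\lvert\lvert f \lvert\lvert_{H^{-1}(\Omega)}$, with $C$ depending on $a$ but not on $\varepsilon$.

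Point (ii) then follows by a further reduction of $\alpha$. By Corollary \ref{cor.A*wp}, there exist $0 < m' < M' < \infty$ such that $A^*$ is positive definite whenever $a \in (-\infty,-M') \cup (-m',0)$ (and then in particular $a \notin \Sigma_\omega$, so that the cell functions exist and $A^*$ is well-defined by (\ref{eq.hommat})). Setting $\alpha := \min\bigl(\alpha_0,\, m',\, 1/M'\bigr)$ secures simultaneously the well-posedness of (\ref{eq.pot}) and the ellipticity of $A^*$, and hence the well-posedness of (\ref{eq.pothom}). For (iii), the uniform bound on $u_\varepsilon$ obtained above allows us to apply Proposition \ref{prop.gencvue}: along any subsequence, $u_\varepsilon$ converges weakly in $H^1_0(\Omega)$ to a solution of (\ref{eq.pothom}). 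By (ii) that solution is unique and equals $u_*$, so by a standard uniqueness-of-subsequential-limits argument the full sequence $u_\varepsilon$ converges weakly to $u_*$.

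The main point where care is needed — and the reason the result is non-trivial — is the uniformity in $\varepsilon$ of the resolvent estimate. This is precisely the content of Theorem \ref{th.evnp}: without the $\varepsilon$-independent spectral gap $[0,m) \cup (M,1]$ around the essential accumulation point $1/2$, one could a priori only obtain $\varepsilon$-dependent bounds, insufficient to apply the compactness argument of Proposition \ref{prop.gencvue}. Everything else reduces to the scalar computation relating $a$ to $\lambda$ and to invocations of Corollary \ref{cor.A*wp} and Proposition \ref{prop.gencvue}.
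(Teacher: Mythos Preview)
Your proof is correct and follows essentially the same route as the paper: reformulate (\ref{eq.pot}) as $(\lambda I - T_\varepsilon)u_\varepsilon = \lambda g$, use Theorem~\ref{th.evnp} to get a uniform-in-$\varepsilon$ resolvent bound for (i), invoke Corollary~\ref{cor.A*wp} for (ii), and combine the uniform bound with Proposition~\ref{prop.gencvue} and uniqueness for (iii). Your version is in fact slightly more explicit than the paper's about the choice of $\alpha$ and about the uniformity of the resolvent estimate, which is indeed the key point.
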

\begin{proof}
\textit{(i)} As in Section \ref{sec.defTD}, (\ref{eq.pot}) is equivalent to: 
$$ \left( \frac{1}{1-a} I - T_\varepsilon \right) u_\varepsilon = \frac{1}{1-a} g,$$
where $g \in H^1_0(\Omega)$ represents $f \in H^{-1}(\Omega)$ in the sense of the Riesz theorem: 
$$ \forall v \in H^1_0(\Omega), \:\: \int_{\Omega}{\nabla g \cdot \nabla v \:dx} = \langle f , v \rangle_{H^{-1}(\Omega), H^1_0(\Omega)}.$$
Now, using Theorem \ref{th.evnp}, it is easily seen that there exists $\alpha >0$ small enough so that $\frac{1}{1-a}$ does not belong to the spectrum of $T_\varepsilon$ for $a \in (-\infty,-1/\alpha) \cup (-\alpha, 0)$; for such values of $a$, the operator $(\frac{1}{1-a} I - T_\varepsilon)^{-1}$
is bounded.\\

\noindent \textit{(ii)} follows immediately from Corollary \ref{cor.A*wp}, up to taking a smaller value $0<\alpha$.\\

\noindent \textit{(iii)} Owing to $(i)$, the sequence $u_\varepsilon$ is bounded in $H^1_0(\Omega)$. Using Proposition \ref{prop.gencvue}, 
there exists one solution $u_0 \in H^1_0(\Omega)$ to (\ref{eq.pothom}) such that, up to a subsequence $u_\varepsilon \to u_0$, 
weakly in $H^1_0(\Omega)$.
But we know from $(ii)$ that there is a unique solution $u_*$ to (\ref{eq.pothom}). Hence, $u_0=u_*$, and
by uniqueness of the limit, the whole sequence $u_\varepsilon$ converges to $u_0$.  
\end{proof}

The fact that (\ref{eq.pot}) is well-posed for values of the conductivity $a$ which are negative but `close to $0$' 
was already observed in the work of R. Bunuoiu and K. Ramdani \cite{ramdani}. 
It is quite noticeable that the same conclusion holds in the case where $a$ is negative, with a very large modulus, and that 
the homogenized tensor $A^*$ happens to be elliptic in both cases. These facts are new to the best of our knowledge.~\\

\subsection{Application to a uniform convergence result for large conductivities in periodic homogenization}\label{sec.unifhomog}~\\

In this section, we revisit a problem in `classical' periodic homogenization in light of the previous results. 
We assume that the conductivity $a$ inside the set of inclusions belongs to $(-\infty,-\alpha) \cup (\alpha,+\infty)$ for $\alpha>0$ large enough 
so that (\ref{eq.pot}) is well-posed for any source $f \in H^{-1}(\Omega)$, and the homogenized tensor $A^*$ given by (\ref{eq.hommat}) is elliptic; see Sections \ref{sec.hc} and \ref{seccoeffhomog}. 

We aim to understand if the convergence of the homogenization process $\varepsilon \rightarrow 0$ is \textit{uniform} in the values of $a \in (-\infty,-\alpha) \cup (\alpha,+\infty)$. 
To this end, we use the following notations: for $\varepsilon >0$ and a source $f \in H^{-1}(\Omega)$, $u_\varepsilon^a$ and $u_*^a\in H^1_0(\Omega)$ are the unique solutions to (\ref{eq.pot}) and (\ref{eq.pothom}) respectively when $a$ is the value of the conductivity inside the set of inclusions $\omega_\varepsilon$. 
Like in Section \ref{sec.defTD}, it is convenient to observe that (\ref{eq.pot}) may be equivalently written: 
\begin{equation}\label{eq.Teuag}
 \left( \frac{1}{1-a} I - T_\varepsilon \right) u_\varepsilon‰ = \frac{1}{1-a} g,
 \end{equation}
where $g \in H^1_0(\Omega)$ is the representative of $f$ supplied by the Riesz representation theorem. 

In our developments, a useful intermediate result is the solution $u_\varepsilon^\infty \in H^1_0(\Omega)$ to (\ref{eq.pot}) in the case where the inclusions are filled with a material with infinite conductivity, that is $a = +\infty$. Following for instance \cite{baolilin}, $u_\varepsilon^\infty$ is formally defined as the unique solution in $\text{\rm Ker}(T_\varepsilon)$ to the variational problem: 
\begin{equation}\label{eq.FVuinfty}
\forall v \in \text{\rm Ker}(T_\varepsilon), \:\: \int_{\Omega \setminus \overline{\omega_\varepsilon}}{\nabla u_\varepsilon \cdot \nabla v \:dx} = \int_\Omega{\nabla g \cdot \nabla v \:dx}.
\end{equation}
Let us recall (see Section \ref{sec.evTD}) that $\text{\rm Ker}(T_\varepsilon)$ is the subspace of $H^1_0(\Omega)$ 
composed of functions which are constant in each inclusion $\omega_\varepsilon^\xi$, $\xi \in \Xi_\varepsilon$. 

Our first result is an estimate of the difference between $u_\varepsilon^a$ and $u_\varepsilon^\infty$ which is uniform with respect to $\varepsilon$.


\begin{proposition}\label{propcvunifa}
There exist constants $\alpha >0$ and $C>0$, which are independent of $g$, $a$ and $\varepsilon$ such that: 
$$ \forall a \in (-\infty, -\alpha) \cup (\alpha,\infty), \:\: \lvert\lvert u_\varepsilon^a - u_\varepsilon^\infty \lvert\lvert_{H^1_0(\Omega)} \leq \frac{C}{\lvert a\lvert}\lvert\lvert g \lvert\lvert_{H^1_0(\Omega)}.$$ 
\end{proposition}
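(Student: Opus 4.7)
The plan is to reduce the problem to an exercise in spectral calculus, using the orthogonal decomposition of Proposition \ref{prop.evTD} together with the uniform spectral gap from Theorem \ref{th.evnp}.

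First, I would identify $u_\varepsilon^\infty$ as a simple projection. Recall the orthogonal decomposition
$H^1_0(\Omega) = \text{\rm Ker}(T_\varepsilon) \oplus \text{\rm Ker}(I-T_\varepsilon) \oplus \mathfrak{h}_\varepsilon$ from Proposition \ref{prop.evTD}, and accordingly write $g = g_0 + g_1 + g_h$. Any $u \in \text{\rm Ker}(T_\varepsilon)$ is locally constant on each inclusion $\omega_\varepsilon^\xi$, hence $\nabla u = 0$ on $\omega_\varepsilon$; so for $v \in \text{\rm Ker}(T_\varepsilon)$ one has $\int_{\Omega \setminus \overline{\omega_\varepsilon}} \nabla u \cdot \nabla v \,dx = \int_\Omega \nabla u \cdot \nabla v \,dx$. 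The variational problem (\ref{eq.FVuinfty}) therefore expresses that $u_\varepsilon^\infty$ is the $H^1_0(\Omega)$-orthogonal projection of $g$ onto $\text{\rm Ker}(T_\varepsilon)$, i.e.\ $u_\varepsilon^\infty = g_0$.

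Next, I would decompose $u_\varepsilon^a$ according to the same direct sum by plugging into (\ref{eq.Teuag}) with $\mu := 1/(1-a)$. Component-wise, the equation $(\mu I - T_\varepsilon)u_\varepsilon^a = \mu g$ yields:
\begin{itemize}
\item on $\text{\rm Ker}(T_\varepsilon)$: $\mu\,u_{\varepsilon,0}^a = \mu g_0$, so $u_{\varepsilon,0}^a = g_0 = u_\varepsilon^\infty$;
\item on $\text{\rm Ker}(I-T_\varepsilon)$: $(\mu-1)u_{\varepsilon,1}^a = \mu g_1$, so $u_{\varepsilon,1}^a = g_1/a$ (since $\mu/(\mu-1) = 1/a$);
\item on $\mathfrak{h}_\varepsilon$: $(\mu I - T_\varepsilon)u_{\varepsilon,h}^a = \mu g_h$, so $u_{\varepsilon,h}^a = \mu(\mu I - T_\varepsilon)^{-1} g_h$.
\end{itemize}
Hence $u_\varepsilon^a - u_\varepsilon^\infty = u_{\varepsilon,1}^a + u_{\varepsilon,h}^a$, and the two terms on the right are orthogonal in $H^1_0(\Omega)$.

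The crucial ingredient is to bound the resolvent on $\mathfrak{h}_\varepsilon$ uniformly in $\varepsilon$. By Theorem \ref{th.evnp}, for $\varepsilon < \varepsilon_0$ the spectrum of the restriction $T_\varepsilon|_{\mathfrak{h}_\varepsilon}$ is contained in $[m,M] \subset (0,1)$, with $m,M$ independent of $\varepsilon$. Choosing $\alpha \geq 2/m$ guarantees that, for $|a| > \alpha$, $|\mu| \leq 1/(|a|-1) \leq m/2$, so $\text{\rm dist}(\mu,[m,M]) \geq m/2$; by the spectral theorem, $\|(\mu I - T_\varepsilon)^{-1}\|_{\mathfrak{h}_\varepsilon \to \mathfrak{h}_\varepsilon} \leq 2/m$. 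Combining this with $|\mu| \leq 2/|a|$ (for $|a|$ large) and the trivial estimate $\|u_{\varepsilon,1}^a\|_{H^1_0} = \|g_1\|_{H^1_0}/|a|$, we obtain
\[
\|u_\varepsilon^a - u_\varepsilon^\infty\|_{H^1_0(\Omega)}^2 \;=\; \|u_{\varepsilon,1}^a\|_{H^1_0}^2 + \|u_{\varepsilon,h}^a\|_{H^1_0}^2 \;\leq\; \frac{C^2}{|a|^2}\bigl(\|g_1\|^2 + \|g_h\|^2\bigr) \;\leq\; \frac{C^2}{|a|^2}\|g\|_{H^1_0(\Omega)}^2,
\]
which gives the claim. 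The only real difficulty is the uniform spectral gap, and that is exactly what Theorem \ref{th.evnp} delivers; everything else is a direct computation in the three invariant subspaces.
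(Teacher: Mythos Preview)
Your proof is correct and follows essentially the same approach as the paper: identify $u_\varepsilon^\infty$ as the orthogonal projection of $g$ onto $\text{Ker}(T_\varepsilon)$, decompose $u_\varepsilon^a$ along the three invariant subspaces, and use the uniform spectral gap of Theorem~\ref{th.evnp} to bound the $\mathfrak{h}_\varepsilon$-component. The only cosmetic difference is that the paper expands explicitly in the eigenbasis $(h_k)$ of $T_\varepsilon|_{\mathfrak{h}_\varepsilon}$ and bounds each coefficient $\beta_k = \langle g,h_k\rangle/(1-(1-a)\lambda_k)$ individually, whereas you phrase the same estimate as a resolvent bound $\|\mu(\mu I - T_\varepsilon)^{-1}\|_{\mathfrak{h}_\varepsilon\to\mathfrak{h}_\varepsilon} \leq C/|a|$; these are equivalent. (One trivial arithmetic slip: to get $|\mu|\leq m/2$ you need $\alpha \geq 1 + 2/m$, not $\alpha \geq 2/m$.)
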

\begin{proof}
We observe that the variational formulation (\ref{eq.FVuinfty}) actually expresses that
$u_\varepsilon^\infty$ is the orthogonal projection of $g$ on $\text{\rm Ker}(T_\varepsilon)$ in the sense of the $H^1_0(\Omega)$ inner product (\ref{eq.H10prod}).

For $a \in (-\infty, -\alpha) \cup (\alpha,\infty)$, let us decompose $g$ according to the orthogonal direct sum (\ref{TDortho}) of $H^1_0(\Omega)$: 
$$ g = g_0 + g_1 + \sum\limits_{k\geq 1}{\langle g, h_k \rangle_{H^1_0(\Omega)} h_k },$$
where $g_0 \in \text{\rm Ker}(T_\varepsilon)$ equals $u_\varepsilon^\infty$ owing to the previous remark, where
$g_1 \in \text{\rm Ker}(I-T_\varepsilon)$, and where the $h_k\in \mathfrak{h}$ are the orthonormal
eigenvectors of the compact, self-adjoint operator $T_\varepsilon: \mathfrak{h} \rightarrow \mathfrak{h}$, whose sequence of eigenvalues
$\lambda_k\in (0,1)$ converges to $\frac{1}{2}$.
Likewise, we decompose the solution $u_\varepsilon^a$ to (\ref{eq.Teuag}): 
$$ u_\varepsilon^a = u_0 + u_1 + \sum\limits_{k\geq 1}{\beta_k h_k },$$
with $u_0 \in  \text{\rm Ker}(T_\varepsilon)$, $u_1 \in \text{\rm Ker}(I-T_\varepsilon)$ and coefficients $\beta_k\in \mathbb{R}$ (we omit the dependence of $u_0$, $u_1$ and the $\beta_k$ on $\varepsilon$ and $a$ for notational simplicity). 
Using (\ref{eq.Teuag}), we readily identify: 
$$u_0 = u_\varepsilon^\infty, \:\: u_1 = \frac{g_1}{a}, \text{ and } \beta_k = \frac{\langle g, h_k \rangle_{H^1_0(\Omega)}}{1 - (1-a)\lambda_k},$$
and so
$$ \lvert\lvert u_\varepsilon^a - u_\varepsilon^\infty \lvert\lvert_{H^1_0(\Omega)}^2 = \frac{1}{a^2}\vert\lvert g_1 \lvert\lvert_{H^1_0(\Omega)}^2 + \sum\limits_{k\geq 1}{\left(\frac{\langle g, h_k \rangle_{H^1_0(\Omega)}}{1 - (1-a)\lambda_k}\right)^2}.$$
Now using the bound supplied by Theorem \ref{th.evnp} on the eigenvalues $\lambda_k$, $k\geq 1$, one has:
$$\lvert 1 - (1-a)\lambda_k \lvert \geq (\lvert a \lvert - 1) \lambda_k
-1 \geq C\lvert a \lvert,$$
for some constant $C >0$, provided $\lvert a \lvert$ is large enough. Eventually, we obtain
$$ \lvert\lvert u_\varepsilon^a - u_\varepsilon^\infty \lvert\lvert_{H^1_0(\Omega)}^2  \leq \frac{1}{a^2}\vert\lvert g_1 \lvert\lvert_{H^1_0(\Omega)}^2 + \frac{C}{a^2} \lvert\lvert g \lvert\lvert_{H^1_0(\Omega)}^2,$$
where $C$ is a constant (different from that of the previous inequality) independent of $\varepsilon$, $g$ and $a$; this terminates the proof.
\end{proof}

We now prove a convergence estimate of the homogenization process 
which is \textit{uniform} in the value of the conductivity $a$ inside the inclusions, provided that this value is large enough (in modulus). 
This result holds in the $L^2(\Omega)$ norm, and cannot possibly hold in the $H^1(\Omega)$ norm, since correctors would have to be introduced for this purpose (see e.g. \cite{blp}). 
Recall also that error estimates are usually difficult to establish in homogenization since they generally call for the introduction and analysis of so-called 
\textit{boundary layer terms}, accounting for the interaction of the periodic structure with the boundary $\partial \Omega$ of the macroscopic domain. 

\begin{theorem}
There exists a constant $\alpha>0$ such that: 
$$ \sup\limits_{a \in (-\infty,-\alpha) \cup (\alpha,\infty)}{\lvert\lvert u_\varepsilon^a - u_*^a \lvert\lvert_{L^2(\Omega)}} \xrightarrow {\varepsilon \rightarrow 0} 0.$$
\end{theorem}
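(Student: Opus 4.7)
The plan is to reduce the uniform convergence to the high-contrast limit $|a|\to\infty$ via the triangle inequality
\[
\|u_\varepsilon^a - u_*^a\|_{L^2(\Omega)} \le \|u_\varepsilon^a - u_\varepsilon^\infty\|_{L^2(\Omega)} + \|u_\varepsilon^\infty - u_*^\infty\|_{L^2(\Omega)} + \|u_*^\infty - u_*^a\|_{L^2(\Omega)},
\]
where $u_\varepsilon^\infty$ is the stiff-inclusion solution of (\ref{eq.FVuinfty}) and $u_*^\infty \in H^1_0(\Omega)$ solves the homogenized equation $-\text{\rm div}(A^*_\infty \nabla u_*^\infty) = f$, with $A^*_\infty := \lim_{|a|\to\infty} A^*(a)$. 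The existence and coercivity of this limit tensor follow from the representation (\ref{homogcoefphi}) combined with Lemma \ref{lem.posdefA*} and the fact that $\pm 1/2$ lie outside the spectrum of ${\mathcal K}_\omega^{\#*}$ (Theorem \ref{thproppotper}(iii)), which also supplies uniform bounds on $(\lambda I - {\mathcal K}_\omega^{\#*})^{-1}$ in a neighborhood of these values.

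Proposition \ref{propcvunifa} directly controls the first term by $C/|a|$, uniformly in $\varepsilon$. The analogous estimate $\|u_*^\infty - u_*^a\|_{H^1_0(\Omega)} \le C'/|a|$ at the homogenized level is obtained by comparing the variational formulations associated with $A^*(a)$ and $A^*_\infty$, the key input being the quantitative continuity $|A^*(a) - A^*_\infty| = O(1/|a|)$, itself derived from a resolvent expansion of $(\lambda I - {\mathcal K}_\omega^{\#*})^{-1}$ near $\lambda = \pm 1/2$ in (\ref{homogcoefphi}). The convergence $u_\varepsilon^\infty \to u_*^\infty$ strongly in $L^2(\Omega)$ is the classical homogenization result for the perfect conductor problem.

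Given $\delta > 0$, one picks $A > \alpha$ large enough that $(C+C')/A < 2\delta/3$, and then $\varepsilon_0 > 0$ small enough that $\|u_\varepsilon^\infty - u_*^\infty\|_{L^2} < \delta/3$ for $\varepsilon < \varepsilon_0$; this delivers the uniform bound in the regime $|a| \ge A$. For the remaining \emph{compact} range $\alpha < |a| \le A$, the argument proceeds by contradiction: if there were sequences $\varepsilon_n \to 0$ and $a_n$ with $\alpha < |a_n| \le A$ and $\|u_{\varepsilon_n}^{a_n} - u_*^{a_n}\|_{L^2(\Omega)} \ge \delta_0 > 0$, then by Theorem \ref{thgapev}(i) the sequence $u_{\varepsilon_n}^{a_n}$ would be bounded in $H^1_0(\Omega)$; extracting so that $a_n \to a_* \in [-A,-\alpha] \cup [\alpha,A]$, Proposition \ref{prop.gencvue} (in the variant that allows the conductivity to vary with $\varepsilon$) combined with Rellich compactness would identify the weak limit as $u_*^{a_*}$ in $L^2(\Omega)$. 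Continuity of $a \mapsto u_*^a$ in $L^2(\Omega)$, inherited from the continuity of $a \mapsto A^*(a)$ on the same compact set, would then yield the contradiction.

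The chief obstacle is a self-contained justification of the two auxiliary ingredients: the $L^2$ homogenization $u_\varepsilon^\infty \to u_*^\infty$ for the stiff inclusion problem, whose variational space $\text{\rm Ker}(T_\varepsilon)$ changes with $\varepsilon$ and hence demands a careful two-scale analysis adapted to the perfect conductor constraint (or a direct appeal to classical references), together with the quantitative expansion $A^*(a) = A^*_\infty + O(1/|a|)$ as $|a|\to\infty$. Both rely on the uniform invertibility of $(\lambda I - {\mathcal K}_\omega^{\#*})$ on a neighborhood of $\lambda = \pm 1/2$, guaranteed by the compactness of ${\mathcal K}_\omega^{\#*}$ on $L^2_0(\partial\omega)$. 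Once these are in place, the argument is essentially a linking of the results established in Sections \ref{sec.boundev}, \ref{sec.cellpb} and \ref{sec.hc}.
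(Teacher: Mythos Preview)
Your proposal is correct and follows essentially the same approach as the paper: the same triangle inequality through $u_\varepsilon^\infty$ and $u_*^\infty$, with Proposition \ref{propcvunifa} for the first term, homogenization of the perfect-conductor problem for the second, and continuity of $a\mapsto A^*(a)$ near $|a|=\infty$ for the third. The only differences are organizational: the paper argues by a single contradiction (observing that any offending sequence must have $|a_k|\to\infty$, since on bounded $a$-sets standard homogenization applies), whereas you split explicitly into $|a|\ge A$ and $\alpha<|a|\le A$ and handle the compact range by its own compactness/contradiction argument; and you derive the quantitative rate $A^*(a)=A^*_\infty+O(1/|a|)$, which is true but stronger than needed, since mere continuity of $a\mapsto u_*^a$ at $|a|=\infty$ already suffices.
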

\begin{proof}
Assume that the result does not hold. 
Then there exists $\delta >0$ and two sequences $a_k \in (-\infty, -\alpha) \cup (\alpha,\infty)$  and $\varepsilon_k \rightarrow 0$ such that: 
\begin{equation}\label{eq.conthchom}
\lvert\lvert u_{\varepsilon_k}^{a_k} - u_*^{a_k} \lvert\lvert_{L^2(\Omega)} \geq \delta. 
\end{equation}
We may assume that $a_k$ does not change signs, and without loss of generality, we consider the case $a_k \in (\alpha,\infty)$.
By standard results in homogenization, it is clear that (\ref{eq.conthchom}) is only possible when $a_k \rightarrow \infty$.   

The triangle inequality yields:
\begin{equation*}\label{eq.triaidhc}
\lvert\lvert u_{\varepsilon_k}^{a_k} - u_*^{a_k} \lvert\lvert_{L^2(\Omega)} \leq \lvert\lvert u_{\varepsilon_k}^{a_k} - u_{\varepsilon_k}^{\infty} \lvert\lvert_{L^2(\Omega)} + \lvert\lvert u_{\varepsilon_k}^{\infty} - u_*^{\infty} \lvert\lvert_{L^2(\Omega)} + \lvert\lvert u_*^{\infty} - u_*^{a_k} \lvert\lvert_{L^2(\Omega)}.
\end{equation*}
In this inequality, 
\begin{itemize}
\item The first term in the right-hand side is controlled by $C / a_k$ as a consequence of Proposition \ref{propcvunifa}.
\item The second term estimates the difference between the voltage potential $u_\varepsilon^\infty$ and the homogenized solution $u_*^\infty$
in the particular case of the infinite conductivity problem. It can be proved, for instance in a similar way as in the proof of Proposition \ref{prop.gencvue} 
that this difference converges to $0$; see also \cite{ciorainfty} for a complete study of this problem. 
\item The last term in the right-hand side converges to $0$ owing to the conclusions of Section \ref{seccoeffhomog}, 
whereby the homogenized tensor $A^*$ (and thus the potential $u_*^a$) depends continuously on $a \in (-\infty,-\alpha) \cup (\alpha,\infty)$.
\end{itemize}
This shows that $\lvert\lvert u_{\varepsilon_k}^{a_k} - u_*^{a_k} \lvert\lvert_{L^2(\Omega)}$ converges to $0$ as $k \to \infty$, 
which contradicts our initial hypothesis.
\end{proof}

\begin{remark}
These results can be appraised in connection with the very deep results obtained by M. Briane and his collaborators concerning the homogenization of 
elliptic equations with unbounded coefficients; see for instance \cite{brianecasa}. In particular, in \cite{brbrcd}, the authors prove a similar result for (positive) unbounded $a$, in 
the two-dimensional setting, in a much larger context than ours, using a completely different proof.
\end{remark}\par
\medskip

\noindent \textbf{Acknowledgements} The authors were partially supported by the AGIR-HOMONIM grant from Universit\'e Grenoble-Alpes,
and by the Labex PERSYVAL-Lab (ANR-11-LABX-0025-01).

\appendix
\section{A closer study of the particular case of rank $1$ laminates}\label{sec.laminates}

In this appendix, we focus on an interesting particular geometry of microstructures $\omega \subset Y$, that of \textit{rank $1$ laminates}, 
which is one of the few amenable to explicit calculations.
Note that this situation violates some of the prevailing assumptions of this article, notably the fact that $\omega \Subset Y$;
it is therefore not surprising that some of the general results established in the previous sections do not hold in the present case.\\

The `macroscopic' domain $\Omega$ at stake is the two-dimensional square $(0,1)^2$. 
It is filled with $N^2$ identical cells, homothetic to the unit periodicity cell $Y = (0,1)^2$. 
The rescaled inclusion pattern $\omega$ in each cell is:
$$ \omega := \left\{ y=(y_1,y_2) \in Y, \: 0<y_1<\theta \right\}, \text{ and so } Y \setminus \overline{\omega} = \left\{y=(y_1,y_2) \in Y, \: \theta<y_1<1 \right\}, $$
where $\theta\in (0,1)$ is the volume of $\omega$. Accordingly, the total set of inclusions $\omega_N \subset \Omega$ reads: 
$$ \omega_N := \bigcup_{j \in \mathbb{N}^2 \atop 0 \leq j \leq N-1}{\frac{1}{N}(j + \omega)};$$
see Figure \ref{figcarre} for an illustration.

\begin{figure}[!ht]
\centering
\includegraphics[width=0.5 \textwidth]{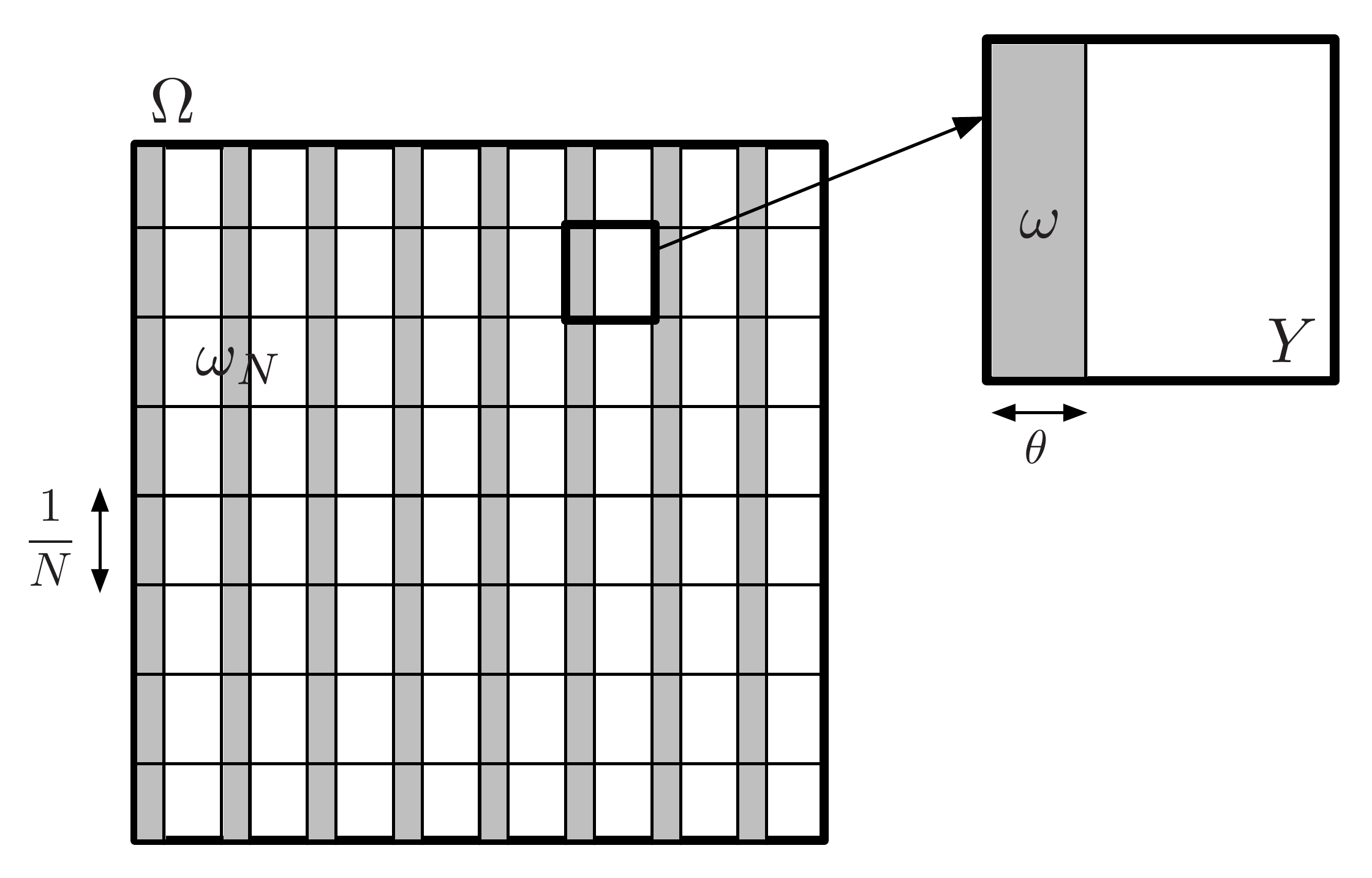}
\caption{\textit{Situation where the inclusion $\omega \subset Y$ is a rank $1$ laminate.}}
\label{figcarre}
\end{figure}

To avoid effects caused by the interactions between cells and the boundary $\partial \Omega$, 
we impose periodic boundary conditions on $\partial \Omega$. In this context, the voltage potential $u_N \in H^1_\#(\Omega)/\mathbb{R}$ associated to 
a source $f \in (H^1_\#(\Omega) / \mathbb{R})^*$ and to the distribution of conductivity equal to $a \in \mathbb{C}$ in $\omega_N$ and $1$ in $Y \setminus \overline{\omega_N}$
is solution to 
\begin{equation}\label{eq.potapp}
-\text{\rm div}(A_N \nabla u) = f \text{ in } \Omega, \text{ where } A_N(x) = \left\{ 
\begin{array}{cl}
a & \text{if }Êx \in \omega_N, \\
1 & \text{otherwise} 
\end{array}
\right. .
\end{equation}
We are interested in
 the spectrum of the Poincar\'e variational operator $T_N$, which maps an arbitrary function $u \in H_\#^1(\Omega) / \mathbb{R}$ to the unique element $T_N u \in H_\#^1(\Omega) / \mathbb{R}$ satisfying:  
$$ \forall v \in H^1_\#(\Omega)/ \mathbb{R}, \:\: \int_{\Omega}{\nabla (T_N u)\cdot \nabla v \:dx} = \int_{\omega_N}{\nabla u \cdot \nabla v\:dx},$$
and in particular in the identification of the limit spectrum
\begin{equation}\label{eq.limspecbloch}
 \lim\limits_{N \to \infty}{\sigma(T_N)} = \left\{\lambda \in \mathbb{C}, \:\: \exists \: N_j \to \infty, \: \lambda_{N_j} \in \sigma(T_{N_j}),\:\: \lambda_{N_j} \xrightarrow{N_j \to \infty} \lambda  \right\}.
 \end{equation}
\par
\smallskip 

\subsection{Study of the homogenization process for the operator $T_N$}~\\

The main tool in our analysis is the discrete Bloch decomposition \cite{aguirreconca}, which has already been used without proof several times
in this article. Although it is quite classical, we sktech the proof for completeness.

\begin{theorem}\label{thbloch}
Let $u$ be a function in $L^2_\#(\Omega)$. Then, there exists a unique collection $\left\{ u_j \right\}$, 
indexed by $j \in \mathbb{N}^2$, $0 \leq j \leq N-1$, composed of $N^2$ complex-valued functions in $L^2_\#(Y)$ such that the following identity holds: 
\begin{equation}\label{eq.decblochN}
  u(x) = \sum\limits_{0\leq j \leq N-1}{u_j(Nx)\: e^{2i\pi j \cdot x}}, \text{ a.e. } x \in \Omega.
  \end{equation}
Furthermore, the Parseval identity holds: for $u,v \in L^2_\#(\Omega)$, with coefficients $\left\{ u_j \right\}_{0\leq j \leq N-1}$, $\left\{ v_j \right\}_{0\leq j \leq N-1}$, one has: 
\begin{equation}\label{eq.ParsevalN}
 \int_\Omega{u(x)\overline{v(x)}\:dx}= \sum\limits_{0\leq j\leq N-1}{\int_Y{u_j(y)\overline{v_j(y)}\:dy}}.
 \end{equation}
\end{theorem}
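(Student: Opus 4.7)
The proof is essentially a rewriting of the usual Fourier expansion on the torus $\Omega = (0,1)^2$ by reorganizing integer frequencies according to their class modulo $N$, so I would proceed by pure Fourier analysis.

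First, I would start from the classical Fourier expansion
\[
u(x) = \sum_{m \in \mathbb{Z}^2} \widehat{u}(m) \, e^{2i\pi m \cdot x}, \qquad \widehat{u}(m) = \int_\Omega u(x) e^{-2i\pi m \cdot x}\,dx,
\]
which converges in $L^2_\#(\Omega)$ and satisfies $\sum_m |\widehat{u}(m)|^2 = \|u\|_{L^2(\Omega)}^2$. The key structural remark is that every $m \in \mathbb{Z}^2$ admits a \emph{unique} decomposition $m = Nk + j$ with $k \in \mathbb{Z}^2$ and $j \in \{0,\ldots,N-1\}^2$, so that $\mathbb{Z}^2$ is partitioned into the $N^2$ cosets $N\mathbb{Z}^2 + j$ of the sublattice $N\mathbb{Z}^2$.

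Regrouping the sum along these cosets yields
\[
u(x) = \sum_{0 \leq j \leq N-1} e^{2i\pi j \cdot x} \sum_{k \in \mathbb{Z}^2} \widehat{u}(Nk + j) \, e^{2i\pi k \cdot (Nx)}.
\]
For each $j$, I would then \emph{define}
\[
u_j(y) := \sum_{k \in \mathbb{Z}^2} \widehat{u}(Nk + j) \, e^{2i\pi k \cdot y}, \qquad y \in Y,
\]
and observe that the convergence in $L^2_\#(Y)$ together with Parseval for this inner series gives $\|u_j\|_{L^2(Y)}^2 = \sum_k |\widehat{u}(Nk+j)|^2 < \infty$; substituting $y = Nx$ in the above yields the decomposition (\ref{eq.decblochN}).

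For uniqueness, I would note that any admissible decomposition $u = \sum_j u_j(Nx) e^{2i\pi j \cdot x}$ forces each term $u_j(Nx) e^{2i\pi j \cdot x}$ to have Fourier spectrum (relative to $\Omega$) supported in the coset $N\mathbb{Z}^2 + j$: indeed, expanding $u_j$ in Fourier series on $Y$ shows that its $k$th mode contributes to the $\Omega$-frequency $Nk+j$. Since the cosets are disjoint, the $\Omega$-Fourier coefficients of $u$ determine the $Y$-Fourier coefficients of each $u_j$ uniquely, namely $\widehat{u_j}(k) = \widehat{u}(Nk+j)$. Finally, the Parseval identity (\ref{eq.ParsevalN}) follows from
\[
\int_\Omega u\,\overline{v}\,dx = \sum_{m \in \mathbb{Z}^2} \widehat{u}(m)\overline{\widehat{v}(m)} = \sum_{0\leq j\leq N-1}\sum_{k\in \mathbb{Z}^2} \widehat{u}(Nk+j)\overline{\widehat{v}(Nk+j)} = \sum_{0\leq j \leq N-1} \int_Y u_j\overline{v_j}\,dy,
\]
by repartitioning the frequency sum along the cosets and applying Parseval on $Y$ for each $u_j,v_j$.

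There is really no serious obstacle here; the entire argument is bookkeeping on the partition $\mathbb{Z}^2 = \bigsqcup_{0 \leq j \leq N-1}(N\mathbb{Z}^2 + j)$. The only mild subtlety worth underlining is that the subspaces $V_j := \{u_j(Nx) e^{2i\pi j\cdot x} : u_j \in L^2_\#(Y)\}$ of $L^2_\#(\Omega)$ are pairwise orthogonal, but this is immediate from the disjointness of the frequency cosets, once the Fourier viewpoint is adopted.
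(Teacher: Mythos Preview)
Your argument is correct, but the paper takes a different, more elementary route that avoids the full Fourier series on $\Omega$. Instead of regrouping frequencies by cosets, the paper assumes a decomposition (\ref{eq.decblochN}) exists, evaluates it at the shifted points $x + j'/N$ for $0 \le j' \le N-1$, multiplies by $e^{-2i\pi j^0 \cdot (x+j'/N)}$, and sums over $j'$; the identity $\sum_{0 \le k \le N-1} r^k = 0$ for a nontrivial $N$th root of unity then isolates $u_{j^0}$ and produces the explicit inversion formula
\[
u_{j}(y) = \frac{1}{N^2}\sum_{0 \le j' \le N-1} u\!\left(\tfrac{y}{N} + \tfrac{j'}{N}\right) e^{-2i\pi j \cdot (\frac{y}{N} + \frac{j'}{N})}.
\]
Existence is then checked by direct substitution, and Parseval by expanding the product and again invoking the roots-of-unity identity. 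Your Fourier-coset approach is arguably more conceptual and makes the orthogonality of the subspaces $V_j$ transparent; the paper's approach is a finite discrete Fourier transform in the shift variable and has the advantage of yielding the coefficients $u_j$ directly in terms of translates of $u$, with no appeal to Fourier analysis beyond the algebraic identity (\ref{Nthroot}).
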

\begin{proof}
Let $u \in L^2_\#(Y)$ be given, and assume that there exist $N^2$ functions $u_j \in L^2_\#(Y)$ such that (\ref{eq.decblochN}) holds.  
Then, for arbitrary $j^\prime \in \mathbb{N}^2$, $0 \leq j^\prime \leq N-1$, 
$$ u(x + \frac{j^\prime}{N}) = \sum\limits_{0 \leq j \leq N-1}{u_j(Nx)e^{2i\pi j \cdot (x + \frac{j^\prime}{N})}};$$
so as to isolate a particular index $0 \leq j^0 \leq N-1$, we multiply both sides in the previous identity by $e^{-2i\pi j^0 \cdot (x + \frac{j^\prime}{N})}$, then sum over $j^\prime$ to obtain:
\begin{equation}\label{eq.dblesumbloch}
 \sum\limits_{0 \leq j \leq N-1 \atop 0 \leq j^\prime \leq N-1}{u_j(Nx) e^{2i\pi(j- j^0) \cdot (x + \frac{j^\prime}{N})}} = \sum\limits_{0 \leq j \leq N-1}{u(x + \frac{j^\prime}{N}) e^{-2i\pi j^0 \cdot (x + \frac{j^\prime}{N})}}.
 \end{equation}
Using the fact that, for any $N^{\text{th}}$ root 
$r = e^{2\pi i j/N}$ 
of $1$, 
\begin{equation}\label{Nthroot}
 \sum\limits_{0\leq k \leq N-1}{r^k } = \left\{ 
\begin{array}{cc}
N & \text{if }Êr = 1, \\
0 & \text{otherwise}
\end{array}
\right. ,
\end{equation}
The relation
(\ref{eq.dblesumbloch}) simplifies into: 
\begin{equation}\label{eq.invbloch}
 u_{j^0}(y)  =\frac{1}{N^2}\sum\limits_{0\leq j^\prime \leq N-1}{u(\frac{y}{N} + \frac{j^\prime}{N})e^{-2i\pi j^0 \cdot \left(\frac{y}{N} + \frac{j^\prime}{N}\right)} },
 \end{equation}
a formula which clearly defines a function in $L^2_\#(Y)$. Conversely, one easily proves that the $u_j$ defined in (\ref{eq.invbloch}) satisfy (\ref{eq.decblochN}), which proves the first statement.

To verify the Parseval identity (\ref{eq.ParsevalN}), let $u,v \in L^2_\#(\Omega)$ be decomposed as:
$$ u(x) = \sum\limits_{0\leq j \leq N-1}{u_j(Nx)\: e^{2i\pi j \cdot x}}, \text{ and }Êv(x) = \sum\limits_{0\leq j^\prime \leq N-1}{
v_{j^\prime}
(Nx)\: e^{2i\pi j^\prime \cdot x}}.$$
A simple calculation yields: 
$$ 
\begin{array}{>{\displaystyle}c c>{\displaystyle}l}
\int_\Omega{u(x)\overline{v(x)}\:dx} &=& \sum\limits_{0 \leq j, j^\prime \leq N-1}{\int_{\Omega}{u_j(Nx)\overline{v_{j^\prime}(Nx)} \: e^{2i\pi(j-j^\prime)\cdot x}\:dx}}\\
&=& \sum\limits_{0 \leq j, j^\prime,k \leq N-1}{\int_{\frac{1}{N}(k+Y)}{u_j(Nx)\overline{v_{j^\prime}(Nx)} \: e^{2i\pi(j-j^\prime)\cdot x}\:dx}}\\
&=& \frac{1}{N^2}\sum\limits_{0 \leq j, j^\prime,k \leq N-1}{\int_{Y}{u_j(y)\overline{v_{j^\prime}(y)} \: e^{2i\pi(j-j^\prime) \cdot(\frac{k}{N} + \frac{y}{N})}\:dy}}\\
&=& \sum\limits_{0 \leq j \leq N-1}{\int_{Y}{u_j(y)\overline{v_{j}(y)}\:dy}},
\end{array}
$$ 
where we have again made use of (\ref{Nthroot}) to pass from the third line to the last. 
\end{proof}
Let us now consider the operator $B$, from $L^2_\#(Y)^{N^2}$ into $L^2_\#(\Omega)$ which maps a collection $\left\{ u_j \right\}_{0\leq j \leq N-1}$ of coefficients to the function $u \in L^2_\#(\Omega)$ defined by:
$$ u(x) = \sum\limits_{0\leq j \leq N-1}{u_j(Nx)\: e^{2i\pi j \cdot x}}, \text{ a.e. } x \in \Omega.$$ 
Equipping both spaces with their natural inner products, Theorem \ref{thbloch} states that $B$ is a bijective isometry, whose inverse: 
$ {B}^{-1}:L^2_\#(\Omega) \to L^2_\#(Y)^{N^2}$ is also its adjoint operator.

If $u$ belongs to $H^1_\#(\Omega)$,
(\ref{eq.invbloch}) implies that the coefficients $u_j$ of its Bloch decomposition actually belong to $H^1_\#(Y)$, and that the Bloch decomposition of $\nabla u$ reads:
$$ \nabla u(x) = N \sum\limits_{0\leq j \leq N-1}{(\nabla_y + 2i\pi \frac{j}{N}) u_j(Nx)\: e^{2i\pi j \cdot x}}, \text{ a.e. } x \in \Omega.$$
Using the fact that the Bloch decomposition of the constant function $u \equiv 1 \in H^1_\#(\Omega)$ has coefficients: 
$$u_0(y) = 1, \text{ and } u_j(y) = 0 \text{ if } j \neq 0,$$
$B$ induces an invertible operator (still denoted by $B$) $(H^1_\#(Y)/\mathbb{C}) \times H^1_\#(Y)^{N^2-1} \rightarrow H^1_\#(\Omega) /\mathbb{C}$. 

The following proposition easily follows from the previous remarks, and in particular from the Parseval identity (\ref{eq.ParsevalN}).

\begin{proposition}
For any $\eta \in \overline{Y}$, $\eta \neq 0$, define $T_{\eta}: H^1_\#(Y) \to H^1_\#(Y)$ by   
\begin{equation}\label{eq.Tetaapp}
 \forall v \in H^1_\#(Y), \:\: \int_{Y}{(\nabla_y + 2i\pi \eta)(T_{\eta} u) \cdot \overline{(\nabla_y + 2i\pi \eta) v }\:dy} = \int_{\omega}{(\nabla_y + 2i\pi \eta) u \cdot \overline{(\nabla_y + 2i\pi \eta) v }\:dy },
 \end{equation}
and define $T_0 : H^1_\#(Y) / \mathbb{C} \to H^1_\#(Y) / \mathbb{C}$ by 
\begin{equation}\label{eq.T0app}
 \forall v \in H^1(Y) / \mathbb{C}, \:\: \int_Y{\nabla_y (T_0 u) \cdot \overline{\nabla_y v}\:dy} = \int_\omega{\nabla_y u \cdot \overline{\nabla_y v}\:dy}.
 \end{equation}
The operator $B$ diagonalizes $T_N$, i.e. the self-adjoint operator $B^* T_N B$ maps a collection $\left\{ u_j \right\}_{0 \leq j \leq N-1} \in (H^1_\#(Y)/\mathbb{C}) \times H^1_\#(Y)^{N^2-1}$ to $\left\{ T_{\frac{j}{N}} u_j \right\}_{0 \leq j \leq N-1} \in (H^1_\#(Y)/\mathbb{C}) \times H^1_\#(Y)^{N^2-1}$. As
a consequence, the spectrum $\sigma(T_N)$ is the union of the spectra $\sigma(T_{\frac{j}{N}})$:
$$ \sigma(T_N) = \bigcup_{0 \leq j \leq N-1}{\sigma(T_{\frac{j}{N}})}.$$ 
\end{proposition}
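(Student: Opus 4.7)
The plan is to substitute the Bloch decomposition of Theorem \ref{thbloch} directly into the variational characterization of $T_N$ and check that the defining identity decouples across Bloch modes. First, I record that if $u \in H^1_\#(\Omega)/\mathbb{C}$ has Bloch coefficients $\{u_j\}_{0 \leq j \leq N-1}$, then termwise differentiation of (\ref{eq.decblochN}) yields
$$\nabla u(x) = N \sum_{0 \leq j \leq N-1} \bigl( \nabla_y + 2i\pi \tfrac{j}{N} \bigr) u_j(Nx) \, e^{2i\pi j \cdot x},$$
and similarly for any test $w \in H^1_\#(\Omega)/\mathbb{C}$ with coefficients $\{w_j\}$.

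Setting $v := T_N u$ with Bloch coefficients $\{v_j\}$, I would then expand both sides of the defining identity
$$\int_\Omega \nabla v \cdot \overline{\nabla w}\, dx = \int_{\omega_N} \nabla u \cdot \overline{\nabla w}\, dx$$
using the above formula. The key observation is that $\mathds{1}_{\omega_N}(x) = \mathds{1}_\omega^\#(Nx)$, where $\mathds{1}_\omega^\#$ denotes the $Y$-periodic extension of $\mathds{1}_\omega$; this reduces each side to a sum of terms of the form
$$\int_\Omega \Phi_{j,j'}(Nx)\, e^{2i\pi (j-j') \cdot x}\, dx,$$
with $\Phi_{j,j'}$ a $Y$-periodic function built from the Bloch coefficients. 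Splitting the integral over $\Omega$ into the $N^2$ subcells of size $1/N$ and applying the character orthogonality identity (\ref{Nthroot}) collapses the double sum over $(j, j')$ to its diagonal; after tracking the $N^2$ factors coming from the gradients and the Jacobian of the change of variables $y = Nx$, the identity boils down to
$$\sum_{j} \int_Y \bigl( \nabla_y + 2i\pi \tfrac{j}{N} \bigr) v_j \cdot \overline{\bigl( \nabla_y + 2i\pi \tfrac{j}{N} \bigr) w_j}\, dy = \sum_{j} \int_\omega \bigl( \nabla_y + 2i\pi \tfrac{j}{N} \bigr) u_j \cdot \overline{\bigl( \nabla_y + 2i\pi \tfrac{j}{N} \bigr) w_j}\, dy.$$

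Next, I would choose $w$ so that all its Bloch coefficients vanish except for one, $w_{j_0}$, which ranges freely over $H^1_\#(Y)$ when $j_0 \neq 0$, or over $H^1_\#(Y)/\mathbb{C}$ when $j_0 = 0$; the inverse formula (\ref{eq.invbloch}) guarantees that any such prescription is realized by a genuine $w \in H^1_\#(\Omega)/\mathbb{C}$. The resulting equation is exactly the variational characterization of $T_{j_0/N} u_{j_0} = v_{j_0}$ given by (\ref{eq.Tetaapp})--(\ref{eq.T0app}). Hence in Bloch coordinates $T_N$ acts as $(u_j)_j \mapsto (T_{j/N} u_j)_j$, which is the claim $B^* T_N B = \bigoplus_j T_{j/N}$. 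The spectral identity $\sigma(T_N) = \bigcup_j \sigma(T_{j/N})$ then follows from the standard fact that the spectrum of a finite direct sum of bounded self-adjoint operators equals the union of their spectra, together with the invariance of the spectrum under conjugation by the topological isomorphism $B$.

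The main obstacle I expect is the careful bookkeeping in the middle step: factors of $N^2$ arise both from differentiating $u_j(Nx)$ and from the Jacobian of $y = Nx$, and the reduction of the $(j,j')$-indexed double sum to the diagonal requires verifying that each summand $\Phi_{j,j'}$ is genuinely $Y$-periodic (so that (\ref{Nthroot}) may be applied) and that, since $j, j' \in \{0,\ldots,N-1\}^2$, the congruence $j \equiv j' \pmod N$ forces $j = j'$. Once these routine but error-prone verifications are carried out, the decoupling and the spectral statement are immediate.
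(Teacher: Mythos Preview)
Your proposal is correct and is essentially the approach the paper has in mind: the paper does not give a detailed proof but simply says the result ``easily follows from the previous remarks, and in particular from the Parseval identity,'' and what you outline is precisely the computation underlying that Parseval-type orthogonality, extended to include the $Y$-periodic weight $\mathds{1}_{\omega_N}(x) = \mathds{1}_\omega^\#(Nx)$. Your bookkeeping of the $N^2$ factors and the diagonal reduction via (\ref{Nthroot}) is exactly how the Parseval identity (\ref{eq.ParsevalN}) itself is established in the proof of Theorem~\ref{thbloch}, so there is no genuine difference in strategy.
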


Therefore, the study of the spectrum of $T_N$ boils down to that of the spectra of the operators $T_\eta$ defined by (\ref{eq.Tetaapp}) and (\ref{eq.T0app}).
Let us now take advantage of the particular geometry of $\omega$ to simplify the problem further:
We decompose functions $u \in H^1_\#(Y)$ (or $u \in H^1_\#(Y) / \mathbb{C}$) by using partial Fourier series in the variable $y_2$: 
$$ u(y) =  \sum\limits_{n=-\infty}^{+\infty}{a_n(y_1) e^{2i\pi ny_2}}, \text{ a.e. } y \in Y,$$
where the $a_n \in H^1_\#(0,1)$ (and $a_0 \in H^1_\#(0,1) / \mathbb{C}$ if $u \in H^1_\#(Y) / \mathbb{C}$). 

After some elementary calculations, the operators $T_\eta$ are diagonalized by this Fourier decomposition, i.e. the spectrum of $T_N$ reads: 
$$ \sigma(T_N) = \bigcup_{0 \leq j \leq N-1 \atop n \in \mathbb{N}}{\sigma(T_{\frac{j}{N}}^n)},$$
where, for any $\eta = (\eta_1,\eta_2) \in \overline{Y}$, $T_\eta^n : H^1_\#(0,1) \to H^1_\#(0,1)$ is defined by: 
\begin{multline}\label{eq.Tetan}
\forall v \in H^1_\#(0,1) \in, \:\: \int_{0}^1{\left(((T_\eta^n u)^\prime + 2i\pi \eta_1 (T_\eta^n u))\overline{(v^\prime + 2i\pi \eta_1 v)} + 4\pi^2(n+\eta_2)^2 u\overline{v}\right)\:dy}  
\\ = \int_{0}^\theta{\left((u^\prime + 2i\pi \eta_1 u)\overline{(v^\prime + 2i\pi \eta_1 v)} + 4\pi^2(n+\eta_2)^2 u\overline{v}\right)\:dy},
\end{multline}
and $T_0^0 : H^1_\#(0,1)/ \mathbb{C} \to H^1_\#(0,1) / \mathbb{C}$ is given by: 
\begin{equation}\label{eq.T00}
\forall v \in H^1_\#(0,1)/\mathbb{C}, \:\: \int_{0}^1{((T_0^0 u)^\prime)\overline{v^\prime}\:dy}  
\\ = \int_{0}^\theta{u^\prime\overline{v^\prime}\:dy}.
\end{equation}

It is proved in the same way as in Section \ref{sec.NPTD} that the spectrum of each operator $T_\eta^n$
consists of a sequence of eigenvalues in $[0,1]$ with $\frac{1}{2}$ as unique accumulation point, and we now proceed to identify these eigenvalues.\\

Let us first study the eigenvalues of the operator $T_\eta^n$ in the case where either $\eta_2 \neq 0$ or $n \neq 0$.
A value $\beta \in \mathbb{C}$ is an eigenvalue for $T_\eta^n$ as defined in (\ref{eq.Tetan}) if there exists $u \in H^1_\#(0,1)$, 
$u\neq 0$ such that:
 \begin{equation}\label{eq.stdyevTetan}
 -\left( \frac{\partial }{\partial y_1} + 2i\pi \eta_1 \right)\left(A_\beta(y_1)\left( \frac{\partial u}{\partial y_1} + 2i\pi \eta_1 u \right) \right) + 4\pi^2 A_\beta(y_1)(n + \eta_2)^2 u =0,
 \end{equation}
 where:
$$A_\beta(y) = \left\{Ê
\begin{array}{cc}
\beta -1 & \text{if }Êy_1 < \theta, \\
\beta  & \text{if }Ê\theta < y_1 < 1.  \\
\end{array}
\right. $$
Assuming $\beta \notin \left\{0 ,1\right\}$, (\ref{eq.stdyevTetan}) is equivalent to:
\begin{equation}\label{edobloch}
 u^{\prime\prime}(z) + 4i \pi\eta_1 u^\prime(z) - 4\pi^2(\eta_1^2 + (n+\eta_2)^2) u(z) = 0 \text{ a.e. }  z\in (0,\theta) \text{ and } z \in (\theta,1), 
 \end{equation}
complemented with the transmission conditions at $z=0$ and $z = \theta$:
\begin{equation}\label{edoblochtc0}
 u(0^+) = u(1^-), \:\:\:\:  \beta (u^\prime + 2i\pi \eta_1u)(1^-)  = (\beta-1) (u^\prime + 2i\pi \eta_1u)(0^+),
 \end{equation}
\begin{equation}\label{edoblochtctheta}
 u(\theta^-) = u(\theta^+), \:\:\:\: (\beta-1) (u^\prime + 2i\pi \eta_1u)(\theta^-) = \beta (u^\prime + 2i\pi \eta_1u)(\theta^+) .
 \end{equation}

The ordinary differential equation (\ref{edobloch}) has discriminant $\Delta = 16\pi^2(n + \eta_2)^2$, 
and the associated characteristic equation has two solutions 
$$r_1 = -2i \pi\eta_1 - 2\pi(n + \eta_2), \:\: r_2 = -2i\pi\eta_1 + 2\pi(n + \eta_2),$$ 
which are distinct since $n+\eta_2 \neq 0$. 
Therefore, there exist $4$ coefficients $A,B,C,D \in \mathbb{C}$ such that: 
$$ u(z) = Ae^{r_1z} + Be^{r_2z} \text{ for }Êz \in (0,\theta), \:\: u(z) = Ce^{r_1z} + De^{r_2z} \text{ for }Êz \in (\theta,1). $$
The fact that $u \in H^1_\#(0,1)$ imposes that: 
$$ A + B = Ce^{r_1} + De^{r_2}, \text{ and } Ae^{r_1\theta} + Be^{r_2\theta} = Ce^{r_1\theta} + De^{r_2\theta},$$
to be complemented with the transmission conditions: 
$$(\beta-1) (r_1+2i\pi\eta_1) A+ (\beta-1) (r_2+2i\pi\eta_1) B = \beta(r_1+2i\pi\eta_1)Ce^{r_1} + \beta (r_2+2i\pi\eta_1)De^{r_2} $$
$$(\beta-1) (r_1+2i\pi\eta_1) Ae^{r_1\theta} + (\beta-1) (r_2+2i\pi\eta_1) Be^{r_2\theta} = \beta(r_1+2i\pi\eta_1)Ce^{r_1\theta} + \beta (r_2+2i\pi\eta_1)De^{r_2\theta} $$

As a consequence, (\ref{edobloch},\ref{edoblochtc0},\ref{edoblochtctheta}) has a non trivial solution provided the following 
determinant vanishes:
\begin{equation}\label{eq.detbloch}
\left\lvert 
\begin{array}{cccc}
1 & 1 & -e^{r_1} & -e^{r_2} \\
e^{r_1\theta} & e^{r_2\theta} & -e^{r_1\theta} & -e^{r_2\theta} \\
(\beta-1) (r_1+2i\pi\eta_1) &  (\beta-1) (r_2+2i\pi\eta_1) & -\beta(r_1+2i\pi\eta_1) e^{r_1} & -\beta (r_2+2i\pi\eta_1)e^{r_2}\\
(\beta-1) (r_1+2i\pi\eta_1) e^{r_1\theta} &  (\beta-1) (r_2+2i\pi\eta_1) e^{r_2\theta} & -\beta(r_1+2i\pi\eta_1) e^{r_1\theta} & -\beta (r_2+2i\pi\eta_1)e^{r_2\theta}
\end{array}
\right\lvert = 0,
 \end{equation}
which is a quadratic equation for $\beta$. After tedious calculations, (\ref{eq.detbloch}) simplifies into: 
$$ \beta^2 - \beta + \gamma = 0, \text{ where } \gamma := \frac{1}{4} \frac{\cosh(2\pi (n + \eta_2)) - \cosh((2\pi (n + \eta_2))(2\theta-1))}{\cosh(2\pi (n + \eta_2)) - \cos(2\pi\eta_1)}.$$
The discriminant of this second order equation reads: 
\begin{equation}\label{eq.discbloch}
 \Delta_\eta^n = \frac{\cosh((2\pi (n + \eta_2))(2\theta-1)) -\cos(2\pi\eta_1)}{\cosh(2\pi (n + \eta_2)) - \cos(2\pi\eta_1)}.
 \end{equation}
We observe that $\Delta_\eta^n \in (0,1)$, leading to two distinct eigenvalues $\beta_\eta^{n\pm} = (1 \pm \sqrt{\Delta_\eta^n})/2$, which are symmetric with respect to $\frac{1}{2}$.\\

As for the eigenvalues of $T_\eta^n$ in the case where $\eta_2 = 0$ and $n=0$, simple calculations show that $\sigma(T_\eta^n) = \left\{ 0, 1\right\}$ if $\eta_1 \neq 0$
and that $\sigma(T_0^0) = \left\{0,1-\theta,1\right\}$. All in all, we have proved that the spectrum of $T_N$ is: 
$$ \sigma(T_N) = \left\{0,1-\theta,1\right\} \cup \left\{ \frac{1}{2}(1\pm \sqrt{\Delta_{\frac{j}{N}}^n})\right\}_{0 < j \leq N-1 \atop n \in \mathbb{N}},$$ 
where $\Delta_\eta^n$ is defined by (\ref{eq.discbloch}).
This allows for the identification of the limit spectrum (\ref{eq.limspecbloch}) as:
$$ \lim\limits_{N\to \infty}{\sigma(T_N)} = \left\{0,1-\theta,1\right\} \cup \left\{ \frac{1}{2}(1\pm \sqrt{\Delta_{\eta}^n})\right\}_{\eta \in \overline{Y}, n \in \mathbb{N}} = [0,1].$$
Hence, in the present situation of rank $1$ laminates, the limit spectrum of $T_N$ is the whole interval $[0,1]$, in sharp contrast with the situation where $\omega \Subset Y$, 
as studied in Section \ref{sec.limspec}.\\

\subsection{Analysis of the homogenized tensor $A^*$}~\\

As we have seen in Section \ref{sec.gendirect} (see in particular Propositions \ref{prop.gencvue} and \ref{prop.limitrecip}, which are straightforwardly adapted to this context), 
the asymptotic behavior of the voltage potential $u_N$ associated to a conductivity $a \in \mathbb{C}$ inside the set of inclusions $\omega_N$, solution to (\ref{eq.potapp}), is partly driven by the formally homogenized tensor $A^*$ whose components $A^*_{ij}$ are given by ($i,j=1,2$): 
\begin{equation}\label{eq.A*app}
 A^*_{ij} = \int_Y{A(y) (e_i + \nabla_y \chi_i) \cdot (e_j + \nabla_y \chi_j) \:dy}, \text{ where } A(y) = \left\{ 
\begin{array}{cl}
a & \text{if } y \in \omega, \\
1 & \text{otherwise} ,
\end{array}
\right. 
\end{equation}
and the cell functions $\chi_i \in H^1_\#(Y) / \mathbb{R}$ ($i=1,2$) are solutions to: 
\begin{equation}\label{eq.cellpbapp}
 -\text{\rm div}_y(A(y)(e_i + \nabla_y \chi_i)) = 0.
\end{equation}
As we have seen in Section \ref{sec.cellpb}, both cell problems (\ref{eq.cellpbapp}) have a unique solution, 
provided the conductivity $a$ lies outside the exceptional set $\Sigma_\omega$ given by: 
$$ \Sigma_\omega = \left\{ a \in \mathbb{C}, \:\: \frac{1}{1-a} \in \sigma(T_0)\right\},$$
where $T_0 : H^1_\#(Y) / \mathbb{R} \to H^1_\#(Y) / \mathbb{R}$ is (the real version of that ) defined by (\ref{eq.T0app}):
\begin{equation}\label{eq.T0appcell}
 \forall v \in H^1_\#(Y) / \mathbb{R}, \:\: \int_Y{\nabla_y (T_0 u) \cdot \nabla_y v\:dy} = \int_\omega{\nabla_y u \cdot \nabla_y v\:dy}.
 \end{equation}
On the contrary, in the case $a \in \Sigma_\omega$, there may be multiple solutions to (\ref{eq.cellpbapp}) (or none), 
but when this happens, it is easily seen that (\ref{eq.A*app}) is independent of which of these solutions is used.\\

Calculations similar to those performed in the previous section (based on a partial Fourier decomposition) lead to an explicit characterization of the set $\Sigma_\omega$ in the present context of rank $1$ laminates: 
$$ \Sigma_\omega := \left\{Ê-\frac{\theta}{1-\theta}, 0\right\} \cup \left\{Êa_n^\pm\right\}_{n \in \mathbb{N}^*}, $$
where the $a_n^\pm$ read: 
$$ a^n_{\pm} = \frac{2(1 \pm 2\sinh(\pi n)\sinh(\pi n (2\theta-1)) ) - \cosh(2\pi n (2\theta-1)) - \cosh(2\pi n)}{\cosh(2\pi n) - \cosh(2\pi n (2\theta-1))}.$$

Let us now turn to the cell problems (\ref{eq.cellpbapp}) in this setting for an arbitrary conductivity $a \in \mathbb{C}$. 
Relying again on a partial Fourier decomposition in the $y_2$ variable, it is easy to see that, 
if (\ref{eq.cellpbapp}) has solutions, one of them is necessarily of the form: 
\begin{equation}\label{depdy1}
 \chi_i(y) = u_i(y_1), \text{ for some function } u_i \in H^1_\#(0,1) / \mathbb{R}, \: i = 1,2.
 \end{equation}
 
Now, easy calculations reveal that: 
\begin{itemize}
\item If $a \neq -\frac{\theta}{1-\theta}$, (\ref{eq.cellpbapp}) has (possibly non unique) solutions $\chi_i$ for $i=1,2$: 
$$ \chi_1(y) = \left\{ 
\begin{array}{cl}
Ay_1 + B & \text{if } y_1 < \theta, \\
Cy_1 + D & \text{if } y_1 > \theta, \\
\end{array}
\right. ,\text{ and } \chi_2(y) = 0,$$
where the coefficients $A$ and $C$ read: 
$$A = \frac{1-a}{a - \frac{\theta}{1-\theta}}, \: C = A \frac{\theta} {\theta-1}. $$
Then, the homogenized tensor (\ref{eq.A*app}) equals: 
$$ A^* = \left( \begin{array}{cc}
\lambda_{\theta,a}^- & 0, \\
0 & \lambda_{\theta,a}^+
\end{array}
\right), \text{ where }Ê\lambda_{\theta,a}^- = \left(\frac{\theta}{a} + 1-\theta\right)^{-1}, \text{ and }\lambda_{\theta,a}^+ = a\theta + (1-\theta) .$$
Note that $A^*$ is invertible, and becomes degenerate when $a$ gets close to $-\frac{\theta}{1-\theta}$.
The behaviors of the mappings $a \mapsto \lambda_{\theta,a}^\pm$ change depending on whether the volume fraction $\theta$ is larger or smaller than $\frac{1}{2}$, 
as can be seen on Figure \ref{figvpa}.
In the case where $\theta < \frac{1}{2}$, three regimes are to be distinguished: 
\begin{itemize}
\item When $-\frac{\theta}{1-\theta} < a < 0 $, $A^*$ has eigenvalues with opposite signs. 
\item When $-\frac{1-\theta}{\theta} <a<-\frac{\theta}{1-\theta}$, $A^*$ is positive definite . 
\item When $a < -\frac{1-\theta}{\theta}$, $A^*$ has again eigenvalues with opposite signs. 
\end{itemize}
This behavior differs in the case where $\theta > \frac{1}{2}$: 
\begin{itemize}
\item When $-\frac{1-\theta}{\theta}< a < 0 $, $A^*$ has eigenvalues with opposite signs. 
\item When $-\frac{\theta}{1-\theta} <a<-\frac{1-\theta}{\theta}$, $A^*$ is negative definite . 
\item When $a < -\frac{\theta}{1-\theta}$, $A^*$ has again eigenvalues with opposite signs. 
\end{itemize}

These results are in sharp contrast with the case of inclusions $\omega \Subset Y$, dealt with in Section \ref{secdirect}. 

\item In the case $a = -\frac{\theta}{1-\theta}$, the cell problem (\ref{eq.cellpbapp}) has infinitely many solutions of the form (\ref{depdy1}) for $i=2$, 
and none for $i=1$. 
\end{itemize}

It is remarkable that the only value of $a$ for which the cell problems do not have solutions is $a = -\frac{\theta}{1-\theta}$, corresponding 
to the essential spectrum of the operator defined in (\ref{eq.T0appcell}). We do not know whether this fact holds for general inclusion patterns
$\omega \subset Y$ 
or is particular to rank $1$ laminates.

\begin{figure}[!ht]
\centering
\includegraphics[width=0.49 \textwidth]{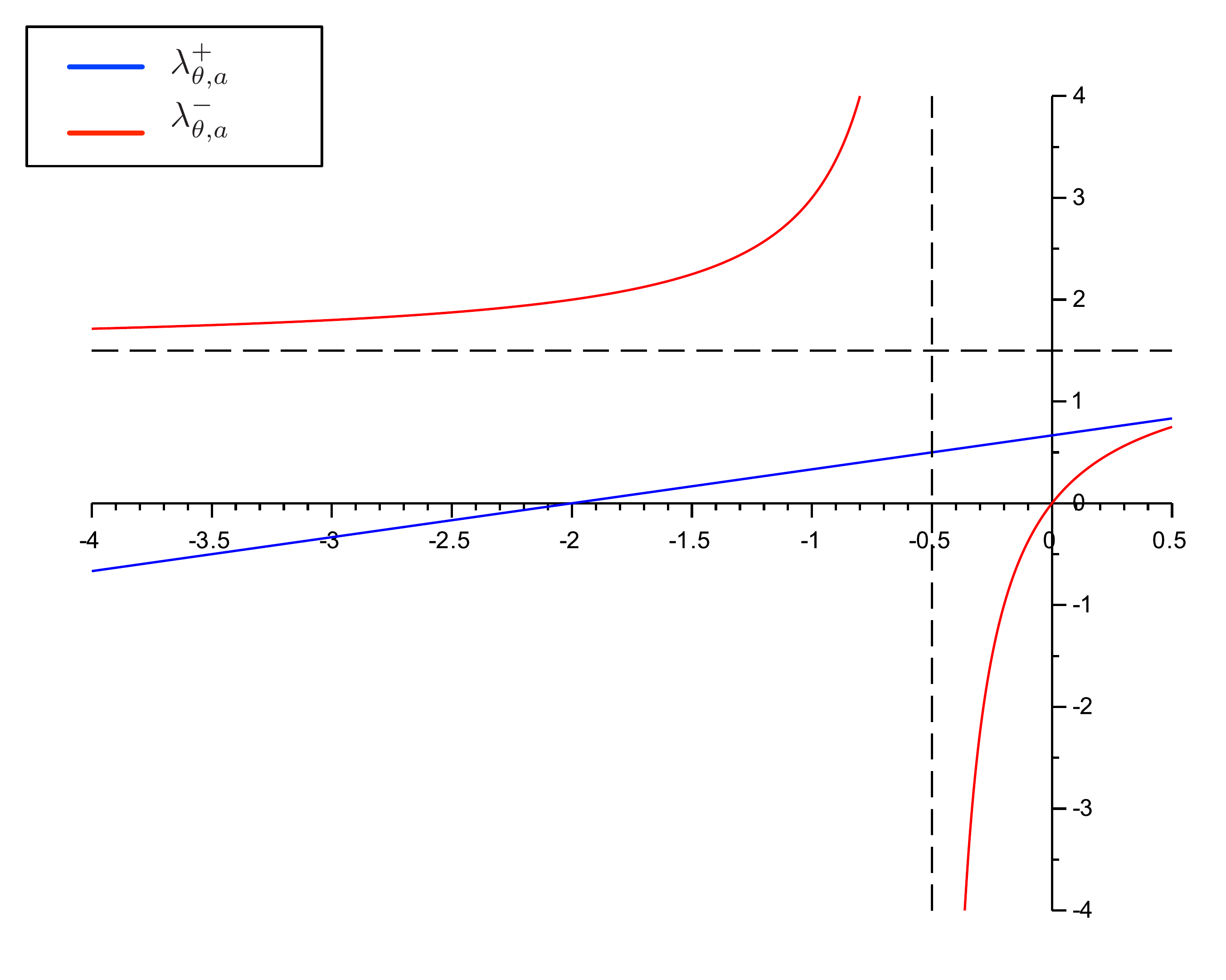} \hspace{0.1cm} \includegraphics[width=0.49 \textwidth]{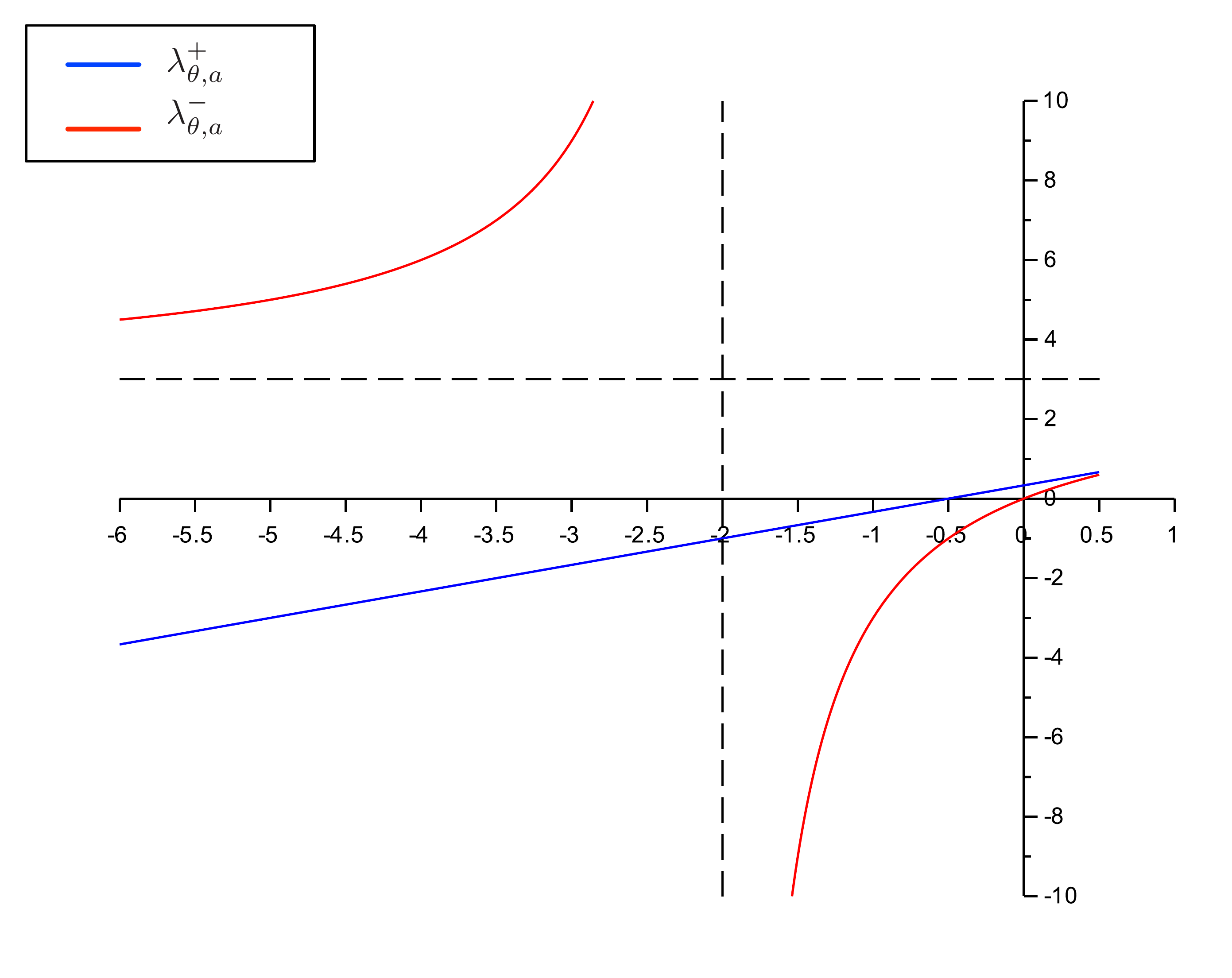}
\caption{\textit{Behavior of the eigenvalues $a \mapsto \lambda_{\theta,a}^\pm$ of the homogenized matrix $A^*$ for the volume fractions (left) $\theta = \frac{1}{3}$, 
and (right) $\theta = \frac{2}{3}$.}}
\label{figvpa}
\end{figure}


\end{document}